\title{Parametrized higher category theory II: Universal constructions}
\author{Jay Shah}
\address{Fachbereich Mathematik und Informatik, WWU Münster, 48149 M\"{u}nster, Germany}
\email{jayhshah@gmail.com}
\begin{document}

\tikzcdset{arrow style=tikz, diagrams={>=stealth}}

\begin{abstract} 
We develop parametrized generalizations of a number of fundamental concepts in the theory of $\infty$-categories, including factorization systems, free fibrations, exponentiable fibrations, relative colimits and relative Kan extensions, filtered and sifted diagrams, and the universal constructions $\Ind$ and $\PShv^{\Sigma}$.
\end{abstract}

\date{\today}
\maketitle

\tableofcontents

\section{Introduction}

In this paper, we continue our development of the foundations of \emph{parametrized} (i.e., \emph{indexed}) higher category theory from \cite{Exp2}. Let $\cT$ be an $\infty$-category.

\begin{dfn}
A \emph{$\cT$-$\infty$-category} is a cocartesian fibration $\cC \to \cT^{\op}$. Given two $\cT$-$\infty$-categories $\cC$ and $\cD$, a \emph{$\cT$-functor} $F: \cC \to \cD$ is a morphism of cocartesian fibrations, i.e., a functor over $\cT^{\op}$ that preserves cocartesian edges.
\end{dfn} 


\begin{wrn}
In \cite{Exp2}, we set $S = \cT^{\op}$ and instead spoke of $S$-$\infty$-categories as cocartesian fibrations $\cC \to S$. As this is purely an issue of nomenclature, we will not hesitate in referring to results from \cite{Exp2} with our opposite convention in force.
\end{wrn}

The basic idea of parametrized higher category theory is to develop a theory of $\infty$-categories internal to the $(\infty,2)$-category of $\cT$-$\infty$-categories. The most fundamental new complication that arises is that of a broader notion of \emph{point}; points should now be thought of as encompassing all the corepresentable left fibrations over $\cT^{\op}$. For example, taking $\cT = \OO_G$ to be the orbit category of a finite group, the theory of $G$-colimits essentially amalgamates the usual theory of colimits together with that of coproducts indexed by $G$-orbits.\footnote{See \cref{thmx:DecompositionMainResult} for a precise statement.} Our original motivation for this project lay in the necessity of having robust $\infty$-categorical foundations for equivariant homotopy theory -- see \cite{Exp0} and the introduction of \cite{Exp2} for more details on this. However, nothing in \cite{Exp2} or this paper is specific to that application. In principle, the foundational work that we undertake here should prove useful wherever classical indexed category theory has found application, or for base $\infty$-categories $\cT$ of algebro-geometric origin (e.g., in a motivic context). It will also be essential for our development of the theory of \emph{parametrized $\infty$-operads} in \cite{paramalg}, which underpins the work of Horev and his collaborators \cite{horev2019enuine,hahn2020quivariant} on equivariant factorization homology.

Recall that in \cite{Exp2} we accomplished the following primary objectives:

\begin{enumerate}
\item We introduced the concepts of $\cT$-(co)limits and $\cT$-Kan extensions.\footnote{We give a rapid review of these concepts in \cref{sec:old}.} We also proved the basic existence and uniqueness theorem for $\cT$-Kan extensions (cf. \cite[Thm.~10.3]{Exp2} and \cite[Thm.~10.5]{Exp2}).
\item Say that $\cT$ is \emph{orbital} if its finite coproduct completion $\FF_{\cT}$ admits pullbacks. Supposing that $\cT$ is orbital, we proved as \cite[Cor.~12.15]{Exp2} that a $\cT$-$\infty$-category $\cC$ is $\cT$-cocomplete \cite[Def.~5.13]{Exp2} if and only if $\cC$ admits all $\cT$-coproducts \cite[Def.~5.10]{Exp2}, fiberwise geometric realizations, and the restriction functors preserve geometric realizations. This was done by a $\cT$-colimit decomposition technique in the form of the parametrized Bousfield--Kan formula; cf. \cite[Thm.~12.6]{Exp2} and \cite[Thm.~12.13]{Exp2} coupled with the parametrized Quillen's Theorem A \cite[Thm.~6.7]{Exp2}.
\item We proved a parametrized Yoneda lemma \cite[Lem.~11.1]{Exp2} and subsequently established the universal property of the $\cT$-$\infty$-category of presheaves \cite[Thm.~11.5]{Exp2}.
\end{enumerate}

For more involved applications, we need to establish generalizations of all three of these results. Firstly, recall that Lurie in \cite[\S 4.3]{HTT} set up a theory of \emph{relative Kan extensions}. The idea is that given a commutative diagram
\[ \begin{tikzcd}
\cC \ar{r}{F} \ar[hookrightarrow]{d}[swap]{i} & \cE \ar{d}{\pi} \\
\cD \ar{r} \ar[dotted]{ru} & \cB
\end{tikzcd} \]
of $\infty$-categories, one can give a pointwise criterion for the existence of an initial filler $i_! F$. If $\pi$ is in addition a cocartesian fibration, then as a corollary one sees that $i_! F$ always exists if we suppose that for all objects $b \in \cB$, the fiber $\cE_b$ admits all colimits, and for all morphisms $f: b \to b'$, the pushforward functor $f_!: \cE_b \to \cE_{b'}$ preserves all colimits.

We will establish the theory of \emph{relative $\cT$-colimits} and \emph{relative $\cT$-left Kan extensions} and thereby obtain a generalization of Lurie's result in the parametrized setting in the form of \cref{thm-app:RelativeLKEexistence}.\footnote{Of course, one may dualize appropriately to obtain analogous results involving relative $\cT$-limits and relative $\cT$-right Kan extensions; cf. \cite[Cor.~5.25]{Exp2}.} Since the definitions of relative $\cT$-colimit and relative $\cT$-left Kan extension are technically involved (cf. \cref{dfn:relativeColimit} and \cref{def-app:relativeKanExtension}), at this point we will only state a simplified corollary of our main existence result that nonetheless covers the case of most relevance. To formulate the analogous existence criterion in the parametrized context, we need the notion of a \emph{parametrized fiber} of a $\cT$-functor:
 
\begin{dfn}[{\cite[Notn.~2.29]{Exp2}}] 
Let $\cB$ be a $\cT$-$\infty$-category, $b \in \cB_t$ an object, and let $\Ar^{\cocart}(\cB)$ be the full subcategory of $\Ar(\cB)$ on the cocartesian edges in $\cB$. We let $\underline{b} \coloneq \{b\} \times_{\cB, \ev_0} \Ar^{\cocart}(\cB)$. Note then that the functor $\underline{b} \to (\cT^{/t})^{\op} \cong \{ t \} \times_{\cT^{\op}} \Ar(\cT^{\op})$ induced by the structure map of $\cB$ is a trivial fibration \cite[Lem.~12.10]{Exp2} and $\ev_1: \underline{b} \to \cB$ is a $\cT$-functor covering $(\cT^{/t})^{\op} \to \cT^{\op}$.

Now suppose $\pi: \cE \to \cB$ is a $\cT$-functor. The \emph{parametrized fiber} of $\pi$ over $b$ is the $\cT^{/t}$-$\infty$-category
$$\cE_{\underline{b}} \coloneq \underline{b} \times_{\ev_1, \cB, \pi} \cE.$$
\end{dfn}   

\begin{thmx} \label{thmx:Kan}
Suppose we have a commutative diagram of $\cT$-$\infty$-categories
\[ \begin{tikzcd}
\cC \ar{r} \ar[hookrightarrow]{d}[swap]{i} & \cE \ar{d}{\pi} \\
\cD \ar{r} \ar[dotted]{ru} & \cB
\end{tikzcd} \]
in which $i$ is the inclusion of a full $\cT$-subcategory \cite[Def.~2.2]{Exp2} and $\pi$ is in addition a cocartesian fibration. Consider the restriction functor
\[ i^*: \Fun_{/\cB, \cT}(\cD, \cE) \to \Fun_{/\cB, \cT}(\cC, \cE) \]
where $\Fun_{/\cB, \cT}(-, -)$ denotes the full subcategory of $\Fun_{/\cB}(-, -)$ spanned by the $\cT$-functors. Suppose that for all $b \in \cB_t$, the parametrized fiber $\cE_{\underline{b}}$ admits all $\cT^{/t}$-colimits, and for all $f: b \to b' \in \cB_t$, the induced pushforward $\cT^{/t}$-functor $f_!: \cE_{\underline{b}} \to \cE_{\underline{b'}}$ preserves all $\cT^{/t}$-colimits. Then $i^*$ admits a left adjoint $i_!$. Moreover, the unit transformation $\id \Rightarrow i^* i_!$ is an equivalence, so $i_!$ is fully faithful.
\end{thmx}

\begin{rem} As with the ordinary theory of Kan extensions, the full faithfulness assertion in \cref{thmx:Kan} is where the pointwise formula for $i_!$ comes into play. In particular, even if we assumed the relevant presentability hypotheses, it would not suffice to appeal to the adjoint functor theorem to verify this property.
\end{rem}

Secondly, we develop the theory of $\cT$-$\kappa$-small, $\cT$-filtered, and $\cT$-sifted $\cT$-$\infty$-categories. In order to speak of small and large simplicial sets and $\infty$-categories, we henceforth fix two strongly inaccessible cardinals $\delta_0 < \delta_1$.

\begin{convention}
For simplicity, we now also suppose throughout that the base $\infty$-category $\cT$ is small.
\end{convention}

Let $\Cat$ denote the (large) $\infty$-category of small $\infty$-categories and let $\Cat_{\cT} \coloneq \Cat^{\cocart}_{/\cT^{\op}} \simeq \Fun(\cT^{\op}, \Cat)$ be the $\infty$-category of $\cT$-small $\cT$-$\infty$-categories.\footnote{Since we suppose that $\cT$ is small, a $\cT$-$\infty$-category $\cC$ is $\cT$-small if and only if $\cC$ is small.}

\begin{dfn}[\cref{def:Tfinite}] \label{dfn:FiniteEtc}
Let $\Delta_{\cT} \subset \Cat_{\cT}$ be the full subcategory spanned by the objects
\[ \{ \Delta^n \times \Map_{\cT}(-,t) \}_{t\in \cT, n \geq 0}. \]
Then for every regular cardinal $\kappa$, we define the full subcategory $\Cat^{\ksmall}_{\cT} \subset \Cat_{\cT}$ to be the smallest full subcategory that contains $\Delta_{\cT}$ and is closed under all colimits indexed by $\kappa$-small simplicial sets. We say that a $\cT$-small $\cT$-$\infty$-category $\cC$ is \emph{$\cT$-$\kappa$-small} if it belongs to $\Cat^{\ksmall}_{\cT}$. If $\kappa = \omega$, we also say that $\cC$ is \emph{$\cT$-finite}.
\end{dfn}

\begin{rem} \label{rem:annoyingTerminology}
Adopting the terminology of \cref{dfn:FiniteEtc} entails speaking of a host of seemingly redundant expressions such ``$\cT$-finite $\cT$-$\infty$-category''. We avoid simply writing e.g. ``finite $\cT$-$\infty$-category'' because of the possible ambiguity as to whether, given some $\cT$-$\infty$-category $\cC$, ``finite'' refers to $\cC$ being finite as an $\infty$-category or as a $\cT$-$\infty$-category.
\end{rem}


We then have the following generalization of \cite[Cor.~12.15]{Exp2}, whose proof turns out to be far simpler than our earlier strategy of appealing to the parametrized Bousfield--Kan formula. We give the most useful formulation of this here; a slightly more general statement is recorded as \cref{prop:FiberwiseAndCoproductsGiveAll}.

\begin{thmx} \label{thmx:DecompositionMainResult}
Suppose that $\cT$ is orbital. Let $\cC$ be an $\cT$-$\infty$-category and $\kappa$ a regular cardinal. Then $\cC$ strongly admits\footnote{We recall this notion as \cref{def-app:StronglyAdmitsAndPreservesColimit}.} all $\cT$-$\kappa$-small $\cT$-colimits if and only if
\begin{enumerate}
\item For every $t \in \cT$, the fiber $\cC_t$ admits all $\kappa$-small colimits, and for every $\alpha: s \to t$, the restriction functor $\alpha^*: \cC_t \to \cC_s$ preserves $\kappa$-small colimits.
\item For every map $\alpha: U \to V$ of finite $\cT$-sets,\footnote{A finite $\cT$-set is defined to be an object of the finite coproduct completion $\FF_{\cT}$ of $\cT$.} the restriction functor $\alpha^*: \cC_V \to \cC_U$ admits a left adjoint $\alpha_!$.\footnote{For a finite $\cT$-set $U$ with orbit decomposition $U_1 \sqcup ... \sqcup U_n$, we write $\cC_U \coloneq \prod_{i=1}^n \cC_{U_i}$, and the contravariant functoriality in the finite $\cT$-set is inherited from that for orbits.}
\item $\cC$ satisfies the \emph{Beck-Chevalley condition}, i.e., for every pullback square
\[ \begin{tikzcd}
U' \ar{r}{\beta'} \ar{d}{\alpha'} & U \ar{d}{\alpha} \\
V' \ar{r}{\beta} & V
\end{tikzcd} \]
in $\FF_{\cT}$, the mate
\[ \alpha'_! \beta'^* \Rightarrow \beta^* \alpha_!: \cC_U \to \cC_{V'} \]
is an equivalence.
\end{enumerate}
\end{thmx}

\begin{rem}
Again supposing that $\cT$ is orbital, note that by \cite[Prop.~5.12]{Exp2} $\cC$ admits finite $\cT$-coproducts if and only if conditions (2) and (3) in \cref{thmx:DecompositionMainResult} hold. Moreover, the ordinary ($\infty$-categorical) Bousfield--Kan formula shows that an $\infty$-category is cocomplete if and only if it admits coproducts and geometric realizations (cf. \cite[Cor.~12.3]{Exp2}). Taking $\kappa$ to be our fixed inaccessible cardinal $\delta_0$, we then see that the hypotheses of \cref{thmx:DecompositionMainResult} are equivalent to those of \cite[Cor.~12.15]{Exp2}.
\end{rem}

\begin{rem}
In \cite{Exp4},  Nardin implicitly \emph{defines} a $\cT$-$\infty$-category to strongly admit $\cT$-finite $\cT$-(co)limits\footnote{Note that Nardin writes instead ``finite $\cT$-(co)limits'' for this notion and he also doesn't use the adjective ``strongly''; see \cref{rem:annoyingTerminology}.} if conditions (1) through (3) in \cref{thmx:DecompositionMainResult} are satisfied. Moreover, using his formulation, a $\cT$-stable $\cT$-$\infty$-category \cite[Def.~7.1]{Exp4} by definition strongly admits all $\cT$-finite $\cT$-colimits and $\cT$-finite $\cT$-limits. One practical consequence of \cref{thmx:DecompositionMainResult} is that $\cT$-stable $\cT$-$\infty$-categories then strongly admit $\cT$-(co)limits indexed by an \emph{a priori} larger class of $\cT$-diagrams; for instance, when $\cT = \OO_G$ this includes those $G$-spaces that admit the structure of a finite $G$-CW complex.
\end{rem}

Moving onto the theory of $\cT$-$\kappa$-filtered and $\cT$-sifted $\cT$-$\infty$-categories, we may make the following definitions as the evident parametrized generalizations of \cite[Def.~5.3.1.7]{HTT} and \cite[Def.~5.5.8.1]{HTT}.

\begin{dfn}[\cref{def:ParamFiltered}]
Let $\cJ$ be a $\cT$-$\infty$-category and let $\kappa$ be a regular cardinal. We say that $\cJ$ is \emph{$\cT$-$\kappa$-filtered} if for all $t \in \cT$ and $\cT^{/t}$-$\kappa$-small $\cK$, every $\cT^{/t}$-functor $p: \cK \to \cJ_{\underline{t}}$ admits an extension to a $\cT^{/t}$-functor $\overline{p}: \cK^{\underline{\rhd}} \to \cJ_{\underline{t}}$.\footnote{We recall the parametrized cone as \cref{def-app:JoinAndSlice}. Here, for the $\cT^{/t}$-$\infty$-category $\cK$, $\cK^{\underline{\rhd}}$ is notation for $\cK \star_{(\cT^{/t})^{\op}} (\cT^{/t})^{\op}$.}
\end{dfn}

\begin{ntn}
For a finite $\cT$-set $U$ with orbit decomposition $U_1 \sqcup ... \sqcup U_n$, we write
$$\underline{U} \coloneq \coprod_{i=1}^n (\cT^{\op})^{U_i/}$$
for the $\cT$-$\infty$-category given by the coproduct of corepresentable left fibrations; this straightens to the presheaf $\Map_{\FF_{\cT}}(-,U)|_{\cT^{\op}}$.
\end{ntn}

Recall from \cite{Exp2} that we write $\underline{\Fun}_{\cT}(-,-)$ for the internal hom for $\cT$-$\infty$-categories (defined at the level of marked simplicial sets as \cite[Def.~3.2]{Exp2}); for every $t \in \cT$ we have that $\underline{\Fun}_{\cT}(\cC,\cD)_t \simeq \Fun_{\cT^{/t}}(\cC_{\underline{t}}, \cD_{\underline{t}})$, and for every $\alpha: s \to t$, the restriction functor $\alpha^*$ is given by restricting $\cT^{/t}$-functors to $\cT^{/s}$-functors.

\begin{dfn}[\cref{def:ParamSifted}]
Let $\cJ$ be a $\cT$-$\infty$-category. Then $\cJ$ is \emph{$\cT$-sifted} if for all $t \in \cT$ and finite $\cT^{/t}$-sets $U$, the diagonal $\cT^{/t}$-functor $\delta: \cJ_{\underline{t}} \to \underline{\Fun}_{\cT^{/t}}(\underline{U}, \cJ_{\underline{t}})$ is $\cT^{/t}$-cofinal in the sense of \cite[Def.~6.8]{Exp2}, i.e. $\delta$ is fiberwise cofinal.
\end{dfn}

Our main theorems about these concepts should be read as confirming the following expectation: $\cT$-filtered and $\cT$-sifted $\cT$-colimits are computed as ordinary filtered and ordinary sifted colimits in the fibers. To say this precisely, we need another definition.

\begin{dfn}[\cref{def:cofinalconstant}]
Let $\cJ$ be a $\cT$-$\infty$-category. We say that $\cJ$ is \emph{cofinal-constant} (cc) if for all morphisms $\alpha: s \to t$ in $\cT$, the restriction functor $\alpha^*: \cJ_t \to \cJ_s$ is cofinal.
\end{dfn}

\begin{rem}
Let $\cJ$ be a cofinal-constant $\cT$-$\infty$-category and $p: \cJ \to \cC$ a $\cT$-functor. Moreover, suppose that $\cT$ has a terminal object $t$. Then by our hypothesis on $\cJ$ and \cite[Thm.~6.7]{Exp2}, the $\cT$-functor $\chi: \cJ_t \times \cT^{\op} \to \cJ$ uniquely determined by the inclusion $\cJ_t \subset \cJ$ is $\cT$-cofinal. Consequently, we obtain an equivalence
$$\colim^{\cT}_{\cJ} p \simeq \colim^{\cT}_{\cJ_t \times \cT^{\op}} p \circ \chi$$
provided that either $\cT$-colimit exists.
\end{rem}

Now let $\Spc$ denote the (large) $\infty$-category of small spaces and let $\underline{\Spc}_{\cT}$ be the $\cT$-$\infty$-category of small $\cT$-spaces \cite[Exm.~3.12]{Exp2}.

\begin{thmx}[{\cref{thm:EquivalentConditionsFiltered} and \cref{prop:FilteredCofinalityCriterion}}] \label{thmx:filtered} Suppose that $\cT$ is orbital. Let $\cJ$ be a $\cT$-$\infty$-category and let $\kappa$ be a regular cardinal. The following conditions are equivalent:
\begin{enumerate}
\item $\cJ$ is $\cT$-$\kappa$-filtered.
\item For all $t \in \cT$, $\cJ_t$ is $\kappa$-filtered, and $\cJ$ is cofinal-constant.
\item The $\cT$-colimit $\cT$-functor
\[ \underline{\colim}^{\cT}_{\cJ}: \underline{\Fun}_{\cT}(\cJ, \underline{\Spc}_{\cT}) \to \underline{\Spc}_{\cT} \]
strongly preserves $\cT$-$\kappa$-small $\cT$-limits.
\item For all $t \in \cT$ and $\cT^{/t}$-$\kappa$-small $\cK$, the diagonal $\cT^{/t}$-functor
\[ \delta: \cJ_{\underline{t}} \to \underline{\Fun}_{\cT^{/t}}(\cK, \cJ_{\underline{t}}) \]
is $\cT^{/t}$-cofinal.
\end{enumerate}
\end{thmx}

\begin{thmx}[{\cref{thm:EquivalentConditionsSifted}}] \label{thmx:sifted} Suppose that $\cT$ is orbital and let $\cJ$ be a $\cT$-$\infty$-category. The following conditions are equivalent:
\begin{enumerate}
\item $\cJ$ is $\cT$-sifted.
\item For all $t \in \cT$, $\cJ_t$ is sifted, and $\cJ$ is cofinal-constant.
\item The $\cT$-colimit $\cT$-functor
\[ \underline{\colim}^{\cT}_{\cJ}: \underline{\Fun}_{\cT}(\cJ, \underline{\Spc}_{\cT}) \to \underline{\Spc}_{\cT} \]
preserves finite $\cT$-products.
\end{enumerate}
\end{thmx}

Thirdly, building upon our earlier discussion of $\cT$-presheaves, we introduce the universal constructions $\underline{\Ind}^{\kappa}_{\cT}(\cC)$ and $\underline{\PShv}^{\Sigma}_\cT(\cC)$ that freely adjoin $\cT$-$\kappa$-filtered $\cT$-colimits and $\cT$-sifted $\cT$-colimits to $\cC$, respectively (\cref{dfn:Ind}). These are essentially defined to be the minimal full $\cT$-subcategories of $\underline{\PShv}_{\cT}(\cC) \coloneq \underline{\Fun}_{\cT}(\cC^{\vop}, \underline{\Spc}_{\cT})$ closed under the relevant $\cT$-colimits. However, in view of condition (2) in \cref{thmx:filtered} and \cref{thmx:sifted}, it turns out that $\underline{\Ind}^{\kappa}_{\cT}(-)$ and $\underline{\PShv}^{\Sigma}_\cT(-)$ are obtained by fiberwise application of $\Ind^{\kappa}$ and $\PShv^{\Sigma}$ (cf. \cref{var:FiberwisePresheaves}).

Our main result identifies these constructions in terms of $\cT$-presheaves that strongly preserve certain $\cT$-limits if $\cC$ admits sufficiently many $\cT$-colimits.

\begin{notation}
Let $\cD$ and $\cE$ be $\cT$-$\infty$-categories, and suppose in the following that $\cD, \cE$ strongly admit the relevant $\cT$-(co)limits. We introduce notation for certain full $\cT$-subcategories of $\underline{\Fun}_{\cT}(\cD,\cE)$, which may be specified by indicating over each $t \in \cT$ what $\cT^{/t}$-functors $\cD_{\underline{t}} \to \cE_{\underline{t}}$ span the fiber:
\begin{enumerate}
\item $\underline{\Fun}_{\cT}^{L}(\cD, \cE)$: take those $\cT^{/t}$-functors that strongly preserve all (small) $\cT^{/t}$-colimits.
\item $\underline{\Fun}^{\times}_{\cT}(\cD, \cE)$: take those $\cT^{/t}$-functors that preserve finite $\cT^{/t}$-products.\footnote{Note that there is no distinction between strongly preserving and preserving finite $\cT^{/t}$-products, and likewise for finite $\cT^{/t}$-coproducts.}
\item $\underline{\Fun}^{\sqcup}_{\cT}(\cD, \cE)$: take those $\cT^{/t}$-functors that preserve finite $\cT^{/t}$-coproducts.
\item $\underline{\Fun}_{\cT}^{\operatorname{\kappa-lex}}(\cD, \cE)$: take those $\cT^{/t}$-functors that strongly preserve $\cT$-$\kappa$-small $\cT^{/t}$-limits.
\item $\underline{\Fun}_{\cT}^{\operatorname{\kappa-rex}}(\cD, \cE)$: take those $\cT^{/t}$-functors that strongly preserve $\cT$-$\kappa$-small $\cT^{/t}$-colimits.
\end{enumerate}
\end{notation}

We only state the most important points here and refer the reader to the main body of the paper for the more comprehensive theorem.

\begin{thmx}[{\cref{thm:SiftedCocompletionAsParamProductPreservingPresheaves}}]
Suppose that $\cT$ is orbital and let $\cC$ be a $\cT$-$\infty$-category.
\begin{enumerate}
\item Suppose that $\cC$ admits finite $\cT$-coproducts. We then have an equality
$$\underline{\PShv}^{\Sigma}_{\cT}(\cC) = \underline{\Fun}^{\times}_{\cT}(\cC^{\vop}, \underline{\Spc}_{\cT}).$$
Moreover, $\underline{\PShv}^{\Sigma}_{\cT}(\cC)$ is $\cT$-cocomplete, and given any $\cT$-cocomplete $\cT$-$\infty$-category $\cD$, restriction along the $\cT$-Yoneda embedding $j^{\Sigma}_{\cT}: \into{\cC}{\underline{\PShv}^{\Sigma}_{\cT}(\cC)}$ implements an equivalence
\[ \underline{\Fun}^L_{\cT}(\underline{\PShv}^{\Sigma}_{\cT}(\cC), \cD) \xto{\simeq} \underline{\Fun}^{\sqcup}_{\cT}(\cC, \cD) \]
with inverse given by $\cT$-left Kan extension.
\item Suppose that $\cC$ strongly admits $\cT$-$\kappa$-small $\cT$-colimits. We then have an equality
$$ \underline{\Ind}^{\kappa}_{\cT}(\cC) = \underline{\Fun}^{\operatorname{\kappa-lex}}_{\cT}(\cC^{\vop}, \underline{\Spc}_{\cT}).$$
Moreover, $\underline{\Ind}^{\kappa}_{\cT}(\cC)$ is $\cT$-cocomplete, and given any $\cT$-cocomplete $\cT$-$\infty$-category $\cD$, restriction along the $\cT$-Yoneda embedding $j^{\kappa}_{\cT}: \into{\cC}{\underline{\Ind}^{\kappa}_{\cT}(\cC)}$ implements an equivalence
\[ \underline{\Fun}^L_{\cT}(\underline{\Ind}^{\kappa}_{\cT}(\cC), \cD) \xto{\simeq} \underline{\Fun}^{\operatorname{\kappa-rex}}_{\cT}(\cC, \cD) \]
with inverse given by $\cT$-left Kan extension.
\end{enumerate}
\end{thmx}

Lastly, we also lay parametrized foundations for two other important concepts in the theory of $\infty$-categories: \emph{factorization systems} and \emph{exponentiable} (i.e., \emph{flat}) \emph{fibrations}.\footnote{Some authors (e.g., Ayala and Francis \cite{ayala2017ibrations}) reserve the term \emph{exponentiable} for the homotopy invariant definition, but we will elide this distinction in our narrative here.} We defer the statements of these results to their respective sections \ref{sec:fact} and \ref{sec:pairing}. Most notably, we use the theory of $\cT$-factorization systems to establish the universal property of the free $\cT$-cocartesian fibration (\cref{exm:freeCocartesianFibration}), while we use the theory of $\cT$-flat fibrations and the associated \emph{$\cT$-pairing construction} (\cref{conthm:PairingConstructionRecalled}) to study $\cT$-(co)limits in a $\cT$-$\infty$-category of sections (\cref{thm:ParametrizedLimitsAndColimitsInSectionCategories}).


\begin{rem}
In the case $\cT = \ast$, our main \cref{thm:GeneralFreeFibration} on parametrized factorization systems applies to give a common generalization of the proof of the universal property of the usual free cocartesian fibration (\cite[Thm.~4.5]{GHN}) with that of the universal property of the $\cO$-monoidal envelope for an $\infty$-operad $\cO$ (\cite[Prop.~2.2.4.9]{HA}).\footnote{This line of reasoning is well-known to experts and has also appeared in the literature as \cite[Prop.~B.1]{ayala2017actorization}; we thank Rune Haugseng for the pointer.} In \cite{paramalg}, we will apply \cref{thm:GeneralFreeFibration} to establish the theory of $\cO$-monoidal envelopes for a $\cT$-$\infty$-operad $\cO$.
\end{rem}

\begin{rem}
Our main interest in \cref{thm:ParametrizedLimitsAndColimitsInSectionCategories} lies in using it in \cite{paramalg} to study $\cT$-(co)limits in a $\cT$-$\infty$-category of $\cO$-algebras for a $\cT$-$\infty$-operad $\cO$. Also see \cite[Prop.~7.6]{BACHMANN2021} for a similar type of statement in the context of normed $E_{\infty}$-algebras in motivic homotopy theory.
\end{rem}

In the appendix, we take the opportunity to give the correct\footnote{If $\cO^{\otimes}$ is the commutative $\infty$-operad, then it turns out that our earlier definition of symmetric promonoidal given in \cite{BarwickGlasmanShah} was insufficiently general; see \cref{exm:promonoidalFlat}. We thank Yonatan Harpaz for alerting us to this issue.} definition of an exponentiable fibration of $\infty$-operads and then the construction of \emph{$\cO$-promonoidal Day convolution} with respect to a base $\infty$-operad $\cO^{\otimes}$ (\cref{conthm:DayConvolution}). This generalizes Lurie's construction in \cite[\S 2.2.6]{HA}, which supposes that the source $\infty$-operad in question is $\cO$-monoidal. We saw fit to include this material here because the main lemma behind it (\cref{lem:FlatCategoricalFibrationArrowLemma}) is also used to establish the theory of $\cT$-flat fibrations.

\begin{rem}
Vladimir Hinich has informed us that our treatment of $\cO$-promonoidal Day convolution is a slightly reorganized version of his discussion in \cite[\S 2.8]{HINICH2020107129}. In particular, our \cref{conthm:DayConvolution} is essentially his \cite[Prop.~2.8.3]{HINICH2020107129}, and \cref{lem:FlatCategoricalFibrationArrowLemma} when specialized to the context of $\infty$-operads is his \cite[Lem.~2.8.4]{HINICH2020107129}.
\end{rem}

\subsection*{Notation and terminology}

We collect a few miscellaneous pieces of notation and terminology from \cite{Exp2} that we have not introduced yet in our discussion.

\begin{convention}
Let $X, Y \to Z$ be maps of simplicial sets. Unless otherwise indicated, when we write $X \times_Z \Ar(Z) \times_Z Y$ we mean $X \times_{Z, \ev_0} \Ar(Z) \times_{\ev_1, Z} Y$ (i.e., evaluation at the source goes to the left and evaluation at the target goes to the right).
\end{convention}

We will need to use the theory of marked simplicial sets in various places in this paper; see \cite[\S 2]{Exp2} for a review.

\begin{ntn}
\begin{enumerate}[leftmargin=*]
\item Given a simplicial set $X$, we let $X^{\flat}$ be the minimal marking on $X$ and $X^{\sharp}$ the maximal marking on $X$. 
\item If $p: X \to S$ is a cocartesian fibration, then we let $\leftnat{X}$ denote $X$ with its $p$-cocartesian edges marked.
\end{enumerate}
\end{ntn}

\begin{notation}
We will generally write $\cT^{\op}$ as $\ast_{\cT}$ when we wish to think of it as the terminal $\cT$-$\infty$-category.
\end{notation}

\begin{dfn}
Let $\cC$ be a $\cT$-$\infty$-category. We define the \emph{$\cT$-$\infty$-category of arrows} in $\cC$ to be
$$\Ar_{\cT}(\cC) \coloneq \cT^{\op} \times_{\Ar(\cT^{\op})} \Ar(\cC)$$
where the map $\cT^{\op} \to \Ar(\cT^{\op})$ is the identity section.
\end{dfn}

\begin{rec}[{\cite[Def.~4.1]{Exp2}}]
Let $S$ be a simplicial set, let $\iota: \partial \Delta^1 \times S \subset \Delta^1 \times S$ be the inclusion functor, and consider the right adjoint
\[ \iota_*: \sSet_{/\partial \Delta^1 \times S} \to \sSet_{/\Delta^1 \times S} \]
to pullback along $\iota$. Then for maps $p,q: X, Y \to S$ of simplicial sets, we define the \emph{$S$-join} $X \star_S Y$ to be $\iota_*(X,Y)$; this recovers the ordinary join if $S = \ast$. Note that if we let $\chi: S \times \Delta^1 \to S \star S$ be the map adjoint to $\id_{S \times \partial \Delta^1}$ (using the universal property of the ordinary join), then we have a canonical isomorphism
\[ X \star_S Y \cong S \times \Delta^1 \underset{\chi, S \star S, p \star q}{\times} X \star Y, \]
so the $S$-join is the \emph{relative join} in the sense of Lurie \cite[\href{https://kerodon.net/tag/0241}{Tag 0241}]{kerodon}. In keeping with the terminology of \cite{Exp2}, however, we will prefer to generically call this the parametrized join.

Now if $S = \cT^{\op}$ and we have $\cT$-$\infty$-categories $\cC$ and $\cD$, the $\cT^{\op}$-join $\cC \star_{\cT^{\op}} \cD$ is again a $\cT$-$\infty$-category, and in fact the structure map to $\cT^{\op} \times \Delta^1$ is a $\cT$-functor (cf. \cite[Prop.~4.3]{Exp2}).
\end{rec}

\begin{rec}[{\cite[Def.~8.3]{Exp2}}]
Let $\cC$ and $\cD$ be $\cT$-$\infty$-categories and let $\adjunct{F}{\cC}{\cD}{G}$ be a relative adjunction with respect to $\cT^{\op}$ \cite[Def.~7.3.2.2]{HA}. Then we say that $F \dashv G$ is a \emph{$\cT$-adjunction} if $F$ and $G$ are both $\cT$-functors.
\end{rec}

\begin{rec}[{\cite[Def.~7.1]{Exp2}}]
Let $p: \cX \to \cB$ be a $\cT$-functor. We say that $p$ is a \emph{$\cT$-fibration} if $p$ is a categorical fibration. In this case, $p$ is \emph{$\cT$-cocartesian}, resp. \emph{$\cT$-cartesian} if
\begin{enumerate}
\item For every object $t \in \cT$, $p_t: \cX_t \to \cB_t$ is a cocartesian, resp. cartesian fibration.
\item For every morphism $\alpha: s \to t$, the restriction functor $\alpha^\ast: \cX_{t} \to \cX_{s}$ carries $p_t$-cocartesian, resp. $p_t$-cartesian edges to $p_s$-cocartesian, resp. $p_s$-cartesian edges.
\end{enumerate}
If $p: \cX \to \cB$ and $q: \cY \to \cB$ are two $\cT$-cocartesian fibrations, we say that a $\cT$-functor $F: \cX \to \cY$ over $\cB$ is a \emph{morphism of $\cT$-cocartesian fibrations} if $F$ preserves fiberwise (with respect to $\cT$) cocartesian edges. Similarly, we have the analogous definition of a morphism of $\cT$-cartesian fibrations.

Finally, note that $p$ is $\cT$-cocartesian if and only if $p$ is a cocartesian fibration \cite[Rem.~7.4]{Exp2}.
\end{rec}

\subsection*{Acknowledgements}

I would like to thank Denis Nardin for helpful conversations on the subject matter of this paper. I would also like to acknowledge that Dylan Wilson has obtained similar results in unpublished work. The author was funded by the Deutsche Forschungsgemeinschaft (DFG, German Research Foundation) under Germany’s Excellence Strategy EXC 2044–390685587, Mathematics Münster: Dynamics–Geometry–Structure.

\section{Recollections on parametrized limits and colimits}

\label{sec:old}

In this section, we give a streamlined exposition of the concepts of parametrized (co)limits and Kan extensions introduced in \cite{Exp2}. This is done primarily to fix notation and make this paper more self-contained. For the reader already familiar with \cite{Exp2}, the only points to bear in mind are our more concise notation for parametrized cones (\cref{def-app:JoinAndSlice}) and the notion of strongly admitting and preserving $\CMcal{K}$-indexed $\cT$-colimits with respect to certain collections $\CMcal{K}$ of parametrized diagrams (\cref{def-app:StronglyAdmitsAndPreservesColimit}).

\begin{notation}[{\cite[Notn.~3.5]{Exp2}}] Let $p: \cK \to \cC$ be a $\cT$-functor. We then let
\[ \sigma_p: \ast_{\cT} \to \underline{\Fun}_{\cT}(\cK, \cC) \]
denote the cocartesian section given by adjointing the map $\Ar(\cT^\op)^\sharp \times_{\cT^\op} \leftnat{\cK} \xto{\pr} \leftnat{\cK} \xto{p} \leftnat{\cC}$. This is an explicit choice of $\cT$-functor corresponding to $p$ under the equivalence
\[ \Fun_{\cT}(\ast_{\cT}, \underline{\Fun}_{\cT}(\cK, \cC)) \simeq \Fun_{\cT}(\cK,\cC) \]
of \cite[Prop.~3.4]{Exp2}.
\end{notation}



\begin{definition}[Cones and slices] \label{def-app:JoinAndSlice}
Let $\cC$ be a $\cT$-$\infty$-category. We let
$$\cC^{\underline{\rhd}} \coloneq \cC \star_{\cT^\op} \cT^\op \:, \qquad \cC^{\underline{\lhd}} \coloneq \cT^\op \star_{\cT^\op} \cC $$
denote the \emph{$\cT$-right} and \emph{$\cT$-left cones} on $\cC$. We also write $v: \ast_{\cT} \subset \cC^{\underline{\rhd}}$ or $\cC^{\underline{\lhd}}$ for the inclusion of the cone $\cT$-point.

For a $\cT$-functor $p: \cK \to \cC$, we then let
$$ \cC^{(p,\cT)/} \coloneq \ast_{\cT} \times_{\sigma_p, \underline{\Fun}_{\cT}(\cK, \cC)} \underline{\Fun}_{\cT}(\cK^{\underline{\rhd}}, \cC) \:, \qquad \cC^{/(p,\cT)} \coloneq \ast_{\cT} \times_{\sigma_p,\underline{\Fun}_{\cT}(\cK, \cC)} \underline{\Fun}_{\cT}(\cK^{\underline{\lhd}}, \cC) $$
denote the \emph{slice} $\cT$-$\infty$-categories.
\end{definition}


We will also need in a few places the following smaller model for slicing over and under a $\cT$-object.

\begin{definition} \label{def-app:SmallerSlice}
Let $\cC$ be a $\cT$-$\infty$-category. For any object $x \in \cC_t$, we write
\[ \cC^{/\underline{x}} \coloneq \Ar_{\cT}(\cC) \times_{\cC} \underline{x} \:, \qquad \cC^{\underline{x}/} \coloneq \underline{x} \times_{\cC} \Ar_{\cT}(\cC) \]
and regard these as $\cT^{/t}$-$\infty$-categories via composition of the projection to $\underline{x}$ with the trivial fibration $\underline{x} \xto{\simeq} (\cT^{/t})^\op$ (\cite[Lem.~12.10]{Exp2}).
\end{definition}

\begin{observation}[{\cite[Prop.~4.30]{Exp2}}] \label{obs-app:SmallerSliceEqv}
In \cref{def-app:SmallerSlice}, if we write $i_x: \underline{x} \to \cC_{\underline{t}}$ for the $\cT^{/t}$-functor defined by $x$, then we have canonical equivalences
\[ \cC^{/\underline{x}} \simeq (\cC_{\underline{t}})^{/(i_x, \cT^{/t})} \:, \qquad \cC^{\underline{x}/} \simeq (\cC_{\underline{t}})^{(i_x, \cT^{/t})/} \]
of $\cT^{/t}$-$\infty$-categories over $\cC_{\underline{t}}$. Similarly, for a cocartesian section $\sigma: \ast_{\cT} \to \cC$, we have canonical equivalences 
\[ \Ar_{\cT}(\cC) \times_{\cC, \sigma} \ast_{\cT} \simeq \cC^{/(\sigma, \cT)} \:, \qquad \ast_{\cT} \times_{\sigma,\cC} \Ar_{\cT}(\cC) \simeq \cC^{(\sigma, \cT)/} \]
of $\cT$-$\infty$-categories over $\cC$.
\end{observation}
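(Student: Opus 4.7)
The plan is to reduce all four claimed equivalences to a single computation by invoking [Exp2, Prop.~4.30], which identifies the slice $\cT$-$\infty$-category $\cD^{/(p, \cT)}$ with the pullback $\Ar_{\cT}(\cD) \times_{\underline{\Fun}_{\cT}(\cK, \cD)} \{\sigma_p\}$ for any $\cT$-functor $p: \cK \to \cD$ (and dually for $\cD^{(p,\cT)/}$). Thus in each case, the right-hand side of the claimed equivalence can be rewritten as the pullback of $\Ar_{\cT^{/t}}(\cC_{\underline{t}})$ (or $\Ar_{\cT}(\cC)$) against a cocartesian section, and the task is to identify this with the explicit pullback defining the left-hand side.

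For the first equivalence $\cC^{/\underline{x}} \simeq (\cC_{\underline{t}})^{/(i_x, \cT^{/t})}$, I would apply this to $p = i_x$ over $\cT^{/t}$ and then use that $\underline{x} \to (\cT^{/t})^{\op}$ is a trivial fibration [Exp2, Lem.~12.10]. The latter implies that the restriction map $\underline{\Fun}_{\cT^{/t}}(\underline{x}, \cC_{\underline{t}}) \xrightarrow{\sim} \cC_{\underline{t}}$ is an equivalence carrying $\sigma_{i_x}$ to $x$, so the pullback against $\sigma_{i_x}$ collapses to one against the object $x$ — namely, to $\Ar_{\cT^{/t}}(\cC_{\underline{t}}) \times_{\cC_{\underline{t}}} \underline{x}$. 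Finally, by a routine manipulation from $\cC_{\underline{t}} = \cC \times_{\cT^{\op}} (\cT^{/t})^{\op}$ and the definition of $\Ar_{\cT}$, one has a natural isomorphism $\Ar_{\cT^{/t}}(\cC_{\underline{t}}) \cong \Ar_{\cT}(\cC) \times_{\cT^{\op}} (\cT^{/t})^{\op}$; substituting into the pullback and using that $\underline{x}$ already lives over $(\cT^{/t})^{\op}$ collapses the extra factor and yields $\Ar_{\cT}(\cC) \times_{\cC} \underline{x} = \cC^{/\underline{x}}$.

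The dual statement $\cC^{\underline{x}/} \simeq (\cC_{\underline{t}})^{(i_x, \cT^{/t})/}$ follows by the mirror argument, with $\ev_0$ and $\ev_1$ swapped and the $\cT^{/t}$-left cone replaced by the $\cT^{/t}$-right cone. For a cocartesian section $\sigma: \ast_{\cT} \to \cC$, the same argument applies \emph{mutatis mutandis} with $\underline{x}$, $(\cT^{/t})^{\op}$, $i_x$ replaced by $\ast_{\cT}$, $\cT^{\op}$, $\sigma$; here the simplification is even more direct, as $\ast_{\cT} = \cT^{\op}$ is already the terminal $\cT$-$\infty$-category and no trivial-fibration step is needed.

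The main obstacle I anticipate is not conceptual but bookkeeping: the composite identifications involve three pullback factors ($\Ar$, $(\cT^{/t})^{\op}$, and $\underline{x}$) interacting through several forgetful maps, and one must verify that the resulting equivalences respect the $\cT^{/t}$- (or $\cT$-) structure maps to $\cC_{\underline{t}}$ (resp.\ $\cC$), so that they live in the appropriate slice $\infty$-categories over $\cC_{\underline{t}}$ (resp.\ $\cC$) as asserted. Once [Exp2, Prop.~4.30] is in hand, however, everything reduces to a sequence of pullback diagram chases.
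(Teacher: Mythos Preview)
Your proposal is correct and matches the paper's approach: the paper simply records this as an observation citing \cite[Prop.~4.30]{Exp2} without giving any further argument, and your sketch spells out exactly the reduction that proposition encodes. The only minor caveat is that your opening sentence slightly overstates what \cite[Prop.~4.30]{Exp2} says for a general $\cK$ (the identification with $\Ar_{\cT}(\cD)$ is specific to the case where $\cK$ is a parametrized point), but since you immediately specialize to $\cK = \underline{x}$ or $\cK = \ast_{\cT}$ this causes no trouble.
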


We now proceed to our discussion on parametrized colimits; the case of parametrized limits is dual in view of \cite[Cor.~5.25]{Exp2} and hence will not be explicitly considered.

\begin{definition}[{\cite[Def.~5.1-2]{Exp2}}] \label{def-app:ParamColimit}
Let $\cC$ be a $\cT$-$\infty$-category. A $\cT$-functor $\sigma: \ast_{\cT} \to \cC$ is a \emph{$\cT$-initial object} if and only if $\sigma(t) \in \cC_t$ is an initial object for all $t \in \cT$. A $\cT$-functor $\overline{p}: \cK^{\underline{\rhd}} \to \cC$ is then a \emph{$\cT$-colimit diagram} if and only if the $\cT$-functor
\[ (\id,\sigma_{\overline{p}}) : \ast_{\cT} \to \cC^{(p,\cT)/} = \ast_{\cT} \times_{\sigma_p, \underline{\Fun}_{\cT}(\cK, \cC)} \underline{\Fun}_{\cT}(\cK^{\underline{\rhd}}, \cC) \]
is a $\cT$-initial object. Lastly, we say that a $\cT$-functor $p: \cK \to \cC$ \emph{admits a $\cT$-colimit} if $p$ admits an extension to a $\cT$-colimit diagram $\overline{p}$, and we then write $\colim^\cT_{\cK} p = \overline{p}|_{v}$. If $\cT$ moreover has a terminal object $t$, we will also identify the cocartesian section $\colim^\cT_{\cK} p$ with its value at $t$.\footnote{If $t \in \cT^{\op}$ is an initial object, then cocartesian sections are uniquely specified by their value at $t$.}
\end{definition}

\begin{notation}
Let $p: \cK \to \cC$ be a $\cT$-functor and let $\delta: \cC \to \underline{\Fun}_{\cT}(\cK, \cC)$ be the constant $\cT$-functor. We then write
\[ \begin{tikzcd}
\underline{\colim}^{\cT}_{\cK}: \underline{\Fun}_{\cT}(\cK, \cC) \ar[dotted]{r} & \cC
\end{tikzcd} \]
for the partially-defined\footnote{For a $\cT$-functor $R: \cC \to \cD$, the domain of its partial $\cT$-left adjoint $L$ is the largest full $\cT$-subcategory $\cD_0 \subset \cD$ for which $L_t$ is a partial left adjoint to $R_t$ for all $t \in \cT$ and $f^\ast L_t \xto{\simeq} L_s f^\ast$ for all $(f: s \to t) \in \cT$.} $\cT$-left adjoint of $\delta$.
\end{notation}

The next observation is the trivial case of \cite[Cor.~9.16]{Exp2} where we let $\cD = \cT^\op$ there.

\begin{observation}
For a $\cT$-functor $p: \cK \to \cC$, $\underline{\colim}^{\cT}_{\cK}$ is defined on an object $p: \cK_{\underline{t}} \to \cC_{\underline{t}}$ in the fiber $\underline{\Fun}_{\cT}(\cK, \cC)_t \simeq \Fun_{\cT^{/t}}(\cK_{\underline{t}}, \cC_{\underline{t}})$ if and only if $p$ admits a $\cT^{/t}$-colimit, in which case $\underline{\colim}^{\cT}_{\cK} p \simeq (\colim^{\cT^{/t}}_{\cK_{\underline{t}}} p)(t)$. In particular, if for each $t \in \cT$ the parametrized fiber $\cC_{\underline{t}}$ admits all $\cK_{\underline{t}}$-indexed $\cT^{/t}$-colimits, then $\underline{\colim}^{\cT}_{\cK}$ is defined on its entire domain.

Passing to cocartesian sections, we then see that
\[ \begin{tikzcd}
\colim^{\cT}_{\cK}: \Fun_{\cT}(\cK, \cC) \ar[dotted]{r} & \Fun_{\cT}(\ast_{\cT},\cC)
\end{tikzcd} \]
is a partial left adjoint to the functor given by precomposing with the structure map of $\cK$, and $\colim^{\cT}_{\cK}$ is defined on its entire domain if $\underline{\colim}^{\cT}_{\cK}$ is (but possibly not conversely).
\end{observation}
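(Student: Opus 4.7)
The plan is to deduce this observation directly from \cite[Cor.~9.16]{Exp2} by specializing the auxiliary parameter $\cD$ there to $\cT^{\op}$, i.e., to the terminal $\cT$-$\infty$-category. That corollary, applied to the $\cT$-functor $\delta: \cC \to \underline{\Fun}_\cT(\cK, \cC)$ whose partial $\cT$-left adjoint is $\underline{\colim}^\cT_\cK$ by definition, furnishes both the fiberwise criterion for when $\underline{\colim}^\cT_\cK$ is defined at a point and the pointwise formula for its value. The only work remaining is to translate the output of that corollary into the assertion of the observation.

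To perform that translation, I would first unwind $\delta$ fiberwise: under the equivalence $\underline{\Fun}_\cT(\cK,\cC)_t \simeq \Fun_{\cT^{/t}}(\cK_{\underline{t}}, \cC_{\underline{t}})$, the restricted functor $\delta_t$ sends $c \in \cC_t$ to the constant $\cT^{/t}$-functor valued in the cocartesian section of $\cC_{\underline{t}}$ determined by $c$. After reinterpreting $c$ as the corresponding object of $(\cC_{\underline{t}})_t$, this is the value at the terminal object of $(\cT^{/t})^{\op}$ of the $\cT^{/t}$-diagonal $\cC_{\underline{t}} \to \underline{\Fun}_{\cT^{/t}}(\cK_{\underline{t}}, \cC_{\underline{t}})$. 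Thus by \cref{def-app:ParamColimit} the partial left adjoint of $\delta_t$ at $p$ exists exactly when $p$ admits a $\cT^{/t}$-colimit, and is then equal to $(\colim^{\cT^{/t}}_{\cK_{\underline{t}}} p)(t)$. Combining this fiberwise description with the compatibility along restriction maps in $\cT$ that is already encoded by \cite[Cor.~9.16]{Exp2} yields the first claim, and the ``in particular'' clause is then immediate.

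The cocartesian-section statement follows by applying $\Fun_\cT(\ast_\cT, -)$: via \cite[Prop.~3.4]{Exp2} this converts $\delta$ into precomposition along the structure map $\cK \to \ast_\cT$ and identifies the source and target of $\colim^\cT_\cK$ with $\Fun_\cT(\cK, \cC)$ and $\Fun_\cT(\ast_\cT, \cC)$ respectively. Since $\underline{\colim}^\cT_\cK$ is a $\cT$-functor on its domain, given any $\cT$-functor $p: \cK \to \cC$ whose fiberwise restrictions $p_{\underline{t}}$ all admit $\cT^{/t}$-colimits, these fiberwise colimits assemble into a cocartesian section of $\cC$ that computes $\colim^\cT_\cK p$; hence $\colim^\cT_\cK$ is defined wherever $\underline{\colim}^\cT_\cK$ is. The converse can fail because $\underline{\Fun}_\cT(\cK, \cC)_t$ contains $\cT^{/t}$-functors $\cK_{\underline{t}} \to \cC_{\underline{t}}$ that need not arise from restrictions of global $\cT$-functors, and $\underline{\colim}^\cT_\cK$ may fail to be defined at such while $\colim^\cT_\cK$ never sees them. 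There is no substantive obstacle; the only care needed is ensuring the conventions on parametrized fibers and the internal hom match those used in \cite[Cor.~9.16]{Exp2} when performing the translation.
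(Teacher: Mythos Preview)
Your proposal is correct and matches the paper's approach exactly: the paper states that this observation is the trivial case of \cite[Cor.~9.16]{Exp2} obtained by setting $\cD = \cT^{\op}$ there, which is precisely what you do. Your additional unwinding of $\delta$ fiberwise and the passage to cocartesian sections is a helpful elaboration but not a departure from the paper's one-line justification.
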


This observation already highlights the need to systematically distinguish between $\cT$-colimits in $\cC$ and $\cT^{/t}$-colimits in the parametrized fibers $\cC_{\underline{t}}$. We do this as follows:

\begin{definition} \label{def-app:StronglyAdmitsAndPreservesColimit}
Let $\cC$ be a $\cT$-$\infty$-category.
\begin{enumerate}
\item $\cC$ \emph{strongly admits} all $\cT$-colimits, i.e., is \emph{$\cT$-cocomplete} \cite[Def.~5.13]{Exp2}, if for each $t \in \cT$, $\cC_{\underline{t}}$ admits all $\cT^{/t}$-colimits.
\item If $\cC$ and $\cD$ are $\cT$-cocomplete $\cT$-$\infty$-categories, then a $\cT$-functor $F:\cC \to \cD$ \emph{strongly preserves} all $\cT$-colimits if for each $t \in \cT$, $F_{\underline{t}}: \cC_{\underline{t}} \to \cD_{\underline{t}}$ preserves all $\cT^{/t}$-colimits \cite[Def.~11.2]{Exp2}.
\end{enumerate} 
More generally, if we have a collection $\CMcal{K} = \{ \CMcal{K}_{t} : t \in \cT \}$ where $\CMcal{K}_t$ is a class of small $\cT^{/t}$-$\infty$-categories such that for each morphism $f: s \to t$ in $\cT$, $f^\ast(\CMcal{K}_t) \subset \CMcal{K}_s$, then we have analogous notions of strongly admitting and preserving $\CMcal{K}$-indexed $\cT$-colimits. We will typically leave the collection $\{ \CMcal{K}_t \}$ implicit when referring to $\CMcal{K}$. Abusing notation, we will also let $\CMcal{K}$ refer to the class of $\cT$-$\infty$-categories $\cK$ such that $\cK_{\underline{t}} \in \CMcal{K}_t$ for all $t \in \cT$.
\end{definition}

\begin{remark}
In this paper, all $\cT$-colimits will be indexed by small $\cT$-$\infty$-categories, and we will typically suppress the adjective `small' in this context (as was already done in \cref{def-app:StronglyAdmitsAndPreservesColimit}).
\end{remark}

We next review the theory of $\cT$-left Kan extensions along fully faithful $\cT$-functors. We first need an auxiliary construction. 

\begin{rem} \label{rem:ConeMappingOut}
Let $\cC$ be a $\cT$-$\infty$-category. By definition, the $\cT$-right cone $\cC^{\underline{\rhd}}$ has a universal mapping property with respect to maps going \emph{in}. In the following we will also need a universal mapping property of $\cC^{\underline{\rhd}}$ for maps going \emph{out}. Namely, by \cite[Lem.~4.5]{Exp2} we have a homotopy pushout square of $\cT$-$\infty$-categories
\[ \begin{tikzcd}
\cC \times \{ 1\} \ar[hookrightarrow]{r} \ar{d}{p} & \cC \times \Delta^1 \ar{d}{f} \\ 
\ast_{\cT} \ar[hookrightarrow]{r}{v} & \cC^{\underline{\rhd}}
\end{tikzcd} \]
where $f$ is defined as the adjoint to $(\id_{\cC}, p)$.

Now suppose that $\sigma: \ast_{\cT} \to \cC$ is a $\cT$-final object. Then we may construct a homotopy $h: \cC \times \Delta^1 \to \cC$ from $\id_{\cC}$ to $\sigma$, which yields a $\cT$-functor
\[ h': \cC^{\underline{\rhd}} \to \cC \]
such that $h'|_{\cC} = \id_{\cC}$ and $h'|_{v} = \sigma$. Moreover, if one considers the bifibration (cf. \cite[Lem.~4.8]{Exp2})
\[ (f , g): \Fun_{\cT}(\cC^{\underline{\rhd}}, \cC) \to \Fun_{\cT}(\cC, \cC) \times \Fun_{\cT}(\ast_{\cT}, \cC) \]
then $h'$ is obtained by taking a $f$-cartesian lift with target $[\cC^{\underline{\rhd}} \xto{\overline{p}} \ast_{\cT} \xto{\sigma} \cC]$ in $\Fun_{\cT}(\cC^{\underline{\rhd}}, \cC)$ over the edge $\id_{\cC} \to \sigma \circ p$ in $\Fun_{\cT}(\cC, \cC)$ specified by $h$.
\end{rem}

\begin{construction} \label{con-app:LKE}
Let $\cD$ be a $\cT$-$\infty$-category and let $x \in \cD_t$. We then construct a $\cT^{/t}$-functor
\[ \theta_x: (\cD^{/\underline{x}})^{\underline{\rhd}} \to \cD_{\underline{t}} \]
as follows (where the parametrized right cone is formed with respect to the base $\cT^{/t}$). First, we adjoint the projection $\cD^{/\underline{x}} \to \Ar_{\cT}(\cD)$ to obtain a $\cT^{/t}$-functor $h_x: \cD^{/\underline{x}} \times \Delta^1 \to \cD_{\underline{t}}$. We then let $\theta_x$ be the composite of $h_x$ and the $\cT^{/t}$-functor
$$(h' , \pi) : (\cD^{/\underline{x}})^{\underline{\rhd}} \to \cD^{/\underline{x}} \times \Delta^1,$$
where $\pi$ is the structure map to $\Delta^1$ of the parametrized join and $h'$ is as in \cref{rem:ConeMappingOut} (note that any choice of cocartesian section $j_x: \ast_{\cT^{/t}} \to \cD^{/\underline{x}}$ determined by $\id_x$ is a $\cT^{/t}$-final object).

Now suppose given a $\cT$-functor $G: \cD \to \cE$ and a full $\cT$-subcategory $\cC \subset \cD$. We let $F \coloneq G|_{\cC}$, and for any $x \in \cD_t$ we let $\cC^{/\underline{x}} \coloneq \cC \times_{\cD} \cD^{/\underline{x}}$. We then write
\[ G^x: (\cC^{/\underline{x}})^{\underline{\rhd}} \to (\cD^{/\underline{x}})^{\underline{\rhd}} \xto{\theta_x} \cD_{\underline{t}} \xto{G_{\underline{t}}} \cE_{\underline{t}} \]
for the composite $\cT^{/t}$-functor. Note that $G^x|_{\cC^{/\underline{x}}}$ factors as $\cC^{/\underline{x}} \to \cC_{\underline{t}} \xto{F_{\underline{t}}} \cE_{\underline{t}}$; we write $F^x$ for this $\cT^{/t}$-functor.
\end{construction}

The following is a simplification of \cite[Def.~10.1]{Exp2} in which we have chosen the datum of the natural transformation $\eta$ present there to be the identity, which allows us to dispense with the auxiliary construction of $G'': (\cC \times_{\cD} \Ar_{\cT}(\cD)) \star_{\cD} \cD \to \cE$ in that definition.

\begin{definition}
Let $\cD$ be a $\cT$-$\infty$-category and $\cC \subset \cD$ a full $\cT$-subcategory. We say that a $\cT$-functor $G: \cD \to \cE$ is a \emph{$\cT$-left Kan extension} of its restriction $F = G|_{\cC}$ if for all $x \in \cD_t$, the $\cT^{/t}$-functor $G^x$ of \cref{con-app:LKE} is a $\cT^{/t}$-colimit diagram.

A $\cT$-functor $F: \cC \to \cE$ then \emph{admits a $\cT$-left Kan extension to $\cD$} if there exists such a $G$, and we say that $\{ F^x \}_{x \in \cD}$ constitutes the set of \emph{relevant diagrams} for the extension problem.
\end{definition}

The author proved the following existence and uniqueness theorem for $\cT$-left Kan extensions as \cite[Thm.~10.3]{Exp2}, \cite[Thm.~10.5]{Exp2}, and \cite[Prop.~10.6]{Exp2}.

\begin{theorem} \label{thm-app:LKEexistence}
Let $\cD$ be a $\cT$-$\infty$-category and $\cC \subset \cD$ a full $\cT$-subcategory (with inclusion $\cT$-functor $\phi$).
\begin{enumerate}
\item A $\cT$-functor $F : \cC \to \cE$ admits a $\cT$-left Kan extension $G$ over $\cD$ if and only if all the relevant diagrams for $F$ admit parametrized colimits. Moreover, $G$ is then uniquely specified up to contractible choice.
\item The partial $\cT$-left adjoint $\phi_!$ to the restriction $\cT$-functor $\phi^\ast: \underline{\Fun}_{\cT}(\cD,\cE) \to \underline{\Fun}_{\cT}(\cC, \cE)$ is defined on all $F: \cC_{\underline{t}} \to \cE_{\underline{t}}$ that admit a $\cT^{/t}$-left Kan extension $G$, in which case $\phi_! F \simeq G$. In particular, $\phi_!$ is defined on its entire domain if for every $x \in \cD_{t}$, the parametrized fiber $\cE_{\underline{t}}$ admits all $\cC^{/\underline{x}}$-indexed $\cT^{/t}$-colimits.
\end{enumerate}
\end{theorem}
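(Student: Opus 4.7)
The plan is to establish (1) by an inductive pointwise argument, and then derive (2) as a formal consequence of the partial adjoint formalism.

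For uniqueness in (1), I would observe that given two $\cT$-left Kan extensions $G, G': \cD \to \cE$ of $F$, for each $x \in \cD_t$ the $\cT^{/t}$-functors $G^x$ and $(G')^x$ of \cref{con-app:LKE} are both $\cT^{/t}$-colimit diagrams extending the common relevant diagram $F^x$. Since $\cT^{/t}$-colimit diagrams are characterized by $\cT^{/t}$-initial objects in the slice $\cE_{\underline{t}}^{(F^x, \cT^{/t})/}$ (\cref{def-app:ParamColimit}) and such initial objects are unique up to contractible choice, we obtain a canonical natural equivalence $G \simeq G'$ rel $\cC$. For existence, I would filter $\cD$ relative to $\cC$ by full $\cT$-subcategories $\cC = \cD_0 \subset \cD_1 \subset \cdots$ obtained by adjoining orbits of non-degenerate simplices one at a time, and extend $G_i: \cD_i \to \cE$ to $G_{i+1}$ by declaring its value on each newly adjoined $x \in (\cD_{i+1})_t$ to be the $\cT^{/t}$-colimit of $F^x$, which exists by hypothesis. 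The homotopy pushout decomposition of the parametrized right cone in \cref{rem:ConeMappingOut} assembles the cone datum, and $\theta_x$ of \cref{con-app:LKE} glues it into $G_{i+1}$.

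The hard part will be carrying out this inductive gluing as a fully coherent $\cT$-functor (i.e., preserving cocartesian edges over $\cT^{\op}$) rather than as a mere object-by-object assignment. My preferred strategy is to package the construction at the level of marked simplicial sets so that the pointwise colimit formula produces a map of marked simplicial sets over $\cT^{\op}$ directly; preservation of cocartesian edges would then reduce to a universal-property check via the equivalence $\cC^{/\underline{x}} \simeq (\cC_{\underline{t}})^{/(i_x, \cT^{/t})}$ of \cref{obs-app:SmallerSliceEqv} and the trivial fibration $\underline{x} \to (\cT^{/t})^{\op}$. This also ensures compatibility of the $\cT^{/t}$-colimit choices as $t$ varies.

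For part (2), the $\cT$-left Kan extension $G$ of $F$ will satisfy, for every $\cT$-functor $H: \cD \to \cE$, a natural equivalence
\[ \underline{\Map}_{\underline{\Fun}_{\cT}(\cD,\cE)}(G, H) \xrightarrow{\simeq} \underline{\Map}_{\underline{\Fun}_{\cT}(\cC,\cE)}(F, \phi^\ast H), \]
obtained by assembling the universal property of each $\cT^{/t}$-colimit $G(x) \simeq \colim^{\cT^{/t}}_{\cC^{/\underline{x}}} F^x$ over $x$ and $t$. This identifies $G$ as $\phi_! F$ in the partial $\cT$-adjunction. The last assertion of (2) is then immediate from (1): under the hypothesis that every parametrized fiber $\cE_{\underline{t}}$ admits all $\cC^{/\underline{x}}$-indexed $\cT^{/t}$-colimits, every relevant diagram $F^x$ admits a colimit, so $\phi_!$ is defined on every $F: \cC_{\underline{t}} \to \cE_{\underline{t}}$.
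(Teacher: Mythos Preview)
Your proposal takes a different route from the paper, and the difficulty you flag — carrying out the inductive gluing as a fully coherent $\cT$-functor — is a genuine gap that your suggested fix does not close.

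The paper does not prove this theorem here; it cites \cite[Thm.~10.3, Thm.~10.5, Prop.~10.6]{Exp2}. But the strategy is visible in the proof of the relative version \cref{thm-app:RelativeLKEexistence}, which explicitly follows the same template. There is no induction over a filtration of $\cD$. Instead one factors $\phi$ through the free $\cT$-cocartesian fibration
\[ \cC \xto{\iota} \cC \times_{\cD} \Ar_{\cT}(\cD) \xto{\ev_1} \cD, \]
reducing to an extension problem along $j: \cM \to \cM \star_{\cD} \cD$ for a $\cT$-cocartesian fibration $\cM \to \cD$. The coherence is then handled \emph{globally}: using the $\cT$-pairing construction $\widetilde{\Fun}_{\cD,\cT}$ one forms a $\cT$-$\infty$-category $\cW \to \cD$ whose parametrized fiber over $x \in \cD_t$ is the slice $(\cE_{\underline{x}})^{(F|_x,\cT^{/t})/}$, and one shows that the full $\cT$-subcategory $\cW' \subset \cW$ on the fiberwise initial objects (i.e., the $\cT^{/t}$-colimit diagrams) is a \emph{trivial fibration} over $\cD$. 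A section of $\cW' \to \cD$ then assembles all the pointwise colimits into a single $\cT$-functor at once; part (2) follows because this section is a relative left adjoint and $\underline{\Fun}_{/\cD,\cT}(\cD,-)$ transports it to the desired partial $\cT$-adjoint.

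Your sketch has two concrete problems. First, the filtration ``by full $\cT$-subcategories obtained by adjoining orbits of non-degenerate simplices'' is not well-posed: full $\cT$-subcategories are determined by objects, so at best you are adjoining objects, and declaring $G_{i+1}(x)$ on a new object $x$ does not specify $G_{i+1}$ on the morphisms and higher simplices connecting $x$ to $\cD_i$ — that is the entire content of the extension problem. In HTT~4.3.2.13 Lurie does \emph{not} work with full subcategories; he runs a transfinite simplex-by-simplex induction using a delicate lifting lemma, and there is no parametrized analogue of that lemma available to you here. Second, ``packaging the construction at the level of marked simplicial sets'' and appealing to \cref{obs-app:SmallerSliceEqv} only identifies individual slice $\cT^{/t}$-$\infty$-categories; it does not explain why a fiberwise choice of colimit yields a map of marked simplicial sets over $\cT^{\op}$. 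This is precisely the coherence problem the pairing construction $\widetilde{\Fun}_{\cD,\cT}$ was built to solve, by encoding all the slices and their compatibilities as a single fibration over $\cD$.

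Your uniqueness argument has a milder version of the same issue: knowing that $G^x$ and $(G')^x$ are both $\cT^{/t}$-initial in the same slice for each $x$ gives pointwise equivalences, but not yet a natural equivalence of $\cT$-functors. In the paper's framework uniqueness falls out of (2), since $G$ is the value of a partial left adjoint.
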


In fact, constructing $\cT$-left Kan extensions along fully faithful $\cT$-functors suffices to handle the general case:

\begin{remark} \label{rem-app:RelatingLKEnotions}
Let $\phi: \cC \to \cD$ be a $\cT$-functor and let $\pi: \cM \to \Delta^1 \times \cT^\op$ be a cocartesian fibration classified by $\phi$ (so $\cM$ is a $\cT$-$\infty$-category and $\cC \simeq \cM_0 \subset \cM$ is the inclusion of a full $\cT$-subcategory). Suppose that we have a $\cT$-functor $\overline{G}: \cM \to \cE$ that is a $\cT$-left Kan extension of its restriction $F = \overline{G}|_{\cC}$. Let $G = \overline{G}|_{\cD}$. We then may construct a natural transformation $\eta: F \Rightarrow \phi^\ast G$ such that $\eta$ exhibits $G$ as the $\cT$-left Kan extension of $F$ along $\phi$ in the sense of \cite[Def.~10.1]{Exp2}. Indeed, consider the trivial fibration\footnote{Cf. \cite[Lem.~2.23]{Exp2} applied to $\pi$ and restrict arrows to be $\cT$-fiberwise.}
$$p = (\ev_0, \pi): \Ar^{\cocart}_{\cT}(\cM) \to \cM \times_{\Delta^1} \Ar(\Delta^1)$$
and let $\sigma$ be a section that restricts to the identity on $\cM$, given by a choice of dotted lift in the diagram
\[ \begin{tikzcd}
\cM \ar[hookrightarrow]{r}{\iota} \ar[hookrightarrow]{d}{\iota} & \Ar^{\cocart}_{\cT}(\cM) \ar[twoheadrightarrow]{d}{p}[swap]{\simeq} \\
\cM \times_{\Delta^1} \Ar(\Delta^1) \ar{r}{=} \ar[dotted]{ru}{\sigma} & \cM \times_{\Delta^1} \Ar(\Delta^1)
\end{tikzcd} \]
where $\iota$ generically denotes the identity section. Then let
$$\eta: \cC \times \Delta^1 = \cC \times \{[0=0] \ra [0 \ra 1] \} \subset \cM \times_{\Delta^1} \Ar(\Delta^1) \xto{\sigma} \Ar^{\cocart}_{\cT}(\cM) \xto{\ev_1} \cM \xto{\overline{G}} \cE $$
and note that $\eta|_{\cC \times \{ 0\}} = F$ and $\eta|_{\cC \times \{ 1\}} = G \circ \phi$. The assertion that $G \simeq \phi_! F$ then follows by examining the pointwise formula defining a $\cT$-left Kan extension.
\end{remark}

\section{Parametrized factorization systems}

\label{sec:fact}


Our goal in this section is to prove a theorem about parametrized factorization systems (\cref{thm:GeneralFreeFibration}) that will allow us to prove the universal property of the free $\cT$-cocartesian fibration (\cref{exm:freeCocartesianFibration}) and subsequently that of the $\cO$-monoidal envelope for a $\cT$-$\infty$-operad $\cO$ in \cite{paramalg}.

\begin{dfn} \label{def:parametrizedFactSystem}
Let $\cC$ be a $\cT$-$\infty$-category. Then a \emph{$\cT$-factorization system} on $\cC$ is the data of a factorization system ($\sL_t$, $\sR_t$) on the fiber $\cC_t$ for every $t \in \cT$, subject to the condition that for every morphism $\alpha: s \to t$ in $\cT$, the restriction functor $\alpha^*: \cC_t \to \cC_s$ sends $(\sL_t, \sR_t)$ into ($\sL_s, \sR_s$).
\end{dfn}

\begin{rem} In \cref{def:parametrizedFactSystem}, we could instead formulate the condition of compatibility of the fiberwise factorization systems with restriction in the following way. Let $p$ denote the structure map of $\cC$ and consider the collection of commutative squares in $\cC$
\[ \begin{tikzcd}[row sep=2em, column sep=2em]
x \ar{r}{\alpha_x} \ar{d}[swap]{f} & x' \ar{d}{f'} \\
y \ar{r}{\alpha_y} & y'
\end{tikzcd} \]
such that $f$ resp. $f'$ lies in the fiber $\cC_t$ resp. $\cC_{t'}$, $p(\alpha_x) = p(\alpha_y)$, and $\alpha_x, \alpha_y$ are $p$-cocartesian edges. Then we must have that if $f$ is in $\sL_t$ resp. $\sR_t$, then $f'$ is in $\sL_{t'}$ resp. $\sR_{t'}$.
\end{rem}

\begin{definition} \label{def:TotalFactorizationSystem}
Let $\cC$ be a $\cT$-$\infty$-category with structure map $p$. Given a $\cT$-factorization system $(\sL_t,\sR_t)_{t \in \cT}$ on $\cC$, let $\sL$ be the collection of edges $e: x \rightarrow y$ in $\cC$ such that for any factorization $x \xrightarrow{e'} x' \xrightarrow{f} y$ of $e$ by a $p$-cocartesian edge $e'$ and a fiberwise edge $f$, $f$ is in $\sL_{p(y)}$. Let $\sR$ be the closure of the union of the $\sR_t$ under equivalences in $\cC$.
\end{definition}

We have the following variant of \cite[Prop.~2.1.2.5]{HA}.

\begin{lem} \label{lem:TotalFactSystem} $(\sL,\sR)$ is a factorization system on $\cC$.
\end{lem}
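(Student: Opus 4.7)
The plan is to adapt the strategy of \cite[Prop.~2.1.2.5]{HA} to the $\cT$-parametrized setting. I would verify the three defining properties of a factorization system in turn.

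\textbf{Closure under equivalences.} This is immediate for $\sR$ by construction. For $\sL$, I would note that if $e \simeq \tilde{e}$ and $e$ admits a factorization $x \xrightarrow{e'} x' \xrightarrow{f} y$ by a $p$-cocartesian edge and a fiberwise edge with $f \in \sL_{p(y)}$, then any factorization of $\tilde{e}$ of the same form can be obtained from that of $e$ up to equivalence (since cocartesian lifts of a fixed edge in $\cT^{\op}$ are essentially unique and the resulting fiberwise edge is determined up to equivalence in $\cC_{p(y)}$); then use that $\sL_{p(y)}$ is closed under equivalences in $\cC_{p(y)}$.

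\textbf{Existence of factorizations.} Given $e: x \to y$, first choose a $p$-cocartesian lift $e': x \to x'$ of $p(e)$ and a factorization $e \simeq f \circ e'$ with $f: x' \to y$ in the fiber $\cC_{p(y)}$. Apply the fiberwise factorization system $(\sL_{p(y)}, \sR_{p(y)})$ to obtain $f \simeq r \circ g$ with $g \in \sL_{p(y)}$ and $r \in \sR_{p(y)} \subseteq \sR$. Then $e \simeq r \circ (g \circ e')$, and one checks directly from \cref{def:TotalFactorizationSystem} that $g \circ e'$ lies in $\sL$: any factorization of $g \circ e'$ through a cocartesian lift of $p(e')$ produces, up to equivalence, the fiberwise edge $g \in \sL_{p(y)}$.

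\textbf{Orthogonality.} Given $l: a \to b$ in $\sL$ and $r: c \to d$ in $\sR$, I would show that the space of lifts in any commutative square is contractible. Up to equivalence, we may assume $r \in \sR_t$ for some $t \in \cT$, so that $c, d \in \cC_t$ and $p(r) = \id_t$. Let $\alpha = p(b \to d): p(b) \to t$. Factor $l$ as $l \simeq l'' \circ l'$ where $l': a \to a'$ is a $p$-cocartesian lift of $p(l)$ and $l'' \in \sL_{p(b)}$. Using the cocartesian property of $l'$, the top edge $a \to c$ is determined (up to contractible choice) by an edge $a' \to c$ over $\alpha$, which in turn factors through the cocartesian lift $\alpha^* c \to c$ giving $a' \to \alpha^* c$ in $\cC_{p(b)}$; similarly $b \to d$ factors uniquely through $\alpha^* d \to d$. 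This reduces the lifting problem to the fiberwise square
\[ \begin{tikzcd}
a' \ar{r} \ar{d}[swap]{l''} & \alpha^* c \ar{d}{\alpha^* r} \\
b \ar{r} & \alpha^* d
\end{tikzcd} \]
in $\cC_{p(b)}$, where $\alpha^* r \in \sR_{p(b)}$ by the compatibility of \cref{def:parametrizedFactSystem}, and then to invoke the orthogonality of $(\sL_{p(b)}, \sR_{p(b)})$ in $\cC_{p(b)}$.

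The main obstacle will be the orthogonality step: one must argue that the reductions via cocartesian lifts and restriction functors induce an \emph{equivalence} of spaces of lifts, not merely a surjection on $\pi_0$. The cleanest way to see this is probably to identify the space of lifts $\Map^{\square}(l, r)$ in $\cC$ with a suitable mapping space in $\cC_{p(b)}$ by exhibiting a sequence of homotopy pullback squares: one using the universal property of $p$-cocartesian edges for the left column (giving the reduction to $l''$), and one using the cartesian structure on $\Ar(\cC) \to \Ar(\cT^{\op})$ over the identity edge $\id_t$ for the right column (giving the reduction to $\alpha^* r$). Once these identifications are in hand, the result follows directly from the assumed fiberwise factorization systems.
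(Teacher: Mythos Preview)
Your approach is essentially the same as the paper's: factor through a $p$-cocartesian edge to reduce to the fiber, then invoke the fiberwise factorization system, using the compatibility of \cref{def:parametrizedFactSystem} to transport $\sL$-edges along restriction functors. The paper's orthogonality argument is organized slightly differently (it pushes the entire square forward into the fiber over $p(y)$ in one step and extracts the filler as a composite with the cocartesian edge), but the content is the same; you are in fact more explicit than the paper about why the reduction gives an equivalence of lifting spaces rather than merely an essentially unique lift.

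One point to correct: the first axiom of a factorization system in \cite[Def.~5.2.8.8]{HTT} is closure under \emph{retracts}, not merely equivalences. The paper checks this directly, using that each $\sL_t$, $\sR_t$, and the class of $p$-cocartesian edges are stable under retracts. Your argument for closure under equivalences does not immediately give this; you should either upgrade the argument (retracts of cocartesian edges are cocartesian, and the induced fiberwise edge is a retract of the original one, hence still in $\sL_{p(y)}$), or note that once factorizations and orthogonality are established, closure under retracts follows formally from $\sL = {}^{\perp}\sR$ and $\sR = \sL^{\perp}$.
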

\begin{proof} We check the three conditions of a factorization system in turn.
\begin{enumerate}
\item Using the stability of the classes $\{\sL_t \}$ and $\{\sR_t\}$ along with the $p$-cocartesian edges under retracts, we see that $\sL$ and $\sR$ are closed under retracts.
\item Given an edge $e: x \rightarrow y$ in $\cC$, factor $e$ as $x \xrightarrow{e'} x' \xrightarrow{f} y$ for $e'$ $p$-cocartesian and $f$ in the fiber $\cC_{p(y)}$. Using the factorization system $(\sL_{p(y)},\sR_{p(y)})$ on $\cC_{p(y)}$, factor $f$ as $x' \xrightarrow{f'} x'' \xrightarrow{f''} y$ where $f' \in \sL_{p(y)}$ and $f'' \in \sR_{p(y)}$. Then $x \xrightarrow{f' \circ e'} x'' \xrightarrow{f''} y$ is our desired factorization of $e$.
\item Suppose we have a commutative square
\[ \begin{tikzcd}[row sep=2em, column sep=2em]
w \ar{r} \ar{d}{f} & y \ar{d}{g} \\
x \ar{r} & z
\end{tikzcd} \]
with $f \in \sL$ and $g \in \sR$; we want to produce an essentially unique filler $x \to y$. Without loss of generality, we may suppose $p(y) = p(z)$ and $g \in \sR_{p(y)}$. Choosing $p$-cocartesian edges we may factor the square as
\[ \begin{tikzcd}[row sep=2em, column sep=2em]
w \ar{r} \ar{rd}[swap]{f} & w' \ar{r} \ar{d}{f'} & w'' \ar{r} \ar{d}{f''} & y \ar{d}{g} \\
& x \ar{r}{\alpha} & x' \ar{r} \ar[dotted]{ru}[swap]{h} & z
\end{tikzcd} \]
where the edges which `add a prime' are $p$-cocartesian, and vertical edges along with the rightmost square lie in a fiber. By definition, $f' \in \sL_{p(x)}$, and since the $(\sL_t,\sR_t)_{t \in \cT}$ constitute a $\cT$-factorization system on $\cC$, $f'' \in \sL_{p(y)}$. Then we have an essentially unique filler $h$, and $h \circ \alpha: x \rightarrow y$ is our desired filler.
\end{enumerate}
\end{proof}

\begin{proposition} \label{prop:InertCartesianFibration} Suppose $\cD$ is a $\cT$-$\infty$-category and $(\sL_t, \sR_t)_{t \in \cT}$ is a $\cT$-factorization system on $\cD$. Let $(\sL,\sR)$ be the induced factorization system on $\cD$ of \cref{def:TotalFactorizationSystem}.

\begin{enumerate}
\item Let $\Ar^{L}_{\cT}(\cD)$ resp. $\Ar^L(\cD)$ denote the full subcategory of $\Ar_{\cT}(\cD)$ resp. $\Ar(\cD)$ on the morphisms in $\sL$. Then the source map
\[ \ev_0: \Ar^{L}_{\cT}(\cD) \to \cD \]
is a $\cT$-cartesian fibration of $\cT$-$\infty$-categories, and the source map
\[ \ev_0: \Ar^L(\cD) \to \cD \]
is a cartesian fibration (where here the domain is not generally a $\cT$-$\infty$-category).

\item Let $\Ar^R_{\cT}(\cD)$ denote the full subcategory of $\Ar_{\cT}(\cD)$ on the morphisms in $\sR$. Suppose that $p: \cC \to \cD$ is a $\cT$-fibration which admits $p$-cocartesian lifts over all edges in $\sL$. Then the target map
\[ \ev_1: \cC \times_{\cD}  \Ar^R_{\cT}(\cD) \to \cD \]
is a $\cT$-cocartesian fibration.
\end{enumerate}
\end{proposition}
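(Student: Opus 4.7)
The plan is to reduce each of the two assertions to well-known facts about factorization systems on arrow $\infty$-categories, and to verify compatibility with restriction functors along $\cT$. Recall the classical fact (in the form e.g. of \cite[Prop.~2.1.2.5]{HA} or dually \cite[Prop.~5.2.8.17]{HTT}): for any factorization system $(\sA, \sB)$ on an $\infty$-category $\cE$, the source evaluation $\ev_0 : \Ar^{\sA}(\cE) \to \cE$ restricted to arrows in $\sA$ is a cartesian fibration, whose cartesian edges are exactly the commutative squares whose bottom edge lies in $\sB$.

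For part (1), first apply this classical fact to the total factorization system $(\sL, \sR)$ on $\cD$ from \cref{lem:TotalFactSystem}, which immediately yields the cartesian fibration $\ev_0 : \Ar^L(\cD) \to \cD$. For the $\cT$-version, observe that a fiberwise edge of $\cD$ lies in $\sL$ if and only if it lies in $\sL_t$ for its fiber $t$ (immediate from the trivial factorization by an identity $p$-cocartesian edge in the definition of $\sL$). Hence the fiber of $\ev_0 : \Ar^L_{\cT}(\cD) \to \cD$ over $t \in \cT$ identifies with $\ev_0 : \Ar^{L_t}(\cD_t) \to \cD_t$, which is cartesian by the classical result applied to $(\sL_t, \sR_t)$. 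Compatibility of the fiberwise factorization systems with restriction (\cref{def:parametrizedFactSystem}) then ensures that for each $\alpha : s \to t$, the functor $\alpha^\ast$ sends cartesian edges (squares with bottom in $\sR_t$) to cartesian edges (squares with bottom in $\sR_s$), yielding a $\cT$-cartesian fibration.

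For part (2), it suffices to show that for each $t \in \cT$ the fiber $\cC_t \times_{\cD_t} \Ar^{R_t}(\cD_t) \to \cD_t$ is a cocartesian fibration, and then to check that restriction functors preserve cocartesian edges. Fix $(c, f : p(c) \to d)$ with $f \in \sR_t$ and an edge $\alpha : d \to d'$ in $\cD_t$. I would construct a cocartesian lift by factoring the composite as $p(c) \xrightarrow{l} e \xrightarrow{r} d'$ with $l \in \sL_t$ and $r \in \sR_t$, and then choosing a $p$-cocartesian lift $\widetilde{l} : c \to \widetilde{c}$ of $l$, which exists and lies in $\cC_t$ because $l \in \sL_t \subset \sL$ and $p$ is a $\cT$-fibration. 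This data assembles into a candidate edge $(c, f) \to (\widetilde{c}, r)$ in the pullback covering $\alpha$.

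To verify the cocartesian property, I would unwind the mapping space at the level of the pullback into two pieces: given any comparison edge $(c, f) \to (c', g)$ covering $\beta \circ \alpha$ for some $\beta : d' \to d''$ (with $g \in \sR_t$), the $p$-cocartesian property of $\widetilde{l}$ contractibly produces the underlying map $\widetilde{c} \to c'$ from its base component, while the orthogonality of $l \in \sL_t$ against $g \in \sR_t$ contractibly fills the resulting square between $r$ and $g$; together this exhibits the fiber of composition-with-our-candidate-edge as contractible, which is the desired cocartesian property. The $\cT$-cocartesian condition follows because the hypotheses imply that restriction functors preserve the factorization systems, preserve $\sR_t$-morphisms, and (since $p$ is a $\cT$-functor) preserve $p$-cocartesian lifts of $\sL$-edges. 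I expect the main obstacle to be packaging this mapping-space computation cleanly: one must exhibit the relevant space of factorizations in $\cC \times_{\cD} \Ar^R_{\cT}(\cD)$ as a fiber product of the $p$-cocartesian factorization space for $\widetilde{l}$ with the contractible space of fillers supplied by the $\sL_t \perp \sR_t$ lifting property, and confirm that this identification is natural in the comparison data.
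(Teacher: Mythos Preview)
Your proposal is correct and follows essentially the same approach as the paper: for (1) you invoke the classical result on $\Ar^{\sA}(\cE)\to\cE$ fiberwise and globally, and for (2) you construct the cocartesian lift via an $(\sL_t,\sR_t)$-factorization followed by a $p$-cocartesian lift over the $\sL_t$-part, exactly as the paper does. The only difference is that the paper's proof of (2) is a single line (``dual to (1)'') identifying the cocartesian edges as those $(\alpha,(f,g))$ with $f\in\sL$ and $\alpha$ $p$-cocartesian, whereas you spell out the mapping-space verification via the $p$-cocartesian universal property and $\sL_t\perp\sR_t$-orthogonality; this extra care is fine and the packaging you anticipate as an obstacle is routine.
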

\begin{proof} (1): For the first assertion, since we have the factorization system $(\sL_t, \sR_t)$ on the fibers $\cD_t$ for all $t \in \cT$, $\ev_0$ is fiberwise a cartesian fibration, with an edge in $\Ar^{L_t}(\cD_t) = \Ar^L_{\cT}(\cD)_t$
\[ \begin{tikzcd}[row sep=2em, column sep=2em]
x_0 \ar{r}{\alpha} \ar{d}{f} & y_0 \ar{d}{\beta} \\
x_1 \ar{r}{g} & y_1
\end{tikzcd} \]
$(\ev_0)_t$-cartesian if and only if $g$ is in $\sR_t$. Since the factorization systems in the fibers are compatible with restriction, it follows that $\ev_0$ is in addition $\cT$-cartesian.

For the second assertion, repeat the argument with the factorization system $(\sL,\sR)$ on $\cD$ itself.

\noindent (2): The argument is dual to (1), except that now the $\ev_1$-cocartesian edges are given by
\[ (\alpha, (f,g)) : (c_0, x_0 \rightarrow y_0) \to (c_1, x_1 \rightarrow y_1) \]
with $f \in \sL$ and $\alpha$ a $p$-cocartesian edge.
\end{proof}

\begin{theorem} \label{thm:GeneralFreeFibration} Suppose we are in the setup of \cref{prop:InertCartesianFibration}(2) so that we have a $\cT$-fibration $p: \cC \to \cD$ that admits $p$-cocartesian lifts over all edges in $\sL$.
\begin{enumerate} \item For every cocartesian fibration $q: \cE \to \cD$, restriction along the inclusion $i: \cC \to \cC \times_{\cD} \Ar^R_{\cT}(\cD)$ yields a trivial fibration
\[ i^\ast: \Fun_{/\cD}^{\cocart}(\cC \times_{\cD} \Ar^R_{\cT}(\cD), \cE) \to \Fun_{/\cD}^{L}(\cC, \cE) \]
where we define
\begin{align*}
\Fun_{/\cD}^{\cocart}(\cC \times_{\cD} \Ar^R_{\cT}(\cD), \cE) & \coloneq \Fun_{/\cD}(\leftnat{(\cC \times_{\cD} \Ar^R_{\cT}(\cD))},\leftnat{\cE}), \\
\Fun_{/\cD}^{L}(\cC, \cE) & \coloneq \Fun_{/\cD}((\cC,M),\leftnat{\cE}),
\end{align*}
and the marked edges $M$ in $\cC$ are the $p$-cocartesian edges of $\cC$ over $\sL$.\footnote{Note that objects of $\Fun^L_{/\cD}(\cC, \cE)$ are necessarily $\cT$-functors.} In other words,
$$i: (\cC,M) \to \leftnat{(\cC \times_{\cD} \Ar^R_{\cT}(\cD))}$$
is a cocartesian equivalence in $\sSet^+_{/\cD}$. 

\item Let $M'$ denote the $\ev_1$-cocartesian edges in $\cC \times_{\cD} \Ar^R_{\cT}(\cD)$ over $\sL$ and define
\[ \Fun_{/\cD}^{L}(\cC \times_{\cD} \Ar^R_{\cT}(\cD), \cE) \coloneq \Fun_{/\cD}((\cC \times_{\cD} \Ar^R_{\cT}(\cD),M'),\leftnat{\cE}).\]
Then we have an adjunction
\[ \adjunct{i_!}{\Fun_{/\cD}^L(\cC,\cE)}{\Fun_{/\cD}^{L}(\cC \times_{\cD} \Ar^R_{\cT}(\cD), \cE)}{i^\ast} \]
where $i_!$ is the fully faithful inclusion of the full subcategory $\Fun_{/\cD}^{\cocart}(\cC \times_{\cD} \Ar^R_{\cT}(\cD), \cE)$ under the equivalence of (1).
\end{enumerate}
\end{theorem}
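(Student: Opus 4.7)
My strategy for part (1) is to show that $i \colon (\cC, M) \to \leftnat{\cC'}$ is a cocartesian equivalence in $\sSet^+_{/\cD}$, where $\cC' \coloneq \cC \times_\cD \Ar^R_\cT(\cD)$. The fundamental observation driving the proof is that for any object $(c, \alpha \colon x \to y) \in \cC'$ (so $\alpha \in \sR$ and $p(c) = x$), the natural ``stretching'' morphism $(c, \id_x) \to (c, \alpha)$ in $\cC'$ covering $\alpha$ is $\ev_1$-cocartesian. This follows by applying the description of $\ev_1$-cocartesian edges from \cref{prop:InertCartesianFibration}(2) to the trivial factorization $\alpha = \alpha \circ \id_x$: since $\id_x$ lies in $\sL$ and its $p$-cocartesian lift at $c$ is $\id_c$, the pair $(\id_c, (\id_x, \alpha))$ is precisely this stretching edge. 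Consequently, every object of $\cC'$ is canonically the $\ev_1$-cocartesian pushforward of an object in $i(\cC)$ along $\alpha$.

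To construct the inverse of $i^*$, we use the $q$-cocartesian pushforward: since $q \colon \cE \to \cD$ is a cocartesian fibration, there is a canonical functor
$$P \colon \cE \times_{\cD, \ev_0} \Ar_{\cT}(\cD) \to \cE, \qquad (e, \alpha) \mapsto \alpha_! e,$$
obtained by applying $q$-cocartesian lifts to the tautological arrow in $\Ar_\cT(\cD)$. Given $g \in \Fun^L_{/\cD}(\cC, \cE)$, we define $\Phi(g) \colon \cC' \to \cE$ as the composite
$$\cC' = \cC \times_\cD \Ar^R_\cT(\cD) \xrightarrow{g \times \iota} \cE \times_\cD \Ar_{\cT}(\cD) \xrightarrow{P} \cE.$$
Three immediate checks show that $\Phi(g) \in \Fun^{\cocart}_{/\cD}(\cC', \cE)$: $\Phi(g)$ covers $\ev_1$ by construction of $P$; it sends all $\ev_1$-cocartesian edges to $q$-cocartesian edges by combining the explicit description in \cref{prop:InertCartesianFibration}(2) with the hypothesis that $g$ carries $p$-cocartesian lifts over $\sL$ to $q$-cocartesian edges; and $\Phi(g) \circ i \simeq g$ because pushforward along identities is the identity. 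The reverse composition $\Phi \circ i^* \simeq \id$ is witnessed by the first paragraph: for any $F \in \Fun^{\cocart}_{/\cD}(\cC', \cE)$, $F$ sends the stretching edge to a $q$-cocartesian edge, so $F(c, \alpha) \simeq \alpha_! F(c, \id_x) = \Phi(F \circ i)(c, \alpha)$.

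The main obstacle will be upgrading this pointwise inversion to the statement that $i^*$ is a trivial fibration on function complexes, i.e., solving lifting problems against all boundary inclusions $\partial \Delta^n \hookrightarrow \Delta^n$. By adjunction this reduces to showing that
$$(\cC, M) \times \Delta^n \cup_{(\cC, M) \times \partial \Delta^n} \leftnat{\cC'} \times \partial \Delta^n \hookrightarrow \leftnat{\cC'} \times \Delta^n$$
is a cocartesian equivalence in $\sSet^+_{/\cD}$ for every $n \geq 0$. One handles this by rerunning the preceding argument at the simplicial level: the pushforward construction $P$ assembles into a compatible family of maps on the diagram categories $\Fun(\Delta^n, \cE)$, and simplicial naturality of cocartesian lifts supplies the required lifts against boundary inclusions. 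Part (2) follows formally from (1): we define $i_!$ as $\Phi$ followed by the inclusion $\Fun^{\cocart}_{/\cD}(\cC', \cE) \hookrightarrow \Fun^L_{/\cD}(\cC', \cE)$, which is fully faithful since $\Phi$ lands in the $\Fun^{\cocart}$ subcategory and is an equivalence onto it by (1). For $F \in \Fun^L_{/\cD}(\cC', \cE)$, the counit $i_! i^* F \to F$ has component at $(c, \alpha)$ given by the canonical comparison map $\alpha_! F(c, \id_x) \to F(c, \alpha)$ in $\cE_y$, obtained from the morphism $F(c, \id_x) \to F(c, \alpha)$ in $\cE$ (the image of the stretching edge under $F$) via the universal property of the $q$-cocartesian lift of $\alpha$ at $F(c, \id_x)$; the triangle identities and fullness of adjunction follow upon noting that this comparison is an equivalence precisely when $F$ preserves the stretching edges, which characterizes the essential image $\Fun^{\cocart}_{/\cD}(\cC', \cE)$.
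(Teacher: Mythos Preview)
Your approach is essentially the same as the paper's: both arguments hinge on the cocartesian pushforward in $\cE$, encoded by the trivial fibration $\Ar^{\cocart}_{\cT}(\cE) \to \cE \times_{\cD} \Ar_{\cT}(\cD)$ (your functor $P$ is precisely $\ev_1$ composed with a section of this). The construction $\Phi(g)$ you write down is exactly what the paper uses to produce the lift.

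However, your argument has a gap at the ``simplicial upgrade'' step. You correctly identify that the pointwise inverse must be promoted to a solution of all lifting problems against $\partial\Delta^n \hookrightarrow \Delta^n$, but the sentence ``simplicial naturality of cocartesian lifts supplies the required lifts against boundary inclusions'' does not actually do this. The pushforward $P$ involves non-canonical choices, so there is no automatic strict naturality to invoke. The paper closes this gap by \emph{not} first arguing pointwise: given a lifting problem for $A \hookrightarrow B$, the data on $B^\flat \times \leftnat{\cC'}$ together with the given map on the pushout assemble directly into a map $\lambda \colon B \times \cC' \to \cE \times_{\cD} \Ar_{\cT}(\cD)$, and one then lifts $\lambda$ through the trivial fibration $\Ar^{\cocart}_{\cT}(\cE) \to \cE \times_{\cD} \Ar_{\cT}(\cD)$ extending the prescribed data on the sub-object, finally postcomposing with $\ev_1$. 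This is the missing step in your argument, and it is what makes the proof work coherently rather than just on $\pi_0$.

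For part~(2), your construction of the counit is again morally the same as the paper's (the paper builds it as a homotopy $H$ into $\cE \times_{\cD} \Ar_{\cT}(\cD)$ and lifts through the same trivial fibration), but ``the triangle identities and fullness of adjunction follow'' is too quick. One must verify that $\epsilon_F$ is a colocalization in the sense of \cite[Def.~5.2.7.6]{HTT}; the paper does this by checking the three conditions of \cite[Prop.~5.2.7.4]{HTT} (naturality of $\epsilon$, and that $i_! i^\ast \epsilon_F$ and $\epsilon_{i_! i^\ast F}$ are equivalences), then invoking \cite[Prop.~5.2.7.8]{HTT}. Your outline would be correct once these verifications are supplied.
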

\begin{proof} (1): Given a monomorphism $A \to B$ of simplicial sets, we need to solve the lifting problem
\[ \begin{tikzcd}[row sep=2em, column sep=2em]
A^\flat \times \leftnat{(\cC \times_{\cD} \Ar^R_{\cT}(\cD))} \bigcup_{A^\flat \times (\cC,M)} B^\flat \times (\cC,M) \ar{r} \ar{d} & \leftnat{\cE} \ar{d}{q} \\
B^\flat \times \leftnat{(\cC \times_{\cD} \Ar^R_{\cT}(\cD))} \ar{r} \ar[dotted]{ru} & \cD^\sharp.
\end{tikzcd} \]
Let us suppress markings for clarity. We can factor this square as
\[ \begin{tikzcd}[row sep=2em, column sep=2em]
A \times (\cC \times_{\cD} \Ar^R_{\cT}(\cD)) \bigcup_{A \times \cC} B \times \cC \ar{r} \ar{d} & \cE \ar{r}{\iota} & \Ar^\cocart_{\cT}(\cE) \ar[->>]{d}{\simeq} \ar{r}{\ev_1} & \cE \ar{d}{q} \\
B \times (\cC \times_{\cD} \Ar^R_{\cT}(\cD)) \ar{rr}{\lambda} \ar[dotted]{rru} & &  \cE \times_{\cD} \Ar_{\cT}(\cD) \ar{r}{\ev_1} & \cD
\end{tikzcd} \]
where the map $\lambda$ is given by
\begin{align*} B \times (\cC \times_{\cD} \Ar^R_{\cT}(\cD)) &\to B \times \cC \to \cE, \\
B \times (\cC \times_{\cD} \Ar^R_{\cT}(\cD)) &\to B \times \Ar^R_{\cT}(\cD) \to \Ar_{\cT}(\cD),
\end{align*}
and the map $\Ar^\cocart_{\cT}(\cE) \to \cE \times_{\cD} \Ar_{\cT}(\cD)$ is a pullback of the known trivial fibration $\Ar^\cocart(\cE) \to {\cE} \times_{\cD} \Ar(\cD)$ of \cite[Lem.~2.23]{Exp2} by the identity section $\cT \to \Ar(\cT)$, hence is a trivial fibration. Therefore, the dotted arrow exists, and postcomposing by $\ev_1$ yields the desired lift.

(2): We need to show that $\Fun^\cocart_{/\cD}(\cC \times_{\cD} \Ar^R_{\cT}(\cD),\cE) \subset \Fun^L_{/\cD}(\cC \times_{\cD} \Ar^R_{\cT}(\cD),\cE)$ is a coreflective subcategory. For this, it suffices to show that for every object $F \in \Fun^L_{/\cD}(\cC \times_{\cD} \Ar^R_{\cT}(\cD),\cE)$, there exists a colocalization $\epsilon_F: G \to F$ relative to $\Fun^\cocart_{/\cD}(\cC \times_{\cD} \Ar^R_{\cT}(\cD),\cE)$ in the sense of \cite[Def.~5.2.7.6]{HTT} (after taking opposites there). We will construct this explicitly as follows. First define a homotopy
\[ H: \Delta^1 \times \cC \times_{\cD} \Ar^R_{\cT}(\cD) \to \cE \times_{\cD} \Ar_{\cT}(\cD) \]
between the functors
\begin{align*} H_0 = (F \circ i,\subset) &: \cC \times_{\cD} \Ar^R_{\cT}(\cD) \to \cE \times_{\cD} \Ar_{\cT}(\cD), \; & (c, x \ra y) \mapsto (F(c, x=x), x \ra y), \\
H_1 = (\id,\iota_{\cD} q) \circ F &: \cC \times_{\cD} \Ar^R_{\cT}(\cD) \to \cE \to \cE \times_{\cD} \Ar_{\cT}(\cD), \; & (c, x \ra y) \mapsto (F(c, x \ra y), y=y)
\end{align*}
in the following way: let $\min, \max: \Delta^1 \times \Delta^1 \to \Delta^1$ be the min and max maps, form the functors
\begin{align*}
F' = \Ar_{\cT}(F) \circ (\iota, \text{min}^{\ast}) &: \cC \times_{\cD} \Ar^{R}_{\cT}(\cD) \to \Ar_{\cT}(\cC \times_{\cD} \Ar^{R}_{\cT}(\cD)) \to \Ar_{\cT}(\cE), \\
G' = \text{max}^{\ast} \circ \pr_2 &: \cC \times_{\cD} \Ar^{R}_{\cT}(\cD) \to \Ar^{R}_{\cT}(\cD) \to \Ar_{\cT}(\Ar_{\cT}(\cD)),
\end{align*}
and let $H$ be the adjoint of the resulting map 
\[ (F', G'): \cC \times_{\cD} \Ar^R_{\cT}(\cD) \to \Ar_{\cT}(\cE \times_{\cD} \Ar_{\cT}(\cD)). \]
We then place $H$ into the commutative diagram
\[ \begin{tikzcd}[row sep=2em, column sep=2em]
\{1\} \times \cC \times_{\cD} \Ar^R_{\cT}(\cD) \ar{r}{F} \ar{d} & \cE \ar{r}{\iota} & \Ar^\cocart_{\cT}(\cE) \ar[->>]{d}{\simeq} \ar{r}{\ev_1} & \cE \ar{d}{q} \\
\Delta^1 \times \cC \times_{\cD} \Ar^R_{\cT}(\cD) \ar{rr}{H} \ar[dotted]{rru}{\epsilon'_F} & & \cE \times_{\cD} \Ar_{\cT}(\cD) \ar{r}{\ev_1} & \cD.
\end{tikzcd} \]
Let $\epsilon'_F$ be any filler, and define $\epsilon_F = \ev_1 \circ \epsilon'_F$. Note that $\epsilon_F(0) \simeq i_! i^\ast F$ and $\epsilon_F(1) = F$. We now make the following simple observations, whose verification we leave to the reader:
\begin{enumerate}
\item For every natural transformation $\theta: F \to G$, the square
\[ \begin{tikzcd}[row sep=2em, column sep=2em]
i_! i^\ast F \ar{r}{i_! i^\ast \theta} \ar{d}{\epsilon_F} & i_! i^\ast G \ar{d}{\epsilon_G} \\
F \ar{r}{\theta} & G
\end{tikzcd} \]
is homotopy commutative.
\item $i_! i^\ast \epsilon_F$ is an equivalence.
\item $\epsilon_{i_! i^\ast F}$ is an equivalence.
\end{enumerate}
Examining the part of the proof of \cite[Prop.~5.2.7.4]{HTT} that establishes the implication 5.2.7.4(3) $\Rightarrow$ 5.2.7.4(1), we conclude that $\epsilon_F$ is indeed a colocalization, so we are done by \cite[Prop.~5.2.7.8]{HTT}.
\end{proof}

\begin{rem}
Replacing the $\cT$-factorization system $(\sL_t, \sR_t)_{t \in \cT}$ by the factorization system $(\sL,\sR)$ on $\cD$ (\cref{lem:TotalFactSystem}), note that since edges in $\sR$ map down to equivalences in $\cT^{\op}$, we have that $\Ar^R_{\cT}(\cD) \xto{\simeq} \Ar^{R}(\cD)$ where by the latter $\infty$-category we mean the full subcategory of $\Ar(\cD)$ on those edges in $\sR$. \cref{thm:GeneralFreeFibration} could thus be formulated entirely in `non-parametrized' terms; this is related to the fact that a $\cT$-functor $q: \cE \to \cD$ is a $\cT$-cocartesian fibration if and only if it is a cocartesian fibration \cite[Rem.~7.4]{Exp2}. In this form, Ayala--Mazel-Gee--Rozenblyum have also articulated \cref{thm:GeneralFreeFibration}(1) model-independently in terms of an adjunction of $\infty$-categories \cite[Prop.~B.1]{ayala2017actorization}.
\end{rem}

We end this section by giving two important applications of \cref{thm:GeneralFreeFibration}.

\begin{exm} \label{exm:freeCocartesianFibration}
Let $(\sL_t,\sR_t)_{t \in \cT}$ be the $\cT$-factorization system given by letting $\sL_t$ be the equivalences and $\sR_t$ be all morphisms for every $t \in \cT$. Then $\Ar^R_{\cT}(\cD) = \Ar_{\cT}(\cD)$, and $\cC \times_{\cD} \Ar_{\cT}(\cD)$ is the free $\cT$-cocartesian fibration on $\cD$ (\cite[Def.~7.6]{Exp2}). By \cref{thm:GeneralFreeFibration}(1), we see that $i: \cC \to \cC \times_{\cD} \Ar_{\cT}(\cD)$ has the expected universal property: for every $\cT$-cocartesian fibration $\cE \to \cD$,
\[ i^\ast: \Fun^{\cocart}_{/\cD,\cT}(\cC \times_{\cD} \Ar_{\cT}(\cD), \cE) \to \Fun_{/\cD,\cT}(\cC,\cE) \]
is an equivalence. This promotes to an adjunction
\[ \adjunct{\text{Fr}^\cocart}{(\CatT)_{/\cD}}{(\CatT)^{\cocart}_{/\cD} \simeq \Cat_{\cD}}{\mathrm{U}}. \]
When $\cT = \ast$, this recovers \cite[Thm.~4.5]{GHN}.

By \cref{thm:GeneralFreeFibration}(2), we also have an adjunction
\[ \adjunct{i_!}{\Fun_{/\cD,\cT}(\cC,\cE)}{\Fun_{/\cD,\cT}(\cC \times_{\cD} \Ar_{\cT}(\cD), \cE)}{i^\ast} \]
in which $i_!$ is fully faithful.
\end{exm}

\begin{exm}
Suppose $\cT = \ast$ and consider the inert-active factorization system on an $\infty$-operad $\cO^{\otimes}$. Let $p: \cC^{\otimes} \to \cO^{\otimes}$ be a fibration of $\infty$-operads. Then $\Env_{\cO}(\cC)^{\otimes} \coloneq \cC^{\otimes} \times_{\cO^{\otimes}} \Ar^{\act}(\cO^{\otimes})$ is the \emph{$\cO$-monoidal envelope} of $\cC^{\otimes}$ \cite[Constr.~2.2.4.1]{HA}, and by \cref{thm:GeneralFreeFibration}(1) for any $\cO$-monoidal $\infty$-category $\cD^{\otimes}$ we have that
\[ \Fun^{\otimes}_{\cO}(\Env_{\cO}(\cC), \cD) \xto{\simeq} \Alg_{\cC/\cO}(\cD). \]
This recovers \cite[Prop.~2.2.4.9]{HA}.
\end{exm}

\section{Parametrized pairing construction}

\label{sec:pairing}

In this section, we first introduce the concept of a \emph{$\cT$-flat fibration} $p: \cX \to \cB$, which will amount to a condition on $p$ that ensures that the pullback functor
\[ p^*: (\Cat_{\cT})^{/\cB} \to (\Cat_{\cT})^{/\cX} \]
admits a right adjoint $p_*$ (\cref{rem:FromUniversalPropertyToAdjointExistence}). Given another $\cT$-fibration $q: \cY \to \cB$, we then recall from \cite[Constr.~9.1]{Exp2} the \emph{$\cB$-relative $\cT$-pairing construction} $\widetilde{\Fun}_{\cB,\cT}(\cX,\cY)$ (\cref{conthm:PairingConstructionRecalled}) as a certain $\cT$-fibration over $\cB$. In our discussion in \cite[\S 9]{Exp2}, we only established the properties of the $\cT$-pairing construction needed for our application to proving the existence theorem for $\cT$-left Kan extensions. We enter into a more systematic discussion here by first proving its base-change property (\cref{prop:BaseChangePairingConstruction}) and then its universal property internal to $\cT$-$\infty$-categories (\cref{thm:PairingConstructionUniversalProperty}), from which it follows that 
\[ \widetilde{\Fun}_{\cB,\cT}(\cX,\cY) \simeq p_* p^* (\cY \xto{q} \cB) \]
at the level of the underlying $\infty$-category $(\Cat_{\cT})^{/\cB}$ of $\sSet^+_{/\leftnat{\cB}}$.\footnote{We equip $\sSet^+_{/\leftnat{\cB}}$ with the slice model structure with respect to the cocartesian model structure on $\sSet^+_{/(\cT^{\op})^{\sharp}}$. Since $\leftnat{\cB} \to (\cT^{\op})^{\sharp}$ is fibrant, we may indeed identify the underlying $\infty$-category as $(\Cat_{\cT})^{/\cB}$.} We then apply the $\cT$-pairing construction to study $\cT$-(co)limits in a $\cT$-$\infty$-category of sections (\cref{thm:ParametrizedLimitsAndColimitsInSectionCategories}). This material will be used in \cite{paramalg} to understand $\cT$-(co)limits in $\cT$-$\infty$-categories of $\cO$-algebras for a $\cT$-$\infty$-operad $\cO$.

\begin{definition} \label{def:ParametrizedFlatFibration}
Let $p: \cX \to \cB$ be a $\cT$-fibration. We say that $p$ is a \emph{$\cT$-flat fibration} if for every $t \in \cT$, the pullback $p_t: \cX_t \to \cB_t$ is flat.
\end{definition}


In what follows, for a $\cT$-$\infty$-category $\cB$ we let $\Ar^{\cocart}(\cB) \subset \Ar(\cB)$ denote the full subcategory on arrows that are cocartesian edges with respect to the structure map to $\cT^\op$.

\begin{conthm}[{\cite[Def.~9.1]{Exp2}}] \label{conthm:PairingConstructionRecalled}
Let $p: \cX \to \cB$ be a $\cT$-flat fibration and consider the span of marked simplicial sets
\[ \begin{tikzcd}
\leftnat{\cB} & (\Ar^{\cocart}(\cB) \times_{\ev_1,\cB,p} \cX, \sE) \ar{l}[swap]{\ev_0} \ar{r}{\pr_{\cX}} & \leftnat{\cX}
\end{tikzcd} \]
in which $\cB$ and $\cX$ are given the cocartesian markings (with respect to the structure maps to $\cT^\op$), and an edge $e$ in $\Ar^{\cocart}(\cB) \times_{\cB} \cX$ is marked if and only if $\ev_0(e)$ is marked and $\pr_{\cX}(e)$ is marked. The functor
\[ (\ev_0)_{\ast} (\pr_{\cX})^{\ast}: \sSet^+_{/\leftnat{\cX}} \to \sSet^+_{/\leftnat{\cB}} \]
is then right Quillen with respect to the slice model structures induced from the cocartesian model structure on $\sSet^+_{/\cT^\op}$. For a $\cT$-fibration $q: \cY \to \cB$, we then define the \emph{$\cT$-pairing} of $(\cX,p)$ and $(\cY,q)$ to be the $\cT$-fibration over $\cB$ given by
\[ \widetilde{\Fun}_{\cB,\cT}(\cX,\cY) \coloneq (\ev_0)_{\ast} (\pr_{\cX})^{\ast} q^\ast(\leftnat{\cY}),\]
where the marked edges are precisely the cocartesian edges with respect to the structure map to $\cT^\op$.
\end{conthm}
\begin{proof} The assertion that $(\ev_0)_{\ast} (\pr_{\cX})^{\ast}$ is right Quillen was proved under the assumption that $p$ is a $\cT$-cocartesian or $\cT$-cartesian fibration in \cite[Thm.~9.3(1)]{Exp2}. However, that assumption was only used in the proof to show that $\ev_0$ is flat. Using our weaker assumption that $p$ is $\cT$-flat, this follows instead from \cref{lem:FlatCategoricalFibrationArrowLemma} applied to the factorization system $(\sL,\sR)$ on $\cB$ with $\sL$ given by the cocartesian edges and $\sR$ given by those edges lying over equivalences in $\cT^\op$ \cite[Ex.~5.2.8.15]{HTT}.
\end{proof}

\begin{rec} \label{rec:PairingConstructionCocartesianFunctoriality}
If $p: \cX \to \cB$ is a $\cT$-cartesian fibration and $q: \cY \to \cB$ is a $\cT$-cocartesian fibration, then we showed in \cite[Thm.~9.3]{Exp2} that $r: \widetilde{\Fun}_{\cB,\cT}(\cX,\cY) \to \cB$ is a $\cT$-cocartesian fibration. Moreover, we may produce $\widetilde{\Fun}_{\cB,\cT}(\cX,\cY)$ as a marked simplicial set with the $r$-cocartesian edges marked in the following way: let $\sE' \subset (\Ar^{\cocart}(\cB) \times_{\cB} \cX)_1$ be the minimal collection of edges closed under composition that contains the class $\sE$ in \cref{conthm:PairingConstructionRecalled} and the $\ev_0$-cartesian edges in $\Ar^{\cocart}(\cB) \times_{\cB} \cX$, which are those edges
\[ \left( \begin{tikzcd}
b_0 \ar{r} \ar{d} & b_1 \ar{d} \\ 
c_0 \ar{r}{f} & c_1
\end{tikzcd}, \: x_0 \xto{g} x_1 \right) \]
such that $f$ is sent to an equivalence in $\cT^\op$ and $g$ is a $p$-cartesian edge.\footnote{See \cite[Lem.~7.5]{Exp2} for a justification as to why we can take $p$-cartesian edges here as opposed to fiberwise cartesian.} Then the span of marked simplicial sets
\[ \begin{tikzcd}
\cB^{\sharp} & (\Ar^{\cocart}(\cB) \times_{\cB} \cX, \sE') \ar{l}[swap]{\ev_0} \ar{r}{\ev_1} & \cB^{\sharp}
\end{tikzcd} \]
defines via $(\ev_0)_{\ast} (\ev_1)^\ast (\cY,q\text{-cocart})$ the same underlying simplicial set $\widetilde{\Fun}_{\cB,\cT}(\cX,\cY)$ as before, but with the $r$-cocartesian edges marked. Unwinding the definitions, we thus see that $\widetilde{\Fun}_{\cB,\cT}(\cX,\cY)$ enjoys the following additional functoriality with respect to morphisms in $\cB$: for every fiberwise morphism $f: b \to b' \in \cB_{t}$, we have a pushforward functor
\begin{align*} f_!: \Fun_{\cT^{/t}}(\cX_{\underline{b}}, \cY_{\underline{b}}) \to \Fun_{\cT^{/t}}(\cX_{\underline{b'}}, \cY_{\underline{b'}}), \;  F \mapsto f_! \circ F \circ f^{\ast}
\end{align*}
where $f^{\ast}: \cX_{\underline{b'}} \to \cX_{\underline{b}}$ and $f_!: \cY_{\underline{b}} \to \cY_{\underline{b'}}$ are the $\cT^{/t}$-functors encoded by $p$ and $q$.
\end{rec}

We next establish the compatibility of the $\cT$-pairing construction with base-change. First, we need a lemma.

\begin{lemma} \label{lem:BaseChangeCocartesianArrows}
Let $f: \cA \to \cB$ be a $\cT$-functor. The functor $$\psi: \Ar^{\cocart}(\cA) \to \cA \times_{f,\cB,\ev_0} \Ar^{\cocart}(\cB)$$ induced by $f$ is a homotopy equivalence of cartesian fibrations over $\cA$ (with respect to $\ev_0$ on the source and projection to $\cA$ on the target).
\end{lemma}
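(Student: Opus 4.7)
The plan is to apply the standard criterion that a map between two cartesian fibrations over a common base $\cA$ is a homotopy equivalence if and only if it preserves cartesian edges and restricts to a categorical equivalence on each fiber.

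First I would exhibit both the source and target of $\psi$ as cartesian fibrations over $\cA$ and identify their cartesian edges. For the source, essential uniqueness of cocartesian lifts gives a trivial fibration $\Ar^{\cocart}(\cA) \xto{\simeq} \cA \times_{\cT^{\op}, \ev_0} \Ar(\cT^{\op})$, so $\ev_0: \Ar^{\cocart}(\cA) \to \cA$ is equivalent to the pullback of the cartesian fibration $\ev_0: \Ar(\cT^{\op}) \to \cT^{\op}$ along the structure map $\pi: \cA \to \cT^{\op}$; under this identification, the $\ev_0$-cartesian edges correspond to commuting squares in $\cA$ of cocartesian edges whose target component is an equivalence. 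The target $\pr_1: \cA \times_{\cB} \Ar^{\cocart}(\cB) \to \cA$ is the pullback of the cartesian fibration $\ev_0: \Ar^{\cocart}(\cB) \to \cB$ along $f$, so it is again a cartesian fibration with an analogous description of cartesian edges.

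Next I would observe that $\psi$ preserves cartesian edges: as $f$ is a $\cT$-functor it carries cocartesian edges to cocartesian edges and (trivially) equivalences to equivalences, so a square in $\Ar^{\cocart}(\cA)$ whose target component is an equivalence is sent by $\psi$ to a square in $\cA \times_{\cB} \Ar^{\cocart}(\cB)$ with the same property. To check the fiberwise equivalence, note that over $a \in \cA_t$ the source fiber is $\underline{a}$ and the target fiber is canonically identified with $\{f(a)\} \times_{\cB} \Ar^{\cocart}(\cB) = \underline{f(a)}$. By \cite[Lem.~12.10]{Exp2} both admit trivial fibrations to $(\cT^{/t})^{\op}$, and the restriction of $\psi$ covers the identity on $(\cT^{/t})^{\op}$; by two-out-of-three this restriction is itself a trivial fibration, in particular a categorical equivalence.

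I expect the main (mild) obstacle to be bookkeeping the precise description of cartesian edges in $\Ar^{\cocart}(\cA)$ over $\cA$ — these are squares that are not cartesian in $\Ar(\cA)$ but only within the full subcategory of cocartesian edges — and verifying that $\psi$ respects them, but this reduces to the observation above that $f$ preserves both cocartesian edges and equivalences. Combining the three points via the standard criterion yields the conclusion.
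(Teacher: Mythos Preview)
Your proposal is correct and follows essentially the same route as the paper's proof: identify both sides as cartesian fibrations over $\cA$, check that $\psi$ preserves cartesian edges, and verify the fiberwise equivalence via the identification of each fiber with $(\cT^{/t})^{\op}$. The paper cites \cite[Lem.~9.2(1)]{Exp2} directly for the cartesian fibration structure on $\ev_0: \Ar^{\cocart}(\cA) \to \cA$ and the description of its cartesian edges (those $e$ with $\ev_1(e)$ projecting to an equivalence in $\cT^{\op}$), whereas you deduce it from the trivial fibration $\Ar^{\cocart}(\cA) \xto{\simeq} \cA \times_{\cT^{\op}} \Ar(\cT^{\op})$; this is the same fact unwound, though be careful that the precise condition is that $\ev_1(e)$ maps to an equivalence in $\cT^{\op}$, not that it is an equivalence in $\cA$.
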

\begin{proof}
By \cite[Lem.~9.2(1)]{Exp2}, $\ev_0: \Ar^{\cocart}(\cA) \to \cA$ is a cartesian fibration and an edge $e$ is $\ev_0$-cartesian if and only if the projection of $\ev_1(e)$ to $\cT^\op$ is an equivalence. It follows that $\psi$ preserves cartesian edges, so to show $\psi$ is a homotopy equivalence it suffices to check that for every $a \in \cA$, the map on fibers
$$\psi_a: \underline{a} = \{a\} \times_{\cA, \ev_0} \Ar^{\cocart}(\cA) \to \underline{f(a)} = \{ f(a) \} \times_{\cB, \ev_0} \Ar^{\cocart}(\cB)$$
is an equivalence of $\infty$-categories. But if $a$ lies over $t \in T^\op$, then the induced projections $\underline{a} \to (\cT^{/t})^\op$ and $\underline{f(a)} \to (\cT^{/t})^\op$ are equivalences (cf. \cite[Notn.~2.28]{Exp2}), so $\psi_a$ is an equivalence.
\end{proof}

\begin{proposition} \label{prop:BaseChangePairingConstruction}
Let $f: \cA \to \cB$ be a $\cT$-functor, let $\cX \to \cB$ be a $\cT$-flat fibration, and let $\cY \to \cB$ be a $\cT$-fibration. We have a canonical and natural equivalence of $\cT$-fibrations over $\cA$
\[ \widetilde{\Fun}_{\cA,\cT}(\cX \times_{\cB} \cA ,\cY \times_{\cB} \cA) \simeq \widetilde{\Fun}_{\cB,\cT}(\cX,\cY) \times_{\cB} \cA. \]
\end{proposition}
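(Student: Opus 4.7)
My plan is to deduce the equivalence by directly manipulating the defining spans in Construction \ref{conthm:PairingConstructionRecalled} via a Beck--Chevalley argument, using Lemma \ref{lem:BaseChangeCocartesianArrows} and the flatness of the relevant $\ev_0$. First, I would unpack both sides explicitly as outputs of $(\ev_0)_\ast(\pr)^\ast$ on appropriate spans. The left-hand side uses
\[ \leftnat{\cA} \xleftarrow{\ev_0^\cA} (\Ar^{\cocart}(\cA) \times_\cA (\cX \times_\cB \cA), \sE_\cA) \to \leftnat{\cX \times_\cB \cA}, \]
and one has a canonical isomorphism $\Ar^{\cocart}(\cA) \times_{\ev_1,\cA} (\cX \times_\cB \cA) \cong \Ar^{\cocart}(\cA) \times_{f\circ \ev_1, \cB, p} \cX$ simplifying the middle term. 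The right-hand side is the pullback along $f$ of $(\ev_0^\cB)_\ast(\pr_\cX)^\ast(\leftnat{\cY})$.

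Second, using that $\ev_0^\cB \colon \Ar^{\cocart}(\cB) \times_\cB \cX \to \cB$ is flat (established in the proof of Construction \ref{conthm:PairingConstructionRecalled} via Lemma \ref{lem:FlatCategoricalFibrationArrowLemma}), I would invoke Beck--Chevalley in the pullback square
\[ \begin{tikzcd}
\cA \times_\cB (\Ar^{\cocart}(\cB) \times_\cB \cX) \ar{r}{g'} \ar{d}[swap]{\ev_0'} & \Ar^{\cocart}(\cB) \times_\cB \cX \ar{d}{\ev_0} \\
\cA \ar{r}{f} & \cB
\end{tikzcd} \]
to obtain $f^\ast (\ev_0^\cB)_\ast (\pr_\cX)^\ast (\leftnat{\cY}) \simeq (\ev_0')_\ast (g')^\ast (\pr_\cX)^\ast (\leftnat{\cY})$ in $\sSet^+_{/\leftnat{\cA}}$, where the marking on the pullback middle term is $(g')^\ast \sE_\cB$. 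Then I would apply Lemma \ref{lem:BaseChangeCocartesianArrows} to produce the map
\[ \psi \times_\cB \id_\cX \colon \Ar^{\cocart}(\cA) \times_\cB \cX \longrightarrow (\cA \times_\cB \Ar^{\cocart}(\cB)) \times_\cB \cX, \]
which is a fiberwise equivalence over $\cA$ and hence a cocartesian equivalence after passing to the marked slice model structure. Pushing forward along the equal maps $\ev_0^\cA$ and $\ev_0'$ then identifies the two constructions. Naturality in $f$ follows from the functoriality of the entire span formalism in the base $\cT$-$\infty$-category.

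The main obstacle is the bookkeeping of markings: I must verify that the marking $\sE_\cA$ on $\Ar^{\cocart}(\cA) \times_\cA (\cX \times_\cB \cA)$ is exactly the pullback of the marking $\sE_\cB$ on $\Ar^{\cocart}(\cB) \times_\cB \cX$ under $\psi \times_\cB \id_\cX$, which reduces to observing that an edge $e$ in the middle term satisfies the defining condition (that $\ev_0(e)$ and $\pr_\cX(e)$ are cocartesian) if and only if its image under $\psi \times_\cB \id_\cX$ does, since cocartesian edges in $\Ar^{\cocart}(\cA)$ correspond under the equivalence of Lemma \ref{lem:BaseChangeCocartesianArrows} to those in $\cA \times_\cB \Ar^{\cocart}(\cB)$ that project to cocartesian edges in $\Ar^{\cocart}(\cB)$. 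Once this is checked, the Beck--Chevalley step and the lemma together yield the equivalence, and strict naturality in $f$ is automatic from the construction.
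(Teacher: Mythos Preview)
Your approach is essentially the same as the paper's. The paper also constructs the comparison map via the morphism of spans $\phi\colon \Ar^{\cocart}(\cA)\times_\cA(\cA\times_\cB\cX)\to \cA\times_\cB\Ar^{\cocart}(\cB)\times_\cB\cX$, invokes Lemma~\ref{lem:BaseChangeCocartesianArrows} to see that $\phi$ is a homotopy equivalence, and then cites \cite[Lem.~2.27]{Exp2} (which says that a homotopy equivalence of spans compatible with both projections induces an equivalence on the associated pairings) to conclude.

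Two minor remarks on your write-up. First, your Beck--Chevalley step $f^\ast(\ev_0^\cB)_\ast \cong (\ev_0')_\ast (g')^\ast$ actually holds as a strict isomorphism of marked simplicial sets in any slice of $\sSet^+$, so flatness is not needed for that identification; flatness of $\ev_0^\cB$ (via Lemma~\ref{lem:FlatCategoricalFibrationArrowLemma}) is what ensures the resulting object is fibrant, i.e., a $\cT$-fibration over $\cA$. Second, your final step of ``pushing forward along the equal maps $\ev_0^\cA$ and $\ev_0'$'' is the place requiring care: one needs that the homotopy equivalence $\phi$ is compatible with the projection to $\cX$ (not just to $\cA$) so that the pulled-back objects match up, which is exactly the content of the cited \cite[Lem.~2.27]{Exp2}. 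Your marking check is correct and is what the paper elides with the phrase ``after marking as necessary.''
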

\begin{proof}
Consider the morphism of spans
\[ \begin{tikzcd}[column sep=6ex]
& \Ar^{\cocart}(\cA) \times_{\cA} (\cA \times_{\cB} \cX) \ar{ld}[swap]{\ev_0} \ar{rd}{\pr_{\cX}} \ar{d}{\phi} & \\
\cA & \cA \times_{\cB} \Ar^{\cocart}(\cB) \times_{\cB} \cX \ar{l}{\pr_{\cA}} \ar{r}[swap]{\pr_{\cX}} & \cX
\end{tikzcd} \]
where $\phi$ is induced by $f$. Noting that $\cT$-flat fibrations are stable under pullback, we see that $\phi$ induces a comparison functor (after marking as necessary)
\[ \Phi: \widetilde{\Fun}_{\cB,\cT}(\cX,\cY) \times_{\cB} \cA \to \widetilde{\Fun}_{\cA,\cT}(\cX \times_{\cB} \cA ,\cY \times_{\cB} \cA). \]
By \cref{lem:BaseChangeCocartesianArrows}, $\phi$ is a homotopy equivalence. Moreover, the homotopy inverse respects the projection to $\cX$, so by the proof of \cite[Lem.~2.27]{Exp2}, $\Phi$ is an equivalence of $\cT$-fibrations over $\cA$. 
\end{proof}

We can use the base-change property of the $\cT$-pairing construction explicated in \cref{prop:BaseChangePairingConstruction} to give a more transparent identification of its parametrized fibers {\cite[Prop.~9.7]{Exp2}}.

\begin{cor} \label{cor:PairingFibers}
Let $\cX \to \cB$ be a $\cT$-flat fibration and $\cY \to \cB$ a $\cT$-fibration. For every $b \in \cB$ over $t \in \cT^\op$, we have an equivalence of $\cT^{/t}$-$\infty$-categories
\[ \widetilde{\Fun}_{\cB,\cT}(\cX,\cY)_{\underline{b}} \simeq \underline{\Fun}_{\cT^{/t}}(\cX_{\underline{b}}, \cY_{\underline{b}}) \]
\end{cor}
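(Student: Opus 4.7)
The plan is to iterate \cref{prop:BaseChangePairingConstruction}, after first translating into the $\cT^{/t}$-setting to accommodate $\underline{b}$, which is a $\cT^{/t}$-$\infty$-category via the trivial fibration $\underline{b} \to (\cT^{/t})^{\op}$ rather than a $\cT$-$\infty$-category. As a preliminary step, I would verify that the $\cT$-pairing construction is compatible with base change of the ambient indexing category: writing $\cB_{\underline{t}} \coloneq \cB \times_{\cT^{\op}} (\cT^{/t})^{\op}$ and analogously $\cX_{\underline{t}}, \cY_{\underline{t}}$, direct inspection of \cref{conthm:PairingConstructionRecalled} yields
\[ \widetilde{\Fun}_{\cB, \cT}(\cX, \cY) \times_{\cT^{\op}} (\cT^{/t})^{\op} \simeq \widetilde{\Fun}_{\cB_{\underline{t}}, \cT^{/t}}(\cX_{\underline{t}}, \cY_{\underline{t}}), \]
since all fiber products and markings in the defining span only reference $\cT^{\op}$ through the structure map to $\cB$, and commute cleanly with the further base change.

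Next, I would apply \cref{prop:BaseChangePairingConstruction} in the $\cT^{/t}$-setting along the $\cT^{/t}$-functor $i: \underline{b} \to \cB_{\underline{t}}$ classifying $b$, obtaining
\[ \widetilde{\Fun}_{\cB_{\underline{t}}, \cT^{/t}}(\cX_{\underline{t}}, \cY_{\underline{t}}) \times_{\cB_{\underline{t}}} \underline{b} \simeq \widetilde{\Fun}_{\underline{b}, \cT^{/t}}(\cX_{\underline{b}}, \cY_{\underline{b}}), \]
using the identifications $\cX_{\underline{t}} \times_{\cB_{\underline{t}}} \underline{b} \simeq \cX_{\underline{b}}$ and similarly for $\cY$. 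Combining the two displayed equivalences gives $\widetilde{\Fun}_{\cB, \cT}(\cX, \cY)_{\underline{b}} \simeq \widetilde{\Fun}_{\underline{b}, \cT^{/t}}(\cX_{\underline{b}}, \cY_{\underline{b}})$.

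It then remains to identify the right-hand side with $\underline{\Fun}_{\cT^{/t}}(\cX_{\underline{b}}, \cY_{\underline{b}})$. Since $\underline{b} \to \ast_{\cT^{/t}}$ is a trivial fibration \cite[Lem.~12.10]{Exp2}, one more application of \cref{prop:BaseChangePairingConstruction} along a section of this trivial fibration reduces the problem to the case in which the base is the terminal $\cT^{/t}$-$\infty$-category. In this terminal case, one checks directly from the definitions that the pairing construction coincides with the internal hom $\underline{\Fun}_{\cS}(-,-)$ of \cite[Def.~3.2]{Exp2} (writing $\cS \coloneq \cT^{/t}$); the key point is that the identity section $(\cS^{\op})^{\sharp} \hookrightarrow \Ar^{\cocart}(\cS^{\op})^{\natural}$ is a cocartesian equivalence in $\sSet^+_{/(\cS^{\op})^{\sharp}}$, so the extra $\Ar^{\cocart}$-factor in the pairing span collapses onto the base.

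The main obstacle I anticipate is justifying the first displayed equivalence, namely the compatibility of the pairing construction with base change of the ambient indexing $\infty$-category. This does not follow formally from \cref{prop:BaseChangePairingConstruction}, since that proposition fixes $\cT$ and only handles base change in $\cB$. In practice it is a direct inspection using that the right Quillen functor $(\ev_0)_*(\pr_\cX)^*$ of \cref{conthm:PairingConstructionRecalled} is manifestly compatible with pullback of the underlying cocartesian model structure, but careful bookkeeping of markings is required.
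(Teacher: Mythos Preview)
Your proposal is correct and follows essentially the same route as the paper. The paper's proof invokes \cref{prop:BaseChangePairingConstruction} twice (first to replace $\cB$ by $\cB_{\underline{t}}$, then along $\underline{b} \to \cB_{\underline{t}}$) and then observes that when the base is the terminal $\cT^{/t}$-$\infty$-category one has a literal isomorphism $\widetilde{\Fun}_{\ast_{\cT^{/t}},\cT^{/t}}(-,-) \cong \underline{\Fun}_{\cT^{/t}}(-,-)$ of marked simplicial sets, since $\Ar^{\cocart}(\ast_{\cT^{/t}}) = \Ar((\cT^{/t})^{\op})$ and the defining span reduces on the nose to that of \cite[Def.~3.2]{Exp2}. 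Your concern about the first step is well-placed: strictly speaking \cref{prop:BaseChangePairingConstruction} fixes $\cT$, so passing from $\cT$ to $\cT^{/t}$ does require the direct inspection you describe rather than a literal citation of the proposition; the paper elides this. Your third step is slightly more elaborate than needed, since one gets an actual isomorphism rather than merely an equivalence through a cocartesian collapse of the arrow factor.
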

\begin{proof}
We may invoke \cref{prop:BaseChangePairingConstruction} to replace $\cB$ with $\cB_{\underline{t}}$, and invoke \cref{prop:BaseChangePairingConstruction} again with $\cA = \underline{b} \to \cB$ to reduce to the case $\cB = \cT$, for which $\widetilde{\Fun}_{\cT,\cT}(-,-) \cong \Fun_{\cT}(-,-)$ as marked simplicial sets.
\end{proof}

We next proceed to articulate the universal property of the $\cT$-pairing construction (\cref{thm:PairingConstructionUniversalProperty}) as a partially-defined internal hom for $\cT$-fibrations over a fixed base $\cT$-$\infty$-category.

\begin{notation}
Let $\cB$ be a $\cT$-$\infty$-category, let $p: \cX \to \cB$, $q: \cY \to \cB$ be $\cT$-fibrations over $\cB$, and let $q_{\circ}: \underline{\Fun}_{\cT}(\cX, \cY) \to \underline{\Fun}_{\cT}(\cX, \cB)$ be the $\cT$-functor given by postcomposition by $q$. We then let
\[ \underline{\Fun}_{/\cB,\cT}(\cX,\cY) \coloneq \ast_{\cT} \times_{\sigma_p,\underline{\Fun}_{\cT}(\cX, \cB),q_{\circ}} \underline{\Fun}_{\cT}(\cX, \cY) \]
denote the $\cT$-$\infty$-category of $\cT$-functors $\cX \to \cY$ over $\cB$.
\end{notation}

\begin{lemma} \label{lem:RelativeFunctorsViaSpanConstr}
Let $p: \cX \to \cB$ be a $\cT$-fibration and consider the span of marked simplicial sets
\[ \begin{tikzcd}[column sep=6ex]
(\cT^\op)^\sharp & \Ar(\cT^\op)^\sharp \times_{\ev_1, (\cT^\op)^\sharp} \leftnat{\cX} \ar{l}[swap]{\ev_0} \ar{r}{p \circ \pr_{\cX}} & \leftnat{\cB}.
\end{tikzcd} \]
For any $\cT$-fibration $q: \cY \to \cB$, we then have an equivalence of $\cT$-$\infty$-categories
\[ (\ev_0)_{\ast} (p \circ \pr_{\cX})^\ast (\leftnat{Y}) \simeq \underline{\Fun}_{/\cB,\cT}(\cX,\cY) \]
which is an isomorphism at the level of marked simplicial sets.
\end{lemma}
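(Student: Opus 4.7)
The plan is to unwind the definitions on both sides at the level of marked simplicial sets and observe that they agree tautologically.

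First, I would write out the $n$-simplices of $(\ev_0)_{\ast}(p \circ \pr_{\cX})^{\ast}(\leftnat{\cY})$ over a map $\sigma: \Delta^n \to \cT^{\op}$. Since $(\ev_0)_*$ is by construction right adjoint to pullback along $\ev_0: \Ar(\cT^{\op})^\sharp \times_{\cT^\op} \leftnat{\cX} \to (\cT^{\op})^{\sharp}$ in $\sSet^+$, such an $n$-simplex corresponds to a map of marked simplicial sets
\[ (\Delta^n)^{\sharp} \times_{(\cT^{\op})^{\sharp}} \Ar(\cT^{\op})^\sharp \times_{\cT^\op} \leftnat{\cX} \longrightarrow (p \circ \pr_\cX)^*(\leftnat{\cY}) \]
over $\Ar(\cT^{\op})^\sharp \times_{\cT^\op} \leftnat{\cX}$. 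By the definition of pullback in $\sSet^+$, such a map is the same datum as a marked map
\[ (\Delta^n)^\sharp \times_{(\cT^\op)^\sharp} \Ar(\cT^\op)^\sharp \times_{\cT^\op} \leftnat{\cX} \longrightarrow \leftnat{\cY} \]
over $\cB$ (the marking on the source is inherited from $\Ar(\cT^\op)^\sharp$ and $\leftnat{\cX}$, and a marked map into $\leftnat{\cY}$ just asks that marked edges go to $q$-cocartesian edges over $\cT^\op$).

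Second, I would compare this with the $n$-simplices of $\underline{\Fun}_{/\cB,\cT}(\cX,\cY)$ over $\sigma$. By the defining pullback square, these are pairs consisting of an $n$-simplex of $\underline{\Fun}_\cT(\cX,\cY)$ over $\sigma$ together with a compatibility with the cocartesian section $\sigma_p$ under $q_\circ$; unwinding the internal-hom formula of \cite[Def.~3.2]{Exp2}, the first datum is a marked map $(\Delta^n)^\sharp \times_{(\cT^\op)^\sharp} \Ar(\cT^\op)^\sharp \times_{\cT^\op} \leftnat{\cX} \to \leftnat{\cY}$ over $\cT^\op$, and the compatibility condition with $\sigma_p$ is exactly that the composite to $\cT^\op$ factors through $p: \cX \to \cB$. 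This matches the description obtained in the previous paragraph on the nose.

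Third, I would check that the identification is natural in $\sigma$ (so that we get an isomorphism of underlying simplicial sets over $\cT^{\op}$) and that the markings match: an edge on either side is marked exactly when it encodes a natural transformation whose components are $q$-cocartesian over a cocartesian edge of $\cT^\op$. Both of these checks are formal once the bijection on simplices is in hand. The only subtle point---and the step most worth writing out carefully---is bookkeeping the fact that the marking on $\Ar(\cT^\op)^\sharp \times_{\cT^\op} \leftnat{\cX}$ in the span of the lemma is the \emph{product} marking (i.e.\ both coordinates must be marked), which is precisely what the formula for the internal hom $\underline{\Fun}_\cT(\cX,\cY)$ uses; any mismatch here would give a different $\cT$-$\infty$-category. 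Granting this, the asserted isomorphism of marked simplicial sets is immediate.
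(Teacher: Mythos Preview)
Your approach is correct and essentially the same as the paper's: the paper unwinds the adjunction defining $(\ev_0)_\ast$ against an arbitrary marked test object $K \to (\cT^\op)^\sharp$ and reads off the isomorphism by Yoneda, which is exactly your probe-and-match strategy done in one stroke. One small slip: to compute the underlying $n$-simplices you should take $K = (\Delta^n)^\flat$ rather than $(\Delta^n)^\sharp$ (the latter imposes extra marking constraints on the domain of the adjoint map); your separate step (3) on markings then corresponds to taking $K = (\Delta^1)^\sharp$.
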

\begin{proof}
By definition, given a map of marked simplicial sets $K \to (\cT^\op)^\sharp$, we have natural bijections
\begin{align*}
& \Hom_{/(\cT^\op)^\sharp}(K, (\ev_0)_{\ast} (p \circ \pr_{\cX})^\ast (\leftnat{\cY})) \cong \Hom_{/\leftnat{\cB}}(K \times_{\cT^\op} \Ar(\cT^\op)^\sharp \times_{\cT^\op} \leftnat{\cX}, \leftnat{\cY}) \\
& \cong \Hom_{/(\cT^\op)^\sharp}(K \times_{\cT^\op} \Ar(\cT^\op)^\sharp \times_{\cT^\op} \leftnat{\cX}, \leftnat{\cY})
\times_{ \Hom_{/(\cT^\op)^{\sharp}}(K \times_{\cT^\op} \Ar(\cT^\op)^{\sharp} \times_{\cT^\op} \leftnat{\cX}, \leftnat{\cB}) }
\{\phi \circ \pr\}
\end{align*}
yielding an isomorphism of marked simplicial sets over $\cT^\op$
\[ (\ev_0)_{\ast} (p \circ \pr_{\cX})^\ast (\leftnat{Y}) \cong \leftnat{\underline{\Fun}_{/\cB,\cT}(\cX,\cY)}. \]
\end{proof}

\begin{theorem} \label{thm:PairingConstructionUniversalProperty}
We have a canonical equivalence of $\cT$-$\infty$-categories
\[ \underline{\Fun}_{/\cB,\cT}(\cC,\widetilde{\Fun}_{\cB,\cT}(\cX,\cY)) \simeq \underline{\Fun}_{/\cB,\cT}(\cC \times_{\cB} \cX,\cY) \]
natural in $\cT$-flat fibrations $p: \cX \to \cB$ and $\cT$-fibrations $\cC, \cY \to \cB$.
\end{theorem}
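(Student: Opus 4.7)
The plan is to unwind both sides into $\Hom$ sets in $\sSet^+_{/\leftnat{\cB}}$ via the span-construction formulas from \cref{lem:RelativeFunctorsViaSpanConstr} and \cref{conthm:PairingConstructionRecalled}, and then to compare the two resulting test objects through an explicit marked homotopy equivalence. Testing against an arbitrary $K \in \sSet^+_{/(\cT^{\op})^{\sharp}}$ and abbreviating $Z_K(\cA) := K \times_{\cT^\op} \Ar(\cT^\op)^\sharp \times_{\cT^\op} \leftnat{\cA}$, \cref{lem:RelativeFunctorsViaSpanConstr} applied to the right-hand side yields
\[ \Hom_{/(\cT^\op)^\sharp}(K, \underline{\Fun}_{/\cB,\cT}(\cC \times_\cB \cX, \cY)) \cong \Hom_{/\leftnat{\cB}}(Z_K(\cC) \times_{\leftnat{\cB}} \leftnat{\cX}, \leftnat{\cY}), \]
using the identification $\leftnat{\cC \times_\cB \cX} \cong \leftnat{\cC} \times_{\leftnat{\cB}} \leftnat{\cX}$. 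For the left-hand side, \cref{lem:RelativeFunctorsViaSpanConstr} combined with the pullback--pushforward adjunction built into the pairing construction produces
\[ \Hom_{/(\cT^\op)^\sharp}(K, \underline{\Fun}_{/\cB,\cT}(\cC, \widetilde{\Fun}_{\cB,\cT}(\cX, \cY))) \cong \Hom_{/\leftnat{\cB}}(Z_K(\cC) \times_{\ev_0, \leftnat{\cB}} E, \leftnat{\cY}), \]
where $E = (\Ar^{\cocart}(\cB) \times_\cB \cX, \sE)$, the fiber product is formed via $\ev_0 \colon E \to \leftnat{\cB}$, and the combined test object is sent to $\leftnat{\cB}$ via $p \circ \pr_\cX$.

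The comparison now reduces to showing that the section $s \colon \leftnat{\cX} \hookrightarrow E$ sending $x \mapsto (\id_{p(x)}, x)$ is a marked homotopy equivalence of marked simplicial sets over $\leftnat{\cB}$, with $\pr_\cX$ as its strict left inverse. I would construct a witnessing homotopy $H \colon \Delta^1 \times E \to E$ from $\id_E$ to $s \circ \pr_\cX$ whose value on the generating edge of $\Delta^1$ at a vertex $(\alpha \colon b \to p(x), x) \in E$ is the edge in $E$ given by the commutative square in $\cB$ with top and left edges both equal to $\alpha$ and bottom and right edges both equal to $\id_{p(x)}$, paired with the degenerate edge on $x$ in $\cX$. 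Two verifications are required: (i) $H$ lies over $\leftnat{\cB}$ with respect to $p \circ \pr_\cX$, since each intermediate edge projects to the degenerate edge on $p(x)$; and (ii) each intermediate edge is marked in $\sE$, since its $\ev_0$-component is $\alpha$ (a $\cT$-cocartesian edge in $\cB$, hence marked in $\leftnat{\cB}$) and its $\pr_\cX$-component is degenerate on $x$. Coherence on higher simplices comes from exhibiting $H$ as adjoint to the natural transformation $\id_E \Rightarrow s \circ \pr_\cX$ whose components are the edges just described.

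Taking fiber products over $\leftnat{\cB}$ with $Z_K(\cC)$ converts $s$ into the natural inclusion between the two test objects above and propagates $H$ to a marked homotopy over $\leftnat{\cB}$. Since $\leftnat{\cY}$ is fibrant in the slice model structure on $\sSet^+_{/\leftnat{\cB}}$, this marked homotopy equivalence induces a bijection on $\Hom$ sets natural in $K$, giving the desired equivalence of $\cT$-$\infty$-categories. Functoriality of each piece yields the stated naturality in $\cC$, $\cX$, and $\cY$. The main obstacle I anticipate is the marking verification (ii) together with the coherent extension of $H$ to higher simplices; once this is secure, everything else is formal manipulation of span adjunctions.
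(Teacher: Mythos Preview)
Your unwinding of both sides into $\Hom$-sets via the span formulas is correct, and the section $s\colon \leftnat{\cX} \hookrightarrow E$ is indeed a marked homotopy equivalence over $\leftnat{\cB}$ with respect to $p \circ \pr_\cX$. The gap lies in the step ``taking fiber products over $\leftnat{\cB}$ with $Z_K(\cC)$ \dots\ propagates $H$ to a marked homotopy.''

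The marked simplicial set $E$ carries two distinct maps to $\leftnat{\cB}$: the map $\ev_0$ sending $(\alpha\colon b \to p(x),\, x) \mapsto b$, and the map $p \circ \pr_\cX$ sending $(\alpha,\, x) \mapsto p(x)$. Your homotopy $H$ respects the latter (its $\pr_\cX$-component is degenerate) but \emph{not} the former: the $\ev_0$-image of the edge you describe at $(\alpha, x)$ is precisely $\alpha$, not a degeneracy. Since the fiber product $Z_K(\cC) \times_{\ev_0, \leftnat{\cB}} E$ is formed along $\ev_0$, the homotopy $H$ does not base-change to it; applying $Z_K(\cC) \times_{\leftnat{\cB}} (-)$ to $\Delta^1 \times E \xrightarrow{H} E$ is not even well-typed.

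To build the required homotopy on the full fiber product, one must move the $Z_K(\cC)$-component simultaneously: given $(k,\, \gamma\colon t_0 \to t_1,\, c,\, \alpha,\, x)$ with $\alpha$ lying over $\beta\colon t_1 \to t_2$ in $\cT^\op$, one replaces $\gamma$ by $\beta \circ \gamma$ and $c$ by its cocartesian pushforward along $\beta$ in $\cC$. This absorption of the $\Ar^{\cocart}(\cB)$-factor into the $\Ar(\cT^\op)$-factor via composition is exactly the content of \cite[Lem.~9.12]{Exp2}, which the paper invokes after first reducing to $\cC = \cB$ via \cref{prop:BaseChangePairingConstruction}. Your approach is salvageable along these lines, but that absorption step is the substantive point and does not follow from the homotopy on $E$ alone.
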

\begin{proof}
By \cref{prop:BaseChangePairingConstruction}, we may assume $\cC = \cB$ without loss of generality. We then adopt essentially the same strategy that we used to show \cite[Eqn.~9.12.1]{Exp2} (which is the equivalence when $\cY = \cB \times_{\cT^\op} \cE$) by considering the diagram of marked simplicial sets
\[ \begin{tikzcd}
\Ar(\cT^\op)^\sharp \times_{\cT^\op} \leftnat{\cX} \ar{r}{(j,\id)} \ar{d}{(\id,p)} & (\Ar^{\cocart}(\cB) \times_{\cB} \cX, \sE) \ar{r}{p \circ \pr_{\cX}} \ar{d}{\ev_0} & \leftnat{\cB} \\
\Ar(\cT^\op)^\sharp \times_{\cT^\op} \leftnat{\cB} \ar{d}{\ev_0} \ar{r}{\pr_{\cB}} & \leftnat{\cB} \\
(\cT^\op)^\sharp
\end{tikzcd} \]
where $j$ is the composite $\Ar(\cT^\op) \times_{\cT^\op} \cX \xto{\pr_{\cX}} \cX \xto{p} \cB \xto{\iota} \Ar^{\cocart}(\cB)$, $\iota$ being the identity section. Let
\[ i: \Ar(\cT^\op)^\sharp \times_{\cT^\op} \leftnat{\cX} \to (\Ar(\cT^\op)^\sharp \times_{\cT^\op} \leftnat{\cB}) \times_{\leftnat{\cB}} (\Ar^{\cocart}(\cB) \times_{\cB} \cX, \sE) \]
denote the induced map to the pullback. By \cite[Lem.~9.12]{Exp2}, $i$ is a homotopy equivalence with respect to the projection to $\cX$. By \cite[Lem.~2.27]{Exp2} and \cref{lem:RelativeFunctorsViaSpanConstr}, we obtain an equivalence of $\cT$-$\infty$-categories
\[ \underline{\Fun}_{/\cB,\cT}(\cB,\widetilde{\Fun}_{\cB,\cT}(\cX,\cY)) \xto{\simeq} \underline{\Fun}_{/\cB,\cT}(\cX,\cY). \]
\end{proof}

\begin{rem} \label{rem:FromUniversalPropertyToAdjointExistence}
In the statement of \cref{thm:PairingConstructionUniversalProperty}, if we replace the span
\[ \begin{tikzcd}
\leftnat{\cB} & (\Ar^{\cocart}(\cB) \times_{\cB} \cX, \sE) \ar{l}[swap]{\ev_0} \ar{r}{p \circ \pr_{\cX}} & \leftnat{\cB}
\end{tikzcd}  \]
with
\[ \begin{tikzcd}
\leftnat{\cB} & (\Ar^{\cocart}(\cB) \times_{\cB} \cX, \sE) \ar{l}[swap]{\ev_0} \ar{r}{\pr_{\cX}} & \leftnat{\cX}
\end{tikzcd} \]
then the same argument as in the proof of \cref{thm:PairingConstructionUniversalProperty} shows that for all $\cT$-fibrations $\cD \to \cX$, we have a canonical and natural equivalence\footnote{Here, $(\ev_0)_* (\pr_{\cX})^*: \sSet^+_{/\leftnat{\cX}} \to \sSet^+_{/\leftnat{\cB}}$ and $(\ev_0)_* (\pr_{\cX})^* \cD \coloneq (\ev_0)_* (\pr_{\cX})^* (\leftnat{\cD})$ regarded as a $\cT$-fibration over $\cB$.}
\[ \underline{\Fun}_{/\cB, \cT}(\cC, (\ev_0)_* (\pr_{\cX})^* \cD) \xto{\simeq} \underline{\Fun}_{/\cX, \cT}(\cC \times_{\cB} \cX, \cD). \]
Passing first to cocartesian sections and then to mapping spaces, this shows that at the level of underlying $\infty$-categories, $(\ev_0)_* (\pr_{\cX})^*: (\Cat_{\cT})^{/\cX} \to (\Cat_{\cT})^{/\cB}$ computes the right adjoint to $p^*$. This justifies the terminology of ``$\cT$-flat fibration'' since these are indeed exponentiable in the parametrized sense. Moreover, we then see that
\[ \widetilde{\Fun}_{\cB,\cT}(\cX,-) \simeq p_* p^*(-) \]
as endofunctors of $(\Cat_{\cT})^{/\cB}$.
\end{rem}

\begin{rem} \label{rem:FunctorPairingComparison}
Suppose $\cX, \cY \to \cB$ are $\cT$-cocartesian fibrations and let $\underline{\Fun}_{\cB}(\cX,\cY)$ denote the internal hom construction of \cite[\S 3]{Exp2},\footnote{We mildly abuse our notational conventions by writing $\underline{\Fun}_{\cB}(-,-)$ instead of $\underline{\Fun}_{\cB^\op}(-,-)$ as we would do if $\cB = \ast_{\cT} = \cT^{\op}$.} so that $\underline{\Fun}_{\cB}(\cX,\cY) \to \cB$ is a $\cT$-cocartesian fibration. Consider the morphism of spans
\[ \begin{tikzcd}
& (\Ar^{\cocart}(\cB) \times_{\cB} \cX, \sE) \ar{rd} \ar{ld} \ar{d}{(i,\id)} & \\
\leftnat{\cB} & \Ar(\cB)^{\sharp} \times_{\cB} \leftnat{\cX} \ar{l} \ar{r} & \leftnat{\cB}
\end{tikzcd} \]
in which $i$ is the inclusion $\Ar^{\cocart}(\cB) \subset \Ar(\cB)$. Then this morphism induces a $\cT$-functor over $\cB$
\[ \rho: \underline{\Fun}_{\cB}(\cX,\cY) \to \widetilde{\Fun}_{\cB,\cT}(\cX,\cY). \]
that upon passage to cocartesian sections in the source, regarded as $\cT$-sections in the target, induces the inclusion
\[ \Fun_{\cB}(\cX, \cY) \to \Fun_{/\cB,\cT}(\cX,\cY). \]
\end{rem}

Beware that even if $\cX$ is in addition $\cT$-cartesian so that $\widetilde{\Fun}_{\cB,\cT}(\cX,\cY)$ is $\cT$-cocartesian, $\rho$ will not preserve cocartesian edges in general; indeed, one observes that the above functor $(i,\id)$ does not carry the marking $\sE'$ of \cref{rec:PairingConstructionCocartesianFunctoriality} to marked edges in the target. Nonetheless, we have the following proposition.

\begin{proposition} \label{prop:PairingConstructionReducesToFunctorInternalHomWhenSourceConstant}
In the situation of \cref{rem:FunctorPairingComparison}, if $\cX \simeq \cB \times_{\cT^\op} \cK$ for some $\cT$-$\infty$-category $\cK$, then the comparison $\cT$-functor $\rho$ implements an equivalence
\[ \rho: \underline{\Fun}_{\cB}(\cB \times_{\cT^\op} \cK,\cY) \xto{\simeq} \widetilde{\Fun}_{\cB,\cT}(\cB \times_{\cT^\op} \cK,\cY) \]
of $\cT$-cocartesian fibrations over $\cB$. Consequently, for every $\cT$-cocartesian fibration $\cC \to \cB$, the equivalence of \cref{thm:PairingConstructionUniversalProperty} restricts to
\[ \underline{\Fun}_{\cB}(\cC, \widetilde{\Fun}_{\cB,\cT}(\cB \times_{\cT^\op} \cK,\cY)) \simeq \underline{\Fun}_{\cB}(\cC \times_{\cT^\op} \cK, \cY). \]
\end{proposition}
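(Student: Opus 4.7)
The plan is to establish the equivalence $\rho$ by reducing to a parametrized-fiberwise check over $\cB$, separately verifying that $\rho$ preserves cocartesian edges via the explicit pushforward formula in \cref{rec:PairingConstructionCocartesianFunctoriality}, and then deriving the consequence by combining $\rho$ with the standard internal-hom adjunction for $\underline{\Fun}_{\cB}$.

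First I would note that both sides are $\cT$-cocartesian fibrations over $\cB$: the internal hom $\underline{\Fun}_{\cB}(\cB \times_{\cT^\op} \cK, \cY)$ by the general internal-hom property, and $\widetilde{\Fun}_{\cB,\cT}(\cB \times_{\cT^\op} \cK, \cY)$ by \cref{rec:PairingConstructionCocartesianFunctoriality}, since the projection $\cB \times_{\cT^\op} \cK \to \cB$ is $\cT$-cartesian with cartesian edges those of the form $(e,\id_k)$. By \cite[Lem.~2.27]{Exp2} it then suffices to check that $\rho$ is an equivalence on each parametrized fiber over $b \in \cB_t$. \cref{cor:PairingFibers} identifies the fiber of $\widetilde{\Fun}_{\cB,\cT}(\cB \times_{\cT^\op} \cK, \cY)$ over $b$ with $\underline{\Fun}_{\cT^{/t}}(\cK_{\underline{t}}, \cY_{\underline{b}})$, using the canonical equivalence $(\cB \times_{\cT^\op} \cK)_{\underline{b}} \simeq \cK_{\underline{t}}$ coming from the trivial fibration $\underline{b} \xto{\simeq} (\cT^{/t})^{\op}$. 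The same formula holds for the corresponding fiber of $\underline{\Fun}_{\cB}(\cB \times_{\cT^\op} \cK, \cY)$, and tracing through the morphism of spans in \cref{rem:FunctorPairingComparison} shows that $\rho$ restricts fiberwise to the canonical identification.

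To show $\rho$ preserves cocartesian edges, I would appeal to the description from \cref{rec:PairingConstructionCocartesianFunctoriality}: for a fiberwise morphism $f: b \to b'$ in $\cB_t$, cocartesian edges in $\widetilde{\Fun}_{\cB,\cT}(\cX,\cY)$ implement the operation $F \mapsto f_! \circ F \circ f^{\ast}$ with $f^{\ast}: \cX_{\underline{b'}} \to \cX_{\underline{b}}$ the $\cT^{/t}$-functor encoded by the cartesian fibration structure. When $\cX = \cB \times_{\cT^\op} \cK$, this pullback is canonically equivalent to $\id_{\cK_{\underline{t}}}$, so the formula collapses to $F \mapsto f_! \circ F$, which is the pushforward inside $\underline{\Fun}_{\cB}(\cB \times_{\cT^\op} \cK, \cY)$. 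The compatibility for cocartesian lifts over morphisms of $\cT^{\op}$ is handled analogously using that the $\cT$-cocartesian structures on $\cB$ and on $\cB \times_{\cT^\op} \cK$ are compatibly intertwined.

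The main obstacle is the bookkeeping involved in identifying $\rho$ fiberwise with the canonical equivalence of \cref{cor:PairingFibers}; this requires carefully chasing the marked simplicial set definitions of both constructions and exploiting that the $\ev_0$-cartesian edges which get marked in $\sE'$ project to the identity when $\cX$ is a product. For the consequence, given a $\cT$-cocartesian fibration $\cC \to \cB$, postcomposition with the cocartesian equivalence $\rho$ yields an equivalence
\[ \underline{\Fun}_{\cB}(\cC, \underline{\Fun}_{\cB}(\cB \times_{\cT^\op} \cK, \cY)) \xto{\simeq} \underline{\Fun}_{\cB}(\cC, \widetilde{\Fun}_{\cB,\cT}(\cB \times_{\cT^\op} \cK, \cY)), \]
and combining with the standard internal-hom adjunction $\underline{\Fun}_{\cB}(\cC, \underline{\Fun}_{\cB}(\cX, \cY)) \simeq \underline{\Fun}_{\cB}(\cC \times_{\cB} \cX, \cY)$ applied to $\cX = \cB \times_{\cT^\op} \cK$ (so $\cC \times_{\cB} \cX = \cC \times_{\cT^\op} \cK$) gives the asserted restricted equivalence, which by construction is the restriction of the equivalence of \cref{thm:PairingConstructionUniversalProperty}.
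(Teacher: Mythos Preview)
Your proposal is correct and follows essentially the same strategy as the paper: first argue that $\rho$ is a morphism of $\cT$-cocartesian fibrations over $\cB$, then reduce to a fiberwise equivalence check. The main difference lies in how each argument executes these two steps.

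For cocartesian edge preservation, you invoke the explicit pushforward formula of \cref{rec:PairingConstructionCocartesianFunctoriality} and observe that $f^\ast$ becomes the identity when $\cX$ is a product. The paper instead argues directly at the level of markings: since fiberwise cartesian, cocartesian, and equivalence edges all coincide in $\cB \times_{\cT^\op} \cK$, the map $(i,\id)$ carries the class $\sE'$ into the marked edges of $\Ar(\cB)^\sharp \times_{\cB} \leftnat{\cX}$, so $\rho$ is automatically a morphism of cocartesian fibrations. This is a bit cleaner, avoiding any separate bookkeeping for the cocartesian lifts over $\cT^\op$.

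For the fiberwise check, you compute both parametrized fibers via \cref{cor:PairingFibers} and then must verify that $\rho$ restricts to the canonical identification, which you rightly flag as the main bookkeeping obstacle. The paper sidesteps this entirely: after base-changing so that $b$ is initial, it passes to cocartesian sections and observes that the cocartesian sections of $\widetilde{\Fun}_{\cB,\cT}(\cX,\cY)$ are precisely those $\cT$-functors over $\cB$ sending fiberwise cartesian edges to fiberwise cocartesian edges, which in the product case is exactly $\Fun_{\cB}(\cX,\cY)$. This avoids any span-chasing. Your route is more explicit about the fibers but pays for it with the compatibility check; the paper's route trades that check for a universal-property argument.
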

\begin{proof}
The consequence will follow immediately from the universal property of $\underline{\Fun}_{\cB}(-,-)$ once we establish the first claim. For this, first observe that since $\cX \simeq \cB \times_{\cT^\op} \cK$, $\cX$ is both a $\cT$-cocartesian and $\cT$-cartesian fibration over $\cB$ such that the fiberwise cartesian edges, fiberwise cocartesian edges, and fiberwise equivalences all coincide in $\cX$. In particular, the functor $(i,\id)$ in \cref{rem:FunctorPairingComparison} carries the class $\sE'$ of \cref{rec:PairingConstructionCocartesianFunctoriality} into the marked edges of $\Ar(\cB)^{\sharp} \times_{\cB} \leftnat{\cX}$, so $\rho$ is a morphism of $\cT$-cocartesian fibrations. It therefore suffices to check the claimed equivalence fiberwise.

Given $b \in \cB$ over $t \in \cT^\op$, we may replace $\cB$ by $\cB^{b/}$ and $\cT$ by $(\cT^{/t})^\op$, so it further suffices to check that $\rho$ induces an equivalence upon passage to cocartesian sections. But this is the map
\[ \Fun_{\cB}(\cX, \cY) \xto{\simeq} \Fun^{\cT\text{-}\cocart}_{/\cB,\cT}(\cX,\cY). \]
Indeed, for any $\cT$-cartesian fibration $\cX$ the cocartesian sections of $\widetilde{\Fun}_{\cB,\cT}(\cX,\cY)$ is the full subcategory of $\Fun_{/\cB,\cT}(\cX,\cY)$ spanned by those $\cT$-functors $F: \cX \to \cY$ over $\cB$ that carry fiberwise cartesian edges to fiberwise cocartesian edges, but in our case $F$ equivalently preserves cocartesian edges.
\end{proof}

\subsection{Application: parametrized (co)limits in section \texorpdfstring{$\cT$-$\infty$}{T-infinity-}-categories}

We next use the $\cT$-pairing construction to analyze $\cT$-limits and $\cT$-colimits in a $\cT$-$\infty$-category of sections. Actually, we work in somewhat greater generality: given a $\cT$-cocartesian fibration $\cX \to \cB$ and a $\cT$-fibration $\cC \to \cB$, we will study $\cT$-(co)limits in $\underline{\Fun}_{/\cB,\cT}(\cC, \cX)$ (\cref{thm:ParametrizedLimitsAndColimitsInSectionCategories}). First, we introduce some terminology concerning relative adjunctions, extending \cite[Def.~8.3]{Exp2}.

\begin{definition} \label{def:RelativeParametrizedAdjunction}
Let $\cX, \cY \to \cB$ be $\cT$-fibrations and let
\[ \adjunct{F}{\cX}{\cY}{G} \]
be a relative adjunction with respect to the structure maps to $\cB$ (in the sense of \cite[Def.~7.3.2.2]{HA}). We then say that $F \dashv G$ is a \emph{$\cB$-relative $\cT$-adjunction} if $F$ and $G$ are $\cT$-functors.
\end{definition}

\begin{remark}[Stability under base-change] \label{rem:BaseChangeRelativeParametrizedAdjunction}
Suppose
$$\adjunct{F}{\cX}{\cY}{G}$$
is a $\cB$-relative $\cT$-adjunction and $\phi: \cC \to \cB$ is a $\cT$-functor. By \cite[Prop.~7.3.2.5]{HA}, the pullback
$$\adjunct{F_{\cC}}{\cX \times_{\cB} \cC}{\cY \times_{\cB} \cC}{G_{\cC}}$$
is then a $\cC$-relative $\cT$-adjunction.
\end{remark}

The following lemma illustrates the basic asymmetry between $\cB$-relative left and right $\cT$-adjoints that should already be familiar from the theory of relative adjunctions (compare \cite[Prop.~7.3.2.6]{HA} versus \cite[Prop.~7.3.2.11]{HA}.)

\begin{lemma} \label{lem:RelativeAdjointExistence}
Suppose $\cX, \cY \to \cB$ are $\cT$-cocartesian fibrations. 
\begin{enumerate}
\item Let $F: \cX \to \cY$ be a morphism of $\cT$-cocartesian fibrations over $\cB$. Then $F$ admits a $\cB$-relative right $\cT$-adjoint $R: \cY \to \cX$ if and only if for all $b \in \cB_t$, the parametrized fiber $F_{\underline{b}}: \cX_{\underline{b}} \to \cY_{\underline{b}}$ admits a right $\cT^{/t}$-adjoint $R_{\underline{b}}$.
\item Let $F: \cX \to \cY$ be a $\cT$-functor over $\cB$. Then $F$ admits a $\cB$-relative left $\cT$-adjoint $L: \cY \to \cX$ if and only if for all $b \in \cB_t$, the parametrized fiber $F_{\underline{b}}: \cX_{\underline{b}} \to \cY_{\underline{b}}$ admits a left $\cT^{/t}$-adjoint $L_{\underline{b}}$, and for all fiberwise morphisms $f: b \to b'$ in $\cB_t$, the natural transformation
\[ \begin{tikzcd}
\cY_b \ar{r}{L_b} \ar{d}[swap]{f_!} & \cX_b \ar{d}{f_!} \\
\cY_{b'} \ar{r}[swap]{L_{b'}} \ar[Rightarrow, shorten <=10pt, shorten >=10pt]{ru} & \cX_{b'}
\end{tikzcd} \quad \text{adjoint to} \quad
\begin{tikzcd}
\cY_b \ar{d}[swap]{f_!} \ar[Rightarrow, shorten <=10pt, shorten >=10pt]{rd} & \cX_b \ar{l}[swap]{F_b} \ar{d}{f_!} \\
\cY_{b'} & \cX_{b'} \ar{l}{F_{b'}}
\end{tikzcd} \]
is an equivalence. Moreover, in this case $L$ is a morphism of $\cT$-cocartesian fibrations over $\cB$.
\end{enumerate}
\end{lemma}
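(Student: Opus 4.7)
The plan is to reduce both parts to the classical relative adjoint existence result \cite[Prop.~7.3.2.6]{HA} (and its left-adjoint counterpart), combined with pullback stability of relative adjunctions \cite[Prop.~7.3.2.5]{HA}. Throughout, the essential bookkeeping is to keep track of two distinct Beck--Chevalley conditions: one along restriction morphisms in $\cT$, and one along fiberwise morphisms in $\cB_t$.

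For the forward directions of both parts, I would pull back the given $\cB$-relative $\cT$-adjunction along $\underline{b} \to \cB$. Using \cite[Prop.~7.3.2.5]{HA}, the pullback is a relative adjunction in which both adjoints remain $\cT$-functors. Composing with the trivial fibration $\underline{b} \xto{\simeq} \ast_{\cT^{/t}}$ yields the required $\cT^{/t}$-adjunction of parametrized fibers. For part (2), the mate condition is then obtained by unpacking what it means for $L$ to preserve the fiberwise cocartesian edge in $\cY$ over $f: b \to b'$: the comparison $L_{b'} f_! \to f_! L_b$ is precisely the mate of the canonical $\nu: f_! F_b \to F_{b'} f_!$ coming from $F$ being a functor over $\cB$, and $L$ being a morphism of $\cT$-cocartesian fibrations translates to this mate being an equivalence.

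For the reverse direction of part (1), specializing $\cX_{\underline{b}} \to (\cT^{/t})^{\op}$ to its fiber over $\id_t$ extracts ordinary right adjoints $R_b$ from each right $\cT^{/t}$-adjoint $R_{\underline{b}}$. Working fiber-by-fiber over $\cT$, I would apply \cite[Prop.~7.3.2.6]{HA} to $F_t: \cX_t \to \cY_t$, a morphism of cocartesian fibrations over $\cB_t$, to produce a $\cB_t$-relative right adjoint $R_t$. To assemble $\{R_t\}$ into a global $R: \cY \to \cX$ (and show $R$ is a $\cT$-functor), one must verify that for every $\alpha: s \to t$ in $\cT$, the mate $\alpha^\ast R_t \to R_s \alpha^\ast$ is an equivalence. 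This Beck--Chevalley condition for restriction along $\cT$ is precisely what is encoded in the $\cT^{/t}$-functoriality of each $R_{\underline{b}}$, and the same identity then shows $R$ preserves $\cT^\op$-cocartesian edges (noting that $R$ automatically covers morphisms in $\cB$, so $\cT^\op$-cocartesian edges that are cocartesian lifts over $\cB$ are preserved, reducing the check to restriction edges along $\cT$).

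The reverse direction of part (2) proceeds analogously, using the dual statement of \cite[Prop.~7.3.2.6]{HA} to produce fiberwise relative left adjoints $L_t$ from the $L_b$'s, now invoking the \emph{explicit} mate condition to ensure $L_t$ is a morphism of cocartesian fibrations over $\cB_t$; the $\cT^{/t}$-functoriality of $L_{\underline{b}}$ again supplies restriction Beck--Chevalley, globalizing $\{L_t\}$ to $L$. The main obstacle, and the source of the asymmetry between the two parts, is precisely that the second Beck--Chevalley condition (along $\cB_t$) is automatic for right adjoints when $F$ preserves fiberwise cocartesian edges (and moreover is irrelevant to the statement, since $R$ is not required to preserve cocartesian edges over $\cB$), whereas for left adjoints it must be imposed by hand and yields both the existence of the global gluing and the additional conclusion that $L$ is a morphism of $\cT$-cocartesian fibrations.
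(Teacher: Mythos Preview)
Your proposal is correct and follows essentially the same strategy as the paper, reducing to the relative-adjoint criteria of \cite[\S 7.3.2]{HA}. The paper's execution is somewhat more direct: rather than working fiber-by-fiber over $\cT$ and then assembling the $R_t$ (which leaves an implicit coherence step), it applies the (opposite of the) criterion of \cite[Prop.~7.3.2.6]{HA} \emph{globally} over $\cB$, using that a $\cT$-cocartesian fibration over $\cB$ is in particular an ordinary cocartesian fibration over $\cB$; this produces $R$ as a functor over $\cB$ in one step, and the remaining check that $R$ is a $\cT$-functor is exactly your Beck--Chevalley condition along restriction in $\cT$. For part (2) the paper cites \cite[Prop.~7.3.2.11]{HA} (rather than a ``dual of 7.3.2.6'') and observes that the cocartesian/fiberwise factorization of morphisms in $\cB$ reduces the Beck--Chevalley checks there to the two conditions you identified. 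Your decomposition buys a more explicit accounting of the two separate Beck--Chevalley conditions; the paper's buys a shorter proof with no assembly step.
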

\begin{proof} (1): By the opposite of \cite[Prop.~7.3.2.6]{HA}, $F$ admits a $\cB$-relative right adjoint $R$ if and only if for all $b \in \cB$, $F_b$ admits a right adjoint $R_b$. But we are then reduced to showing that $R$ is in addition a $\cT$-functor if and only if $R_{\underline{b}}$ is in addition a $\cT^{/t}$-functor for all $b \in \cB$, which is clear.

\noindent (2): Since every morphism in $\cB$ factors as the composite of a cocartesian edge and a fiberwise morphism, the claim follows directly from \cite[Prop.~7.3.2.11]{HA}.
\end{proof}

We may now state the main result of this subsection. Note that the cases of parametrized limits and colimits involve different hypotheses (as should be familiar from the theory of limits and colimits in $\infty$-categories of algebras).

\begin{theorem} \label{thm:ParametrizedLimitsAndColimitsInSectionCategories}
Let $p: \cX \to \cB$ be a $\cT$-cocartesian fibration, let $\cK$ be a $\cT$-$\infty$-category, and let $r:\cC \to \cB$ be any $\cT$-fibration.
\begin{enumerate}[leftmargin=*]
\item There exists a $\cB$-relative constant $\cK$-indexed diagram $\cT$-functor
\[ \delta_{p}: \cX \to \widetilde{\Fun}_{\cB,\cT}(\cB \times_{\cT^\op} \cK, \cX), \]
which is a morphism of $\cT$-cocartesian fibrations over $\cB$, such that the constant $\cK$-indexed diagram $\cT$-functor
\[ \delta_{r,p}: \underline{\Fun}_{/\cB, \cT}(\cC, \cX) \to \underline{\Fun}_{\cT}(\cK, \underline{\Fun}_{/\cB, \cT}(\cC, \cX)) \]
is given by $\underline{\Fun}_{/\cB, \cT}(\cC,\delta_p)$.
\item Suppose that for every $b \in \cB_t$, $\cX_{\underline{b}}$ admits all $\cK_{\underline{t}}$-indexed $\cT^{/t}$-limits. Then $\delta_p$ admits a $\cB$-relative $\cT$-right adjoint
\[ \mathrm{lim}^{\cB,\cT}: \widetilde{\Fun}_{\cB,\cT}(\cB \times_{\cT^\op} \cK, \cX) \to \cX \]
which for all $b \in \cB_t$ restricts to the $\cT^{/t}$-limit $\cT^{/t}$-functor
$$\mathrm{lim}^{\cT^{/t}}: \underline{\Fun}_{\cT^{/t}}(\cK_{\underline{t}}, \cX_{\underline{b}}) \to \cX_{\underline{b}}.$$
Consequently, $\delta_{r,p}$ admits a $\cT$-right adjoint $\lim^{\cT}$ given by $\underline{\Fun}_{/\cB, \cT}(\cC, \mathrm{lim}^{\cB,\cT})$.
\item A $\cT^{/t}$-functor
\[ \overline{f}: \cK_{\underline{t}}^{\underline{\lhd}} \to \underline{\Fun}_{/\cB,\cT}(\cC, \cX)_{\underline{t}} \]
is a $\cT^{/t}$-limit diagram, resp. $f$ admits a $\cT^{/t}$-limit, if for all $\alpha: s \to t$ in $\cT$ and $c \in \cC_{s}$ over $b \in \cB_{s}$, the composite $\cT^{/s}$-functor
\[ \overline{f_c}: \cK_{\underline{s}}^{\underline{\lhd}} \xto{f_{\underline{\alpha}}} \underline{\Fun}_{/\cB,\cT}(\cC, \cX)_{\underline{s}} \xto{\ev_{\underline{c}}} \underline{\Fun}_{/\cB_{\underline{s}},\cT^{/s}}(\underline{c}, \cX_{\underline{s}}) \simeq \cX_{\underline{b}} \]
is a $\cT^{/s}$-limit diagram, resp. $f_c$ admits a $\cT^{/s}$-limit.
\item Suppose that for every $b \in \cB_t$, $\cX_{\underline{b}}$ admits all $\cK_{\underline{t}}$-indexed $\cT^{/t}$-colimits, and for every morphism $f:b \to b'$ in $\cB_t$, the pushforward $\cT^{/t}$-functor $f_!: \cX_{\underline{b}} \to \cX_{\underline{b'}}$ preserves $\cK_{\underline{t}}$-indexed $\cT^{/t}$-colimits. Then $\delta_p$ admits a $\cB$-relative $\cT$-left adjoint
\[ \colim^{\cB,\cT}: \widetilde{\Fun}_{\cB,\cT}(\cB \times_{\cT^\op} \cK, \cX) \to \cX \]
which for all $b \in \cB_t$ restricts to the $\cT^{/t}$-colimit $\cT^{/t}$-functor
\[ \colim^{\cT^{/t}}: \underline{\Fun}_{\cT^{/t}}(\cK_{\underline{t}}, \cX_{\underline{b}}) \to \cX_{\underline{b}}. \]
Consequently, $\delta_{r,p}$ admits a $\cT$-left adjoint $\colim^{\cT}$ given by $\underline{\Fun}_{/\cB, \cT}(\cC, \colim^{\cB,\cT})$.
\item A $\cT^{/t}$-functor
\[ \overline{f}: \cK_{\underline{t}}^{\underline{\rhd}} \to \underline{\Fun}_{/\cB,\cT}(\cC, \cX)_{\underline{t}} \]
is a $\cT^{/t}$-colimit diagram, resp. $f$ admits a $\cT^{/t}$-colimit, if for all $\alpha: s \to t$ in $\cT$ and $c \in \cC_{s}$ over $b \in \cB_{s}$, the composite $\cT^{/s}$-functor
\[ \overline{f_c}: \cK_{\underline{s}}^{\underline{\rhd}} \xto{f_{\underline{\alpha}}} \underline{\Fun}_{/\cB,\cT}(\cC, \cX)_{\underline{s}} \xto{\ev_{\underline{c}}} \underline{\Fun}_{/\cB_{\underline{s}},\cT^{/s}}(\underline{c}, \cX_{\underline{s}}) \simeq \cX_{\underline{b}} \]
is a $\cT^{/s}$-colimit diagram (resp. $f_c$ admits a $\cT^{/s}$-colimit), and for all $c \to c' \in \cC_{s}$ over $g: b \to b' \in \cB_s$, $g_! \circ \overline{f_c}$ is a $\cT^{/s}$-colimit diagram (resp. $g_!$ preserves the $\cT^{/s}$-colimit of $f_c$).
\end{enumerate}
\end{theorem}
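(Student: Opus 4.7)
The overall strategy is to use the universal property of the $\cT$-pairing construction (\cref{thm:PairingConstructionUniversalProperty}) together with the fiberwise criterion for $\cB$-relative $\cT$-adjoints (\cref{lem:RelativeAdjointExistence}), treating all five parts in sequence. For (1), \cref{prop:PairingConstructionReducesToFunctorInternalHomWhenSourceConstant} identifies $\widetilde{\Fun}_{\cB,\cT}(\cB \times_{\cT^\op} \cK, \cX) \simeq \underline{\Fun}_{\cB}(\cB \times_{\cT^\op} \cK, \cX)$ as $\cT$-cocartesian fibrations over $\cB$, so we take $\delta_p$ to be adjoint to the projection $\cX \times_{\cT^\op} \cK \to \cX$; this is manifestly a morphism of $\cT$-cocartesian fibrations over $\cB$. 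Applying $\underline{\Fun}_{/\cB,\cT}(\cC,-)$ and using \cref{thm:PairingConstructionUniversalProperty} twice (once to unpack the pairing and once to reinterpret $\cK$-indexed diagrams as $\cT$-functors out of $\cK$), we obtain
\[ \underline{\Fun}_{/\cB,\cT}(\cC, \widetilde{\Fun}_{\cB,\cT}(\cB \times_{\cT^\op} \cK, \cX)) \simeq \underline{\Fun}_{/\cB,\cT}(\cC \times_{\cT^\op} \cK, \cX) \simeq \underline{\Fun}_{\cT}(\cK, \underline{\Fun}_{/\cB,\cT}(\cC, \cX)), \]
under which $\underline{\Fun}_{/\cB,\cT}(\cC, \delta_p)$ corresponds to $\delta_{r,p}$.

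For parts (2) and (4), we construct the $\cB$-relative $\cT$-adjoints fiberwise via \cref{lem:RelativeAdjointExistence}. By \cref{cor:PairingFibers}, the parametrized fiber at $b \in \cB_t$ is $\underline{\Fun}_{\cT^{/t}}(\cK_{\underline{t}}, \cX_{\underline{b}})$ and $(\delta_p)_{\underline{b}}$ is the $\cT^{/t}$-constant diagram $\cT^{/t}$-functor. For (2), the fiberwise $\cT^{/t}$-right adjoint is exactly the $\cT^{/t}$-limit functor, whose existence is the stated hypothesis; \cref{lem:RelativeAdjointExistence}(1) thus produces $\mathrm{lim}^{\cB,\cT}$. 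For (4), \cref{rec:PairingConstructionCocartesianFunctoriality} shows that the pushforward $\cT^{/t}$-functor on $\widetilde{\Fun}_{\cB,\cT}(\cB \times_{\cT^\op} \cK, \cX)$ along a fiberwise $f: b \to b'$ is postcomposition by $f_!: \cX_{\underline{b}} \to \cX_{\underline{b'}}$ (the source pullback is trivial as $\cB \times_{\cT^\op} \cK$ is constant over $\cB$); the mate condition of \cref{lem:RelativeAdjointExistence}(2) then translates precisely into the hypothesis that $f_!$ preserves $\cK_{\underline{t}}$-indexed $\cT^{/t}$-colimits. The assertions about $\delta_{r,p}$ follow from (1) by applying $\underline{\Fun}_{/\cB,\cT}(\cC,-)$ to the $\cB$-relative $\cT$-adjunctions.

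For parts (3) and (5), the pointwise criterion reduces, via \cref{prop:BaseChangePairingConstruction} and the identification in (1), to a conservativity/detection statement for the evaluation family $\{\ev_{\underline{c}} \circ (-)_{\underline{\alpha}}\}$ on $\underline{\Fun}_{/\cB_{\underline{t}},\cT^{/t}}(\cC_{\underline{t}}, \cX_{\underline{t}})$; here we use that $\underline{c} \to \cB_{\underline{s}}$ is the parametrized corepresentable at $c$ to identify $\underline{\Fun}_{/\cB_{\underline{s}},\cT^{/s}}(\underline{c}, \cX_{\underline{s}}) \simeq \cX_{\underline{b}}$. For (3), checking the universal property of a $\cT^{/t}$-limit diagram unwinds to fiberwise mapping-space conditions, which the pointwise hypothesis directly furnishes. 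For (5), the same reduction applies; however, the additional hypothesis that $g_! \circ \overline{f_c}$ is a $\cT^{/s}$-colimit diagram supplies the cocartesian-compatibility needed to upgrade fiberwise-assembled colimit diagrams to $\cT^{/s}$-colimit diagrams in the section category --- exactly the asymmetry reflected in \cref{lem:RelativeAdjointExistence}.

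The main obstacle will be part (4), specifically verifying that the parametrized pushforward on the pairing construction coincides with postcomposition by $f_!$ (which requires a careful unwinding of the marking $\sE'$ from \cref{rec:PairingConstructionCocartesianFunctoriality}), and then matching the Beck--Chevalley condition of \cref{lem:RelativeAdjointExistence}(2) with $f_!$-preservation of $\cK_{\underline{t}}$-indexed $\cT^{/t}$-colimits. A secondary subtlety lies in justifying the joint conservativity/detection used in (3) and (5), which ultimately rests on the universal property of sections over $\cB$ and the reinterpretation via \cref{thm:PairingConstructionUniversalProperty}.
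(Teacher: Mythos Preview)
Your proposal is correct and follows essentially the same approach as the paper: for (1) you use \cref{prop:PairingConstructionReducesToFunctorInternalHomWhenSourceConstant} to define $\delta_p$ and \cref{thm:PairingConstructionUniversalProperty} to identify $\delta_{r,p}$; for (2) and (4) you invoke \cref{lem:RelativeAdjointExistence} fiberwise and then push the resulting $\cB$-relative $\cT$-adjunction through $\underline{\Fun}_{/\cB,\cT}(\cC,-)$. The only stylistic divergence is in (3) and (5): the paper phrases these as the \emph{partial} version of (2) and (4), regarding $\lim^{\cB,\cT}$ (resp.\ $\colim^{\cB,\cT}$) as a partially-defined $\cB$-relative right (resp.\ left) $\cT$-adjoint whose domain is detected by the hypotheses on $\overline{f_c}$, and then observing that $\underline{\Fun}_{/\cB,\cT}(\cC,-)$ carries partial relative $\cT$-adjoints to partial $\cT$-adjoints (after base-changing to reduce to $\cC = \cB$ via \cref{rem:BaseChangeRelativeParametrizedAdjunction} and invoking \cite[Cor.~8.5]{Exp2}). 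Your ``conservativity/detection for the evaluation family'' framing amounts to the same thing, but the partial-adjoint language makes the asymmetry between (3) and (5) more transparent and avoids having to unwind mapping-space conditions by hand.
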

\begin{proof}
(1): Using \cref{prop:PairingConstructionReducesToFunctorInternalHomWhenSourceConstant}, we may define $\delta_p$ as adjoint to the projection $\cX \times_{\cT^\op} \cK \to \cX$, since this is a morphism of $\cT$-cocartesian fibrations over $\cB$, and by construction this has the indicated property. 

\noindent (2): Under our assumption, the existence of $\lim^{\cB,\cT}$ follows immediately from \cref{lem:RelativeAdjointExistence}(1). For the consequence, note that we have an equivalence of $\cT$-$\infty$-categories
\begin{align*}
\underline{\Fun}_{\cT}(\cK, \underline{\Fun}_{/\cB, \cT}(\cC, \cX)) & \simeq \underline{\Fun}_{/\cB,\cT}(\cC \times_{\cT^\op} \cK, \cX) \\
& \simeq \underline{\Fun}_{/\cB,\cT}(\cC, \widetilde{\Fun}_{\cB,\cT}(\cB \times_{\cT^\op} \cK, \cX))
\end{align*}
where the first equivalence holds by the universal property of $\underline{\Fun}_{\cT}(-,-)$ and the definition of $\underline{\Fun}_{/\cB, \cT}(\cC, \cX)$ as a pullback, and the second equivalence holds by \cref{thm:PairingConstructionUniversalProperty}. It thus suffices to show that $\underline{\Fun}_{/\cB,\cT}(\cC,-)$ covariantly transforms $\cB$-relative $\cT$-adjunctions into $\cT$-adjunctions. For this, by \cref{rem:BaseChangeRelativeParametrizedAdjunction} we may suppose that $\cC = \cB$ without loss of generality, in which case the assertion is \cite[Cor.~8.5]{Exp2}.

\noindent (3): This follows similarly to (2), but where we now consider how $\underline{\Fun}_{/\cB, \cT}(\cC,-)$ transforms $\lim^{\cB,\cT}$ as a partially defined $\cB$-relative right $\cT$-adjoint (under the given hypotheses on $\overline{f_c}$) to $\lim^{\cT}$ as a partially defined right $\cT$-adjoint.

\noindent (4) and (5): These are proven as for (2) and (3) but using \cref{lem:RelativeAdjointExistence}(2) instead.
\end{proof}

\begin{corollary} \label{cor:SectionCategoryAdmitsClassesOfLimits}
Let $\cX \to \cB$ be a $\cT$-cocartesian fibration and let $\cC \to \cB$ be a $\cT$-fibration. Let $\CMcal{K} = \{ \CMcal{K}_t : t \in \cT \} $ be a collection of classes $\CMcal{K}_t$ of small $\cT^{/t}$-$\infty$-categories closed with respect to base-change in $\cT$.
\begin{enumerate}
\item Suppose that for all $b \in \cB_{t}$, $\cX_{\underline{b}}$ admits all $\CMcal{K}_t$-indexed $\cT^{/t}$-limits. Then $\underline{\Fun}_{/\cB,\cT}(\cC, \cX)$ strongly admits all $\CMcal{K}$-indexed $\cT$-limits.
\item Suppose that for all $b \in \cB_{t}$, $\cX_{\underline{b}}$ admits all $\CMcal{K}_t$-indexed $\cT^{/t}$-colimits, and for all $g: b \to b' \in \cB_{t}$, the pushforward $\cT^{/t}$-functor $g_!: \cX_{\underline{b}} \to \cX_{\underline{b'}}$ preserves all $\CMcal{K}_t$-indexed $\cT^{/t}$-colimits. Then $\underline{\Fun}_{/\cB,\cT}(\cC, \cX)$ strongly admits all $\CMcal{K}$-indexed $\cT$-colimits.
\end{enumerate}
\end{corollary}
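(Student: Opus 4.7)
The plan is to reduce to the fiberwise existence assertions of \cref{thm:ParametrizedLimitsAndColimitsInSectionCategories}(2) and (4), applied after base-change to $\cT^{/t}$ for each $t \in \cT$. Unpacking \cref{def-app:StronglyAdmitsAndPreservesColimit}, I must show that for every $t \in \cT$ and every $\cK \in \CMcal{K}_t$, the parametrized fiber $\underline{\Fun}_{/\cB,\cT}(\cC,\cX)_{\underline{t}}$ admits $\cK$-indexed $\cT^{/t}$-(co)limits.

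First I would identify this parametrized fiber. The pullback description of $\underline{\Fun}_{/\cB,\cT}(-,-)$ together with the naturality of $\underline{\Fun}_{\cT}(-,-)$ and of $\sigma_p$ under base-change yields a canonical equivalence
\[ \underline{\Fun}_{/\cB,\cT}(\cC,\cX)_{\underline{t}} \simeq \underline{\Fun}_{/\cB_{\underline{t}},\cT^{/t}}(\cC_{\underline{t}},\cX_{\underline{t}}) \]
of $\cT^{/t}$-$\infty$-categories; when $\cX$ is a $\cT$-cocartesian fibration, this may alternatively be extracted by combining \cref{thm:PairingConstructionUniversalProperty} with \cref{prop:BaseChangePairingConstruction}. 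After this base-change, $\cX_{\underline{t}} \to \cB_{\underline{t}}$ remains a $\cT^{/t}$-cocartesian fibration and $\cC_{\underline{t}} \to \cB_{\underline{t}}$ a $\cT^{/t}$-fibration.

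Fixing $\cK \in \CMcal{K}_t$, I would then apply \cref{thm:ParametrizedLimitsAndColimitsInSectionCategories}(2) (respectively (4)) over the base $\cT^{/t}$ to the data $\cX_{\underline{t}} \to \cB_{\underline{t}}$, $\cC_{\underline{t}} \to \cB_{\underline{t}}$, and $\cK$. The hypothesis for (2) demands that for every $b \in (\cB_{\underline{t}})_s$, the parametrized fiber $(\cX_{\underline{t}})_{\underline{b}} \simeq \cX_{\underline{b}}$ admit $\cK_{\underline{s}}$-indexed $\cT^{/s}$-limits; this follows from our standing assumption on $\cX$ together with the closure of $\CMcal{K}$ under base-change, which ensures $\cK_{\underline{s}} \in \CMcal{K}_s$. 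For (4), the additional requirement that the pushforward $\cT^{/s}$-functors $g_!$ preserve $\cK_{\underline{s}}$-indexed $\cT^{/s}$-colimits for $g: b \to b'$ in $\cB_s$ is furnished by our hypothesis via the same base-change closure argument.

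No step in this reduction is individually difficult; the only subtlety lies in bookkeeping the base-changes to $\cT^{/t}$ and then to $\cT^{/s}$, and in verifying that the closure property of $\CMcal{K}$ supplies precisely the hypotheses required to invoke \cref{thm:ParametrizedLimitsAndColimitsInSectionCategories} fiberwise.
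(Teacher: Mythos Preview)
Your proposal is correct and follows essentially the same route as the paper: base-change the entire setup to $\cT^{/t}$, identify the parametrized fiber $\underline{\Fun}_{/\cB,\cT}(\cC,\cX)_{\underline{t}}$ with $\underline{\Fun}_{/\cB_{\underline{t}},\cT^{/t}}(\cC_{\underline{t}},\cX_{\underline{t}})$, and invoke \cref{thm:ParametrizedLimitsAndColimitsInSectionCategories}(2) and (4) over $\cT^{/t}$, using the closure of $\CMcal{K}$ under base-change to supply the hypotheses at each $s \to t$. The paper's proof records exactly this reduction in a single sentence; your version merely spells out the bookkeeping in more detail.
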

\begin{proof} We show how to deduce (1) from \cref{thm:ParametrizedLimitsAndColimitsInSectionCategories}, the proof of (2) being similar. For this, the only additional point to note is that for any $\cK \in \CMcal{K}_t$, by base-change of the given data to lie over $\cT^{/t}$ we may apply \cref{thm:ParametrizedLimitsAndColimitsInSectionCategories} under our hypotheses to show that $\underline{\Fun}_{/\cB,\cT}(\cC, \cX)_{\underline{t}}$ admits all $\cK$-indexed $\cT^{/t}$-limits.
\end{proof}

\section{Relative parametrized colimits}


In this section and the next we work towards the proof of \cref{thmx:Kan}.

\begin{dfn} \label{dfn:relativeColimit} Suppose we have a commutative diagram of $\cT$-$\infty$-categories
\[ \begin{tikzcd}[row sep=2em, column sep=2em]
\cK \ar{r}{p} \ar[hookrightarrow]{d}{i} & \cC \ar{d}{\pi} \\
\cK^{\underline{\rhd}} \ar{r}{\overline{q}} \ar{ru}{\overline{p}} & \cB
\end{tikzcd} \]
in which $\pi$ is a $\cT$-fibration. Let $q = \pi p = \overline{q} i$. We say that $\overline{p}$ is a \emph{weak $\pi$-$\cT$-colimit diagram} if the $\cT$-functor
\[ \ast_{\cT} \to \ast_{\cT} \times_{\sigma_{\overline{q}}, \cB^{(q,\cT)/}} \cC^{(p,\cT)/}  \]
induced by $\sigma_{\overline{p}}$ is a $\cT$-initial object.

We say that $\overline{p}$ is a \emph{$\pi$-$\cT$-colimit diagram} if the $\cT$-functor
\[ \ast_{\cT} \to \cB^{(\overline{q},\cT)/} \times_{\cB^{(q,\cT)/}} \cC^{(p,\cT)/} \]
induced by $\sigma_{\overline{p}}$ is a $\cT$-initial object. (Here, the projection to the first factor is induced by $\sigma_{\overline{q}'}$ for $\overline{q}'$ given by the composite
\begin{tikzcd}
\cK \star_{\cT^\op} (\Delta^1 \times \cT^\op) \ar{r}{\id \star \mathrm{const}} & \cK^{\underline{\rhd}} \ar{r}{\overline{q}} & \cB.)
\end{tikzcd}

\end{dfn}

\begin{example}
For $\cT = \Delta^0$ and $\cK = \Delta^0$, weak $\pi$-colimit diagrams are locally $\pi$-cocartesian edges, whereas $\pi$-colimit diagrams are $\pi$-cocartesian edges.
\end{example}

\begin{rem} For \cref{dfn:relativeColimit}, $\overline{p}$ is a $\pi$-$\cT$-colimit diagram if and only if the $\cT$-functor
\[ \cC^{(\overline{p},\cT)/} \to \cB^{(\overline{q},\cT)/} \times_{\cB^{(q,\cT)/}} \cC^{(p,\cT)/}  \]
is an equivalence of $\cT$-$\infty$-categories, or equivalently the commutative square
\[ \begin{tikzcd}[row sep=2em, column sep=2em]
\cC^{(\overline{p},\cT)/} \ar{r} \ar{d} & \cC^{(p,\cT)/} \ar{d} \\
\cB^{(\overline{q},\cT)/} \ar{r} & \cB^{(q,\cT)/}
\end{tikzcd} \]
is a homotopy pullback square (using \cref{lm:sliceCategoryFibrationProperties}(1)). This is ultimately because for any $\cT$-category $\cE$, a $\cT$-functor $\sigma: \ast_{\cT} \to \cE$ is a $\cT$-initial object if and only if $\cE^{(\sigma,\cT)/} \to \cE$ is an equivalence.
\end{rem}

We now collect a few lemmas that will feature in the proof of our main result (\cref{prp-app:RelativeColimitExistence}) on the existence of $\pi$-$\cT$-colimits. We first state a parametrized analogue of \cite[Prop.~4.2.1.6]{HTT}.

\begin{lem} \label{lm:sliceCategoryFibrationProperties} Suppose we have a commutative diagram of $\cT$-$\infty$-categories
\[ \begin{tikzcd}[row sep=2em, column sep=2em]
\cK \ar{r}{p} \ar[hookrightarrow]{d}{i} & \cC \ar{d}{\pi} \\
\cL \ar{r}{\overline{q}} \ar{ru}{\overline{p}} & \cB.
\end{tikzcd} \]
in which $i$ is a monomorphism. Let $q = \pi \circ p = \overline{q} \circ i$ and let
\[ \psi: \cC^{(\overline{p},\cT)/} \to \cC^{(p,\cT)/} \times_{\cB^{(q,\cT)/}} \cB^{(\overline{q},\cT)/} \]
denote the induced $\cT$-functor.
\begin{enumerate}
 \item If $\pi$ is a categorical fibration, then
 \[ \phi: \underline{\Fun}_{\cT}(\cL, \cC) \to \underline{\Fun}_{\cT}(\cK, \cC) \times_{\underline{\Fun}_{\cT}(\cK, \cB)} \underline{\Fun}_{\cT}(\cL, \cB) \]
 is a categorical fibration, and $\psi$ is a left fibration.

\item Suppose that $\pi$ is a $\cT$-cocartesian fibration and let $M_C$ denote the $\pi$-cocartesian edges in $\cC$. Suppose that we have fiberwise markings $\{ (M_K)_t \}$ on $\cK$ and $\{ (M_L)_t \}$ on $\cL$ that are stable under base-change (i.e., for all $f: s \to t \in \cT$, $f^\ast (M_K)_t \subset (M_K)_s$). Let $M_K$ be the minimal subset of the edges on $\cK$ closed under composition that contains the cocartesian edges and $\{ (M_K)_t \}$, and similarly define $M_L$. Let $\underline{\Fun}_{\cT}( (\cL, M_L), (\cC, M_C))$, etc. be the full $\cT$-subcategories spanned by those $\cT^{/t}$-functors that preserve the additional markings, and let
\[ \phi': \underline{\Fun}_{\cT}((\cL, M_L), (\cC, M_C)) \to \underline{\Fun}_{\cT}((\cK, M_K), (\cC, M_C)) \times_{\underline{\Fun}_{\cT}(\cK, \cB)} \underline{\Fun}_{\cT}(\cL, \cB) \]
be the restriction of $\phi$. Then if $i: (\cK, M_K) \to (\cL, M_L)$ is a cocartesian equivalence in $\sSet^+_{/\cT^\op}$, $\phi'$ is a trivial fibration. Moreover, if $\overline{p}$ then sends $M_L$ into $M_C$, $\psi$ is a trivial fibration.
\end{enumerate}
\end{lem}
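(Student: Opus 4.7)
\medskip
\noindent\textbf{Proof plan.} The whole lemma should be proved by reducing every assertion to a lifting problem in the cocartesian model structure on $\sSet^+_{/\cT^{\op}}$ and then invoking the SM7 (pushout--product) axiom for the internal hom $\underline{\Fun}_{\cT}(-,-)$ of marked simplicial sets. To set this up, recall that both $\underline{\Fun}_{\cT}(\cK,\cC)$ and the parametrized join $\cK \star_{\cT^{\op}} \cL$ are defined pointwise in $\sSet^+_{/\cT^{\op}}$, and that these two constructions are adjoint in each variable: given a cone $\sigma \colon \ast_{\cT} \to \underline{\Fun}_{\cT}(\cK,\cC)$, maps $\cL \to \cC^{(p,\cT)/}$ over $\cB$ correspond bijectively to extensions of $p$ along $\cK \hookrightarrow \cK \star_{\cT^{\op}} \cL$. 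All lifting problems below will be translated through this adjunction.

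For part~(1), the fact that $\phi$ is a categorical fibration is an instance of SM7: the inclusion $i$ is a cofibration in $\sSet^+_{/\cT^{\op}}$, $\pi$ is a fibration between fibrant objects, and $\underline{\Fun}_{\cT}$ is the internal hom, so the Leibniz construction gives a categorical fibration. To show $\psi$ is a left fibration I would test against a left-anodyne inclusion $\Lambda^n_i \hookrightarrow \Delta^n$ ($0 \leq i < n$). Unwinding the definitions of the slice $\cT$-$\infty$-categories via the adjunction above, such a lifting problem is equivalent to the assertion that $\pi$ has the right lifting property against the pushout--product
\[ \bigl(\Lambda^n_i \star_{\cT^{\op}} \cL\bigr) \cup_{\Lambda^n_i \star_{\cT^{\op}} \cK} \bigl(\Delta^n \star_{\cT^{\op}} \cK\bigr) \hookrightarrow \Delta^n \star_{\cT^{\op}} \cL. \]
The key combinatorial input is that this pushout--product is \emph{left anodyne} in $\sSet_{/\cT^{\op}}$; this reduces fiberwise over $\cT^{\op}$ to the classical analogue \cite[Lem.~2.1.2.3]{HTT} for the usual join, by inspecting the formula $\cK \star_{\cT^{\op}} \cL = \iota_{\ast}(\cK,\cL)$ and using that left anodyne maps are stable under pullback along the right Quillen functor $\iota^{\ast}$. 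Since $\pi$ is a categorical fibration (hence has the RLP against left anodyne maps), $\psi$ is a left fibration.

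For part~(2), I would run essentially the same argument, but now keep track of the markings. Given a monomorphism $A \hookrightarrow B$ of marked simplicial sets, the lifting problem against $\phi'$ transposes to lifting $\pi$ against a pushout--product involving $i \colon (\cK,M_K) \to (\cL,M_L)$ in $\sSet^+_{/\cT^{\op}}$. Since $i$ is assumed a cocartesian equivalence, the pushout--product with the monomorphism $A \hookrightarrow B$ is a trivial cofibration by the pushout--product axiom for the cocartesian model structure, and $\pi$ (a fibration between fibrant objects) has the RLP against it. This gives the trivial fibration $\phi'$. Under the extra hypothesis that $\overline{p}$ carries $M_L$ into $M_C$, the same argument applied to the parametrized-join variant (built on the SM7 analysis from part~(1)) upgrades to showing that $\psi$ is a trivial fibration, since the target cone of $\psi$ then admits compatible markings making the entire lifting take place inside the marked subcategories of functor $\cT$-$\infty$-categories spanned by the marking-preserving functors.

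The main obstacle is the combinatorial lemma asserting that the parametrized-join pushout--product of a left-anodyne inclusion with a cofibration (resp.\ a monomorphism with a marked cocartesian equivalence) is again left anodyne (resp.\ a trivial cofibration in the cocartesian model structure). The formula $-\star_{\cT^{\op}} - = \iota_{\ast}(-,-)$ lets one transfer this from the familiar non-parametrized pushout--product properties of the ordinary join, but it requires a careful check that $\iota^{\ast}$ sends the relevant anodyne/cocartesian-equivalence classes on $\sSet^+_{/\Delta^1 \times \cT^{\op}}$ to themselves, which is exactly the sort of verification that underlies the $\cT^{\op}$-join formalism set up in \cite[\S 4]{Exp2} and on which I would lean throughout.
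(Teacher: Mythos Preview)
Your treatment of $\phi$ in (1) and $\phi'$ in (2) is correct and matches the paper: both reduce to the pushout--product (SM7) axiom for the cocartesian model structure on $\sSet^+_{/\cT^{\op}}$, exactly as you outline (the paper cites \cite[Cor.~3.1.4.3]{HTT} for the stability of trivial cofibrations under pushout--product with cofibrations).

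Your argument for $\psi$, however, has genuine gaps. First, the adjunction you invoke --- that maps into $\cC^{(p,\cT)/}$ correspond to extensions of $p$ along $\cK \hookrightarrow \cK \star_{\cT^{\op}} \cL$ --- does not hold for the slice model used in the statement. The slice $\cC^{(p,\cT)/}$ is the \emph{fat} parametrized slice, defined as a pullback of $\underline{\Fun}_{\cT}(\cK^{\underline{\rhd}},\cC)$; it has no strict join adjunction. Even the thin slice $\cC_{(p,\cT)/}$ has a more complicated adjunction involving an extra $\Ar(\cT^{\op})$ factor (see the isomorphism displayed in the proof of \cref{prop:ParamColimitDecomposition}(1)), so your transposition formula is not what one actually gets. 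Second, your displayed pushout--product has the horn on the wrong side of the join: for an \emph{under}category one tests against $\cL \star (\Lambda^n_i \to \Delta^n)$, not $(\Lambda^n_i \to \Delta^n) \star \cL$. Third, and most seriously, the implication ``$\pi$ is a categorical fibration, hence has the RLP against left anodyne maps'' is false --- take for instance the inclusion $\{0\} \hookrightarrow \Delta^1$, which is a categorical fibration without the RLP against $\{0\} \hookrightarrow \Delta^1$. What \cite[Lem.~2.1.2.3]{HTT} actually gives is that the relevant pushout--product is \emph{inner} anodyne; categorical fibrations do lift against those, so the underlying idea is salvageable in the non-parametrized thin-slice setting, but as written the argument does not go through.

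The paper sidesteps all of this for $\psi$. It first observes that $\psi$ is a categorical fibration by a base-change argument: apply the $\phi$ result to both $i$ and $i^{\underline{\rhd}}$ and pull back. It then notes that a categorical fibration between left fibrations over a common base is automatically a left fibration, so it suffices to show that each individual slice $\cC^{(f,\cT)/} \to \cC$ is a left fibration; this in turn follows from the $\cT$-bifibration structure of $\underline{\Fun}_{\cT}(\cJ^{\underline{\rhd}},\cC) \to \underline{\Fun}_{\cT}(\cJ,\cC) \times_{\cT^{\op}} \cC$ established in \cite{Exp2}. The same base-change maneuver handles $\psi$ in part~(2). This route requires no new join combinatorics at all.
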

\begin{proof} (1): It suffices to show that $\phi$ is a fibration in $\sSet^+_{/\cT^\op}$ where we mark the cocartesian edges. This follows as in the proof of \cite[Lem.~3.5(1)]{Exp2}. Considering this categorical fibration for both $i$ and $i^{\underline{\rhd}}$ then shows $\psi$ is a categorical fibration by a base-change argument. Since a functor between left fibrations over a common base that is a categorical fibration is necessarily a left fibration, to then show that $\psi$ is a left fibration it suffices to show that $\cC^{(f,\cT)/} \to \cC$ is a left fibration for any $\cT$-functor $f: \cJ \to \cC$. But this is a consequence of the $\cT$-functor
$$\underline{\Fun}_{\cT}(\cJ^{\underline{\rhd}}, \cC) \to \underline{\Fun}_{\cT}(\cJ, \cC) \times_{\cT^\op} \cC$$
being a $\cT$-bifibration (cf. \cite[Ex.~7.10]{Exp2} and \cite[Rem.~2.4.7.4]{HTT} for the stronger conclusion that $\cC^{(f,\cT)/} \to \cC$ is a left fibration and not just a $\cT$-cocartesian fibration).

(2): Since $\phi'$ is a categorical fibration by (1), it suffices to show that $\phi'_t$ is an equivalence for all $t \in \cT$. After replacing $\cT^{/t}$ with $\cT$, we thus reduce to checking that
\[ \widehat{\phi}': \Fun_{\cT}((\cL, M_L), (\cC, M_C)) \to \Fun_{\cT}((\cK, M_K), (\cC, M_C)) \times_{\Fun_{\cT}(\cK, \cB)} \Fun_{\cT}(\cL, \cB) \]
is a trivial fibration. Let $A \to B$ be any cofibration of simplicial sets. The relevant lifting problem then transposes to
\[ \begin{tikzcd}
A^{\flat} \times (\cL, M_K) \bigcup_{A^{\flat} \times (\cK, M_K)} B^{\flat} \times (\cK, M_K) \ar{r} \ar{d} & (\cC, M_C) \ar{d} \\
B^{\flat} \times (\cL, M_L) \ar{r} \ar[dotted]{ur} & \cB^{\sharp}.
\end{tikzcd} \]
Now because trivial cofibrations in the cocartesian model structure on $\sSet^+_{/\cT^\op}$ are stable under taking pushout-products with arbitrary cofibrations in $\sSet^+$ \cite[Cor.~3.1.4.3]{HTT}, there exists a dotted lift. The assertion about $\psi$ then follows as in (1) by a base-change argument.
\end{proof}

We have an elementary observation about lifting $\cT$-initial objects along a $\cT$-cocartesian fibration.

\begin{lem} \label{lem:InitialObjectsInCocartesianFibration} Let $\pi: \cC \to \cB$ be a $\cT$-cocartesian fibration and suppose that $\sigma: \cT^\op \to \cB$ is a $\cT$-initial object. Let $\widetilde{\sigma}: \cT^\op \to \cC$ be a $\cT$-functor lift of $\sigma$. Suppose that:
\begin{enumerate}
\item $\widetilde{\sigma}$ is a $\cT$-initial object in $\cT^\op \times_{\sigma, \cB} \cC$.
\item For all $t \in \cT$ and morphisms $f: \sigma(t) \to y \in \cB_t$, the pushforward $\cT^{/t}$-functor $f_!: \cC_{\underline{\sigma(t)}} \to \cC_{\underline{y}}$ preserves $\cT^{/t}$-initial objects.
\end{enumerate}
Then $\widetilde{\sigma}$ is a $\cT$-initial object.
\end{lem}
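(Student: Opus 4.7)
The plan is to verify $\cT$-initiality fiberwise by a direct mapping-space argument. Fix $t \in \cT$ and an object $c \in \cC_t$ lying over $b \coloneq \pi(c) \in \cB_t$; we need to show that $\Map_{\cC_t}(\widetilde{\sigma}(t), c)$ is contractible. Since $\pi_t: \cC_t \to \cB_t$ is a cocartesian fibration, I would use the standard fiber sequence
\[ \Map_{\cC_t}(\widetilde{\sigma}(t), c) \to \Map_{\cB_t}(\sigma(t), b) \]
whose fiber over a morphism $f: \sigma(t) \to b$ is identified via the pushforward with $\Map_{\cC_b}(f_! \widetilde{\sigma}(t), c)$. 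The base is contractible since $\sigma$ is by assumption a $\cT$-initial object, i.e., $\sigma(t)$ is initial in $\cB_t$. It therefore suffices to show that each fiber is contractible, i.e., that $f_! \widetilde{\sigma}(t)$ is initial in $\cC_b$ for each (essentially unique) choice of $f$.

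To verify this, I would first upgrade hypothesis (1) to the assertion that $\widetilde{\sigma}(t)$ defines a $\cT^{/t}$-initial object of the parametrized fiber $\cC_{\underline{\sigma(t)}}$. Concretely, for any $\alpha: s \to t$ in $\cT$, the corresponding object of $\cC_{\underline{\sigma(t)}}$ over $\alpha$ is $\alpha^\ast \widetilde{\sigma}(t) \simeq \widetilde{\sigma}(s)$, using that $\widetilde{\sigma}$ is a $\cT$-functor (i.e., a cocartesian section). Since $\sigma$ is likewise cocartesian, we may similarly identify the fiber of $\cC_{\underline{\sigma(t)}}$ over $\alpha$ with $\{\sigma(s)\} \times_{\cB_s} \cC_s = (\cT^\op \times_{\sigma,\cB} \cC)_s$, and hypothesis (1) then says precisely that $\widetilde{\sigma}(s)$ is initial there. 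Thus $\widetilde{\sigma}(t)$ is $\cT^{/t}$-initial in $\cC_{\underline{\sigma(t)}}$. Hypothesis (2) then gives that $f_! \widetilde{\sigma}(t)$ is $\cT^{/t}$-initial in $\cC_{\underline{b}}$, and in particular initial in its fiber over $\id_t$, which is $\cC_b$. This contracts the fiber, and hence the total mapping space.

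The only genuine point of care is the identification of the parametrized fiber $\cC_{\underline{\sigma(t)}}$ used in the second paragraph, which rests on the cocartesian-section property of $\sigma$ together with the trivial fibration $\underline{\sigma(t)} \xto{\simeq} (\cT^{/t})^\op$. Once this identification is made precise, the proof reduces to assembling the two hypotheses with the standard mapping-space decomposition for a cocartesian fibration, and no further parametrized machinery is needed.
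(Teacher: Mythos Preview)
Your proof is correct and follows essentially the same approach as the paper: reduce $\cT$-initiality to fiberwise initiality and then, for each fixed $t$, use the standard mapping-space decomposition for the cocartesian fibration $\pi_t$. The paper's proof is a single sentence deferring to ``the known assertion when $\cT = \Delta^0$''; you have simply written that assertion out (the fiber sequence over $\Map_{\cB_t}(\sigma(t),b)$) and spelled out the one nontrivial point the paper leaves implicit, namely that hypothesis~(1) together with the cocartesian-section property of $\sigma$ and $\widetilde{\sigma}$ upgrades to $\widetilde{\sigma}(t)$ being $\cT^{/t}$-initial in $\cC_{\underline{\sigma(t)}}$, so that hypothesis~(2) can be applied.
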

\begin{proof} We may check for all $t \in \cT$ that $\widetilde{\sigma}(t)$ is an initial object in $\cC_t$, using the known assertion when $\cT = \Delta^0$.
\end{proof}

We can then bootstrap from \cref{lem:InitialObjectsInCocartesianFibration} to understand relative $\cT$-colimits originating from $\cT$-colimits in the parametrized fibers.

\begin{lem} \label{lem:RelativeColimitsConcentratedInFiber} Suppose we have a commutative diagram of $\cT$-$\infty$-categories
\[ \begin{tikzcd}[row sep=2em, column sep=2em]
\cK \ar{d} \ar{r}{p_0} \ar[bend left]{rr}{p} & \cT^\op \times_{\cB} \cC \ar{r} \ar{d} & \cC \ar{d}{\pi} \\
\cK^{\underline{\rhd}} \ar{r} \ar{ru}{\overline{p_0}} \ar{rru}[swap, near end]{\overline{p}} \ar[bend right]{rr}{\overline{q}} & \cT^\op \ar{r}{\sigma} & \cB
\end{tikzcd} \]
in which $\pi$ is a $\cT$-cocartesian fibration and $\overline{p_0}$ is a $\cT$-colimit diagram. Then $\overline{p}$ is a weak $\pi$-$\cT$-colimit diagram.

Suppose moreover that for every $t \in \cT$ and morphism $f: \sigma(t) \to y \in \cB_t$, the pushforward $\cT^{/t}$-functor $f_!: \cC_{\underline{\sigma(t)}} \to \cC_{\underline{y}}$ preserves the $\cK_{\underline{t}}$-indexed $\cT^{/t}$-colimit diagram given by $(\overline{p_0})_{\underline{t}}$. Then $\overline{p}$ is a $\pi$-$\cT^{/t}$-colimit diagram.
\end{lem}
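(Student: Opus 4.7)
The plan is to reduce both assertions to statements about $\cT$-initial objects in appropriate slices, and then appeal respectively to the hypothesis on $\overline{p_0}$ and to \cref{lem:InitialObjectsInCocartesianFibration}.

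\textbf{Part 1 (weak $\pi$-$\cT$-colimit).} Set $\cC_\sigma \coloneq \cT^\op \times_{\sigma, \cB} \cC$, which is a $\cT$-$\infty$-category through which $p$ and $\overline{p}$ factor as $p_0$ and $\overline{p_0}$, respectively. Commuting limits in the defining pullbacks of the $\cT$-slice $\infty$-categories yields a canonical isomorphism of $\cT$-$\infty$-categories
\[ \ast_{\cT} \times_{\sigma_{\overline{q}}, \cB^{(q,\cT)/}} \cC^{(p,\cT)/} \;\cong\; (\cC_\sigma)^{(p_0,\cT)/}, \]
under which the $\cT$-functor $\ast_{\cT} \to \ast_{\cT} \times_{\sigma_{\overline{q}}} \cC^{(p,\cT)/}$ induced by $\sigma_{\overline{p}}$ corresponds to the tautological section $\sigma_{\overline{p_0}}$. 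Since $\overline{p_0}$ is by hypothesis a $\cT$-colimit diagram, $\sigma_{\overline{p_0}}$ is a $\cT$-initial object, which proves the first claim.

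\textbf{Part 2 ($\pi$-$\cT$-colimit under the additional hypothesis).} I would consider the projection
\[ \pi_{\mathrm{sl}}: \cB^{(\overline{q},\cT)/} \times_{\cB^{(q,\cT)/}} \cC^{(p,\cT)/} \longrightarrow \cB^{(\overline{q},\cT)/}, \]
which is a left fibration as the pullback of the left fibration $\cC^{(p,\cT)/} \to \cB^{(q,\cT)/}$ furnished by \cref{lm:sliceCategoryFibrationProperties}(1); in particular, it is a $\cT$-cocartesian fibration. Next I would verify that $\sigma_{\overline{q}'}: \ast_{\cT} \to \cB^{(\overline{q},\cT)/}$ is a $\cT$-initial object: fiberwise, $\overline{q}'|_{\underline{t}}$ picks out the degenerate cocone extending $\overline{q}|_{\underline{t}}$ by the identity morphism on $\overline{q}(v)$, and such degenerate extensions are initial in slices by standard reasoning. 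The $\cT$-section of $\pi_{\mathrm{sl}}$ induced by $\sigma_{\overline{p}}$ then lifts $\sigma_{\overline{q}'}$, and the fiber of $\pi_{\mathrm{sl}}$ over $\sigma_{\overline{q}'}$ is precisely the pullback analyzed in Part 1. To apply \cref{lem:InitialObjectsInCocartesianFibration}, I need to confirm that (a) this lift is a $\cT$-initial object of the fiber, which is exactly Part 1; and (b) for every $t \in \cT$ and morphism $\sigma_{\overline{q}'}(t) \to \xi$ in $\cB^{(\overline{q},\cT)/}_t$, the resulting pushforward $\cT^{/t}$-functor on fibers of $\pi_{\mathrm{sl}}$ preserves $\cT^{/t}$-initial objects. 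Unwinding the iterated slice structure, such morphisms classify morphisms $f: \sigma(t) \to y$ in $\cB_t$, and under the identification of Part 1 the induced pushforward on fibers is postcomposition with $f_!: \cC_{\underline{\sigma(t)}} \to \cC_{\underline{y}}$. The extra hypothesis that $f_!$ preserves the $\cK_{\underline{t}}$-indexed $\cT^{/t}$-colimit of $(\overline{p_0})_{\underline{t}}$ translates exactly to this pushforward preserving the $\cT^{/t}$-initial object of the fiber, and \cref{lem:InitialObjectsInCocartesianFibration} then yields the conclusion.

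\textbf{Expected obstacle.} The principal technical difficulty lies in the explicit analysis of morphisms out of $\sigma_{\overline{q}'}$ in the iterated parametrized slice $\cB^{(\overline{q},\cT)/}$: one must confirm that such morphisms indeed classify morphisms $\sigma(t) \to y$ in $\cB_t$, and that the resulting pushforward on fibers of $\pi_{\mathrm{sl}}$ matches postcomposition by $f_!$ under the identification of Part 1. This requires careful manipulation of the parametrized joins $\cK^{\underline{\rhd}}$ and of the auxiliary map $\overline{q}'$, but once completed the reduction to \cref{lem:InitialObjectsInCocartesianFibration} is immediate.
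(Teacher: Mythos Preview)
Your proposal is correct and follows essentially the same architecture as the paper's proof: Part~1 via the pullback identification of slices (the paper cites \cref{lem:PullbackOfSliceCategories} for what you phrase as ``commuting limits''), and Part~2 via \cref{lem:InitialObjectsInCocartesianFibration} applied to the projection to $\cB^{(\overline{q},\cT)/}$.

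The one difference worth noting concerns exactly the obstacle you flagged. Rather than analyzing morphisms out of $\sigma_{\overline{q}'}$ in $\cB^{(\overline{q},\cT)/}$ directly, the paper first observes that the inclusion of the cone $\cT$-point $\ast_{\cT} \hookrightarrow \cK^{\underline{\rhd}}$ is fiberwise cofinal, so by \cite[Thm.~6.7]{Exp2} it induces an equivalence $\cB^{(\overline{q},\cT)/} \xto{\simeq} \cB^{(\sigma,\cT)/}$. The right-hand side then admits the concrete description $\cB^{(\sigma,\cT)/} \simeq \ast_{\cT} \times_{\sigma,\cB} \Ar_{\cT}(\cB)$ of \cref{obs-app:SmallerSliceEqv}, so its objects over $t$ are literally pairs $(t, f:\sigma(t) \to y)$, and the parametrized fibers of the pulled-back left fibration are immediately identified with the slice of $\cC_{\underline{y}}$ under the relevant diagram. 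This replacement dissolves the bookkeeping you anticipated with the iterated join $(\cK^{\underline{\rhd}})^{\underline{\rhd}}$ and makes both the $\cT$-initiality of the base section and the identification of the pushforward with postcomposition by $f_!$ transparent.
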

\begin{proof} By \cref{lem:PullbackOfSliceCategories}, we have an equivalence $\cC^{(p_0,\cT)/} \simeq \cT^\op \times_{\cB^{(q,\cT)/}} \cC^{(p,\cT)/}$. Thus by definition, if $\overline{p_0}$ is a $\cT$-colimit diagram, then $\overline{p}$ is a weak $\pi$-$\cC$-colimit diagram.

For the second claim, we note that the fiberwise cofinal $\cT$-functor $\cT^\op \subset \cK^{\underline{\rhd}}$ given by inclusion of the $\cT$-cone point induces an equivalence $\cB^{(\overline{q},\cT)/} \xto{\simeq} \cB^{(\sigma, \cT)/}$ by \cite[Thm.~6.7]{Exp2}. It thus suffices to check that the $\cT$-functor
\[ \begin{tikzcd}[row sep=2em, column sep=2em]
\cT^\op \ar{r}{\widetilde{\sigma}} \ar{rd}[swap]{\sigma} & \cB^{(\sigma,\cT)/} \times_{\cB^{(\pi p,\cT)/}} \cC^{(p,\cT)/} \ar{d}{\pi'} \\
& \cB^{(\sigma,\cT)/}
\end{tikzcd} \]
induced by $\overline{p}$ is a $\cT$-initial object (where we abuse notation and write $\sigma$ also for the $\cT$-initial object $\id_\sigma$ in $\cB^{(\sigma,\cT)/}$). By \cref{lm:sliceCategoryFibrationProperties}(1), $\pi'$ is a left fibration. Under the equivalence $\cB^{(\sigma,\cT)/} \simeq \cT^\op \times_{\sigma,\cB} \Ar(\cB)$ of \cref{obs-app:SmallerSliceEqv}, the objects of the base $\cB^{(\sigma,\cT)/}$ are equivalently given by pairs $(t \in \cT, f: \sigma(t) \to y)$. A cocartesian section of the parameterized fiber $\pi'_{\underline{f}}$ is determined up to equivalence by a commutative diagram of $\cT^{/t}$-$\infty$-categories
\[ \begin{tikzcd}[row sep=2em, column sep=2em]
\cK_{\underline{t}} \ar{d} \ar{r}{p'_0}  & \cC_{\underline{y}} \ar{d} \\
(\cK_{\underline{t}})^{\underline{\rhd}} \ar{r} \ar{ru}{\overline{p_0}'}  & (\cT^{/t})^\op
\end{tikzcd} \]
and is a $\cT^{/t}$-initial object if and only if $\overline{p_0}'$ is a $\cT^{/t}$-colimit diagram. It is now clear that under our hypotheses, \cref{lem:InitialObjectsInCocartesianFibration} applies to show that $\widetilde{\sigma}$ is a $\cT$-initial object.
\end{proof}

\begin{lem} \label{lem:PullbackOfSliceCategories} Suppose we have a homotopy pullback square of $\cT$-$\infty$-categories
\[ \begin{tikzcd}[row sep=2em, column sep=2em]
\cW \ar{r}{f} \ar{d}{g} & \cX \ar{d}{h} \\
\cY \ar{r}{k} & \cZ
\end{tikzcd} \]
and a $\cT$-functor $p: \cK \to \cW$. Then the commutative square of $\cT$-$\infty$-categories
\[ \begin{tikzcd}[row sep=2em, column sep=2em]
\cW^{(p,\cT)/} \ar{r} \ar{d} & \cX^{(f p,\cT)/} \ar{d} \\
\cY^{(g p,\cT)/} \ar{r} & \cZ^{(h f p, \cT)/}.
\end{tikzcd} \]
is a homotopy pullback square.
\end{lem}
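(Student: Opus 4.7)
The plan is to reduce the assertion to the statement that both internal hom functors $\underline{\Fun}_{\cT}(\cK, -)$ and $\underline{\Fun}_{\cT}(\cK^{\underline{\rhd}}, -)$ preserve homotopy pullbacks, combined with the fact that limits commute with limits. Both of these internal hom functors are right adjoints at the level of marked simplicial sets over $\cT^{\op}$ (with left adjoints given by product with $\leftnat{\cK}$ and $\leftnat{\cK^{\underline{\rhd}}}$ respectively), and in particular send the given homotopy pullback square
\[ \cW \simeq \cX \times^h_{\cZ} \cY \]
to homotopy pullback squares
\[ \underline{\Fun}_{\cT}(\cK, \cW) \simeq \underline{\Fun}_{\cT}(\cK, \cX) \times^h_{\underline{\Fun}_{\cT}(\cK, \cZ)} \underline{\Fun}_{\cT}(\cK, \cY), \]
\[ \underline{\Fun}_{\cT}(\cK^{\underline{\rhd}}, \cW) \simeq \underline{\Fun}_{\cT}(\cK^{\underline{\rhd}}, \cX) \times^h_{\underline{\Fun}_{\cT}(\cK^{\underline{\rhd}}, \cZ)} \underline{\Fun}_{\cT}(\cK^{\underline{\rhd}}, \cY). \]

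Next, by definition of the slice construction we have
\[ \cW^{(p,\cT)/} \simeq \ast_{\cT} \times^h_{\sigma_p, \underline{\Fun}_{\cT}(\cK, \cW)} \underline{\Fun}_{\cT}(\cK^{\underline{\rhd}}, \cW), \]
with the evident analogous formulas for the other three slice $\cT$-$\infty$-categories. Substituting the decompositions above and invoking compatibility of the various sections $\sigma$ (each obtained by post-composition of $\sigma_p$ with the appropriate $\cT$-functor among $f, g, hf$), we rewrite $\cW^{(p,\cT)/}$ as a limit of the cubical diagram with vertices
\[ \ast_{\cT},\ \underline{\Fun}_{\cT}(\cK, \cX),\ \underline{\Fun}_{\cT}(\cK, \cY),\ \underline{\Fun}_{\cT}(\cK, \cZ),\ \underline{\Fun}_{\cT}(\cK^{\underline{\rhd}}, \cX),\ \underline{\Fun}_{\cT}(\cK^{\underline{\rhd}}, \cY),\ \underline{\Fun}_{\cT}(\cK^{\underline{\rhd}}, \cZ), \]
with the obvious structure maps. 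By Fubini for homotopy limits (i.e., a limit of limits is a limit over the product-shaped diagram), this same limit can be computed by first forming the three slice $\cT$-$\infty$-categories $\cX^{(fp,\cT)/}$, $\cY^{(gp,\cT)/}$, $\cZ^{(hfp,\cT)/}$ and then taking their pullback, yielding precisely $\cX^{(fp,\cT)/} \times^h_{\cZ^{(hfp,\cT)/}} \cY^{(gp,\cT)/}$.

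I do not expect any serious obstacle: the whole argument is a formal manipulation of homotopy limits, once one knows that $\underline{\Fun}_{\cT}(\cK,-)$ and $\underline{\Fun}_{\cT}(\cK^{\underline{\rhd}},-)$ preserve homotopy pullbacks. The only point requiring any care is to check that the sections $\sigma_p$, $\sigma_{fp}$, $\sigma_{gp}$, $\sigma_{hfp}$ assemble coherently over the cube (which is immediate from the naturality of the assignment $p \mapsto \sigma_p$), so that the iterated pullback on the right-hand side really does compute the limit of the cubical diagram described above.
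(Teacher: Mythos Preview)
Your proposal is correct and follows essentially the same approach as the paper, which simply records the proof as ``a straightforward diagram chase, starting from the known assertion that $\underline{\Fun}_{\cT}(\cK,-): \Cat_{\cT} \to \Cat_{\cT}$ preserves limits.'' You have merely spelled out the details of that diagram chase (applying $\underline{\Fun}_{\cT}(\cK,-)$ and $\underline{\Fun}_{\cT}(\cK^{\underline{\rhd}},-)$ to the pullback square, then using Fubini for limits together with the pullback definition of the slice), which is exactly what the paper intends.
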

\begin{proof} The proof is a straightforward diagram chase, starting from the known assertion that
\[ \underline{\Fun}_{\cT}(\cK,-): \CatT \to \CatT \]
preserves limits.
\end{proof}

Finally, we arrive at our main existence result for relative $\cT$-colimits.

\begin{prp} \label{prp-app:RelativeColimitExistence}
Suppose we have a commutative diagram of $\cT$-$\infty$-categories
\[ \begin{tikzcd}
\cK \ar{r}{p} \ar{d}[swap]{i} & \cC \ar{d}{\pi} \\
\cK^{\underline{\rhd}} \ar{r}{\overline{q}} \ar[dotted]{ru}{\overline{p}} & \cB
\end{tikzcd} \]
in which $\pi: \cC \to \cB$ is a $\cT$-cocartesian fibration. Let $\sigma = \overline{q}|_{\cT^\op}$. If for all $t \in \cT$, the parametrized fiber $\cC_{\underline{\sigma(t)}}$ admits $\cK_{\underline{t}}$-indexed $\cT^{/t}$-colimits, then there exists a filler $\overline{p}: \cK^{\underline{\rhd}} \to \cC$ which is a weak $\pi$-$\cT$-colimit diagram.

Moreover, suppose that for all morphisms $f: \sigma(t) \to y \in \cB_t$, the induced pushforward $\cT^{/t}$-functor $f_!: \cC_{\underline{\sigma(t)}} \to \cC_{\underline{y}}$ preserves $\cK_{\underline{t}}$-indexed $\cT^{/t}$-colimits. Then $\overline{p}$ is a $\pi$-$\cT$-colimit diagram.
\end{prp}
\begin{proof} We prove this by reducing to \cref{lem:RelativeColimitsConcentratedInFiber}. Let $M_C$ denote the $\pi$-cocartesian edges in $\cC$. First consider the diagram
\[ \begin{tikzcd}
\cK \times \{0\} \ar{rr}{p} \ar{d}{i_0} & &  \cC \ar{d}{\pi} \\
\cK \times \Delta^1 \ar{r}{f} \ar[dotted]{rru}{h} & \cK^{\underline{\rhd}} \ar{r}{\overline{q}} & \cB
\end{tikzcd} \]
where the map $f$ is adjoint to $(\cK = \cK, \cK \to \cT)$. Because $i_0: \leftnat{\cK} \times \{0\} \to \leftnat{\cK} \times (\Delta^1)^\sharp$ is left marked anodyne, the dotted map $h: \leftnat{\cK} \times (\Delta^1)^\sharp \to (\cC, M_C)$ exists. Consider the two commutative squares
\[ \begin{tikzcd}[row sep=2em, column sep=4em]
\cK \times \Delta^1 \ar{r}{h} \ar{d}{f} & \cC \ar{d}{\pi} \\
\cK^{\underline{\rhd}} \ar{r}{\overline{q}} & \cB
\end{tikzcd} \quad, \quad
\begin{tikzcd}[row sep=2em, column sep=4em]
\cK \ar{rr}{p' := h|_{\cK \times \{1\}}} \ar{d}{i} & &  \cC \ar{d}{\pi} \\
\cK^{\underline{\rhd}}  \ar{r} \ar[bend right]{rr}{\overline{q}'} & \cT^\op \ar{r}{\sigma} & \cB
\end{tikzcd} \]
We obtain a zig-zag
\[ \begin{tikzcd}
\cB^{(\overline{q},\cT)/} \times_{\cB^{(\pi p,\cT)/}} \cC^{(p,\cT)/} \ar{d} & \cB^{(\overline{q},\cT)/} \times_{\cB^{(\pi h,\cT)/}} \cC^{(h,\cT)/} \ar{r}{\phi} \ar{l}[swap]{\psi} \ar{d} & \cB^{(\overline{q}',\cT)/} \times_{\cB^{(\pi p',\cT)/}} \cC^{(p',\cT)/} \ar{d} \\
\cB^{(\overline{q},\cT)/} & \cB^{(\overline{q},\cT)/} \ar{r}{\chi} \ar{l}[swap]{=} & \cB^{(\overline{q}',\cT)/}
\end{tikzcd} \]
where all the maps are obvious (except possibly $\chi$, which is induced by precomposition by $\cK^{\underline{\rhd}} \to \cT^\op \to \cK^{\underline{\rhd}}$). We claim that $\psi$ and $\phi$ are equivalences. For $\psi$, by \cref{lm:sliceCategoryFibrationProperties}(2),
\[ \cC^{(h,\cT)/} \to \cB^{(\pi h,\cT)/} \times_{\cB^{(\pi p,\cT)/}} \cC^{(p,\cT)/} \]
is a trivial fibration. But $\psi$ is a pullback of this map, hence an equivalence. For $\phi$, note that the $\cT$-functors $\cK^{\underline{\rhd}} \to \cT^\op \to \cK^{\underline{\rhd}}$ and $\cK \times \{1\} \to \cK \times \Delta^1$ are both fiberwise cofinal. Hence by \cite[Thm.~6.7]{Exp2}, we deduce that $\phi$ is an equivalence.

Replacing $p$ and $\overline{q}$ by $p'$ and $\overline{q}'$, we find ourselves in the situation of \cref{lem:RelativeColimitsConcentratedInFiber}, which immediately applies given our hypotheses.
\end{proof}

\section{Relative parametrized left Kan extensions}


\begin{definition} \label{def-app:relativeKanExtension}
Suppose we have a commutative diagram of $\cT$-$\infty$-categories
\[ \begin{tikzcd}
\cC \ar{r}{F} \ar[hookrightarrow]{d}[swap]{i} & \cE \ar{d}{\pi} \\
\cD \ar{r} \ar{ru}{G} & \cB
\end{tikzcd} \]
in which $i$ is the inclusion of a full $\cT$-subcategory. Then we say that $G$ is a \emph{$\pi$-$\cT$-left Kan extension of $F$} if for every $x \in \cD_t$, the commutative diagram
\[ \begin{tikzcd}
\cC^{/\underline{x}} \ar{r}{F^x} \ar[hookrightarrow]{d} & \cE_{\underline{t}} \ar{d}{\pi_{\underline{t}}} \\
(\cC^{/\underline{x}})^{\underline{\rhd}} \ar{r} \ar{ru}{G^x} & \cB_{\underline{t}}
\end{tikzcd} \]
exhibits $G^x$ (defined in \cref{con-app:LKE}) as a $\pi_{\underline{t}}$-$\cT^{/t}$-colimit diagram. Here, the lower horizontal $\cT^{/t}$-functor is the composite
\[ (\cC^{/\underline{x}})^{\underline{\rhd}} \to (\cD^{/\underline{x}})^{\underline{\rhd}} \xto{\theta_x} \cD_{\underline{t}} \to \cB_{\underline{t}}. \]
We also say that $G$ is a \emph{weak $\pi$-$\cT$-left Kan extension of $F$} if in the pulled-back diagram
\[ \begin{tikzcd}
\cC \ar{r}{F'} \ar{d}[swap]{i} & \cE \times_{\cB} \cD \ar{d}{\pi'} \\
\cD \ar{r}{=} \ar{ru}{G'} & \cD,
\end{tikzcd}  \]
$G'$ is a $\pi'$-$\cT$-left Kan extension of $F'$.
\end{definition}


We may now prove our main existence result on relative $\cT$-left Kan extensions, from which \cref{thmx:Kan} is an immediate corollary.

\begin{theorem} \label{thm-app:RelativeLKEexistence}
Let $\pi: \cE \to \cB$ be a $\cT$-cocartesian fibration\footnote{Because of this assumption, our theorem is slightly weaker than \cite[Thm.~4.3.2.15]{HTT} in the case where $\cT = \Delta^0$.}, let $\rho: \cD \to \cB$ be a $\cT$-functor, and let $i: \cC \subset \cD$ be the inclusion of a full $\cT$-subcategory.
\begin{enumerate}
\item Let $F: \cC \to \cE$ be a $\cT$-functor over $\cB$ and suppose that for all $x \in \cD_t$, $F^x: \cC^{/\underline{x}} \to \cE_{\underline{t}}$ admits a $\pi_{\underline{t}}$-$\cT^{/t}$-colimit. Then $F$ admits an essentially unique $\pi$-$\cT$-left Kan extension $G: \cD \to \cE$.
\item The partial $\cT$-left adjoint $i_!$ to the restriction $\cT$-functor
\[ i^*: \underline{\Fun}_{/\cB, \cT}(\cD, \cE) \to \underline{\Fun}_{/\cB, \cT}(\cC, \cE) \]
is defined on all those $F: \cC_{\underline{t}} \to \cE_{\underline{t}}$ that admit a weak $\pi_{\underline{t}}$-$\cT^{/t}$-left Kan extension $G$, in which case $i_! F \simeq G$.
\end{enumerate}
\end{theorem}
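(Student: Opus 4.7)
The plan is to adapt the proof of the absolute version \cref{thm-app:LKEexistence} by systematically substituting $\pi$-$\cT$-colimits for $\cT$-colimits and invoking \cref{prp-app:RelativeColimitExistence} at the key step where colimits must be produced. The structure parallels the classical proof of Lurie's \cite[Prop.~4.3.2.15]{HTT} in the relative setting, modulo the additional bookkeeping demanded by the parametrization over $\cT$.

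For part (1), I would build $G$ by extending $F$ cell by cell along a suitable transfinite filtration of $\cD$ modulo $\cC$. The extension problem at a new object $x \in \cD_t$ is, by the definition of $\pi$-$\cT$-left Kan extension together with \cref{obs-app:SmallerSliceEqv} and \cref{con-app:LKE}, equivalent to producing a $\pi_{\underline{t}}$-$\cT^{/t}$-colimit of the relevant diagram $F^x: \cC^{/\underline{x}} \to \cE_{\underline{t}}$ lying over the composite $(\cC^{/\underline{x}})^{\underline{\rhd}} \to (\cD^{/\underline{x}})^{\underline{\rhd}} \xto{\theta_x} \cD_{\underline{t}} \to \cB_{\underline{t}}$; this exists (in fact as a $\pi$-$\cT^{/t}$-colimit, not merely a weak one) by \cref{prp-app:RelativeColimitExistence}, since $\pi$ is a $\cT$-cocartesian fibration. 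To make the lifting problem rigorous, one identifies the space of $\pi$-$\cT$-LKEs of $F$ with cocartesian sections of a $\cT$-fibration constructed from the slice $\cT$-$\infty$-categories $\cC^{(F^x, \cT^{/t})/}$ and $\cB_{\underline{t}}^{(\overline{q}^x, \cT^{/t})/}$, using \cref{lm:sliceCategoryFibrationProperties}(1) (which supplies the requisite left fibrations) and \cref{lem:PullbackOfSliceCategories} (which handles compatibility with pullback across $\pi$). Essential uniqueness at each stage is immediate from the universal property built into the definition of $\pi$-$\cT$-colimit as an initial object in a comma $\cT^{/t}$-$\infty$-category.

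For part (2), I would apply the fiberwise criterion for partial $\cT$-left adjoints to reduce, at each $t \in \cT$, to the assertion that if $G: \cD_{\underline{t}} \to \cE_{\underline{t}}$ is a weak $\pi_{\underline{t}}$-$\cT^{/t}$-LKE of $F$, then $G$ corepresents restriction against $F$ in $\underline{\Fun}_{/\cB_{\underline{t}},\cT^{/t}}(-, \cE_{\underline{t}})$. By the pullback definition of \emph{weak} (\cref{def-app:relativeKanExtension}), we may replace $\pi_{\underline{t}}$ by its pullback $\pi': \cE_{\underline{t}} \times_{\cB_{\underline{t}}} \cD_{\underline{t}} \to \cD_{\underline{t}}$, so that $G$ corresponds to a genuine $\pi'$-$\cT^{/t}$-LKE over the base $\cD_{\underline{t}}$. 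In this situation, the pointwise universal property of each $\pi'_{\underline{s}}$-$\cT^{/s}$-colimit of $F^x$ unwinds, via the equivalences of \cref{obs-app:SmallerSliceEqv} and the characterization of $\pi$-$\cT$-colimits as equivalences of slice $\cT$-$\infty$-categories, into the mapping space equivalence $\Map(G, H) \simeq \Map(F, i^* H)$ naturally in $H \in \underline{\Fun}_{/\cD_{\underline{t}}, \cT^{/t}}(\cD_{\underline{t}}, \cE_{\underline{t}} \times_{\cB_{\underline{t}}} \cD_{\underline{t}})$, which is what is required.

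The main obstacle will be in rigorously executing the cell-by-cell construction of step~(1): one must verify that the successive lifting problem can be solved by a $\cT$-functor (as opposed to merely an object-level choice), which requires that the chosen $\pi_{\underline{t}}$-$\cT^{/t}$-colimits assemble compatibly across restrictions $\alpha^*: \cC_{\underline{t}} \to \cC_{\underline{s}}$ and cocartesian transport along morphisms of $\cB$. This is precisely the content of the stronger (non-weak) conclusion of \cref{prp-app:RelativeColimitExistence}, which ensures that the resulting $G$ is automatically a $\cT$-functor over $\cB$ and not just a section pointwise; a subsidiary technical point is translating the homotopy pullback squares of slice $\cT$-$\infty$-categories from \cref{lem:PullbackOfSliceCategories} into the correct statement about cocartesian sections of the associated fibrations.
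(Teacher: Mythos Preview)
Your approach diverges substantially from the paper's, and the obstacle you flag at the end is exactly the point where a cell-by-cell transfinite induction in the parametrized setting becomes genuinely hard to execute. The paper does not attempt to filter $\cD$ and extend step by step; instead it solves the coherence problem in one stroke by factoring $i$ through the free $\cT$-cocartesian fibration
\[
\cC \xto{\iota} \cC \times_{\cD} \Ar_{\cT}(\cD) \xto{\ev_1} \cD,
\]
replacing $F$ by a functor $F'$ on $\cM \coloneq \cC \times_{\cD} \Ar_{\cT}(\cD)$ that sends $\phi$-cocartesian edges to $\pi$-cocartesian edges (via a choice of section of $\Ar^{\cocart}_{\cT}(\cE) \to \cE \times_{\cB} \Ar_{\cT}(\cB)$). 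After this reduction one has a $\cT$-cocartesian fibration $\phi: \cM \to \cD$, and the paper invokes the $\cT$-pairing construction of \cref{sec:pairing} to form
\[
\cW = \cD \times_{\widetilde{\Fun}_{\cD,\cT}(\cM, \cE)} \widetilde{\Fun}_{\cD,\cT}(\cM \star_{\cD} \cD, \cE),
\]
whose parametrized fiber over $x \in \cD_t$ is the slice $(\cE_{\underline{x}})^{(F|_x, \cT^{/t})/}$. One then shows directly, by a mapping-space computation using the lax-square description of morphisms in $\widetilde{\Fun}_{\cD,\cT}$, that the full $\cT$-subcategory $\cW' \subset \cW$ on $\cT^{/t}$-colimit diagrams is a trivial fibration over $\cD$; any section yields $G$, and \cref{thm:PairingConstructionUniversalProperty} identifies this section with an initial object in the relevant $\cT$-$\infty$-category of extensions.

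The upshot is that the pairing construction packages all the compatibility you were worried about (across restrictions $\alpha^*$ and cocartesian transport) into the single statement that $\cW' \to \cD$ is a trivial fibration. Your invocation of \cref{prp-app:RelativeColimitExistence} is not misplaced---its content enters implicitly in identifying which objects of $\cW$ lie in $\cW'$---but it does not by itself assemble the pointwise colimits into a $\cT$-functor, and the paper does not attempt a filtration argument to do so. For part~(2), the paper likewise works through the factorization $i^* = \iota^* \circ j^* \circ (\phi')^*$ and uses the adjunction of \cref{exm:freeCocartesianFibration} rather than a direct fiberwise mapping-space computation.
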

\begin{proof}
The overarching strategy is the same as in the proof of \cref{thm-app:LKEexistence} given in \cite[\S 10]{Exp2}. The key idea is to factor $i$ through the free $\cT$-cocartesian fibration as
\[ \cC \xto{\iota} \cC \times_{\cD} \Ar_{\cT}(\cD) \xto{\ev_1} \cD. \]
(1): Choose a section $\xi$ of the trivial fibration $\Ar^{\cocart}_{\cT}(\cE) \to \cE \times_{\cB} \Ar_{\cT}(\cB)$ that restricts to the identity section on $\cE$ and let $F'$ be the composite
\[ \cC \times_{\cD} \Ar_{\cT}(\cD) \xto{F \times \Ar_{\cT}(\rho) } \cE \times_{\cB} \Ar_{\cT}(\cB) \xto{\xi} \Ar^{\cocart}_{\cT}(\cE) \xto{\ev_1} \cE. \]
We then have a commutative diagram
\[ \begin{tikzcd}
\cC \times_{\cD} \Ar_{\cT}(\cD) \ar{r}{F'} \ar{d}[swap]{j} & \cE \ar{d}{\pi} \\
(\cC \times_{\cD} \Ar_{\cT}(\cD)) \star_{\cD} \cD \ar{r}  & \cB
\end{tikzcd} \]
where $F = F'|_{\cC}$. Note that if $\xi: \cM \to \cD$ is any $\cT$-cocartesian fibration, then for any $x \in \cD_{t}$ we have a pullback square
\[ \begin{tikzcd}
\cM_{\underline{x}} \ar{r} \ar{d} & \cM^{/\underline{x}} \coloneq \cM \times_{\cM \star_{\cD} \cD} \Ar_{\cT}(\cM \star_{\cD} \cD) \times_{\cM \star_{\cD} \cD} \underline{x} \ar{d} \\
\underline{x} \ar{r}{\iota_x} & \cD^{/\underline{x}} \coloneq \Ar_{\cT}(\cD) \times_{\cD} \underline{x}
\end{tikzcd} \]
where the righthand vertical functor is a cocartesian fibration (induced by $\cM \star_{\cD} \cD \to \cD$). Since cocartesian fibrations are smooth \cite[Prop.~4.1.2.15]{HTT} and $\iota_x$ is fiberwise cofinal, it follows that $\cM_{\underline{x}} \to \cM^{/\underline{x}}$ is fiberwise cofinal (with respect to the base $(\cT^{/t})^\op$). In our situation, $\cM = \cC \times_{\cD} \Ar_{\cT}(\cD)$, $\cM_{\underline{x}} \cong \cC^{/\underline{x}}$, and $(F')^x$ restricts on $\cC^{/\underline{x}}$ to $F^x$. By the proof of \cref{prp-app:RelativeColimitExistence} together with \cite[Thm.~6.7]{Exp2}, we see that $F^x$ admits a $\pi$-$\cT$-colimit if and only if $(F')^x$ admits a $\pi$-$\cT$-colimit. We thereby reduce to the `$\cD$-parametrized' situation of a $\cT$-cocartesian fibration $\phi: \cM \to \cD$ and a commutative diagram
\[ \begin{tikzcd}
\cM \ar{r}{F} \ar{d}[swap]{j} & \cE \ar{d}{\pi} \\
\cM \star_{\cD} \cD \ar{r} \ar[dotted]{ru}{G} & \cB
\end{tikzcd} \]
in which $F$ sends $\phi$-cocartesian edges to $\pi$-cocartesian edges. Pulling back along $\rho$, we may also suppose that $\cD = \cB$, noting that the discrepancy between strong and weak $\pi$-$\cT$-left Kan extensions will lie only in the pointwise property of the eventual extension and does not feature in the constructive proof of existence.

We may solve the coherence problem of assembling the individual $\pi_{\underline{t}}$-$\cT^{/t}$-colimits together into a $\pi$-$\cT$-left Kan extension $G$ by a similar method to the proof of \cite[Thm.~9.15]{Exp2}. Consider \cite[Constr.~9.8]{Exp2} applied to $\phi$ and $F$; this yields
\[ \cW = \cE^{(\phi, F)/\cT} \coloneq \cD \times_{\widetilde{\Fun}_{\cD,\cT}(\cM, \cE)} \widetilde{\Fun}_{\cD,\cT}(\cM \star_{\cD} \cD, \cE), \]
such that for $x \in \cD_{t}$, if we let $F|_x: \cM_{\underline{x}} \to \cE_{\underline{x}}$ denote the $\cT^{/t}$-functor given by restriction, then the parametrized fiber $\cW_{\underline{x}}$ is equivalent to $(\cE_{\underline{x}})^{(F|_x, \cT^{/t})/}$ by \cref{cor:PairingFibers} (or \cite[Cor.~9.9]{Exp2}). Let $\cW' \subset \cW$ be the full $\cT$-subcategory spanned by the $\cT^{/t}$-colimit diagrams $(\cM_{\underline{x}})^{\underline{\rhd}} \to \cE_{\underline{x}}$, so that for all $x \in \cD$, the fiber $\cW'_{x}$ is that spanned by the initial objects in $\cW_x$, which are precisely weak $\pi_{\underline{t}}$-$\cT^{/t}$-colimit diagrams extending $F|_x$. Note that the use of \cite[Prop.~9.10]{Exp2} and \cite[Lem.~9.11]{Exp2} in the proof of \cite[Thm.~9.15]{Exp2} won't apply here since $\cE \to \cD$ isn't supposed to be a $\cT$-cartesian fibration. However, we may argue directly that $\cW' \to \cD$ is a trivial fibration by computing mapping spaces as follows:

\begin{itemize}
\item[($\ast$)] For any $\alpha: x \to y \in \cD_t$ and extensions $\overline{F|_{i}}$ of $F|_i$ over $(\cM_{\underline{i}})^{\underline{\rhd}}$, $i \in \{x,y\}$, we have that maps $\overline{F|_x} \to \overline{F|_y}$ in $\widetilde{\Fun}_{\cD,\cT}(\cM \star_{\cD} \cD, \cE)$ are defined by lax commutative squares of $\cT^{/t}$-functors
\[ \begin{tikzcd}
(\cM_{\underline{x}})^{\underline{\rhd}} \ar{r}{\overline{F|_x}} \ar{d}[swap]{\alpha_!} \ar[phantom]{rd}{\SWarrow}   & \cE_{\underline{x}} \ar{d}{\alpha_!} \\
(\cM_{\underline{y}})^{\underline{\rhd}} \ar{r}[swap]{\overline{F|_y}} & \cE_{\underline{y}},
\end{tikzcd} \]
and hence we have
\[ \Map_{\cW}(\overline{F|_x}, \overline{F|_y}) \simeq \Map(\alpha_! \overline{F|_x}, \overline{F|_y} \alpha_! ) \times_{\Map(\alpha_! F|_x, F|_y \alpha_!)} \{ \id\}. \]
\end{itemize}

By assumption, if $\overline{F|_x}$ is a $\cT^{/t}$-colimit diagram, then $\alpha_! \overline{F|_x}$ is as well. Therefore, $\Map_{\cW'}(\overline{F|_x}, \overline{F|_y})$ is contractible for all $\cT^{/t}$-colimit diagrams $\overline{F|_x}$ and $\overline{F|_y}$, and this suffices to show that $\cW' \to \cD$ is a trivial fibration since we already have compatibility of these initial objects with restriction in the base $\cT$. Furthermore, any section $\tau$ of this trivial fibration defines a relative left adjoint of $\cW \to \cD$ with respect to the base $\cD$.

Now by \cref{thm:PairingConstructionUniversalProperty}, applying $\underline{\Fun}_{/\cD,\cT}(\cD,-)$ to $\tau$ yields an extension $G: \cM \star_{\cD} \cD \to \cE$ of $F$ that is a $\cT$-initial object of
\[ \cT^\op \times_{\sigma_F, \underline{\Fun}_{/\cD,\cT}(\cM, \cE)} \underline{\Fun}_{/\cD,\cT}(\cM \star_{\cD} \cD, \cE). \]

Taking cocartesian sections, we then see that $G$ is an initial object in the space of such fillers and is in particular essentially unique.

(2): Let $\phi': (\cC \times_{\cD} \Ar_{\cT}(\cD)) \star_{\cD} \cD \to \cD$ be the structure map. Factor $i^\ast$ as
\[ \begin{tikzcd}
\underline{\Fun}_{/\cB,\cT}(\cD, \cE) \ar{r}{(\phi')^\ast} & \underline{\Fun}_{/\cB,\cT}((\cC \times_{\cD} \Ar_{\cT}(\cD)) \star_{\cD} \cD, \cE) \ar{r}{j^\ast} & \underline{\Fun}_{/\cB,\cT}(\cC \times_{\cD} \Ar_{\cT}(\cD), \cE) \ar{r}{\iota^\ast} & \underline{\Fun}_{/\cB,\cT}(\cC, \cE).
\end{tikzcd} \]
Then since $\phi'$ is $\cT$-left adjoint to $i_{\cD}$, $(\phi')^\ast$ has $\cT$-left adjoint $(i_{\cD})^\ast$. Also, by \cref{exm:freeCocartesianFibration} the procedure $F \mapsto F'$ of (1) defines a fully faithful $\cT$-left adjoint to $\iota^\ast$ with essential image spanned by those $\cT$-functors that send $\phi$-cocartesian edges to $\pi$-cocartesian edges. To conclude, we observe that in the proof of (1) we showed that the partial left $\cT$-adjoint $j_!$ is defined on $F'$ if it is obtained from $F$ satisfying the assumptions of (2).
\end{proof}


\section{More on the parametrized Yoneda embedding}

For a $\cT$-$\infty$-category $\cC$, let $\underline{\PShv}_\cT(\cC) \coloneqq \underline{\Fun}_\cT(\cC^{\vop}, \underline{\Spc}_\cT)$ be the $\cT$-$\infty$-category of $\cT$-presheaves and $j_\cT: \cC \to \underline{\PShv}_\cT(\cC)$ the $\cT$-Yoneda embedding \cite[\S 11]{Exp2}. In this section, we record a generalization (\cref{prop:ParamYonedaPreservesSlices}) of our earlier result that $j_\cT$ strongly preserves $\cT$-limits \cite[Cor.~11.10]{Exp2} as well as some basic facts concerning $\cT$-corepresentable $\cT$-left fibrations (\cref{lem:ParamRepresentableFibrations}) that mirror the discussion in \cite[\S 4.4.4]{HTT}. These results play a technical role in the remainder of the paper and so their proofs could be skipped on a first reading.

\begin{lemma} \label{lem:ParameterizedSlicesAsOrdinarySlices} Let $\cK$ and $\cC$ be $\cT$-$\infty$-categories. Then we have a homotopy pullback square
\[ \begin{tikzcd}
\underline{\Fun}_\cT(\cK^{\underline{\lhd}}, \cC) \ar{r} \ar{d} & \underline{\Fun}_\cT(\cK \times \Delta^1, \cC) \ar{d} \\
\cC \times_{\cT^{\op}} \underline{\Fun}_\cT(\cK, \cC) \ar{r} & \underline{\Fun}_\cT(\cK, \cC) \times_{\cT^{\op}} \underline{\Fun}_\cT(\cK, \cC).
\end{tikzcd} \]
Thus, for any $\cT$-functor $p: \cK \to \cC$ and $t \in \cT$, we have an equivalence
\[ (\cC^{/(p,\cT)})_t \simeq \cC_t \times_{\Fun_{\cT^{/t}}(\cK_{\underline{t}}, \cC_{\underline{t}})} \Fun_{\cT^{/t}}(\cK_{\underline{t}}, \cC_{\underline{t}})^{/\{ p_{\underline{t}}\}}  \]
of right fibrations over $\cC_t$.
\end{lemma}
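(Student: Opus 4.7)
The plan is to derive the pullback square by applying the internal hom $\underline{\Fun}_\cT(-,\cC)$ to a pushout decomposition of the parametrized left cone.

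First, I would establish a homotopy pushout presentation of $\cK^{\underline{\lhd}}$. Dualizing the pushout square of \cite[Lem.~4.5]{Exp2} (which is used in \cref{rem:ConeMappingOut}), we obtain a homotopy pushout of simplicial sets over $\cT^{\op}$
\[ \begin{tikzcd}
\cK \times \{0\} \ar[hookrightarrow]{r} \ar{d} & \cK \times \Delta^1 \ar{d} \\
\ast_{\cT} \ar[hookrightarrow]{r} & \cK^{\underline{\lhd}}
\end{tikzcd} \]
in which the left map is the structure map and the bottom map is the inclusion of the $\cT$-cone point. Since $\cK \times \{1\} \to \cK \times \Delta^1$ lies over the identity on the copy of $\cK$ already embedded in $\cK^{\underline{\lhd}}$, I would enlarge this to the homotopy pushout
\[ \begin{tikzcd}
\cK \times \partial\Delta^1 \ar[hookrightarrow]{r} \ar{d} & \cK \times \Delta^1 \ar{d} \\
\ast_{\cT} \sqcup \cK \ar[hookrightarrow]{r} & \cK^{\underline{\lhd}}
\end{tikzcd} \]
where the lefthand vertical map is the structure map on the $\{0\}$-summand and the identity on the $\{1\}$-summand.

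Second, I would apply $\underline{\Fun}_\cT(-, \cC)$ to this square. Working in $\sSet^+_{/\cT^{\op}}$ equipped with the cocartesian model structure, the marked versions of the horizontal maps are cofibrations and $\leftnat{\cC}$ is fibrant, so $\underline{\Fun}_\cT(-, \cC)$ transforms the homotopy pushout into a homotopy pullback. Combining with the canonical identifications $\underline{\Fun}_\cT(\ast_\cT, \cC) \simeq \cC$ and $\underline{\Fun}_\cT(\cK \sqcup \cK, \cC) \simeq \underline{\Fun}_\cT(\cK, \cC) \times_{\cT^{\op}} \underline{\Fun}_\cT(\cK, \cC)$ recovers the first assertion.

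Third, for the consequence, I would base-change the pullback square along $\sigma_p: \ast_{\cT} \to \underline{\Fun}_\cT(\cK, \cC)$ (regarded as the projection from $\cC \times_{\cT^{\op}} \underline{\Fun}_\cT(\cK, \cC)$ to the second factor), which by definition yields a pullback square with upper-left corner $\cC^{/(p, \cT)}$. Taking the fiber at $t \in \cT$ and using the standard identifications $\underline{\Fun}_{\cT}(\cK, \cC)_t \simeq \Fun_{\cT^{/t}}(\cK_{\underline{t}}, \cC_{\underline{t}})$ and $\underline{\Fun}_{\cT}(\cK \times \Delta^1, \cC)_t \simeq \Fun_{\cT^{/t}}(\cK_{\underline{t}}, \cC_{\underline{t}})^{\Delta^1}$, the result is the pullback of $\cC_t$ and $\Fun_{\cT^{/t}}(\cK_{\underline{t}}, \cC_{\underline{t}})^{\Delta^1}$ over $\Fun_{\cT^{/t}}(\cK_{\underline{t}}, \cC_{\underline{t}})^{\times 2}$, where the first map is the constant diagram paired with $p_{\underline{t}}$; this pullback is by definition (or by a standard rewriting) the pullback $\cC_t \times_{\Fun_{\cT^{/t}}(\cK_{\underline{t}}, \cC_{\underline{t}})} \Fun_{\cT^{/t}}(\cK_{\underline{t}}, \cC_{\underline{t}})^{/\{p_{\underline{t}}\}}$. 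The right-fibration statement over $\cC_t$ is then immediate: the left-hand side is a left fibration by \cref{lm:sliceCategoryFibrationProperties}(1) (and what is claimed here is the opposite variant, obtained by taking the $\cK^{\underline{\lhd}}$-version in place of $\cK^{\underline{\rhd}}$), while the right-hand side is a pullback of the right fibration $\Fun(-)^{/\{p_{\underline{t}}\}} \to \Fun(-)$; naturality of the comparison ensures it is a map over $\cC_t$.

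The main technical point is ensuring the homotopy-invariant conclusions in the first two steps, i.e.\ that the pushout square is genuinely a homotopy pushout and that $\underline{\Fun}_\cT(-,\cC)$ carries it to a homotopy pullback. This is handled by lifting to $\sSet^+_{/\cT^{\op}}$ where the relevant inclusions are cofibrations and the internal hom into $\leftnat{\cC}$ is right Quillen, which by standard model-categorical arguments converts such pushouts to pullbacks.
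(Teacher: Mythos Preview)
Your proposal is correct and follows essentially the same approach as the paper: both establish the same homotopy pushout square presenting $\cK^{\underline{\lhd}}$ and then apply $\underline{\Fun}_\cT(-,\cC)$ to obtain the pullback. The only cosmetic difference is that the paper justifies the pushout by applying \cite[Prop.~4.2.1.2]{HTT} fiberwise, whereas you dualize \cite[Lem.~4.5]{Exp2} and then enlarge; for the consequence the paper routes through the intermediate identification $\underline{\Fun}_\cT(\cK,\cC)^{/(\sigma_p,\cT)}$ and invokes \cite[Prop.~4.30]{Exp2}, while you base-change and take fibers directly, but these amount to the same computation.
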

\begin{proof} Consider the commutative square of $\cT$-$\infty$-categories
\[ \begin{tikzcd}
\cK \times \partial \Delta^1 \ar{r} \ar{d} & \cT^{\op} \bigsqcup \cK \ar{d} \\
\cK \times \Delta^1 \ar{r} & \cK^{\underline{\lhd}}
\end{tikzcd} \]
where the vertical maps are the inclusions and the horizontal maps are induced by the structure map $\cK \to \cT^{\op}$ and the identity on $\cK$. By application of \cite[Prop.~4.2.1.2]{HTT} fiberwise, this is a homotopy pushout square, and the first claim follows by transforming the pushout to a pullback under $\underline{\Fun}_\cT(-,\cC)$. For a $\cT$-functor $p: \cK \to \cC$, we thus obtain a commutative diagram of homotopy pullback squares
\[ \begin{tikzcd}
\cC^{/(p,\cT)} \ar{r} \ar{d} & \underline{\Fun}_\cT(\cK, \cC)^{/(\sigma_p,\cT)} \ar{r} \ar{d} & \underline{\Fun}_\cT(\cK \times \Delta^1, \cC) \ar{d} \\
\cC \ar{r}{\delta} & \underline{\Fun}_\cT(\cK, \cC) \ar{r}{(\id,\sigma_p)} & \underline{\Fun}_\cT(\cK, \cC) \times_{\cT^{\op}} \underline{\Fun}_\cT(\cK, \cC),
\end{tikzcd} \]
where $\sigma_p: \cT^\op \to \underline{\Fun}_\cT(\cK, \cC)$ selects $p$. To identify $(\cC^{/(p,\cT)})_t$, after replacing $p$ by $p_{\underline{t}}$ we may suppose that $\cT$ has a final object $\ast$. But then for any $\cT$-$\infty$-category $\cD$ and cocartesian section $\sigma: \cT^{\op} \to \cD$ that selects an object $x = \sigma(\ast) \in \cD_{\ast}$, we have that $(\cD^{/(\sigma,\cT)})_{\ast} \simeq (\cD_{\ast})^{/x}$ by \cite[Prop.~4.30]{Exp2}.
\end{proof}

The following lemma generalizes and supplies another proof of the fact that the Yoneda embedding preserves limits \cite[Prop.~5.1.3.2]{HTT}.

\begin{lemma} \label{lem:YonedaPreservesLimitsForSliceCategories} 
Let $p: \cK \to \cC$ be functor of small $\infty$-categories. Then the commutative square of $\infty$-categories
\[ \begin{tikzcd}
\cC^{/p} \ar{r} \ar{d} & \PShv(\cC)^{/jp} \ar{d} \\
\cC \ar{r}{j} & \PShv(\cC)
\end{tikzcd} \]
is a homotopy pullback square. Consequently, the functor $\varphi: \cC^{\op} \to \Spc$ classifying the right fibration $\cC^{/p} \to \cC$ is canonically equivalent to $\lim_\cK j p$.
\end{lemma}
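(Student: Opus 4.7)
The plan is to verify the square is a homotopy pullback by showing that the induced comparison map of right fibrations over $\cC$,
\[ \cC^{/p} \to \cC \times_{\PShv(\cC)} \PShv(\cC)^{/jp}, \]
is a fiberwise equivalence. Both $\cC^{/p} \to \cC$ and $\PShv(\cC)^{/jp} \to \PShv(\cC)$ are right fibrations by (the $\cT = \ast$ case of) \cref{lm:sliceCategoryFibrationProperties}(1), so pullback stability reduces the claim to checking equivalence on fibers over each $c \in \cC$.

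By \cref{lem:ParameterizedSlicesAsOrdinarySlices} specialized to $\cT = \ast$, the fiber of $\cC^{/p}$ over $c$ is identified with the mapping space $\Map_{\Fun(\cK, \cC)}(\delta c, p)$, where $\delta$ denotes the constant diagram functor; similarly, the fiber of $\PShv(\cC)^{/jp}$ over $jc$ is $\Map_{\Fun(\cK, \PShv(\cC))}(\delta(jc), jp)$. Noting that the postcomposition functor $j_\ast = \Fun(\cK, j) \colon \Fun(\cK, \cC) \to \Fun(\cK, \PShv(\cC))$ carries $\delta c$ to $\delta(jc)$ and $p$ to $jp$, the comparison map on fibers is identified with the map on mapping spaces
\[ \Map_{\Fun(\cK, \cC)}(\delta c, p) \to \Map_{\Fun(\cK, \PShv(\cC))}(j_\ast \delta c, j_\ast p) \]
induced by $j_\ast$. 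Since the parametrized Yoneda embedding $j$ is fully faithful (here we use \cite[Prop.~11.4]{Exp2} in the unparametrized guise, or just ordinary Yoneda), so is $j_\ast$, and hence this map is an equivalence, completing the proof of the pullback square.

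For the consequence, I would use that $\PShv(\cC)$ admits all small limits, so the right fibration $\PShv(\cC)^{/jp} \to \PShv(\cC)$, classified by $F \mapsto \lim_\cK \Map_{\PShv(\cC)}(F, jp(-))$, is equivalent over $\PShv(\cC)$ to the representable right fibration $\PShv(\cC)^{/\lim_\cK jp}$. Pulling back along $j$ and invoking the Yoneda lemma, the functor $\varphi \colon \cC^{\op} \to \Spc$ classifying $\cC^{/p} \to \cC$ is
\[ \varphi(c) \simeq \Map_{\PShv(\cC)}(jc, \lim_\cK jp) \simeq (\lim_\cK jp)(c), \]
i.e., $\varphi \simeq \lim_\cK jp$ as presheaves on $\cC$.

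The one delicate bookkeeping step is the identification of the comparison map on fibers with $j_\ast$ applied to mapping spaces, which requires checking that the square relating the two diagonal functors along $j$ and $j_\ast$ commutes; this is routine from the definitions. Once this is in place, full faithfulness of $j_\ast$ and the universal property of $\PShv(\cC)$ combine to give both assertions with no further difficulty.
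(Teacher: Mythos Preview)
Your proof is correct and follows the same overall strategy as the paper---reduce to a fiberwise check over $\cC$, identify the fibers via the $\cT=\ast$ case of \cref{lem:ParameterizedSlicesAsOrdinarySlices}, then deduce the consequence from representability---but the key step differs. You show the map
\[ \Map_{\Fun(\cK,\cC)}(\delta c, p) \to \Map_{\Fun(\cK,\PShv(\cC))}(\delta(jc), jp) \]
is an equivalence by the one-line observation that $j_\ast = \Fun(\cK,j)$ is fully faithful whenever $j$ is. The paper instead rewrites the source as an end $\lim_{\Tw(\cK)} \Map_{\cC}(c, p(-))$ via the end formula for mapping spaces in functor categories, identifies the comparison with restriction along $\ev_1 : \Tw(\cK) \to \cK$, and proves separately (\cref{lem:TwistedArrowFinalFunctor}) that $\ev_1$ is right cofinal. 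Your route is more elementary and avoids the auxiliary twisted-arrow lemma entirely; the paper's route has the minor side benefit of making the identification with $(\lim_\cK jp)(c)$ explicit in the same breath, but you recover that just as well in your consequence argument.
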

\begin{proof} It suffices to show that the induced functor $\psi: \cC^{/p} \to \cC \times_{\PShv(\cC)} \PShv(\cC)^{/jp}$ is an equivalence of right fibrations over $\cC$ by checking that for all $x \in \cC$, $\psi_x$ is an equivalence. First note that by the $\cT = \Delta^0$ case of \cref{lem:ParameterizedSlicesAsOrdinarySlices}, we may identify $\psi_x$ with the map
$$\psi'_x: \Map_{\cC^\cK}(\delta x, p) \to \Map_{\PShv(\cC)^\cK}(\delta j(x), j p) \simeq  \Map_{\PShv(\cC)}(j(x), \lim_{\cK} jp) \simeq (\lim_{\cK} jp)(x)$$
induced by postcomposition by the Yoneda embedding. Using the end formula for mapping spaces in $\Fun(\cK,\cC)$ \cite[Prop.~2.3]{GlasmanHodge}, we have an equivalence
\[ \Map_{\cC^\cK}(\delta x, p) \simeq \int_{\cK} \Map_\cC(x, p(-)) \coloneqq \lim_{\Tw(\cK)} \Map_\cC(x, p(-)).  \]
where the limit is taken over the functor 
$$\Tw(\cK) \xrightarrow{\ev_1} \cK \xrightarrow{p} \cC \xrightarrow{\Map_\cC(x,-)} \Spc.$$
Under this identification, $\psi'_x$ is induced by restriction along $\ev_1$ (using the contravariant functoriality of limits in the diagram). The claim then follows from \cref{lem:TwistedArrowFinalFunctor}.

Since representable right fibrations are classified by the corresponding representable functor \cite[Prop.~4.4.4.5]{HTT}, we then have that $\varphi$ is equivalent to the composition
\[ \cC^{\op} \xrightarrow{j^{\op}} \PShv(\cC)^{\op} \xrightarrow{\Map_{\PShv(\cC)}(-,\lim_\cK j p)} \Spc. \]
An argument with the Yoneda lemma then shows $\varphi$ is in turn equivalent to $\lim_\cK j p$ (in more detail, see \cref{rem:YonedaLemmaExtension}). 
\end{proof}

\begin{lemma} \label{lem:TwistedArrowFinalFunctor} Let $\cK$ be an $\infty$-category. Then the source and target functors $$\ev_0, \ev_1: \Tw(\cK) \to \cK$$ are right cofinal.
\end{lemma}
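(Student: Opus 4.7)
The plan is to exhibit each of $\ev_0$ and $\ev_1$ as a (co)cartesian fibration with weakly contractible fibers and then invoke the appropriate form of HTT Proposition~4.1.3.2 to conclude. I describe the case of $\ev_1$ in detail; the case of $\ev_0$ is entirely symmetric by duality.

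First, I argue that $\ev_1 \colon \Tw(\cK) \to \cK$ is a cocartesian fibration. Recall that $\Tw(\cK) \to \cK^{\op} \times \cK$ is a left fibration (classified by the mapping space bifunctor), hence in particular a cocartesian fibration; composing with the cocartesian fibration $\pr_2 \colon \cK^{\op} \times \cK \to \cK$ and using closure of cocartesian fibrations under composition shows that $\ev_1$ itself is cocartesian. Explicitly, the cocartesian lift at $\alpha \colon a \to b$ of a morphism $g \colon b \to b'$ in $\cK$ is the morphism $\alpha \to g \circ \alpha$ in $\Tw(\cK)$ corresponding to $(\id_a, g) \in \cK^{\op} \times \cK$.

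Next, I identify the fiber $\ev_1^{-1}(k)$ with the left fibration over $\cK^{\op}$ classified by the representable $\Map_\cK(-, k)$; its straightening identifies it (up to opposite) with the slice $\cK^{/k}$, which has a terminal object $\id_k$ and is therefore weakly contractible.

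To finish, the opposite functor $\ev_1^{\op} \colon \Tw(\cK)^{\op} \to \cK^{\op}$ is then a cartesian fibration with weakly contractible fibers. By HTT Proposition~4.1.3.2, $\ev_1^{\op}$ is cofinal in the HTT sense, which by HTT Theorem~4.1.3.1 is equivalent to the weak contractibility of $\Tw(\cK)^{\op} \times_{\cK^{\op}} (\cK^{\op})^{k/} \cong (\Tw(\cK) \times_\cK \cK^{/k})^{\op}$ for every $k \in \cK$; passing back to $\cK$, this is precisely the right cofinality of $\ev_1$ in the sense required by the preceding lemma. For $\ev_0$, the symmetric argument reduces the question to the weak contractibility of the coslice fibers $\cK^{a/}$, which follows from the existence of the initial object $\id_a$. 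The only real subtlety is bookkeeping around the direction of the twisted arrow category's evaluation maps; the essential mathematical input is the combination of the (co)cartesian fibration structure with the distinguished identity objects in the (co)slice fibers.
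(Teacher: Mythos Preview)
Your proof is correct and takes essentially the same approach as the paper: both identify $\ev_1$ as a cocartesian fibration with weakly contractible fibers $(\cK_{/k})^{\op}$ and deduce the required cofinality from this. The only difference is packaging---you pass to opposites and invoke HTT~4.1.3.2 as a black box, whereas the paper unwinds the same argument by appealing to smoothness of cocartesian fibrations (HTT~4.1.2.15) to pull back the cofinal inclusion $\{y\} \hookrightarrow \cK_{/y}$; incidentally, the paper's assertion that $\ev_1$ is a \emph{left} fibration is a slight overstatement (the fibers are not groupoids), so your ``cocartesian'' is in fact the more accurate claim.
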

\begin{proof} We verify the hypotheses of Joyal's cofinality theorem \cite[Thm.~4.1.3.1]{HTT} for $\ev_1$ (in its opposite formulation). Let $y \in \cK$ and consider the commutative diagram of homotopy pullbacks
\[ \begin{tikzcd}
(\cK_{/y})^{\op} \ar{r}{\iota'} \ar{d} & \Tw(\cK) \times_\cK \cK_{/y} \ar{r} \ar{d} & \Tw(\cK) \ar{d}{\ev_1} \\
\{ y \} \ar{r}{\iota} & \cK_{/y} \ar{r} & \cK
\end{tikzcd} \]
Since $\ev_1$ is a left fibration, $\ev_1$ is a smooth map \cite[Prop.~4.1.2.15]{HTT}. Therefore, the pullback $\iota'$ of the cofinal inclusion $\iota$ along $\ev_1$ is again cofinal, so in particular a weak homotopy equivalence. We deduce that $\Tw(\cK) \times_\cK \cK_{/y}$ is weakly contractible, which proves the claim. The proof for $\ev_0$ is similar.
\end{proof}

\begin{remark}\label{rem:YonedaLemmaExtension} Let $q \in \PShv(\cC)$ be a presheaf. By \cite[Lem.~5.1.5.2]{HTT}, we have an equivalence $q(c) \simeq \Map_{\PShv(\cC)}(j(c),q)$ of spaces for all $c \in \cC$. We can promote this to an equivalence of presheaves
$$\Map_{\PShv(\cC)}(j(-),q) \simeq q(-)$$
as follows. Let $\pi: \cC^{/q} = \cC \times_{\PShv(\cC)} \PShv(\cC)^{/q} \to \cC$ denote the projection. We then have the sequence of equivalences in $\PShv(\cC)$
\begin{align*}
q(-) & \simeq \colim_{\cC^{/q}} \Map_\cC(-,\pi) \simeq \colim_{\cC^{/q}} \Map_{\PShv(\cC)}(j(-),j \pi) \\
 & \simeq \Map_{\PShv(\cC)}(j(-), \colim_{\cC^{/q}} j \pi) \simeq \Map_{\PShv(\cC)}(j(-), q),
\end{align*}
where we use that $\id_{\PShv(\cC)}$ is the left Kan extension of $j$ along itself \cite[Lem.~5.1.5.3]{HTT} for the first and last equivalences, $j$ is fully faithful \cite[Prop.~5.1.3.1]{HTT} for the second equivalence, and $j(c)$ corepresents evaluation at $c$ \cite[Lem.~5.1.5.2]{HTT} (and is hence completely compact) to show the third equivalence, where the map in question is the canonical colimit interchange map.

Now suppose $\cC$ is a $\cT$-$\infty$-category and let $q \in \PShv_\cT(\cC) \simeq \PShv(\cC^{\rm v})$ be a $\cT$-presheaf. We again have that the $\cT$-Yoneda embedding $j_\cT: \cC \to \underline{\PShv}_\cT(\cC)$, given fiberwise by $\cC_t \subset \PShv(\cC_t) \subset \PShv_{\cT^{/t}}(\cC) \simeq \PShv(\cC_{\underline{t}}^{\rm v})$, is $\cT$-fully faithful, $\id_{\underline{\PShv}_\cT(\cC)}$ is the $\cT$-left Kan extension of $j_\cT$ along itself \cite[Lem.~11.1]{Exp2}, and for any $c \in \cC_t$, $j_\cT(c)$ is completely compact as an object in $\PShv_{\cT^{/t}}(\cC_{\underline{t}}) \simeq \PShv(\cC_{\underline{t}}^{\rm v})$, hence $\cT^{/t}$-completely compact in $\underline{\PShv}_{\cT^{/t}}(\cC_{\underline{t}})$ since $\cT$-colimits in $\underline{\Spc}_\cT$ are computed as ordinary colimits under the correspondence of \cite[Prop.~5.5]{Exp2}. Repeating the above argument then shows that we have an equivalence of $\cT$-presheaves
\[ q(-) \simeq \underline{\Map}_{\underline{\PShv}_\cT(\cC)}(j_\cT(-),q): \cC^{\vop} \to \Spc, \]
where on the right we view $q$ as a cocartesian section of $\underline{\PShv}_\cT(\cC)$.
\end{remark}

\begin{proposition} \label{prop:ParamYonedaPreservesSlices} Let $p: \cK \to \cC$ be a $\cT$-functor of small $\cT$-$\infty$-categories. Then the commutative square of $\cT$-$\infty$-categories
\[ \begin{tikzcd}
\cC^{/(p,\cT)} \ar{r} \ar{d} & \underline{\PShv}_\cT(\cC)^{/(j_\cT p,\cT)} \ar{d} \\
\cC \ar{r}{j_\cT} & \underline{\PShv}_\cT(\cC) 
\end{tikzcd} \]
is a homotopy pullback square. Consequently, the functor $\cC^{\vop} \to \Spc$ classifying the left fibration $$(\cC^{/(p,\cT)})^{\vop} \simeq (\cC^\vop)^{(p^{\vop},\cT)/} \to \cC^{\vop}$$ is canonically equivalent to $\lim^\cT_\cK j_\cT p$.
\end{proposition}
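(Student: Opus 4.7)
The plan is to show the induced comparison $\cT$-functor
\[ \rho: \cC^{/(p,\cT)} \longrightarrow \cC \times_{\underline{\PShv}_\cT(\cC)} \underline{\PShv}_\cT(\cC)^{/(j_\cT p,\cT)} \]
is an equivalence. Both source and target of $\rho$ are right fibrations over $\cC$ (dualizing the argument in Lemma~\ref{lm:sliceCategoryFibrationProperties}(1) that $\cC^{(f,\cT)/} \to \cC$ is a left fibration), so it suffices to show that $\rho_t$ induces a homotopy equivalence on fibers over each $x \in \cC_t$ for each $t \in \cT$.

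Applying Lemma~\ref{lem:ParameterizedSlicesAsOrdinarySlices} in parallel to the pairs $(\cC, p)$ and $(\underline{\PShv}_\cT(\cC), j_\cT p)$, and using the base-change identifications $\underline{\PShv}_\cT(\cC)_{\underline{t}} \simeq \underline{\PShv}_{\cT^{/t}}(\cC_{\underline{t}})$ under which $(j_\cT)_{\underline{t}}$ corresponds to $j_{\cT^{/t}}$, the fiber map at $x$ becomes the map
\[ \Map_{\Fun_{\cT^{/t}}(\cK_{\underline{t}}, \cC_{\underline{t}})}(\delta x, p_{\underline{t}}) \longrightarrow \Map_{\Fun_{\cT^{/t}}(\cK_{\underline{t}}, \underline{\PShv}_{\cT^{/t}}(\cC_{\underline{t}}))}\bigl(\delta(j_{\cT^{/t}} x), j_{\cT^{/t}} p_{\underline{t}}\bigr) \]
induced by postcomposition with $j_{\cT^{/t}}$. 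Since the parametrized Yoneda embedding is $\cT^{/t}$-fully faithful (hence fully faithful on underlying $\infty$-categories, by \cite[Thm.~11.5]{Exp2}), postcomposition yields a fully faithful functor $\Fun(\cK_{\underline{t}}, \cC_{\underline{t}}) \to \Fun(\cK_{\underline{t}}, \underline{\PShv}_{\cT^{/t}}(\cC_{\underline{t}}))$. Since $\Fun_{\cT^{/t}}(-,-) \subset \Fun(-,-)$ is a full subcategory, the displayed map is the restriction of this fully faithful functor to the relevant mapping spaces, hence an equivalence.

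For the consequence: the pullback square expresses $\cC^{/(p,\cT)} \to \cC$ as the pullback along $j_\cT$ of $\underline{\PShv}_\cT(\cC)^{/(j_\cT p,\cT)} \to \underline{\PShv}_\cT(\cC)$. Since $\underline{\Spc}_\cT$ is $\cT$-complete, so is $\underline{\PShv}_\cT(\cC)$, and hence $L \coloneq \lim^\cT_\cK j_\cT p$ exists in $\underline{\PShv}_\cT(\cC)$. The universal property of the $\cT$-limit identifies $\underline{\PShv}_\cT(\cC)^{/(j_\cT p,\cT)}$ with a representable slice whose associated right fibration is classified by $\underline{\Map}_{\underline{\PShv}_\cT(\cC)}(-, L)$. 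Pulling back along $j_\cT$ and passing to the opposite $\cT$-left fibration over $\cC^{\vop}$, the classifying $\cT$-presheaf on $\cC$ becomes $\underline{\Map}_{\underline{\PShv}_\cT(\cC)}(j_\cT(-), L)$, which by the parametrized Yoneda lemma \cite[Lem.~11.1]{Exp2} (cf.\ Remark~\ref{rem:YonedaLemmaExtension}) is canonically equivalent to $L$ itself, i.e.\ to $\lim^\cT_\cK j_\cT p$. The main subtlety will be ensuring the naturality of the identifications in Lemma~\ref{lem:ParameterizedSlicesAsOrdinarySlices}, so that the fiber map really is recognized as postcomposition with $j_{\cT^{/t}}$; once this is in hand, the rest is formal.
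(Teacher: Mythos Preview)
Your argument is correct and takes a genuinely more direct route than the paper's proof. The paper reduces to the fiber over a terminal object of $\cT$, then passes through the identification $\PShv_\cT(\cC) \simeq \PShv(\cC^{\rm v})$, uses the adjunction $\pi^\ast \dashv \pi_\ast$ together with \cite[Lem.~8.8]{Exp2} to rewrite the parametrized slice as an ordinary overcategory, performs a somewhat intricate base-change along a functor $P': \cC_\ast^\op \times \cK \to \cC^{\vop} \times_{\cT^\op} \cK$, and finally appeals to the non-parametrized statement Lemma~\ref{lem:YonedaPreservesLimitsForSliceCategories} (whose proof in turn goes through an end formula and the cofinality of $\ev_1: \Tw(\cK) \to \cK$). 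By contrast, you observe that once Lemma~\ref{lem:ParameterizedSlicesAsOrdinarySlices} identifies the fibers of the comparison map as the effect of postcomposition by $j_{\cT^{/t}}$ on mapping spaces in $\Fun_{\cT^{/t}}(\cK_{\underline{t}},-)$, the conclusion is immediate from the $\cT^{/t}$-full faithfulness of the parametrized Yoneda embedding (which, since $j_{\cT^{/t}}$ preserves cocartesian edges, upgrades to full faithfulness on total $\infty$-categories and hence on $\Fun(\cK_{\underline{t}},-)$). The naturality concern you flag is harmless: the pullback square of Lemma~\ref{lem:ParameterizedSlicesAsOrdinarySlices} is visibly functorial in the pair $(\cC,p)$, so applying it to $j_\cT$ yields compatible identifications. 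Your argument is shorter and avoids the detour through the non-parametrized case entirely; the paper's route has the minor expository advantage of explicitly exhibiting the relation to ordinary presheaf categories, but this is not needed for the statement at hand.
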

\begin{proof} The square in question is a homotopy pullback if and only if for all $t \in \cT$, the square
\[ \begin{tikzcd}
(\cC^{/(p,\cT)})_t \simeq ( \cC_{\underline{t}}^{/(p_{\underline{t}},\cT^{/t})} )_{\id_t} \ar{r} \ar{d} & (\underline{\PShv}_\cT(\cC)^{/(j_\cT p,\cT)})_t \simeq ( \PShv_{\cT^{/t}}(\cC_{\underline{t}})^{/(p_{\underline{t}},\cT^{/t})} )_{\id_t} \ar{d} \\
\cC_t \simeq (\cC_{\underline{t}})_{\id_t} \ar{r} & \underline{\PShv}_\cT(\cC)_t \simeq \PShv_{\cT^{/t}}(\cC_{\underline{t}})
\end{tikzcd} \]
is a homotopy pullback of $\infty$-categories. Therefore, after replacing $\cT$ by $\cT^{/t}$, we may suppose that $\cT$ has a final object $\ast \in \cT$, and it suffices to check that the square of $\infty$-categories
\[ \begin{tikzcd}
(\cC^{/(p,\cT)})_{\ast} \ar{r} \ar{d} & (\underline{\PShv}_\cT(\cC)^{/(j_\cT p,\cT)})_{\ast} \ar{d} \\
\cC_{\ast} \ar{r} & \PShv_\cT(\cC)
\end{tikzcd} \]
is a homotopy pullback. Let $\pi$ generically denote all pullbacks of the structure map $\cK \to \cT^\op$. By the universal property of $\underline{\Spc}_\cT$ as a $\cT$-$\infty$-category of $\cT$-objects in $\Spc$ \cite[Prop.~3.10]{Exp2}, we have an identification of the constant $\cT$-diagram functor
$$\PShv_\cT(\cC) = \Fun_\cT(\cC^{\vop}, \underline{\Spc}_\cT) \to \Fun_\cT(\cK, \underline{\PShv}_\cT(\cC))$$
with the functor
$$\pi^{\ast}: \Fun(\cC^{\vop}, \Spc) \to \Fun(\cC^{\vop} \times_{\cT^{\op}} \cK, \Spc)$$
given by restriction along $\pi$. Abusing notation, let $j_\cT p$ also denote the corresponding functor $\cC^{\vop} \times_{\cT^{\op}} \cK \to \Spc$ under this equivalence. Then by \cref{lem:ParameterizedSlicesAsOrdinarySlices}, we have a homotopy pullback square
\[ \begin{tikzcd}
(\underline{\PShv}_\cT(\cC)^{/(j_\cT p,\cT)})_{\ast} \ar{r} \ar{d} & \Fun(\cC^{\vop} \times_{\cT^{\op}} \cK, \Spc)^{/j_\cT p} \ar{d} \\
\PShv_\cT(\cC) \simeq \PShv(\cC^{\rm v}) \ar{r}{\pi^{\ast}} & \Fun(\cC^{\vop} \times_{\cT^{\op}} \cK, \Spc).
\end{tikzcd} \]
By the $S = \Delta^0$ case of \cite[Lem.~8.8]{Exp2} applied to the adjunction $\pi^\ast \dashv \pi_\ast$, we deduce an equivalence $$(\underline{\PShv}_\cT(\cC)^{/(j_\cT p,\cT)})_{\ast} \simeq \PShv(\cC^{\rm v})^{/\pi_{\ast}(j_\cT p)}.$$
Next, consider the functor
$$P' = P \times \id_\cK: \cC_{\ast}^{\op} \times \cK \cong (\cC_{\ast}^{\op} \times \cT^{\op}) \times_{\cT^{\op}} \cK \to \cC^{\vop} \times_{\cT^{\op}} \cK,$$
defined to be the product of the unique $\cT$-functor $P: \cC_{\ast}^{\op} \times \cT^{\op} \to \cC^{\vop}$ extending the inclusion $\iota: \cC_{\ast}^{\op} \subset \cC^{\vop}$ on the first factor and the identity on $\cK$ on the second factor; informally, $P'(c,k) = (\chi_t^{\ast} (c), k)$ for $k \in \cK_t$ and $\chi_t: t \to \ast$ the unique map. Using the canonical unit transformation $\iota \Rightarrow P$, $P'$ fits into a lax commutative diagram
\[ \begin{tikzcd}
\cC_{\ast}^{\op} \times \cK \ar{r}{P'} \ar{d}{\pr} \ar[phantom]{dr}[rotate=45]{\Rightarrow} & \cC^{\vop} \times_{\cT^{\op}} \cK \ar{d}{\pi} \\
\cC_{\ast}^{\op} \ar{r}{\iota} & \cC^{\vop}.
\end{tikzcd} \]
We claim that the induced map $\theta: \iota^{\ast} \pi_{\ast} (j_\cT p) \to \pr_{\ast} P'^{\ast} (j_\cT p)$ is an equivalence, which we may check objectwise at each $c \in \cC_{\ast}$. Let $\underline{c}: \cT^{\op} \to \cC^{\vop}$ denote the unique $\cT$-functor such that $\underline{c}(\ast) = c$. Since $\cT$-limits in $\cT$-functor categories are computed pointwise (by the dual of \cite[Prop.~9.17]{Exp2}), under the equivalences of \cite[Prop.~3.10]{Exp2} we have a commutative square
\[ \begin{tikzcd}
\Fun(\cC^{\vop} \times_{\cT^{\op}} \cK, \Spc) \ar{r}{\underline{c}^{\ast}} \ar{d}{\pi_{\ast}} & \Fun(\cK, \Spc) \ar{d}{\pi_{\ast}} \\
\Fun(\cC^{\vop}, \Spc) \ar{r}{\underline{c}^{\ast}} & \Fun(\cT^{\op}, \Spc)
\end{tikzcd} \]
where the vertical functors are given by right Kan extension and the horizontal functors by restriction, and after evaluation at $\ast \in \cT^{\op}$ this equivalence identifies with $\theta(c)$, which proves the claim.

Abusing notation, let $j p$ also denote its adjoint $\cK \times \cC^{\op} \to \Spc$, and observe that $(P')^{\ast}(j_t p) \simeq (j p)|_{\cK \times \cC^{\op}_{\ast}}$, so that the equivalence $\theta$ yields an equivalence
\[ \PShv(\cC_\ast)^{/(\pi_\ast (j_\cT p))|_{\cC^\op_\ast}} \simeq \PShv(\cC_\ast)^{/\pr_\ast ((j p)|_{\cK \times \cC^{\op}_{\ast}})}. \]
Using that $\pr$ is a cartesian fibration, we have an equivalence $\pr_\ast ((j p)|_{\cK \times \cC^{\op}_{\ast}}) \simeq (\pr_{\ast}(j p))|_{\cC^{\op}_{\ast}}$. Note that the limit of $j p: \cK \to \PShv(\cC)$ is computed by $\pr_{\ast} (j p)$, so $\PShv(\cC)^{/jp} \simeq \PShv(\cC)^{/\pr_{\ast}(j p)}$. Using that $\cC_{\ast} \to \PShv_\cT(\cC) \simeq \PShv(\cC^{\rm v})$ factors through $\PShv(\cC_{\ast})$ and invoking \cite[Lem.~8.8]{Exp2} with respect to the adjunctions $\adjunctb{\PShv(\cC_\ast)}{\PShv(\cC^{\rm v})}$ and $\adjunctb{\PShv(\cC_\ast)}{\PShv(\cC)}$, we then reduce the claim to checking that the outer square
\[ \begin{tikzcd}
(\cC^{/(p,\cT)})_{\ast} \ar{r} \ar{d} & \PShv(\cC_{\ast})^{/(j p)|_{\cC_{\ast}^{\op}}} \ar{r} \ar{d} & \PShv(\cC)^{/jp} \ar{d} \\
\cC_{\ast} \ar{r} & \PShv(\cC_{\ast}) \ar{r} & \PShv(\cC).
\end{tikzcd} \]
is a homotopy pullback square. But this follows from \cref{lem:ParametrizedOvercategoryComparison} and \cref{lem:YonedaPreservesLimitsForSliceCategories}.

The last statement then follows from \cite[Prop.~5.24]{Exp2}, \cref{lem:ParamRepresentableFibrations}(3), and \cref{rem:YonedaLemmaExtension}.
\end{proof}

\begin{lemma} \label{lem:ParametrizedOvercategoryComparison} Suppose $\cT$ has a final object $\ast$ and let $p: \cK \to \cC$ be a $\cT$-functor. Then we have a homotopy pullback square
\[ \begin{tikzcd}
(\cC^{/(p,\cT)})_{\ast} \ar{r} \ar{d} & \cC^{/p} \ar{d} \\
\cC_{\ast} \ar{r} & \cC.
\end{tikzcd} \]
\end{lemma}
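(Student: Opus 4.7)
My first step is to invoke \cref{lem:ParameterizedSlicesAsOrdinarySlices} with $t = \ast$. Since $\ast$ is terminal in $\cT$ we have $\cT^{/\ast} \simeq \cT$, $\cK_{\underline{\ast}} \simeq \cK$, and $\cC_{\underline{\ast}} \simeq \cC$, which yields an equivalence
\[ (\cC^{/(p,\cT)})_\ast \simeq \cC_\ast \times_{\Fun_\cT(\cK, \cC)} \Fun_\cT(\cK, \cC)^{/p}, \]
in which the bottom map is the $\cT$-constant-diagram functor $\delta_{\ast,\cT}: c \mapsto \delta_{\ast,\cT}(c)$. On the other hand, the analogous (non-parametrized) pullback description of the ordinary slice gives $\cC^{/p} \simeq \cC \times_{\Fun(\cK, \cC)} \Fun(\cK, \cC)^{/p}$, and pulling back along $\cC_\ast \hookrightarrow \cC$ yields
\[ \cC_\ast \times_\cC \cC^{/p} \simeq \cC_\ast \times_{\Fun(\cK, \cC)} \Fun(\cK, \cC)^{/p}, \]
where now $\cC_\ast \to \Fun(\cK, \cC)$ sends $c$ to the honestly constant functor $\mathrm{const}_c$.

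Next, I would use that $\Fun_\cT(\cK, \cC) \subset \Fun(\cK, \cC)$ is the inclusion of a full subcategory (so that slices pull back); this reduces the problem to constructing a canonical equivalence, over $\Fun(\cK, \cC)^{/p}$, between the two homotopy pullbacks arising from the two maps $\cC_\ast \rightrightarrows \Fun(\cK, \cC)$, namely the composite $\cC_\ast \xrightarrow{\delta_{\ast,\cT}} \Fun_\cT(\cK, \cC) \hookrightarrow \Fun(\cK, \cC)$ and the composite $\cC_\ast \hookrightarrow \cC \xrightarrow{\mathrm{const}} \Fun(\cK, \cC)$. The comparison is mediated by a natural transformation $\eta : \mathrm{const}_{(-)} \Rightarrow \delta_{\ast,\cT}(-)$ of functors $\cC_\ast \to \Fun(\cK, \cC)$, whose component at $c \in \cC_\ast$ and $k \in \cK_t$ is the $p_\cC$-cocartesian edge $c \to (\pi(k) \to \ast)^\ast c = \delta_{\ast,\cT}(c)(k)$ over the unique morphism $\ast \to \pi(k)$ in $\cT^\op$.

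The key technical step is to verify that precomposition with $\eta$ induces the desired equivalence of pullbacks, which via fiberwise checking over $\cC_\ast$ amounts to showing that for each $c \in \cC_\ast$,
\[ \Map_{\Fun_\cT(\cK, \cC)}(\delta_{\ast,\cT}(c), p) \xrightarrow{\; \simeq \;} \Map_{\Fun(\cK, \cC)}(\mathrm{const}_c, p). \]
This follows from the universal property of cocartesian edges: an ordinary natural transformation $\mathrm{const}_c \Rightarrow p$ is a coherent family of morphisms $c \to p(k)$ in $\cC$, each one necessarily lying over the unique map $\ast \to \pi(k)$ in $\cT^\op$; by the universality of the cocartesian lift $\eta_c$ each such morphism factors uniquely as $\eta_c|_k$ followed by a vertical morphism $\delta_{\ast,\cT}(c)(k) \to p(k)$ in $\cC_{\pi(k)}$, and these vertical components are exactly what assemble into a $\cT$-natural transformation $\delta_{\ast,\cT}(c) \Rightarrow p$.

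The main obstacle will be upgrading this objectwise correspondence to a genuine equivalence of mapping spaces, rather than just of path components. The cleanest route is to express both mapping spaces via end formulas (as in \cite[Prop.~2.3]{GlasmanHodge}) over the twisted arrow category $\Tw(\cK)$ and observe that at each twisted arrow the identification reduces to the universal property of a single cocartesian edge in $\cC$, which is a contractible choice; passage to the limit then yields the claimed equivalence. Once this is established, taking the pullback with $\Fun(\cK, \cC)^{/p}$ gives the sought-after homotopy pullback square.
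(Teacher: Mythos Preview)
Your overall strategy is reasonable, but there is a genuine error in the middle: the inclusion $\Fun_\cT(\cK,\cC) \to \Fun(\cK,\cC)$ is \emph{not} fully faithful. Morphisms in $\Fun_\cT(\cK,\cC)$ are natural transformations lying over $\cT^{\op}$ (i.e., with fiberwise components), whereas morphisms in $\Fun(\cK,\cC)$ are arbitrary natural transformations; when $\cT$ has nontrivial endomorphisms these differ. Consequently the step ``slices pull back'' fails, and your reduction to comparing two pullbacks along $\cC_\ast \rightrightarrows \Fun(\cK,\cC)$ does not identify the left-hand side with $(\cC^{/(p,\cT)})_\ast$.

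The repair is to interpolate through $\Fun_{/\cT^{\op}}(\cK,\cC)$ instead: the inclusion $\Fun_\cT(\cK,\cC) \subset \Fun_{/\cT^{\op}}(\cK,\cC)$ \emph{is} full (preservation of cocartesian edges is a condition on objects only), so the slice over $p$ does pull back there. Now observe that $\Fun_{/\cT^{\op}}(\cK,\cC)$ is exactly the fiber of the cocartesian fibration $\Fun(\cK,\cC) \to \Fun(\cK,\cT^{\op})$ over the structure map $\pi_\cK$, while $\mathrm{const}(\cC_\ast)$ lies in the fiber over $\mathrm{const}_\ast$, and your $\eta$ is the cocartesian lift of the unique edge $\mathrm{const}_\ast \to \pi_\cK$. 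Since $\ast$ is initial in $\cT^{\op}$, the space $\Map_{\Fun(\cK,\cT^{\op})}(\mathrm{const}_\ast,\pi_\cK)$ is contractible, and the universal property of cocartesian edges gives the desired equivalence $\Map_{\Fun_\cT}(\delta_{\ast,\cT}(c),p) \simeq \Map_{\Fun(\cK,\cC)}(\mathrm{const}_c,p)$ directly and naturally in $c$, with no need for twisted-arrow end formulas.

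For comparison, the paper's proof avoids all of this mapping-space bookkeeping by a single model-categorical stroke: since $\ast$ is initial in $\cT^{\op}$, the inclusion $\Delta^0 \to (\cT^{\op})^\sharp$ is a cocartesian equivalence in $s\Set^+_{/\cT^{\op}}$, so by \cite[Thm.~4.16]{Exp2} the induced map $i\colon \Delta^0 \star_{\cT^{\op}} \leftnat{\cK} \to \leftnat{\cK^{\underline{\lhd}}}$ is a cocartesian equivalence. The Quillen bifunctor property of $\Fun_\cT(-,-)$ then converts precomposition by $i$ into a homotopy pullback square relating $\Fun_\cT(\cK^{\underline{\lhd}},\cC)$ to $\Fun(\cK^\lhd,\cC)$, and pulling back over $\{p\}$ finishes.
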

\begin{proof} Note that the inclusion of the initial object $\Delta^0 \to (\cT^{\op})^{\sharp}$ is a cocartesian equivalence in $s\Set^+_{/\cT^{\op}}$. By \cite[Thm.~4.16]{Exp2}, $i: \Delta^0 \star_{\cT^{\op}} \leftnat{\cK} \to \leftnat{\cK^{\underline{\lhd}}}$ is a cocartesian equivalence. Since $\Fun_\cT(-,-)$ is a Quillen bifunctor, precomposition by $i$ then yields a homotopy pullback square
\[ \begin{tikzcd}
\Fun_\cT(\cK^{\underline{\lhd}}, \cC) \ar{r} \ar{d} & \Fun(\cK^{\lhd}, \cC) \ar{d} \\
\cC_{\ast} \times \Fun_\cT(\cK,\cC) \ar{r} & \cC \times \Fun(\cK,\cC)
\end{tikzcd} \]
where the bottom horizontal functor is the evident inclusion. Taking the pullback over $\{p \}$ then produces the desired homotopy pullback square.
\end{proof}

We have the evident parametrized analogue of a (co)representable fibration.

\begin{definition} \label{def:CorepresentableFibration} Let $f: \cD \to \cC$ be a $\cT$-left resp. $\cT$-right fibration. If $\cD$ admits a $\cT$-initial object resp. $\cT$-final object, then we say that $f$ is \emph{$\cT$-corepresentable} resp. \emph{$\cT$-representable}.
\end{definition}

We record some basic facts about $\cT$-corepresentable $\cT$-left fibrations and the $\cT$-Yoneda embedding.

\begin{lemma} \label{lem:ParamRepresentableFibrations} \begin{enumerate} \item Let $\cD$ be a $\cT$-$\infty$-category and let $\sigma: \cT^\op \to \cD$ be a cocartesian section. Then $\sigma$ is a $\cT$-initial object of $\cD$ if and only if $\cD^{(\sigma,\cT)/} \to \cD$ is a trivial fibration. 
\item Let $f: \cD \to \cC$ be a $\cT$-functor. If $f$ is a $\cT$-corepresentable $\cT$-left fibration with $\cT$-initial object $\sigma$, then we have a canonical equivalence $\cD \simeq \cC^{(f \sigma,\cT)/}$ of $\cT$-$\infty$-categories over $\cC$.
\item Let $\sigma: \cT^\op \to \cC$ be any cocartesian section. The $\cT$-left fibration $\cC^{(\sigma,\cT)/} \to \cC$, as a left fibration, is classified by the functor
\[ \underline{\Map}_\cC(\sigma,-): \cC \xrightarrow{(\sigma^\vop,\id)} \cC^{\vop} \times_{\cT^\op} \cC \xrightarrow{\underline{\Map}_\cC(-,-)} \Spc \]
that sends $c \in \cC_t$ to $\Map_{\cC_t}(\sigma(t),c)$.
\item The functor obtained by taking cocartesian sections of the $\cT$-Yoneda embedding $j_\cT$ $$\widehat{j_\cT} : \Fun_\cT(\cT^\op,\cC) \to \PShv_\cT(\cC) \simeq \PShv(\cC^{\rm v}), \quad \widehat{j_\cT}(\sigma) \mapsto \underline{\Map}_\cC(-,\sigma) \simeq \underline{\Map}_{\cC^\vop}(\sigma^\vop,-)$$
is fully faithful. Under the straightening equivalence $\PShv_\cT(\cC) \simeq \LFib(\cC^{\vop})$, $\widehat{j_\cT}$ has essential image spanned by the $\cT$-corepresentable $\cT$-left fibrations over $\cC^{\vop}$.
\item The composite $\cT$-functor $\underline{\colim}^\cT j_\cT: \cC \to \underline{\Spc}_\cT$ is constant with value the $\cT$-final object of $\underline{\Spc}_\cT$.
\end{enumerate}
\end{lemma}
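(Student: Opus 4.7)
The plan is to establish the five parts in order, each building on earlier ones.

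For (1), the $\cT$-left fibration $\cD^{(\sigma,\cT)/} \to \cD$ is a trivial fibration iff it is fiberwise trivial. Via the left-slice analogue of \cref{lem:ParameterizedSlicesAsOrdinarySlices}, together with the identification $\Fun_{\cT^{/t}}(\ast_{\cT^{/t}}, \cD_{\underline{t}}) \simeq \cD_t$ (cocartesian sections of $\cD_{\underline{t}} \to (\cT^{/t})^\op$ are determined by their value at the initial object $\id_t$), this fiber at $t$ identifies with the ordinary source projection $(\cD_t)^{\sigma(t)/} \to \cD_t$, which is a trivial fibration iff $\sigma(t)$ is initial in $\cD_t$. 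Part (2) then follows: composing the trivial fibration of (1) with the evident $\cT$-functor $\cD^{(\sigma,\cT)/} \to \cC^{(f\sigma,\cT)/}$ induced by $f$ yields a comparison of $\cT$-left fibrations over $\cC$, which is an equivalence by passage to fibers, where it reduces to the classical statement that a left fibration over $\cC_t$ with specified initial object $\sigma(t)$ above $f\sigma(t)$ is equivalent to $(\cC_t)^{f\sigma(t)/}$.

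For (3), apply \cref{prop:ParamYonedaPreservesSlices} to the $\cT$-$\infty$-category $\cC^{\vop}$ with $\cK = \ast_\cT$ and $p = \sigma^\vop$: its conclusion identifies the left fibration $((\cC^\vop)^{/(\sigma^\vop,\cT)})^\vop \simeq \cC^{(\sigma,\cT)/} \to \cC$ (under the $\vop$-involution swapping left and right parametrized slices) as classified by $\lim^\cT_{\ast_\cT} j_\cT(\sigma^\vop) = j_\cT(\sigma^\vop) = \underline{\Map}_{\cC^\vop}(-,\sigma^\vop) \simeq \underline{\Map}_\cC(\sigma, -)$.

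Part (4) follows from (1)--(3). For essential image: by (3), $\widehat{j_\cT}(\sigma)$ classifies $(\cC^\vop)^{(\sigma^\vop,\cT)/} \to \cC^\vop$, which is always $\cT$-corepresentable (the cocartesian section $\id_{\sigma^\vop}$ is a $\cT$-initial object, as verified fiberwise by the argument of (1)). Conversely, every $\cT$-corepresentable $\cT$-left fibration $f: \cL \to \cC^\vop$ with $\cT$-initial object $\tau$ satisfies $\cL \simeq (\cC^\vop)^{(f\tau,\cT)/}$ by (2), hence lies in the essential image of $\widehat{j_\cT}$ applied to $(f\tau)^\vop$. For fully faithfulness, apply the parametrized Yoneda lemma \cite[Lem.~11.1]{Exp2} to compute $\Map_{\PShv_\cT(\cC)}(\widehat{j_\cT}(\sigma), \widehat{j_\cT}(\tau)) \simeq \underline{\Map}_\cC(\sigma, \tau) \simeq \Map(\sigma, \tau)$ on cocartesian sections, using \cref{rem:YonedaLemmaExtension} for the identification of $j_\cT(\tau)$ with the corepresented $\cT$-presheaf.

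Finally for (5), fix $c \in \cC_t$. By (3), $j_\cT(c) = \underline{\Map}_\cC(-, c)$ classifies the $\cT^{/t}$-left fibration $\pi: \cL = (\cC_{\underline{t}}^\vop)^{(c^\vop, \cT^{/t})/} \to \cC_{\underline{t}}^\vop$. By the standard identification of the $\cT$-colimit of a classifying $\cT$-functor with the $\cT$-colimit of the constant $\ast$-presheaf over the total space, $\underline{\colim}^{\cT^{/t}} j_\cT(c) \simeq \underline{\colim}^{\cT^{/t}}_{\cL} \mathrm{const}_\ast$. Since $\cL$ has $\cT^{/t}$-initial object $\id_{c^\vop}$, each fiber $\cL_{s \to t}$ has an initial object and is therefore weakly contractible, so the $\cT^{/t}$-colimit is a cocartesian section of $\underline{\Spc}_{\cT^{/t}}$ with all fibers $\ast$, i.e., the $\cT^{/t}$-final object. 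The main subtlety throughout is keeping careful track of the $\vop$-involutions when dualizing \cref{prop:ParamYonedaPreservesSlices} in (3); once this is done correctly the remaining arguments reduce to fiberwise classical facts combined with already-established parametrized results.
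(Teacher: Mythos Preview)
Your arguments for (1) and (2) are essentially the paper's arguments. The serious problem is in (3): you invoke the ``Consequently'' clause of \cref{prop:ParamYonedaPreservesSlices}, but in the paper that clause is proved \emph{using} part (3) of this very lemma (together with \cite[Prop.~5.24]{Exp2} and \cref{rem:YonedaLemmaExtension}). So as written your proof of (3) is circular. The first part of \cref{prop:ParamYonedaPreservesSlices} (the homotopy pullback square) does not use (3), but that part alone does not tell you what functor classifies the slice; to extract the classifying functor from the pullback square you would still need to know how slices of $\underline{\PShv}_\cT(\cC)$ are classified and how pullback along $j_\cT$ interacts with straightening, which is exactly what (3) and \cref{rem:YonedaLemmaExtension} supply.

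The paper's proof of (3) instead goes directly to the definition: $\underline{\Map}_\cC(-,-)$ is by construction the straightening of the fiberwise twisted arrow $\cT$-left fibration $\Tw_\cT(\cC) \to \cC^{\vop} \times_{\cT^\op} \cC$, so one needs only check that the pullback $\cC \times_{(\cC^{\vop} \times_{\cT^\op} \cC)} \Tw_\cT(\cC)$ along $(\sigma^{\vop},\id)$ has a $\cT$-initial object mapping to $\sigma$, after which (2) gives the identification with $\cC^{(\sigma,\cT)/}$. This check is fiberwise and classical. For (4), your use of the parametrized Yoneda lemma and \cref{rem:YonedaLemmaExtension} can be made to work but is more elaborate than needed; the paper simply observes that $j_\cT$ is fiberwise fully faithful and that taking cocartesian sections computes a limit in $\Cat$, hence preserves full faithfulness. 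Your (5) is fine in outline but inherits the dependence on (3); the paper's (5) is more direct, reducing via \cite[Prop.~5.5]{Exp2} to the non-parametrized fact $\colim_{\cC_t^{\op}} j(\alpha_t^\ast x) \simeq \ast$.
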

\begin{proof} First note that we have a homotopy pullback square
\[ \begin{tikzcd}
\Ar_\cT(\cD) = \cT^{\op} \times_{\Ar(\cT^\op)} \Ar(\cD) \ar{r}{\simeq} \ar{d}{\ev_0} & \underline{\Fun}_\cT(\cT \times \Delta^1, \cD) \ar{d}{\ev_0} \\
\cD \ar{r}{\simeq} & \underline{\Fun}_\cT(\cT, \cD)
\end{tikzcd} \]
in which the horizontal maps are equivalences of $\cT$-$\infty$-categories. Therefore, for all $t \in \cT$, we have an equivalence $\cD^{(\sigma,\cT)/} \times_\cD \cD_t \simeq \cD_t^{\sigma(t)/}$, and (1) follows by checking fiberwise.

For (2), suppose $f$ is a $\cT$-left fibration and $\sigma$ is a $\cT$-initial object of $\cD$. Using (1), let $\tau: \cD \to \cD^{(\sigma,\cT)/}$ be a choice of section. We then claim that the composite map $$\chi: \cD \xrightarrow{\tau} \cD^{(\sigma,\cT)/} \xrightarrow{f} \cC^{(f \sigma, \cT)/}$$ is an equivalence. Since $\chi$ is a $\cT$-functor between $\cT$-left fibrations over $\cC$, it suffices to check that $\chi_t$ is an equivalence of left fibrations over $\cC_t$ for all $t \in \cT$. But $\chi_t$ is a functor of corepresentable left fibrations that preserves the initial object, and is thus an equivalence.

For (3), recall that $\underline{\Map}_\cC(-,-)$ was defined as the straightening of the $\cT$-left fibration $\Tw_\cT(\cC) \to \cC^{\vop} \times_{\cT^\op} \cC$ given by the $\cT$-fiberwise twisted arrow category  (cf. the discussion right after \cite[Thm.~11.5]{Exp2}). By (2), it then suffices to show that the pullback $\cD = \cC \times_{(\cC^{\vop} \times_{\cT^\op} C)} \Tw_\cT(\cC)$ has a $\cT$-initial object that projects to $\sigma$. But this again reduces to the fiberwise assertion about $\cC_t \times_{(\cC_t^\op \times \cC_t)} \Tw(\cC_t) \simeq (\cC_t)^{\sigma(x)/}$; note that the assumption that $\sigma$ is a \emph{cocartesian} section ensures that the collection of fiberwise initial objects in $\cD$ is stable under cocartesian pushforward, so it indeed promotes to a $\cT$-initial object.

For (4), since $j_\cT$ is fiberwise fully faithful and the formation of cocartesian sections computes the limit of the corresponding functor into $\Cat$, it follows that $\widehat{j_\cT}$ is fully faithful. The assertion now follows from (3).

For (5), we may check the assertion fiberwise, so suppose that $\cT$ has a final object $\ast$. Then we need to show that for any $x \in \cC_\ast$, $\colim^\cT j_\cT(x) \simeq 1 \in \Spc_\cT$. But note that for all $t \in \cT$, if we let $\alpha_t: t \to \ast$ denote the unique morphism then  $j_\cT(x)|_{\cC_t^\op} \simeq \Map_{\cC_t}(-,\alpha_t^\ast x)$, hence in view of \cite[Prop.~5.5]{Exp2} the assertion follows from its non-parametrized analogue $\colim_{\cC_t^\op} j(\alpha_t^\ast x) \simeq 1$.
\end{proof}

\section{Finite, filtered, and sifted diagrams} \label{sec:FiniteFilteredSifted}

In this section, we develop the theory of $\cT$-$\kappa$-small, $\cT$-filtered, and $\cT$-sifted $\cT$-$\infty$-categories (\cref{def:Tfinite}, \cref{def:ParamFiltered}, and \cref{def:ParamSifted}). To prepare for our discussion, we begin with the following proposition, which recovers and extends the homotopy colimit decomposition result of \cite[Cor.~4.2.3.10]{HTT}. Its statement involves the ``lower $\cT$-slice'' construction of \cite[Def.~4.17]{Exp2}.

\begin{thm} \label{prop:ParamColimitDecomposition} Let $\cC$ be a $\cT$-$\infty$-category.
\begin{enumerate}
    \item The assignments $[K \xrightarrow{p} \leftnat{\cC}] \mapsto [\leftnat{\cC}_{(p,\cT)/} \to \cC]$ and $[L \xrightarrow{q} \leftnat{\cC}] \mapsto [\leftnat{\cC}_{/(p,\cT)} \to \leftnat{\cC}]$ of marked simplicial sets assemble to a Quillen adjunction
    \[ \adjunct{\cC_{(-,\cT)/}}{s\Set^+_{/\leftnat{\cC}}}{(s\Set^+_{/\leftnat{\cC}})^\op}{\cC_{/(-,\cT)}} \] 
    with respect to the slice model structure (and its opposite) induced from the cocartesian model structure on $s\Set^+_{/\cT^\op}$. Consequently, we obtain an adjunction of $\infty$-categories
    \[ \adjunct{\cC_{(-,\cT)/}}{(\Cat_\cT)_{/ \cC}}{(\Cat_\cT)^\op_{/ \cC}}{\cC_{/(-,\cT)}}. \]
    \item Let $p_{\bullet}: \cI \to (\Cat_\cT)_{/ \cC}$ be a functor with colimit $p: \cK \to \cC$ and suppose that for every $i \in \cI$, the $\cT$-functor $p_i: \cK_i \to \cC$ admits a $\cT$-colimit $\sigma_i$. Then the $\sigma_i$ assemble to a $\cT$-functor $\sigma_{\bullet}: \cI \times \cT^\op \to \cC$ such that if $\sigma_{\bullet}$ admits a $\cT$-colimit $\sigma$, then $p$ admits a $\cT$-colimit given by $\sigma$.
\end{enumerate}
\end{thm}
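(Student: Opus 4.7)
The plan is to prove (1) by a direct model-categorical argument, then deduce (2) by applying the resulting adjunction to convert colimits of diagrams into limits of slice categories, following the parametrized analogue of the standard ``Fubini for colimits'' strategy.

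For (1), the key adjunction identity
\[ \Hom_{/\leftnat{\cC}}(L, \cC_{(p,\cT)/}) \cong \Hom_{/\leftnat{\cC}}(K, \cC_{/(q,\cT)}) \]
is immediate from unpacking both sides as the space of maps $K \star_{\cT^{\op}} L \to \cC$ extending $p$ on $K$ and $q$ on $L$, using the very definition of the lower parametrized slice via the parametrized join \cite[Def.~4.17]{Exp2}. To upgrade this to a Quillen adjunction, I would verify the pushout-product axiom for the parametrized slice/join construction; this reduces to the compatibility of $\star_{\cT^{\op}}$ with (trivial) cofibrations in the slice model structure on $\sSet^+_{/(\cT^{\op})^{\sharp}}$, which is \cite[Prop.~4.5]{Exp2} (together with the stability properties recorded immediately after). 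The induced adjunction of $\infty$-categories is then obtained by the standard passage from Quillen adjunctions to adjunctions on underlying $\infty$-categories.

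For (2), since $\cC^{(-,\cT)/}$ is a left adjoint, it carries the colimit $p = \colim_i p_i$ to a colimit in $((\Cat_\cT)_{/\cC})^{\op}$, yielding a natural equivalence
\[ \cC^{(p,\cT)/} \simeq \lim_{i \in \cI} \cC^{(p_i,\cT)/} \]
in $(\Cat_\cT)_{/\cC}$. Each $\cC^{(p_i,\cT)/} \to \cC$ is a $\cT$-left fibration, hence so is the limit $\cC^{(p,\cT)/} \to \cC$. A $\cT$-colimit of $p_i$ is equivalently a cocartesian section of $\cC^{(p_i,\cT)/} \to \cT^{\op}$ landing in $\cT$-initial objects. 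The assembly of the $\sigma_i$ into $\sigma_\bullet$ is then produced as follows: for each morphism $\alpha: i \to j$ in $\cI$, the induced $\cT$-functor $f_\alpha: \cK_i \to \cK_j$ over $\cC$ gives a restriction functor $\cC^{(p_j,\cT)/} \to \cC^{(p_i,\cT)/}$ sending the $\cT$-colimit diagram $\overline{p}_j$ to a cone over $p_i$ with apex $\sigma_j$; the $\cT$-initiality of $\overline{p}_i$ yields an essentially unique comparison in $\cC^{(p_i,\cT)/}$, which projects under $\cC^{(p_i,\cT)/} \to \cC$ to the required map $\sigma_i \to \sigma_j$. The higher coherence needed to package these into a functor $\cI \times \cT^{\op} \to \cC$ comes from the contractibility of the space of $\cT$-colimit diagrams for each $p_i$.

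Finally, assuming $\sigma_\bullet$ admits a $\cT$-colimit $\sigma$, I would construct an extension $\overline{p}: \cK^{\underline{\rhd}} \to \cC$ of $p$ with value $\sigma$ at the cone point by combining the cones $\overline{p}_i$ with the natural maps $\sigma_i \to \sigma$ via the colimit presentation $\cK = \colim_i \cK_i$; concretely, this extension corresponds under the limit identification to a compatible family of cocartesian sections of the $\cC^{(p_i,\cT)/} \to \cT^{\op}$. The main obstacle is verifying that the resulting $\overline{p}$ is a $\cT$-colimit diagram, i.e., that the corresponding cocartesian section of $\cC^{(p,\cT)/} \to \cT^{\op}$ is a $\cT$-initial object. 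My plan for this is to reduce to a fiberwise (i.e., pointwise over $\cT$) check using \cite[Cor.~9.16]{Exp2}, in which the assertion becomes: given the diagram of $\cT^{/t}$-initial objects $\widetilde{\sigma}_{\bullet,t}: \cI \to \cC^{(p,\cT)/}_{\underline{t}}$ whose projection to $\cC_{\underline{t}}$ has $\cT^{/t}$-colimit $\sigma(t)$, the induced lift $\widetilde{\sigma}(t)$ is $\cT^{/t}$-initial in $\cC^{(p,\cT)/}_{\underline{t}}$. Using the limit decomposition of mapping spaces in $\cC^{(p,\cT)/}$ and the fact that mapping out of a colimit is a limit of mapping-out spaces, the initiality of $\widetilde{\sigma}(t)$ follows formally from the initiality of each $\widetilde{\sigma}_{i,t}$ in $\cC^{(p_i,\cT)/}_{\underline{t}}$ combined with the $\cT^{/t}$-colimit property of $\sigma(t)$ over $\sigma_{\bullet,t}$.
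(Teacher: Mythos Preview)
Your argument for (1) is essentially the paper's: the hom-set identity via the parametrized join, then checking that the left adjoint preserves (trivial) cofibrations. One minor point: the relevant references in \cite{Exp2} are Propositions~4.18 and~4.19 (not~4.5), and the passage to underlying $\infty$-categories needs \cite[Cor.~1.3.4.26]{HA} since the adjunction is not simplicial.

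For (2) your strategy diverges from the paper's, and as written it has a real gap. The paper avoids all hands-on assembly by passing through the parametrized Yoneda embedding. Using \cref{lem:ParamRepresentableFibrations}(4), the functor $\widehat{j_\cT}:\Fun_\cT(\cT^\op,\cC)\to \LFib(\cC)$ is fully faithful with essential image the $\cT$-corepresentable $\cT$-left fibrations; since each $\cC_{(p_i,\cT)/}$ is $\cT$-corepresentable by hypothesis, the diagram $i\mapsto \cC_{(p_i,\cT)/}$ factors through $\widehat{j_\cT}$ and thereby \emph{defines} $\sigma_\bullet$ coherently with no extra work. The existence of the $\cT$-colimit $\sigma$ then says that $\overline{\sigma_\bullet}{}^{\vop}$ is a $\cT$-limit diagram, and since $j_\cT$ strongly preserves $\cT$-limits, $\widehat{j_\cT}\circ\overline{\psi}{}^{\op}$ is a limit diagram in $\LFib(\cC)$. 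As $\LFib(\cC)\subset(\Cat_\cT)_{/\cC}$ is closed under limits, this limit agrees with $\cC_{(p,\cT)/}$, which is therefore $\cT$-corepresentable by $\sigma$.

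Your proposal instead tries to build $\sigma_\bullet$ and $\overline{p}$ by hand and then verify $\cT$-initiality via mapping spaces. The coherence for $\sigma_\bullet$ (``comes from contractibility of the space of $\cT$-colimit diagrams'') is the sort of thing that can be made precise, but you have not done so; the paper's Yoneda argument is precisely the clean way to encode that contractibility. More seriously, in your final step you want to show $\widetilde\sigma$ is $\cT$-initial in $\cC^{(p,\cT)/}\simeq\lim_i\cC^{(p_i,\cT)/}$ by decomposing mapping spaces, but the image $\widetilde\sigma|_i$ of $\widetilde\sigma$ in $\cC^{(p_i,\cT)/}$ is the cone over $p_i$ with apex $\sigma$, \emph{not} the $\cT$-initial object $\widetilde\sigma_i$ (which has apex $\sigma_i$). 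So the initiality of $\widetilde\sigma_i$ does not directly give contractibility of $\Map(\widetilde\sigma|_i,-)$. What actually works is to use \cref{lem:ParamRepresentableFibrations}(2) to identify each $\cC^{(p_i,\cT)/}\simeq\cC^{(\sigma_i,\cT)/}$ over $\cC$, observe that the transition maps become those induced by $\sigma_i\to\sigma_j$, and then apply (1) a second time to recognize $\lim_i\cC^{(\sigma_i,\cT)/}\simeq\cC^{(\sigma_\bullet,\cT)/}$; at that point the hypothesis on $\sigma_\bullet$ finishes the argument. Once you do this carefully you have essentially reconstructed the Yoneda argument, so the paper's route is both shorter and handles the coherence for free.
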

\begin{proof} (1): Let us suppress the markings on $\cC$ and its relatives for clarity. The two displayed functors participate in an adjunction in view of the definitional isomorphisms of hom-sets
\[ \Hom_{/ \cC}(L,\cC_{(p,\cT)/}) \cong \Hom_{K\sqcup L/ /\cT^\op}(K \star_{\cT^\op} (L \times_{\cT^\op} \Ar(\cT^\op)^\sharp), C) \cong \Hom_{/ \cC}(K,\cC_{/(q,\cT)}). \]
The left adjoint preserves cofibrations and weak equivalences by \cite[Prop.~4.18]{Exp2}, \cite[Prop.~4.19]{Exp2}, and the discussion immediately proceeding it, so the adjunction is Quillen. Finally, even though this adjunction is not generally simplicial, we may descend to an adjunction on the underlying $\infty$-categories by \cite[Cor.~1.3.4.26]{HA}.

(2): Let $\overline{p_{\bullet}}: \cI^\rhd \to (\Cat_\cT)_{/ \cC}$ be a colimit diagram extending $p_{\bullet}$ and let $\overline{\varphi}:(\cI^\op)^{\lhd} \to (\Cat_\cT)_{/ \cC}$ be the opposite of the postcomposition of $\overline{p_{\bullet}}$ with $\cC_{(-,\cT)/}$. By (1), $\overline{\varphi}$ is a limit diagram. We wish to show that the value $\cC_{(p,\cT)/} \to \cC$ on its cone point is a $\cT$-corepresentable $\cT$-left fibration with $\cT$-initial object as indicated.

By assumption, $\varphi$ factors through the subcategory of $\cT$-corepresentable $\cT$-left fibrations over $\cC$, hence by \cref{lem:ParamRepresentableFibrations}(4) applied to $\cC^\vop$ and using that $(-)^\op$ preserves limits, we may factor $\varphi^\op$ as
$$\psi: \cI \to \Fun_\cT(\cT^\op,\cC^\vop)^\op \simeq \Fun_\cT(\cT^\op,\cC).$$
Let $\sigma_{\bullet}: \cI \times \cT^\op \to \cC$ be the adjoint $\cT$-functor and suppose $\sigma_{\bullet}$ extends to a $\cT$-colimit diagram
$$\overline{\sigma_{\bullet}}: (\cI \times \cT^\op) \star_{\cT^\op} \cT^\op \simeq \cI^\rhd \times \cT^\op \to \cC$$
(necessarily then adjoint to a colimit diagram $\overline{\psi}: \cI^\rhd \to \Fun_\cT(\cT^\op,\cC)$ extending $\psi$). Since the $\cT$-Yoneda embedding $j_\cT$ strongly preserves $\cT$-limits \cite[Cor.~11.10]{Exp2}, we obtain a $\cT$-limit diagram $j_\cT \circ \overline{\sigma_{\bullet}}{}^\vop$ and thus a limit diagram
$$\widehat{j_\cT} \circ \overline{\psi}{}^\op: (\cI^\op)^\lhd \to \Fun_\cT(\cT^\op,\cC^\vop) \to \PShv_\cT(\cC^\vop) \simeq \LFib(\cC).$$
Since $\LFib(\cC)$ is a subcategory of $(\Cat_\cT)_{/ \cC}$ stable under limits and $\overline{\varphi}$ factors through this subcategory, we deduce that $\overline{\varphi} \simeq \widehat{j_\cT} \circ \overline{\psi}{}^{\op}$ as limit diagrams extending $\varphi$, which proves the claim.
\end{proof}


\begin{definition} \label{def:Tfinite} Let $\Delta_\cT \subset \Cat_\cT \simeq \Fun(\cT^\op,\Cat)$ be the full subcategory spanned by the objects $\{ \Delta^n \times \Map_\cT(-,t) : t \in \cT, n \geq 0 \}$. Then for every regular cardinal $\kappa$, define the full subcategory $\Cat_\cT^{\ksmall} \subset \Cat_\cT$ of \emph{$\cT$-$\kappa$-small $\cT$-$\infty$-categories} to be the smallest full subcategory that contains $\Delta_\cT$ and is closed under all colimits indexed by $\kappa$-small simplicial sets.

If $\kappa = \omega$, then we will also use the terminology \emph{$\cT$-finite} in place of $\cT$-$\omega$-small.
\end{definition}

\begin{remark}\label{rem:CatYonedaEmbedding} Let $i: \Delta \subset \Cat$ be the usual inclusion of the simplex category, which extends to the adjunction $[\adjunct{L}{\PShv(\Delta)}{\Cat}{R}]$ whose fully faithful right adjoint exhibits $\Cat$ as the full subcategory of complete Segal spaces in $\PShv(\Delta)$. Then applying $\Fun(\cT^\op,-)$ to $L \dashv R$ yields an adjunction $[\adjunct{L_\cT}{\PShv(\Delta \times \cT)}{\Cat_\cT}{R_\cT}]$, where $L_\cT$ is the unique functor extending $i_\cT: \Delta \times \cT \to \Cat_\cT, ([n],t) \mapsto \Delta^n \times \Map_\cT(-,t)$ and $R_\cT$ is fully faithful. In particular, $i_\cT$ is fully faithful with essential image $\Delta_\cT$. Furthermore, using that $\id_{\PShv(\Delta \times \cT)}$ is the left Kan extension of the Yoneda embedding along itself, we deduce that $\id_{\Cat_\cT}$ is the left Kan extension of $i_\cT$ along itself; indeed, for any $\cT$-$\infty$-category $\cC$, applying the colimit-preserving functor $L_\cT$ to the equivalence $$R_\cT(\cC) \simeq \colim [(\Delta \times \cT) \times_{\PShv(\Delta \times \cT)} \PShv(\Delta \times \cT)^{/R_\cT(\cC)} \to \PShv(\Delta \times \cT)]$$
and using that $(\Delta \times \cT) \times_{\PShv(\Delta \times \cT)} \PShv(\Delta \times \cT)^{/R_\cT(\cC)} \simeq (\Delta \times \cT) \times_{\Cat_\cT} (\Cat_\cT)^{/ \cC}$ proves the claim.

As a corollary, if $\kappa_0$ is the strongly inaccessible cardinal that fixes our definition of small simplicial set, then $\Cat_\cT^{\operatorname{\kappa_0-small}} = \Cat_\cT$. Our definition of small $\cT$-$\infty$-category is thus unambiguous.
\end{remark}

\begin{remark}\label{rem:essentiallysmall} In \cref{def:Tfinite}, if $\cT = \Delta^0$, then the notion of a $\cT$-$\kappa$-small $\cT$-$\infty$-category $\cC$ coincides with that of an \emph{essentially} $\kappa$-small $\infty$-category \cite[Def.~5.4.1.3]{HTT}: that is, there exists a $\kappa$-small simplicial set $K$ and a categorical equivalence $K \to \cC$. To see this, note that if $\cC$ is essentially $\kappa$-small then by \cite[Var.~4.2.3.15]{HTT} if $\kappa > \omega$ or \cite[Var.~4.2.3.16]{HTT} if $\kappa = \omega$, there exists a $\kappa$-small simplicial set $L$ and a map $\phi: L \to \Delta$ such that $\cC \simeq \colim_L \phi$. Conversely, the full subcategory of essentially $\kappa$-small $\infty$-categories is closed under $\kappa$-small colimits by a direct argument if $\kappa = \omega$ and by the identification with $\kappa$-compact objects in $\Cat$ if $\kappa > \omega$ \cite[Prop.~5.4.1.2]{HTT}.
\end{remark}

\begin{remark}\label{rem:ExplicitKSmallPresentation} We may describe the $\infty$-category $\Cat^{\ksmall}_\cT$ more explicitly in the following manner. Define an increasing union of full subcategories $(\Cat^{\ksmall}_\cT)_{\alpha} \subset \Cat^{\ksmall}_\cT$ for all ordinals $\alpha \leq \kappa$ inductively as follows:
\begin{enumerate}[start=0]
    \item $(\Cat^{\ksmall}_\cT)_0 = \Delta_\cT$.
    \item For every successor ordinal $\beta = \alpha+1$, let $(\Cat^{\ksmall}_\cT)_{\beta}$ consist of all colimits of diagrams
    $$K \to (\Cat^{\ksmall}_\cT)_{\alpha}$$
    indexed by $\kappa$-small simplicial sets $K$.
    \item For every limit ordinal $\lambda \leq \kappa$, let $(\Cat^{\ksmall}_\cT)_{\lambda} = \bigcup_{\alpha<\lambda} (\Cat^{\ksmall}_\cT)_{\alpha}$.
\end{enumerate}
Then since the ordinal $\kappa$ is itself $\kappa$-filtered, we conclude that $\Cat^{\ksmall}_\cT = (\Cat^{\ksmall}_\cT)_{\kappa}$.
\end{remark}

We now apply \cref{prop:ParamColimitDecomposition} to prove a $\kappa$-$\cT$-cocompleteness result that generalizes \cite[Cor.~12.15]{Exp2}, with a different method of proof.

\begin{thm} \label{prop:FiberwiseAndCoproductsGiveAll} Let $\cC$ be a $\cT$-$\infty$-category and $\kappa$ a regular cardinal. Then $\cC$ strongly admits all $\cT$-$\kappa$-small $\cT$-colimits if and only if
\begin{enumerate}
    \item For every $t \in \cT$, $\cC_{\underline{t}}$ admits $\cT^{/t}$-colimits indexed by corepresentable left fibrations.
    \item For every $t \in \cT$, $\cC_t$ admits $\kappa$-small colimits, and for every $\alpha: s \to t$, the restriction functor $\alpha^\ast: \cC_t \to \cC_s$ preserves $\kappa$-small colimits.
\end{enumerate}
Furthermore, if $\cC$ and $\cD$ are $\cT$-$\infty$-categories that strongly admit $\cT$-$\kappa$-small $\cT$-colimits and $F: \cC \to \cD$ is a $\cT$-functor, then $F$ strongly preserves all $\cT$-$\kappa$-small $\cT$-colimits if and only if
\begin{enumerate}[a.]
    \item For every $\alpha: s \to t$, the mate $\alpha_! F_s \Rightarrow F_t \alpha_!$ is an equivalence.
    \item For every $t \in \cT$, $F_t$ preserves all $\kappa$-small colimits.
\end{enumerate}
\end{thm}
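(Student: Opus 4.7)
The plan is to induct on the exhaustive filtration $(\Cat^{\ksmall}_{\cT})_{\alpha}$ of $\Cat^{\ksmall}_{\cT}$ from \cref{rem:ExplicitKSmallPresentation}, using the parametrized colimit decomposition \cref{prop:ParamColimitDecomposition}(2) as the inductive assembly mechanism. Since conditions (1) and (2) are inherited by $(\cT^{/t}, \cC_{\underline{t}})$ as a $\cT^{/t}$-$\infty$-category, the strong admission claim reduces to showing that $\cC$ admits every $\cT$-$\kappa$-small $\cT$-colimit, and likewise strong preservation by $F$ reduces to ordinary preservation.

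Necessity of (1) and (2) is by specialization: the corepresentable left fibrations $\underline{s}$ belong to $\Delta_{\cT^{/t}} \subset \Cat^{\ksmall}_{\cT^{/t}}$, giving (1); the objects $\Delta^n \times \ast_{\cT^{/t}} \in \Delta_{\cT^{/t}}$ translate under parametrized Yoneda to $\Delta^n$-indexed colimits in $\cC_t$, and every $\kappa$-small $\infty$-category is a $\kappa$-small colimit of simplices (cf.\ \cref{rem:essentiallysmall}), so these suffice to force (2); strong admission then encodes preservation by restriction in both cases.

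For sufficiency, induct on $\alpha \leq \kappa$ with $\cK \in (\Cat^{\ksmall}_{\cT})_{\alpha}$. The base case $\cK = \Delta^n \times \underline{t'} \in \Delta_{\cT}$ is handled by a two-stage argument synthesizing (1) and (2): a $\cT$-functor $p \colon \cK \to \cC$ corresponds by parametrized Yoneda to an $n$-simplex $\bar p \colon \Delta^n \to \cC_{t'}$; condition (1), applied after base change to $\cT^{/t'}$, supplies a $\cT$-colimit $\sigma_i$ of each column restriction $\underline{t'} \to \cC$; these assemble into $\sigma_{\bullet} \colon \Delta^n \times \cT^{\op} \to \cC$; and condition (2) produces the fiberwise $\Delta^n$-indexed $\cT$-colimit of $\sigma_{\bullet}$, which \cref{prop:ParamColimitDecomposition}(2) then identifies as the $\cT$-colimit of $p$. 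The successor step applies \cref{prop:ParamColimitDecomposition}(2) directly: writing $\cK = \colim_{\cI} \cK_{\bullet}$ for $\cI$ a $\kappa$-small simplicial set and $\cK_i \in (\Cat^{\ksmall}_{\cT})_{\alpha}$, the restrictions $p_i = p|_{\cK_i}$ admit $\cT$-colimits $\sigma_i$ by induction, and condition (2) supplies the $\cT$-colimit of the assembled diagram $\sigma_{\bullet} \colon \cI \times \cT^{\op} \to \cC$, which is the $\cT$-colimit of $p$. Limit ordinals are automatic from the union definition.

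The \emph{furthermore} for functors $F$ runs the analogous induction for preservation: necessity of (a) and (b) is again by specialization; for sufficiency, (a) gives preservation at the base-case corepresentable $\cT$-colimits, (b) gives preservation of the fiberwise $\kappa$-small colimit assembly step at every level, and the naturality of \cref{prop:ParamColimitDecomposition}(2) in $p$ propagates preservation through the induction. The main technical obstacle is the base case $\cK = \Delta^n \times \underline{t'}$, where one must carefully interleave conditions (1) and (2) along the ``factors'' $\underline{t'}$ and $\Delta^n$ of $\cK$; once this Fubini-style assembly is in place, the inductive step uses only condition (2) plus \cref{prop:ParamColimitDecomposition}(2) and is comparatively routine.
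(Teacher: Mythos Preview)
Your induction scheme (over the filtration of \cref{rem:ExplicitKSmallPresentation}, with \cref{prop:ParamColimitDecomposition}(2) driving the successor step) is exactly the paper's argument, and the necessity direction and the inductive step are fine. The gap is in your base case.

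You want to treat $\cK = \Delta^n \times \underline{t'}$ by feeding the ``column restrictions'' $p_i = p|_{\{i\} \times \underline{t'}}$ into \cref{prop:ParamColimitDecomposition}(2). But that proposition requires $p$ to be the \emph{colimit} of the diagram $p_{\bullet}: \cI \to (\Cat_\cT)_{/\cC}$, and there is no such diagram with $\cI = \Delta^n$ and vertices the $p_i$: a morphism $p_i \to p_{i+1}$ in the slice would be a $\cT$-functor $\underline{t'} \to \underline{t'}$ under which $p_{i+1}$ pulls back to $p_i$, whereas what $p$ actually records is a morphism $\bar p(i) \to \bar p(i+1)$ in $\cC_{t'}$. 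Even if you could set up such a diagram, its colimit in $\Cat_\cT$ would be $\underline{t'}$ (a $\Delta^n$-colimit of a constant diagram), not $\Delta^n \times \underline{t'}$. What you are really reaching for is a parametrized Fubini statement, which is a different lemma from \cref{prop:ParamColimitDecomposition}(2).

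The fix, and what the paper does, is much simpler: $\Delta^n$ has a final object $n$, so the inclusion $\{n\} \times \underline{t'} \hookrightarrow \Delta^n \times \underline{t'}$ is fiberwise cofinal and hence $\cT$-cofinal. Thus the $\cT$-colimit of $p$ agrees with that of $p_n: \underline{t'} \to \cC$, and condition (1) alone already gives $\Delta_\cT \subset \CMcal{K}_\ast$. Condition (2) is only needed for the inductive step, not the base case.
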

\begin{proof} For the `only if' direction, suppose that $\cC$ strongly admits $\cT$-colimits. Then (1) holds since $\Delta_{\cT^{/t}} \subset \Cat^{\ksmall}_{\cT^{/t}}$ by definition, and (2) holds since if $\cK$ is an essentially $\kappa$-small $\infty$-category, then as noted in \cref{rem:essentiallysmall} there exists a $\kappa$-small simplicial set $L$ and a functor $\phi: L \to \Delta \times \{\id_t \} \subset \Delta \times \cT^{/t} \xrightarrow{\simeq} \Delta_{\cT^{/t}}$ such that $\cK \times (\cT^{/t})^\op \simeq \colim_L \phi$ in $\Cat_\cT$, hence $\cK \times (\cT^{/t})^\op$ is a $\cT$-$\kappa$-small $\cT$-$\infty$-category.

Conversely, suppose $\cC$ satisfies (1) and (2). Let $\CMcal{K}_t$ denote the full subcategory of $\Cat_{\cT^{/t}}$ consisting of all $\cT^{/t}$-$\infty$-categories $\cK$ such that all $\cK$-indexed $\cT^{/t}$-diagrams in $\cC_{\underline{t}}$ admit $\cT^{/t}$-colimits. We wish to show that $\CMcal{K}_t$ contains $\Cat_{\cT^{/t}}^{\ksmall}$. To ease notation, let us replace $\cT^{/t}$ by $\cT$ and suppose that $\cT$ has a final object $\ast$. By our first assumption, $\CMcal{K}_{\ast}$ contains $\Delta_{T}$. It thus suffices to show that $\CMcal{K}_{\ast}$ is closed under $\kappa$-small colimits in $\Cat_{T}$. So suppose we have a diagram $f: I \to \CMcal{K}_{\ast}$ with $I$ $\kappa$-small such that $f$ has colimit $\cK$ in $\Cat_\cT$, and let $p: \cK \to \cC$ be a $\cT$-functor. Then since colimits in $(\Cat_\cT)_{/ \cC}$ are created by the forgetful functor to $\Cat_\cT$, we obtain a colimit diagram $p_{\bullet}: I^{\rhd} \to (\Cat_\cT)_{/ \cC}$ such that for each $i \in I$, the $\cT$-functor $p_i$ admits a $\cT$-colimit $x_i \in \cC_\ast$. By our second assumption combined with \cite[Cor.~5.9]{Exp2} and \cref{prop:ParamColimitDecomposition}(2), we deduce that $p$ admits a $\cT$-colimit, which proves the claim. Repeating the same type of argument establishes the assertion about the $\cT$-functor $F$.
\end{proof}

\begin{remark}\label{rem:OrbitalAdditionToFiberwiseAndCoproductsGiveAll} If we suppose that $\cT$ is orbital in \cref{prop:FiberwiseAndCoproductsGiveAll}, then by \cite[Prop.~5.12]{Exp2} we may replace (1) with the assumption that for all $\alpha: s \to t$, $\alpha^\ast$ admits a left adjoint $\alpha_!$, and for all pullback squares in $\FF_\cT$
\[ \begin{tikzcd}
U' \ar{r}{\beta'} \ar{d}{\alpha'} & U \ar{d}{\alpha} \\
V' \ar{r}{\beta} & V,
\end{tikzcd} \]
the mate $\alpha'_! \beta'^{\ast} \Rightarrow \beta^\ast \alpha_!$ is an equivalence.
\end{remark}


\begin{definition} \label{def:ParamFiltered} Let $\cJ$ be a $\cT$-$\infty$-category and let $\kappa$ be a regular cardinal. Then $\cJ$ is \emph{$\cT$-$\kappa$-filtered} if for all $t \in \cT$ and $\cT^{/t}$-$\kappa$-small $\cK$, every $\cT^{/t}$-functor $p: \cK \to \cJ_{\underline{t}}$ admits an extension to a $\cT^{/t}$-functor $\overline{p}: \cK^{\underline{\rhd}} \to \cJ_{\underline{t}}$.
\end{definition}

\begin{lemma} \label{lem:JoinPreservesSmallness} Suppose that $\cT$ is orbital and has a final object $\ast$. Let $\kappa$ be a regular cardinal and suppose that $\cK$ and $\cL$ are $\cT$-$\kappa$-small $\cT$-$\infty$-categories. Then $\cK \star_{\cT^\op} \cL$ is $\cT$-$\kappa$-small.
\end{lemma}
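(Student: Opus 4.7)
The plan is to express $\cK \star_{\cT^\op} \cL$ as a finite pushout of $\cT$-$\kappa$-small $\cT$-$\infty$-categories, which reduces the lemma to closure of $\Cat^{\ksmall}_\cT$ under binary fiber products over $\cT^\op$, and then to verify the latter using orbitality.

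First I will invoke the evident generalization of \cite[Lem.~4.5]{Exp2} (which treats the special case $\cL = \cT^\op$, namely the right cone) to identify $\cK \star_{\cT^\op} \cL$ with the homotopy pushout in $\Cat_\cT$ of the span
\[ \cK \sqcup \cL \xleftarrow{\pr_{\cK} \sqcup \pr_{\cL}} (\cK \times_{\cT^\op} \cL) \times \partial \Delta^1 \hookrightarrow (\cK \times_{\cT^\op} \cL) \times \Delta^1, \]
where on the left map the two copies of $\cK \times_{\cT^\op} \cL$ indexed by $\partial \Delta^1 = \{0,1\}$ are sent into $\cK$ and $\cL$ via the respective projections. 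Since $\Cat^{\ksmall}_\cT$ is closed under $\kappa$-small colimits and the interval $\Delta^1 \simeq \Delta^1 \times \Map_\cT(-, \ast)$ lies in $\Delta_\cT$ (using that $\ast \in \cT$ is final), both products with $\partial \Delta^1$ and $\Delta^1$ are $\cT$-$\kappa$-small whenever $\cK \times_{\cT^\op} \cL$ is. The lemma therefore reduces to showing that the bifunctor $- \times_{\cT^\op} -$ preserves $\cT$-$\kappa$-smallness in each variable.

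For this product closure, I will use that $\Cat_\cT \simeq \Fun(\cT^\op, \Cat)$ is cartesian closed (inherited from $\Cat$), so that $- \times_{\cT^\op} -$ preserves colimits separately in each variable. Inducting on the generating presentation of $\Cat^{\ksmall}_\cT$ from $\Delta_\cT$ under $\kappa$-small colimits, it suffices to check that the product of two generators $(\Delta^n \times \Map_\cT(-,t)) \times_{\cT^\op} (\Delta^m \times \Map_\cT(-,s))$ belongs to $\Cat^{\ksmall}_\cT$ for all $t, s \in \cT$ and $n, m \geq 0$. Since binary products in $\Cat_\cT$ are computed pointwise, this product is $(\Delta^n \times \Delta^m)$ times the pointwise product of presheaves $\Map_\cT(-, t) \times \Map_\cT(-, s)$.

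The key computation — and where orbitality together with the final object $\ast$ enters — is the identification $\Map_\cT(-, t) \times \Map_\cT(-, s) \simeq \Map_{\FF_\cT}(-, t \times_\ast s)|_{\cT^\op}$, using the pullback $t \times_\ast s \in \FF_\cT$ furnished by orbitality via the canonical maps $t, s \to \ast$. Decomposing $t \times_\ast s \simeq \bigsqcup_i u_i$ into orbits in $\FF_\cT$ yields $\Map_{\FF_\cT}(r, t \times_\ast s) \simeq \bigsqcup_i \Map_\cT(r, u_i)$ for each $r \in \cT$, so the generator product becomes the finite coproduct $\bigsqcup_i (\Delta^n \times \Delta^m) \times \Map_\cT(-, u_i)$. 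Writing the finite simplicial set $\Delta^n \times \Delta^m$ as a finite colimit of simplices reduces each summand to a finite colimit of objects of $\Delta_\cT$, which lies in $\Cat^{\ksmall}_\cT$, and the finite coproduct of these does too. I expect the orbital identification of the two-fold mapping space as a finite sum of corepresentables to be the only substantive part; the remaining pieces are formal closure-under-colimits reductions.
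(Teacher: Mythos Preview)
Your proposal is correct and takes essentially the same approach as the paper: both use the pushout presentation of the parametrized join to reduce to showing that $\cK \times_{\cT^\op} \cL$ is $\cT$-$\kappa$-small, handle the base case of a product of generators via the orbital decomposition of $t \times_\ast s$ in $\FF_\cT$, and propagate to arbitrary $\cT$-$\kappa$-small $\cK, \cL$ using that the relevant bifunctor preserves colimits separately in each variable. The only cosmetic difference is that the paper runs the induction directly on the join (using that $\star_{\cT^\op}$ itself preserves colimits in each variable, checked fiberwise) along the explicit ordinal filtration of \cref{rem:ExplicitKSmallPresentation}, whereas you first reduce to closure under $-\times_{\cT^\op}-$ and invoke cartesian closedness of $\Cat_\cT$; these are equivalent packagings of the same argument.
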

\begin{proof} Let $(\Cat^{\ksmall}_\cT)_{\alpha}$ be as in \cref{rem:ExplicitKSmallPresentation}. First observe that if $\cK = \Delta^n \times (\cT^{/t})^{\op}$ and $\cL = \Delta^m \times (\cT^{/s})^{\op}$, then $\cK \times \cL \in (\Cat^{\operatorname{\omega-small}}_\cT)_{1}$ using that $\cT$ is orbital. Because
$$\cK \star_{\cT^\op} \cL \simeq (\cK \times_{\cT^\op} \cL) \times \Delta^1 \coprod_{(\cK \times_{\cT^\op} \cL) \times  \partial \Delta^1} \cK \sqcup \cL \: ,$$
we deduce that the $\cT$-join restricts to a functor $\Delta_\cT \times \Delta_\cT \to (\Cat^{\operatorname{\omega-small}}_\cT)_{1} \subset (\Cat^{\ksmall}_\cT)_1$. We then formulate the following inductive hypothesis:
\begin{enumerate}
    \item If $\beta = \alpha+1 < \kappa$ is a successor ordinal, then for all $\cK,\cL \in (\Cat^{\ksmall}_\cT)_\alpha$, $\cK \star_{\cT^\op} \cL \in (\Cat^{\ksmall}_\cT)_{\beta}$.
    \item If $\lambda \leq \kappa$ is a limit ordinal, then for all $\cK,\cL \in (\Cat^{\ksmall}_\cT)_{\lambda}$, $\cK \star_{\cT^\op} \cL \in (\Cat^{\ksmall}_\cT)_{\lambda}$. 
\end{enumerate}
Because the $\cT$-join preserves colimits separately in each variable (as may be checked fiberwise) and $\kappa$ is a regular cardinal, we may proceed by induction to confirm the inductive hypothesis for all $\lambda \leq \kappa$; this proves the claim since $\Cat^{\ksmall}_\cT = (\Cat^{\ksmall}_\cT)_{\kappa}$.
\end{proof}


\begin{lemma} \label{lem:FilteredPreservedBySlices} Suppose that $\cT$ is orbital and $\cJ$ is $\cT$-$\kappa$-filtered. Then for any $\cT^{/t}$-$\kappa$-small $\cK$ and $\cT^{/t}$-functor $p: \cK \to \cJ_{\underline{t}}$, $(\cJ_{\underline{t}})^{(p,\cT^{/t})/}$ is $\cT^{/t}$-$\kappa$-filtered.
\end{lemma}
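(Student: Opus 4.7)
The plan is to reduce to the case where $s = t$ in the definition of $\cT^{/t}$-$\kappa$-filteredness, and then unwind the universal property of the slice in order to apply $\cT$-$\kappa$-filteredness of $\cJ$ directly to a single extension problem whose shape is a parametrized join.

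First I would carry out the reduction. By the defining quantification in \cref{def:ParamFiltered}, the property of being $\cT$-$\kappa$-filtered restricts along any base change $\cT^{/s} \to \cT$ to the property of being $\cT^{/s}$-$\kappa$-filtered; in particular $\cJ_{\underline{t}}$ is $\cT^{/t}$-$\kappa$-filtered. Similarly, $\cT^{/t}$ is again orbital and has a final object. Since the definition of $\cT^{/t}$-$\kappa$-filteredness of the slice amounts to verifying an extension property at every $s \in \cT^{/t}$ and every $(\cT^{/t})^{/s} \simeq \cT^{/s}$-$\kappa$-small $\cL$, and since the parametrized fiber of the slice is itself a slice, namely
\[ \bigl((\cJ_{\underline{t}})^{(p,\cT^{/t})/}\bigr)_{\underline{s}} \simeq (\cJ_{\underline{s}})^{(p_{\underline{s}}, \cT^{/s})/} \]
(coming from base change applied to the pullback square defining the slice, together with the compatibility of $\underline{\Fun}_{\cT}(-,-)$ with base change and the identification $(\cK_{\underline{s}})^{\underline{\rhd}} \simeq (\cK^{\underline{\rhd}})_{\underline{s}}$), I may replace $\cT$ by $\cT^{/s}$, $\cJ$ by $\cJ_{\underline{s}}$, $\cK$ by $\cK_{\underline{s}}$, and $p$ by $p_{\underline{s}}$. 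This reduces me to the following situation: $\cT$ is orbital with a final object $t$, $\cJ$ is $\cT$-$\kappa$-filtered, $p: \cK \to \cJ$ is a $\cT$-functor with $\cK$ a $\cT$-$\kappa$-small $\cT$-$\infty$-category, and I must show that for every $\cT$-$\kappa$-small $\cL$ and every $\cT$-functor $q: \cL \to \cJ^{(p,\cT)/}$, $q$ admits an extension to $\cL^{\underline{\rhd}}$.

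Next I would apply the universal property of the parametrized slice. Unwinding \cref{def-app:JoinAndSlice}, $\cT$-functors $\cL \to \cJ^{(p,\cT)/}$ correspond to $\cT$-functors $\widetilde{q} : \cK \star_{\cT^{\op}} \cL \to \cJ$ whose restriction to $\cK$ is $p$, and an extension of $q$ to $\cL^{\underline{\rhd}}$ corresponds to an extension of $\widetilde{q}$ to a $\cT$-functor $\widetilde{q}': \cK \star_{\cT^{\op}} \cL^{\underline{\rhd}} \to \cJ$ restricting to $\widetilde{q}$ and still $p$ on $\cK$. Using associativity of the parametrized join, which is a direct consequence of the iterated description of $\star_{\cT^{\op}}$ as the relative join (cf. the recollection around Kerodon Tag 0241 in the introduction), I obtain the identification
\[ \cK \star_{\cT^{\op}} \cL^{\underline{\rhd}} \;=\; \cK \star_{\cT^{\op}} \bigl(\cL \star_{\cT^{\op}} \cT^{\op}\bigr) \;\simeq\; \bigl(\cK \star_{\cT^{\op}} \cL\bigr) \star_{\cT^{\op}} \cT^{\op} \;=\; \bigl(\cK \star_{\cT^{\op}} \cL\bigr)^{\underline{\rhd}}. \]
Thus I must extend $\widetilde{q}: \cK \star_{\cT^{\op}} \cL \to \cJ$ to the $\cT$-cone on its source.

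By \cref{lem:JoinPreservesSmallness} (which requires $\cT$ orbital with a final object, both of which hold in my reduced setting), the source $\cK \star_{\cT^{\op}} \cL$ is $\cT$-$\kappa$-small. The desired extension now exists by applying the $\cT$-$\kappa$-filteredness of $\cJ$ to $\widetilde{q}$ at the final object $t \in \cT$, finishing the proof. The main subtlety is purely bookkeeping: verifying that the parametrized fiber of a slice is itself the corresponding slice after base change (so that the reduction to $s = t$ is clean) and the associativity of the parametrized join; once these are in hand, the proof is a one-line appeal to \cref{lem:JoinPreservesSmallness} and \cref{def:ParamFiltered}.
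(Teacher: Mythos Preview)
Your proposal is correct and follows essentially the same route as the paper's proof: reduce by base change so that $\cT$ has a final object, translate the extension problem through the join--slice adjunction into an extension of $\widetilde{q}: \cK \star_{\cT^{\op}} \cL \to \cJ$ over $(\cK \star_{\cT^{\op}} \cL)^{\underline{\rhd}}$, and conclude via \cref{lem:JoinPreservesSmallness}. One small point of precision: the universal property you invoke (maps $\cL \to \cJ^{(p,\cT)/}$ corresponding to maps $\cK \star_{\cT^{\op}} \cL \to \cJ$ under $p$) is by definition that of the \emph{lower} slice $\cJ_{(p,\cT)/}$ of \cite[Def.~4.17]{Exp2}, not literally what falls out of unwinding \cref{def-app:JoinAndSlice}; the paper invokes the equivalence $\cJ^{(p,\cT)/} \simeq \cJ_{(p,\cT)/}$ explicitly at this step.
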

\begin{proof} Replace $\cT^{/t}$ by $\cT$ and suppose $\cK$ is $\cT$-$\kappa$-small and $p: \cK \to \cJ$ is a $\cT$-functor. To check that $\cJ^{(p,\cT)/}$ is $\cT$-filtered, after replacing $\cT^{/t}$ by $\cT$ once more it suffices to show that for any $\cT$-functor $\phi: \cL \to \cJ^{(p,\cT)/}$ with $\cT$-$\kappa$-small $\cL$, $\phi$ extends over $\cL^{\underline{\rhd}}$. Using that $\cJ^{(p,\cT)/} \simeq \cJ_{(p,\cT)/}$, by adjunction it suffices to extend the $\cT$-functor $\psi: \cK \star_{\cT^\op} \cL \to \cJ$ (under $p \sqcup q$) over $(\cK \star_{\cT^\op} \cL)^{\underline{\rhd}}$. By our assumption that $\cJ$ is $\cT$-filtered, this is possible since $\cK \star_{\cT^\op} \cL$ is $\cT$-$\kappa$-small by \cref{lem:JoinPreservesSmallness}.
\end{proof}

\begin{theorem} \label{thm:EquivalentConditionsFiltered} Suppose that $\cT$ is orbital. Let $\cJ$ be a $\cT$-$\infty$-category and let $\kappa$ be a regular cardinal. The following conditions are equivalent:
\begin{enumerate}
    \item $\cJ$ is $\cT$-$\kappa$-filtered.
    \item For all $t \in \cT$, $\cJ_t$ is $\kappa$-filtered, and $\cJ$ is cofinal-constant (\cref{def:cofinalconstant}).
    \item The $\cT$-colimit $\cT$-functor $\underline{\colim}^\cT_\cJ: \underline{\Fun}_\cT(\cJ, \underline{\Spc}_\cT) \to \underline{\Spc}_\cT$ strongly preserves $\cT$-$\kappa$-small $\cT$-limits.
\end{enumerate}
\end{theorem}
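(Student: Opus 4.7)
The plan is to prove the cyclic implications $(1) \Rightarrow (2) \Rightarrow (3) \Rightarrow (1)$. Throughout, I exploit that each of the three conditions is stable under base-change of $\cT$ along $\cT^{/t} \to \cT$, which lets me reduce several arguments to the case that $\cT$ has a terminal object. For $(1) \Rightarrow (2)$, to extract fiberwise $\kappa$-filteredness of each $\cJ_t$, I test $(1)$ against the $\cT^{/t}$-$\kappa$-small objects $K \times (\cT^{/t})^{\op}$ for $K$ an ordinary $\kappa$-small simplicial set: given $p: K \to \cJ_t$, the adjoint $\cT^{/t}$-functor extends over the parametrized right cone by $(1)$, and restriction to the fiber over $\id_t$ yields an extension $K^{\rhd} \to \cJ_t$ of $p$. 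For cofinal-constancy, given $\alpha: s \to t$ in $\cT$, Joyal's theorem reduces cofinality of $\alpha^*: \cJ_t \to \cJ_s$ to weak contractibility of each slice $\cJ_t \times_{\cJ_s} (\cJ_s)_{y/}$, which I show is $\omega$-filtered: given a finite family $(x_i, f_i: y \to \alpha^* x_i)_{i=1}^n$, I assemble a $\cT^{/t}$-finite $\cT^{/t}$-$\infty$-category consisting of $n$ points over $\id_t$ and a single point over $\alpha$ glued by cocartesian edges realizing the $f_i$, and extend over its parametrized right cone via $(1)$ to extract the desired common cone.

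For $(2) \Rightarrow (3)$, after base-change I may assume $\cT$ has a terminal object $t$. Cofinal-constancy and the remark following \cref{def:cofinalconstant} show that $\chi: \cJ_t \times \cT^{\op} \to \cJ$ is $\cT$-cofinal, so for any $F: \cJ \to \underline{\Spc}_\cT$ we have $\underline{\colim}^{\cT}_{\cJ} F \simeq \underline{\colim}^{\cT}_{\cJ_t \times \cT^{\op}} F\chi$. Using the universal property of $\underline{\Spc}_\cT$ from \cite[Prop.~3.10]{Exp2}, the right-hand $\cT$-colimit identifies with the ordinary colimit over $\cJ_t$ of a diagram into $\Fun(\cT^{\op}, \Spc) \simeq \PShv(\cT)$. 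Since $\cJ_t$ is $\kappa$-filtered and limits in $\PShv(\cT)$ are computed pointwise, this commutes with $\kappa$-small limits, and to upgrade to strong preservation of $\cT$-$\kappa$-small $\cT$-limits I invoke the dual of \cref{prop:FiberwiseAndCoproductsGiveAll}, which decomposes such $\cT$-limits into fiberwise $\kappa$-small limits together with right adjoints to restriction functors; each of these interacts with the colimit pointwise in $\PShv(\cT)$.

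For $(3) \Rightarrow (1)$, I translate the existence of an extension $\overline{p}: \cK^{\underline{\rhd}} \to \cJ_{\underline{t}}$ of a $\cT^{/t}$-$\kappa$-small diagram $p: \cK \to \cJ_{\underline{t}}$ into a representability question via the $\cT$-Yoneda embedding. By \cref{prop:ParamYonedaPreservesSlices}, such extensions are governed by the $\cT^{/t}$-presheaf $\lim^{\cT^{/t}}_{\cK} j_{\cT^{/t}} p \in \underline{\PShv}_{\cT^{/t}}(\cJ_{\underline{t}})$: the extension exists precisely when this $\cT^{/t}$-limit is $\cT^{/t}$-corepresentable, which via \cref{lem:ParamRepresentableFibrations} translates into non-emptiness of the space of cocartesian sections of an associated $\cT^{/t}$-left fibration over $\cJ_{\underline{t}}$. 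Condition $(3)$ allows me to rewrite the value of this $\cT^{/t}$-limit, after applying $\underline{\colim}^{\cT^{/t}}_{\cJ_{\underline{t}}}$, as a $\cT^{/t}$-$\kappa$-small $\cT^{/t}$-limit of $\cT^{/t}$-colimits of representables; the $\cT$-Yoneda lemma \cite[Lem.~11.1]{Exp2} together with \cref{lem:ParamRepresentableFibrations}(5) then forces this composite to agree with the unit, exhibiting the limit as corepresentable and producing $\overline{p}$. The hard part will be this last step: executing the Yoneda-style reversal requires carefully tracking how $\cT^{/t}$-limits in $\underline{\PShv}_{\cT^{/t}}(\cJ_{\underline{t}})$ interact with the pointwise formula for $\cT^{/t}$-Kan extensions and combining these with the commutation in $(3)$ to force representability, which is the genuine parametrized analogue of the classical reverse implication.
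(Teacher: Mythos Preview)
Your overall cyclic strategy matches the paper, and your $(2)\Rightarrow(3)$ sketch is essentially the paper's argument (the paper also reduces to constant $\cJ_t \times \cT^{\op}$ via cofinal-constancy and then carries out an explicit Beck--Chevalley mate computation using the dual of \cref{prop:FiberwiseAndCoproductsGiveAll}). Two places need repair.

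\textbf{The $(1)\Rightarrow(2)$ cofinal-constancy argument has a genuine gap.} Your proposed $\cT^{/t}$-finite diagram---$n$ points over $\id_t$, one point over $\alpha$, ``glued by cocartesian edges realizing the $f_i$''---does not encode the data: the $f_i: y \to \alpha^\ast x_i$ are fiberwise edges in $\cJ_s$, not cocartesian edges, and a cocartesian edge in any $\cT^{/t}$-$\infty$-category over $\id_t \to \alpha$ goes \emph{from} the $x_i$-slot \emph{to} the $\alpha^\ast x_i$-slot, not to $y$. Even granting a corrected construction, handling only finite discrete families does not establish $\omega$-filteredness. The paper instead identifies $\cJ_t \times_{\cJ_s} (\cJ_s)^{y/} \simeq \bigl((\cJ_{\underline{t}})^{(\sigma,\cT^{/t})/}\bigr)_t$ via \cref{lem:ParameterizedSlicesAsOrdinarySlices}, invokes \cref{lem:FilteredPreservedBySlices} to show the parametrized slice is $\cT^{/t}$-$\kappa$-filtered, and then applies the already-established fiberwise $\kappa$-filteredness to that fiber. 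This is the missing lemma you need.

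\textbf{In $(3)\Rightarrow(1)$ you misstate what the computation proves.} The calculation $\colim^\cT_{\cJ}\bigl(\lim^\cT_{\cK} j_\cT p\bigr) \simeq \lim^\cT_{\cK}(1_\cT) \simeq 1_\cT$ does \emph{not} exhibit the limit presheaf as $\cT$-corepresentable; it shows that the $\cT$-classifying space of the associated $\cT$-left fibration (namely $(\cJ^{(p,\cT)/})^{\vop} \to \cJ^{\vop}$) is the terminal $\cT$-space. Via \cref{lem:ParamClassifyingSpaceLeftFibration} this yields weak contractibility of $(\cJ^{(p,\cT)/})_\ast$, hence nonemptiness, which is exactly what is needed for the extension $\overline{p}$ to exist. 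Drop the corepresentability claim and cite \cref{lem:ParamClassifyingSpaceLeftFibration} instead; the rest of your $(3)\Rightarrow(1)$ is correct and matches the paper.
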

\begin{proof} First suppose (1). Then for any essentially $\kappa$-small $\infty$-category $\cK$ and $t \in \cT$, our assumption ensures that every $\cT^{/t}$-functor $\cK \times (\cT^{/t})^\op \to \cJ_{\underline{t}}$ extends over $\cK^\rhd \times (\cT^{/t})^\op$, which shows that $\cJ_t$ is $\kappa$-filtered. Now suppose $\alpha: s \to t$ is a morphism in $\cT$. We want to show that $\alpha^{\ast}: \cJ_t \to \cJ_s$ is cofinal. Let $x \in \cJ_s$ and $\sigma: (\cT^{/s})^\op \to \cJ_{\underline{t}}$ be the unique $\cT^{/t}$-functor that selects $x$, and note that by \cref{lem:ParameterizedSlicesAsOrdinarySlices}
\[ \cJ_t \times_{\cJ_s} (\cJ_s)^{x/} \simeq ((\cJ_{\underline{t}})^{(\sigma,\cT^{/t})/})_{t}. \]
By \cref{lem:FilteredPreservedBySlices}, $(\cJ_{\underline{t}})^{(\sigma,\cT^{/t})/}$ is $\cT^{/t}$-$\kappa$-filtered, so $((\cJ_{\underline{t}})^{(\sigma,\cT^{/t})/})_{t}$ is $\kappa$-filtered by what was previously shown, and hence weakly contractible.\footnote{In fact, we don't need to invoke \cref{lem:JoinPreservesSmallness} as in the proof of \cref{lem:FilteredPreservedBySlices} because we are only interested in the extension property for constant $\cT^{/t}$-diagrams; in particular, the assumption that $\cT$ is orbital there is not necessary in this instance.} The claim now follows by Joyal's cofinality theorem. We conclude that $\cJ$ is cofinal-constant, so $\cJ$ satisfies (2).

Next suppose (2). To show (3), by the dual of \cref{prop:FiberwiseAndCoproductsGiveAll} it suffices to show that $\colim^\cT_\cJ$ preserves $\kappa$-small limits fiberwise and intertwines with the `coinduction' right adjoints. By \cite[Prop.~5.5]{Exp2}, under the equivalence $\Fun_{\cT^{/t}}(\cJ_{\underline{t}}, \underline{\Spc}_{\cT^{/t}}) \simeq \Fun(\cJ_{\underline{t}}, \Spc)$, the functor $\colim^{\cT^{/t}}_{\cJ_{\underline{t}}}$ identifies with left Kan extension along the structure map $\cJ_{\underline{t}} \to (\cT^{/t})^\op$. In view of our assumption that the fibers of $\cJ$ are $\kappa$-filtered, by \cite[Prop.~5.3.3.3]{HTT} and \cite[Prop.~4.3.3.10]{HTT} we conclude that $\colim^{\cT^{/t}}_{\cJ_{\underline{t}}}$ preserves $\kappa$-small limits. Next, let $\alpha: s \to t$ be a morphism in $\cT$ and consider the resulting pullback square
\[ \begin{tikzcd}
\cJ_{\underline{s}} \ar{r}{\phi} \ar{d}{\pi} & \cJ_{\underline{t}} \ar{d}{\pi} \\
(\cT^{/s})^{\op} \ar{r}{\phi} & (\cT^{/t})^{\op}
\end{tikzcd} \quad \text{yielding} \quad
\begin{tikzcd}
\Fun(\cJ_{\underline{t}}, \Spc) \ar{r}{\pi_!} \ar[phantom]{rd}[rotate=-45]{\Rightarrow} & \Fun((\cT^{/t})^\op, \Spc) \\
\Fun(\cJ_{\underline{s}}, \Spc) \ar{u}{\phi_\ast} \ar{r}{\pi_!} & \Fun((\cT^{/s})^\op, \Spc). \ar{u}{\phi_\ast}
\end{tikzcd} \]
We need to show that the mate $\chi: \pi_! \phi_\ast \Rightarrow \phi_\ast \pi_!$ is an equivalence. To ease notation, let us replace $\cT^{/t}$ by $\cT$ and $t$ by $\ast$. Let $p: \cJ_{\ast} \times \cT^\op \to \cJ$ be the unique $\cT$-functor extending the inclusion $\cJ_{\ast} \subset \cJ$ and note that our assumption that $p$ is $\cT$-cofinal yields a factorization of $\underline{\colim}^\cT_\cJ$ as
\[ \underline{\Fun}_\cT(\cJ, \underline{\Spc}_\cT) \xrightarrow{p^{\ast}} \underline{\Fun}_\cT(\cJ_\ast \times \cT^\op, \underline{\Spc}_\cT) \xrightarrow{\underline{\colim}^\cT_{\cJ_{\ast} \times \cT^\op}} \underline{\Spc}_\cT. \]
Since the $\cT$-functor $p^\ast$ admits a $\cT$-left adjoint given by $\cT$-left Kan extension, $p^\ast$ commutes with all $\cT$-limits \cite[Cor.~8.9]{Exp2}. We may thus replace $\cJ$ by the constant diagram $\cJ_{\ast} \times \cT^\op$ in the proof.

Now let $F: \cJ_{\ast} \times (\cT^{/s})^\op \to \Spc$ be a functor and $u \in \cT$; we will show that $\chi_F(u)$ is an equivalence. For every $\gamma: v \to s$, let $F_{\gamma} = F|_{\cJ_{\ast} \times \{ \gamma \}}$. Invoking our assumption that $\cT$ is orbital, let $\{ v_i \in \cT: i \in I \}$ be a finite collection such that $\cT^{/s} \times_\cT \cT^{/u} \simeq \coprod_{i \in I} \cT^{/v_i}$ and let $\gamma_i: v_i \to s$ be the structure maps. Then
\[ (\phi_\ast \pi_! F)(u) \simeq \lim\left( \bigsqcup_{i \in I} (\cT^\op)^{v_i/} \to (\cT^\op)^{s/} \xrightarrow{\pi_! F} \Spc \right) \simeq \prod_{i \in I} (\pi_! F)(\gamma_i) \simeq \prod_{i \in I} \colim_{\cJ_{\ast}} F_{\gamma_i} \]
On the other hand, using that the upper $\phi$ is a map of cartesian fibrations over $\cJ_{\ast}$, we get that $\phi_{\ast} F$ is computed fiberwise over $\cJ_{\ast}$ by \cite[Prop.~4.3.3.10]{HTT}. Thus for all $x \in \cJ_{\ast}$,
\[ (\phi_{\ast} F)(x,u) \simeq \lim \left( \bigsqcup_{i \in I} (\cT^\op)^{v_i/} \to \{x \} \times (\cT^\op)^{s/} \xrightarrow{F} \Spc \right) \simeq \prod_{i \in I} F(x,\gamma_i), \]
and using that $\kappa$-filtered colimits commute with finite products in $\Spc$, we deduce that $\chi_F(u)$ is an equivalence.

Finally, suppose (3). To show (1), after the usual reduction it suffices to prove that if $\cT$ admits a final object $\ast$ and $q: \cK \to \cJ$ is a $\cT$-functor with $\cK$ $\cT$-$\kappa$-small, then $(\cC^{(q,\cT)/})_{\ast}$ is nonempty. Let $\varphi = \lim^\cT_{\cK^{\vop}} ( j_\cT q^{\vop} ) \in \PShv_\cT(\cJ^{\vop}) \simeq \Fun(\cJ, \Spc)$. Then
\[ \colim^\cT_\cJ ( \varphi) \simeq \lim^\cT_{\cK^{\vop}} (\underline{\colim}^\cT_\cJ j_\cT q^{\vop}) \simeq \lim^\cT_{\cK^{\vop}} (1_\cT) \simeq 1_\cT, \]
using that $\underline{\colim}^\cT j_\cT$ factors through the $\cT$-final object $1_\cT$ of $\underline{\Spc}_\cT$ by \cref{lem:ParamRepresentableFibrations}(5) and any $\cT$-limit of $\cT$-final objects is again $\cT$-final. By \cref{prop:ParamYonedaPreservesSlices} applied to $p = q^{\vop}$ and \cref{lem:ParamClassifyingSpaceLeftFibration}, we deduce that $(\cC^{(q,\cT)/})_{\ast}$ is weakly contractible, so in particular nonempty.
\end{proof}

\begin{lemma} \label{lem:ParamClassifyingSpaceLeftFibration} Let $\pi: \cC \to \cT^\op$ be a $\cT$-$\infty$-category, $p: \cD \to \cC$ a $\cT$-left fibration, and $F: \cC \to \underline{\Spc}_\cT$ a $\cT$-functor that classifies $p$. Let $|\cD|_\cT$ denote the $\cT$-space obtained by inverting all morphisms in the fibers $\cD_t$ for all $t \in \cT$. Then $|\cD|_\cT \simeq \colim^\cT F$. 
\end{lemma}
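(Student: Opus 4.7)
The plan is to invoke the Yoneda lemma by showing that both sides corepresent the same functor on $\cT$-spaces. First, using the equivalence $\Fun_\cT(\cC, \underline{\Spc}_\cT) \simeq \Fun(\cC, \Spc)$ of \cite[Prop.~5.5]{Exp2}, I would identify $F$ with an ordinary functor $\widetilde{F}: \cC \to \Spc$ classifying $p$ as a left fibration, and identify the constant-diagram $\cT$-functor $\delta: \underline{\Spc}_\cT \to \underline{\Fun}_\cT(\cC, \underline{\Spc}_\cT)$ with precomposition by $\pi: \cC \to \cT^\op$ (under the corresponding equivalence $\Fun_\cT(\cT^\op, \underline{\Spc}_\cT) \simeq \Fun(\cT^\op, \Spc)$ on the target). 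Passing to left adjoints then identifies $\colim^\cT F$, viewed as a functor $\cT^\op \to \Spc$, with the left Kan extension $\pi_!\widetilde{F}$.

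Next, for any $\cT$-space $X: \cT^\op \to \Spc$ with unstraightening $q: \widetilde{X} \to \cT^\op$ (a left fibration), I would chain together the equivalences
\[ \Map(\pi_!\widetilde{F}, X) \simeq \Map_{\Fun(\cC, \Spc)}(\widetilde{F}, \pi^\ast X) \simeq \Map_{/\cC}(\cD, \cC \times_{\cT^\op}\widetilde{X}) \simeq \Map_{/\cT^\op}(\cD, \widetilde{X}), \]
using respectively the $\pi_! \dashv \pi^\ast$ adjunction, the straightening--unstraightening equivalence for left fibrations over $\cC$ (noting that $\pi^\ast X$ classifies the pulled-back left fibration, and maps between left fibrations coincide with maps of simplicial sets over $\cC$), and the universal property of the pullback. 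Since $\widetilde{X} \to \cT^\op$ is a left fibration, every morphism in the fibers of $\widetilde{X}$ is an equivalence; hence any map $\cD \to \widetilde{X}$ over $\cT^\op$ inverts all fiberwise morphisms of $\cD$ and factors uniquely through the localization $\cD \to |\cD|_\cT$, giving $\Map_{/\cT^\op}(\cD, \widetilde{X}) \simeq \Map(|\cD|_\cT, X)$. The Yoneda lemma then concludes $\colim^\cT F \simeq |\cD|_\cT$.

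The main technical point is to track naturality in $X$ throughout this chain so that the Yoneda step is justified; this follows from the naturality of straightening--unstraightening and the characterization of $|-|_\cT$ as the left adjoint to the inclusion of $\cT$-spaces (left fibrations over $\cT^\op$) into $\cT$-$\infty$-categories (cocartesian fibrations over $\cT^\op$). As a sanity check, one can verify the resulting equivalence pointwise: the Kan extension formula gives $(\pi_!\widetilde{F})(t) \simeq |\cD \times_{\cT^\op} (\cT^\op)_{/t}|$, and cofinality of the fiber inclusion $\cD_t \hookrightarrow \cD \times_{\cT^\op} (\cT^\op)_{/t}$ over the terminal object of $(\cT^\op)_{/t}$ (using that the projection to $(\cT^\op)_{/t}$ is a pulled-back cocartesian fibration) yields $(\pi_!\widetilde{F})(t) \simeq |\cD_t| = (|\cD|_\cT)(t)$.
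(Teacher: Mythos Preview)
Your proposal is correct and rests on the same underlying idea as the paper's proof: both identify $\colim^\cT F$ with the left Kan extension $\pi_!\widetilde{F}$ via \cite[Prop.~5.5]{Exp2}, and both ultimately use that $|{-}|_\cT$ and $\pi_!$ are each left adjoint to $\pi^\ast$. The difference is one of packaging. The paper argues at the level of functors: it observes that under straightening the assignment $\cD \mapsto |\cD|_\cT$ corresponds to the composite $\Fun(\cC,\Spc) \xrightarrow{\iota} \Fun(\cC,\Cat) \xrightarrow{\pi_!} \Fun(\cT^\op,\Cat) \xrightarrow{|{-}|} \Fun(\cT^\op,\Spc)$, checks that this composite has right adjoint $\pi^\ast$ (using $\pi^\ast\iota \simeq \iota\pi^\ast$ and full faithfulness of $\iota$), and concludes by uniqueness of left adjoints. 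You instead unwind this at a single object via Yoneda, chaining the adjunction $\pi_! \dashv \pi^\ast$, straightening--unstraightening, the pullback universal property, and the reflection $|{-}|_\cT \dashv (\text{inclusion})$. Your version is more hands-on and makes the role of each ingredient explicit (and your pointwise sanity check, while redundant, is a nice confirmation); the paper's version is terser and yields the equivalence functorially in $\cD$ rather than object-by-object.
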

\begin{proof} Recall again that under the equivalence $\Fun_\cT(\cC, \underline{\Spc}_\cT) \simeq \Fun(\cC, \Spc)$ and the identification of $\cT$-left fibrations with left fibrations, $p$ is classified as a left fibration by $F^\dagger: \cC \to \Spc$ and $\colim^\cT F \simeq \pi_! F^{\dagger}$. Denote the classifying space adjunction by $\adjunct{|-|}{\Cat}{\Spc}{\iota}$. In view of the functoriality of the straightening equivalence \cite[Prop.~3.2.1.4]{HTT}, we have that the functor $\LFib(\cC) \to \LFib(\cT^\op)$ defined by $\cD \mapsto |\cD|_\cT$ identifies with the composite
\[ L: \Fun(\cC, \Spc) \xrightarrow{\iota} \Fun(\cC, \Cat) \xrightarrow{\pi_!} \Fun(\cT^\op,\Cat) \xrightarrow{|-|} \Fun(\cT^\op, \Spc). \]
Since $\pi^{\ast} \iota \simeq \iota \pi^{\ast}$ the right adjoint of $L$ identifies with $\pi^{\ast}$, so $L$ is canonically equivalent to $\pi_!$.
\end{proof}

\begin{thm} \label{prop:FilteredCofinalityCriterion} Suppose that $\cT$ is orbital. Let $\cJ$ be a $\cT$-$\infty$-category and $\kappa$ a regular cardinal. Then $\cJ$ is $\cT$-$\kappa$-filtered if and only if for all $t \in \cT$ and $\cT^{/t}$-$\kappa$-small $\cK$, the diagonal $\cT^{/t}$-functor $$\delta: \cJ_{\underline{t}} \to \underline{\Fun}_{\cT^{/t}}(\cK, \cJ_{\underline{t}})$$ is $\cT^{/t}$-cofinal.
\end{thm}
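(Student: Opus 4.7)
The plan is to prove both directions by analyzing a single fiber: for a $\cT^{/t}$-functor $p:\cK \to \cJ_{\underline{t}}$ with $\cK$ a $\cT^{/t}$-$\kappa$-small $\cT^{/t}$-$\infty$-category, the fiber $((\cJ_{\underline{t}})^{(p,\cT^{/t})/})_t$ of the parametrized slice over $\id_t$. Via (the dual of) \cref{lem:ParameterizedSlicesAsOrdinarySlices}, this fiber is naturally equivalent to the comma $\cJ_t \times_{\Fun_{\cT^{/t}}(\cK,\cJ_{\underline{t}})} \Fun_{\cT^{/t}}(\cK,\cJ_{\underline{t}})^{p/}$, whose weak contractibility is (by Joyal's cofinality theorem) equivalent to the ordinary cofinality of the identity-fiber $\delta_t$ of the diagonal $\delta: \cJ_{\underline{t}} \to \underline{\Fun}_{\cT^{/t}}(\cK, \cJ_{\underline{t}})$.

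For the forward direction, suppose $\cJ$ is $\cT$-$\kappa$-filtered. Since $\cT^{/t}$-cofinality of $\delta$ unfolds to ordinary cofinality of $\delta_s$ for every $s \to t$ in $\cT^{/t}$, after base-changing $\cT$ to $\cT^{/s}$ I would reduce to verifying ordinary cofinality of $\delta_t$ for any $\cT$-$\kappa$-small $\cK$ in the case that $\cT$ has terminal object $t$. By Joyal's criterion and the identification above, it suffices to show $(\cJ^{(p,\cT)/})_t$ is weakly contractible for every $\cT$-functor $p:\cK \to \cJ$. But \cref{lem:FilteredPreservedBySlices} ensures that $\cJ^{(p,\cT)/}$ is itself $\cT$-$\kappa$-filtered, and then the already-established implication (1) $\Rightarrow$ (2) of \cref{thm:EquivalentConditionsFiltered} gives that its fiber over $t$ is $\kappa$-filtered, hence weakly contractible.

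For the reverse direction, I would translate the extension problem into one of section-existence: by the universal property of the parametrized internal hom, an extension $\overline{p}:\cK^{\underline{\rhd}} \to \cJ_{\underline{t}}$ of $p$ as a $\cT^{/t}$-functor is precisely a cocartesian section of the $\cT^{/t}$-cocartesian fibration $\pi: (\cJ_{\underline{t}})^{(p,\cT^{/t})/} \to \ast_{\cT^{/t}} = (\cT^{/t})^\op$. Under cocartesian straightening, cocartesian sections of $\pi$ are classified by $\lim_{(\cT^{/t})^\op} F$, where $F: (\cT^{/t})^\op \to \Cat$ is the straightening; since $(\cT^{/t})^\op$ has initial object $\id_t$ (opposite of the terminal $\id_t$ in $\cT^{/t}$), this limit collapses to $F(\id_t) \simeq ((\cJ_{\underline{t}})^{(p,\cT^{/t})/})_t$. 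The hypothesized fiberwise cofinality of $\delta$ at $\id_t$ then guarantees this fiber is weakly contractible — in particular nonempty — producing the required extension.

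The main obstacle is the passage through cocartesian straightening in the reverse direction: making precise both that an extension of $p$ corresponds to a cocartesian section of $\pi$ (rather than an arbitrary section), and that cocartesian sections of a cocartesian fibration over a base with initial object are detected by nonemptiness of the fiber over that object. Once set up, the two implications are governed by the same invariant — weak contractibility of $((\cJ_{\underline{t}})^{(p,\cT^{/t})/})_t$ — forced in one direction by the filtered hypothesis via \cref{lem:FilteredPreservedBySlices} and \cref{thm:EquivalentConditionsFiltered}, and in the other direction forcing the filtered extension property.
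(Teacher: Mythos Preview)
Your proposal is correct and follows essentially the same approach as the paper. The forward direction is identical: reduce to showing $(\cJ^{(p,\cT)/})_\ast$ is weakly contractible via \cref{lem:FilteredPreservedBySlices} and the $(1)\Rightarrow(2)$ implication of \cref{thm:EquivalentConditionsFiltered}.

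For the reverse direction, your detour through cocartesian straightening is correct but unnecessary, and the ``main obstacle'' you flag is not actually an obstacle. The paper observes directly that when $\cT$ has terminal object $\ast$, the fiber $(\cJ^{(p,\cT)/})_\ast$ is by definition $\{p\} \times_{\Fun_\cT(\cK,\cJ)} \Fun_\cT(\cK^{\underline{\rhd}},\cJ)$, which \emph{is} the $\infty$-category of extensions of $p$ --- no passage through cocartesian sections required. This is because $\underline{\Fun}_\cT(-,-)_\ast \simeq \Fun_\cT(-,-)$ when $\ast$ is initial in $\cT^{\op}$. Your argument amounts to the same identification via $\lim_{(\cT^{/t})^{\op}} F \simeq F(\id_t)$, just phrased more elaborately.
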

\begin{proof} In the proof, let us replace $\cT^{/t}$ with $\cT$ and suppose that $\cT$ has a final object $\ast$. For the `if' direction, after replacing $\cT^{/t}$ by $\cT$ once more it suffices to show that for any $\cT$-$\kappa$-small $\cK$ and $\cT$-functor $p: \cK \to \cJ$, $p$ extends over $\cK^{\underline{\rhd}}$, i.e., $(\cJ^{(p,\cT)/})_{\ast}$ is nonempty. But since $(\cJ^{(p,\cT)/})_{\ast} \simeq \cJ_{\ast} \times_{\Fun_\cT(\cK,\cJ)} \Fun_\cT(\cK,\cJ)^{\{p\}/}$ by the dual of \cref{lem:ParameterizedSlicesAsOrdinarySlices}, this follows by our assumption that $\cJ_\ast \to \Fun_\cT(\cK,\cJ)$ is cofinal. Conversely, for the `only if' direction suppose that $\cJ$ is $\cT$-$\kappa$-filtered and let $\cK$ be $\cT$-$\kappa$-small. After replacing $\cT^{/t}$ by $\cT$, it suffices to show that $\delta_\ast$ is cofinal. For this, by Joyal's cofinality theorem and \cref{lem:ParameterizedSlicesAsOrdinarySlices} again, it suffices to show that $(\cJ^{(p,\cT)/})_{\ast}$ is weakly contractible for all $p: \cK \to \cJ$. But this follows by \cref{lem:FilteredPreservedBySlices} and the (1)$\Rightarrow$(2) implication of \cref{thm:EquivalentConditionsFiltered}.
\end{proof}

To formulate the notion of a $\cT$-sifted $\cT$-$\infty$-category, we adopt the viewpoint of the alternative characterization of \cref{prop:FilteredCofinalityCriterion}, but over a more restrictive class of diagrams.

\begin{definition} \label{def:ParamSifted} Let $\cJ$ be a $\cT$-$\infty$-category. Then $\cJ$ is \emph{$\cT$-sifted} if for all $t \in \cT$ and finite $\cT^{/t}$-sets $U$, the diagonal $\cT^{/t}$-functor $\delta: \cJ_{\underline{t}} \to \underline{\Fun}_{\cT^{/t}}(\underline{U}, \cJ_{\underline{t}})$ is $\cT^{/t}$-cofinal.
\end{definition}

\begin{theorem} \label{thm:EquivalentConditionsSifted} Suppose that $\cT$ is orbital and let $\cJ$ be a $\cT$-$\infty$-category. The following conditions are equivalent:
\begin{enumerate}
    \item $\cJ$ is $\cT$-sifted.
    \item For all $t \in \cT$, $\cJ_t$ is sifted, and $\cJ$ is cofinal-constant (\cref{def:cofinalconstant}).
    \item The $\cT$-colimit $\cT$-functor $\underline{\colim}^\cT_\cJ: \underline{\Fun}_\cT(\cJ, \underline{\Spc}_\cT) \to \underline{\Spc}_\cT$ preserves finite $\cT$-products. 
\end{enumerate}
\end{theorem}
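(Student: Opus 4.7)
The plan is to follow the template of \cref{thm:EquivalentConditionsFiltered}, establishing the cyclic chain $(1) \Rightarrow (2) \Rightarrow (3) \Rightarrow (1)$. The main new feature is that the indexing $\cT$-$\infty$-categories $\underline{U}$ for finite $\cT$-sets $U$ form a much smaller (and more rigid) class than the $\cT$-$\kappa$-small $\cT$-$\infty$-categories handled there, and in particular $\underline{U}$ always has discrete fibers. This allows some simplifications but also forces a different route for extracting cofinal-constant.

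For $(1) \Rightarrow (2)$, fiberwise siftedness of $\cJ_t$ follows by testing the $\cT$-sifted hypothesis with $U = n \cdot \ast_{\cT^{/t}}$ (an $n$-fold coproduct of the terminal $\cT^{/t}$-orbit), whence the fiberwise diagonal at $\id_t$ becomes $\cJ_t \to \cJ_t^n$ and is cofinal for every $n \geq 0$. For cofinal-constant given $\alpha \colon s \to t$, the key move---which replaces the slicing-preserves-filtered trick from \cref{thm:EquivalentConditionsFiltered} that would fail here---is to test at $U = s$ viewed as a $\cT^{/t}$-set via $\alpha$ (a single non-terminal orbit), so that $\underline{U}|_{\cT^{/t}} \simeq (\cT^{/s})^\op$. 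By parametrized Yoneda, $\underline{\Fun}_{\cT^{/t}}(\underline{s}, \cJ_{\underline{t}})_{\id_t} \simeq \cJ_s$, and the fiberwise diagonal at $\id_t$ identifies with $\alpha^* \colon \cJ_t \to \cJ_s$, forcing it to be cofinal. For $(2) \Rightarrow (3)$, I would proceed exactly as in the filtered case, invoking the classical fact that sifted colimits commute with finite products in $\Spc$ in place of the analogous commutation for $\kappa$-filtered colimits. The fiberwise product-preservation of $\colim^{\cT^{/t}}_{\cJ_{\underline{t}}}$ follows from $\cJ_t$ being sifted after identifying this functor with $\pi_!$ via \cite[Prop.~5.5]{Exp2}, and the Beck-Chevalley mate equivalence reduces to the constant case $\cJ \simeq \cJ_\ast \times \cT^\op$ via cofinal-constant together with the fact that $p^*$ commutes with all $\cT$-limits.

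For $(3) \Rightarrow (1)$, after reducing to $\cT$ having a terminal object $\ast$ and fixing $q \colon \underline{U} \to \cJ$, I would form $\varphi = \lim^\cT_{\underline{U}^{vop}}(j_\cT q^{vop})$ and compute, as in the filtered case,
\[
\colim^\cT_\cJ \varphi \simeq \lim^\cT_{\underline{U}^{vop}} \colim^\cT_\cJ (j_\cT q^{vop}) \simeq \lim^\cT_{\underline{U}^{vop}} 1_\cT \simeq 1_\cT,
\]
using \cref{lem:ParamRepresentableFibrations}(5) for the second step and stability of $\cT$-final objects under finite $\cT$-products for the third. The first equivalence relies on the observation that $\underline{U}^{vop} \simeq \underline{U}$ (since $\underline{U}$ has discrete fibers as a coproduct of corepresentable left fibrations), so $\lim^\cT_{\underline{U}^{vop}}$ is a finite $\cT$-product, which $\colim^\cT_\cJ$ preserves by (3). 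Combining with \cref{prop:ParamYonedaPreservesSlices} and \cref{lem:ParamClassifyingSpaceLeftFibration} gives $|\cJ^{(q,\cT)/}|_\cT \simeq 1_\cT$; in particular $(\cJ^{(q,\cT)/})_\ast$ is weakly contractible, and by Joyal's cofinality theorem combined with \cref{lem:ParameterizedSlicesAsOrdinarySlices} the diagonal $\delta \colon \cJ_\ast \to \Fun_\cT(\underline{U}, \cJ)_\ast$ is cofinal.

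The most intricate step is the Beck-Chevalley mate argument in $(2) \Rightarrow (3)$, which---while completely parallel to the corresponding portion of the proof of \cref{thm:EquivalentConditionsFiltered}---still requires careful tracking of how the finite $\cT$-product structure interacts with cofinal-constant. The key conceptual novelty, however, lies in $(1) \Rightarrow (2)$, where the extraction of cofinal-constant from a single non-terminal orbit has no direct analogue in the filtered argument.
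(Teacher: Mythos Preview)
Your proposal is correct and follows essentially the same route as the paper's proof: the paper also extracts fiberwise siftedness from $U = \id_t \sqcup \id_t$, extracts cofinal-constant from the single-orbit $U = [\alpha: s \to t]$ via the identification $\delta_{\id_t} \simeq \alpha^*$, and for both $(2)\Rightarrow(3)$ and $(3)\Rightarrow(1)$ simply refers back to the corresponding steps in \cref{thm:EquivalentConditionsFiltered}. Your additional remark that $\underline{U}^{\vop} \simeq \underline{U}$ is needed to justify that the relevant $\cT$-limit in $(3)\Rightarrow(1)$ is indeed a finite $\cT$-product is a detail the paper leaves implicit.
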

\begin{proof} First suppose (1). Then for $t \in \cT$ and $U = \id_t \sqcup \id_t$, the $\cT^{/t}$-cofinality of $\delta: \cJ_{\underline{t}} \to \underline{\Fun}_\cT(\underline{U},\cJ_{\underline{t}})$ ensures that $\cJ_t$ is sifted, whereas if we let $U = [\alpha: s \to t]$, then because $\delta_{\id_t} \simeq \alpha^{\ast}: \cJ_t \to \cJ_s$ we deduce that $\alpha^{\ast}$ is cofinal, hence $\cJ$ is cofinal-constant. This shows that $\cJ$ satisfies (2).

The implication (2)$\Rightarrow$(3) follows by the same proof as (2)$\Rightarrow$(3) in \cref{thm:EquivalentConditionsFiltered}. Finally, suppose (3). By Joyal's cofinality theorem and \cref{lem:ParameterizedSlicesAsOrdinarySlices}, it suffices to show that for all $t \in \cT$, finite $\cT^{/t}$-set $U$, and $\cT^{/t}$-functor $p: \underline{U} \to \cJ_{\underline{t}}$, the $\infty$-category $((\cJ_{\underline{t}})^{(p,\cT^{/t})/})_{\id_t}$ is weakly contractible. But this follows by the same proof as (3)$\Rightarrow$(1) in \cref{thm:EquivalentConditionsFiltered}.
\end{proof}

We end this section by explaining a parametrized generalization of the following fact: if $F: \cC \times \cC \to \cD$ is a functor that commutes with colimits separately in each variable, then $F$ preserves sifted colimits. To do this, we need to recall the appropriate parametrized notion of distributive functor, whose definition is due to Nardin. We first fix some local notation.

\begin{dfn}
Let $U$ be a finite $\cT$-set. A \emph{$\underline{U}$-$\infty$-category} is a cocartesian fibration over $\underline{U}$.\footnote{Perhaps confusingly, this is reversing our convention that a $\cT$-$\infty$-category is a cocartesian fibration over $\cT^{\op}$. However, we don't want to write ``$\underline{U}^{\op}$-$\infty$-category''.}
\end{dfn}

\begin{ntn}
Let $f: U \to V$ be a map of finite $\cT$-sets. Then we have the adjunction
\[ \adjunct{f^*}{\Cat_{\underline{V}}}{\Cat_{\underline{U}}}{f_*} \]
where $f^*$ is pullback along $\underline{U} \to \underline{V}$. Note also that some authors also prefer to write $f_*$ as $\prod_f$ to emphasize its interpretation as an indexed product.
\end{ntn}

To understand the following definition, the reader should convince themselves that it reduces to ``preserving colimits separately in each variable'' when $\cT = \ast$. 

\begin{dfn}[{\cite[Def.~3.15]{nardin}}] \label{dfn:DistributiveFunctor}
Suppose that $\cT$ is orbital, let $f: U \to V$ be a map of finite $\cT$-sets, let $\cC$ be a $\underline{U}$-$\infty$-category, and let $\cD$ be a $\underline{V}$-$\infty$-category. Let $F: f_* \cC \to \cD$ be a $\underline{V}$-functor. Then we say that $F$ is \emph{$\underline{V}$-distributive} if for every pullback square
\[ \begin{tikzcd}
U' \ar{r}{f'} \ar{d}{g'} & V' \ar{d}{g} \\
U \ar{r}{f} & V
\end{tikzcd} \]
of finite $\cT$-sets and $\underline{U'}$-colimit diagram $\overline{p}: \cK^{\underline{\rhd}} \to g'^* \cC$, the $\underline{V'}$-functor
\[ (f'_* \cK)^{\underline{\rhd}} \xto{\can} f'_* (\cK^{\underline{\rhd}}) \xto{f'_* \overline{p}} f'_* g'^* \cC \simeq g^* f_* \cC \xto{g^* F} g^* \cD \]
is a $\underline{V'}$-colimit diagram.\footnote{Using the compatibility of the parametrized join with restriction \cite[Lem.~4.4]{Exp2}, the canonical map $(f'_* \cK)^{\underline{\rhd}} \xto{\can} f'_* (\cK^{\underline{\rhd}})$ is defined to be the adjoint to $\epsilon^{\underline{\rhd}}: (f'^* f'_* \cK)^{\underline{\rhd}} \to \cK^{\underline{\rhd}}$ for $\epsilon$ the counit of the adjunction $f'^* \dashv f'_*$.}
\end{dfn}

\begin{proposition} \label{prop:DistributiveFunctorsPreserveSifted} Suppose that $\cT$ is orbital, let $f: U \to V$ be a map of finite $\cT$-sets, and let $\cC$ resp. $\cD$ be a $\underline{U}$ resp. $\underline{V}$-$\infty$-category. Suppose that $F: f_* \cC \to \cD$ is a $\underline{V}$-distributive $\underline{V}$-functor. Then $F$ strongly preserves $\underline{V}$-sifted $\underline{V}$-colimits.
\end{proposition}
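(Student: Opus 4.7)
The strategy parallels the classical fact that a functor preserving colimits separately in each variable preserves sifted colimits, via cofinality of the diagonal. Here, $\underline{V}$-distributivity plays the role of ``separate-variable colimit preservation'', while $\underline{V}$-siftedness supplies the cofinal diagonal.

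First I would reduce to a fiberwise statement. By a dual version of \cref{prop:FiberwiseAndCoproductsGiveAll} — where the class of ``$\cT$-$\kappa$-small $\cT$-$\infty$-categories'' is replaced by the class of $\underline{V}$-sifted $\underline{V}$-$\infty$-categories, with the Beck--Chevalley clause interpreted accordingly — together with stability of both $\underline{V}$-distributivity and $\underline{V}$-siftedness under base-change along maps of finite $\cT$-sets (cf. the last condition in \cref{dfn:DistributiveFunctor}), it suffices to check that for every $\underline{V}$-sifted $\cJ$ and every $\underline{V}$-colimit diagram $\overline{p} : \cJ^{\underline{\rhd}} \to f_{*}\cC$, the composite $F \circ \overline{p}$ is again a $\underline{V}$-colimit diagram.

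The key step is then to show that the unit $\eta : \cJ \to f_{*}f^{*}\cJ$ of the adjunction $f^{*} \dashv f_{*}$ is $\underline{V}$-cofinal. By \cite[Thm.~6.7]{Exp2}, this can be checked fiberwise at each $v \in \underline{V}$. Using orbitality of $\cT$ to form the pullback $f_{v} : f^{-1}(v) \to v$ of $f$ along $v$ as a map of finite $\cT$-sets, Beck--Chevalley along the base-change to $v$ identifies $(f_{*}f^{*}\cJ)_{\underline{v}} \simeq (f_{v})_{*}(f_{v})^{*}\cJ_{\underline{v}} \simeq \underline{\Fun}_{\underline{V}^{/v}}(\underline{f^{-1}(v)},\cJ_{\underline{v}})$, under which $\eta_{\underline{v}}$ becomes the diagonal $\cJ_{\underline{v}} \to \underline{\Fun}_{\underline{V}^{/v}}(\underline{f^{-1}(v)},\cJ_{\underline{v}})$. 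The $\underline{V}$-siftedness hypothesis (the evident adaptation of \cref{def:ParamSifted} to the $\underline{V}$-base) then ensures this diagonal is $\underline{V}^{/v}$-cofinal, as desired.

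To conclude, let $\overline{p}' : (f^{*}\cJ)^{\underline{\rhd}} \to \cC$ be the $\underline{U}$-functor adjoint to $\overline{p}$, using the compatibility of $f^{*}$ with the parametrized join \cite[Lem.~4.4]{Exp2} to identify $f^{*}(\cJ^{\underline{\rhd}}) \simeq (f^{*}\cJ)^{\underline{\rhd}}$; correspondingly $p \coloneq \overline{p}|_{\cJ}$ adjoints to $p' : f^{*}\cJ \to \cC$, with $p \simeq (f_{*} p') \circ \eta$. Since $\eta$ is $\underline{V}$-cofinal, the $\underline{V}$-colimit of $p$ agrees with that of $f_{*}p'$, and a formal argument with the adjunction $f^{*} \dashv f_{*}$ on fibered $\infty$-categories then shows that $\overline{p}'$ is itself a $\underline{U}$-colimit diagram extending $p'$. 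Now applying \cref{dfn:DistributiveFunctor} with the trivial base-change $g = g' = \id_{V}$ to the $\underline{U}$-colimit diagram $\overline{p}'$ yields that the composite
\[ (f_{*}f^{*}\cJ)^{\underline{\rhd}} \to f_{*}((f^{*}\cJ)^{\underline{\rhd}}) \xto{f_{*}\overline{p}'} f_{*}\cC \xto{F} \cD \]
is a $\underline{V}$-colimit diagram. Precomposition with the $\underline{V}$-cofinal $\eta^{\underline{\rhd}} : \cJ^{\underline{\rhd}} \to (f_{*}f^{*}\cJ)^{\underline{\rhd}}$ produces a $\underline{V}$-colimit diagram $\cJ^{\underline{\rhd}} \to \cD$; tracking identifications, this is precisely $F \circ \overline{p}$. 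The main obstacle is the formal comparison in the last step: one must identify $\underline{V}$-colimits in $f_{*}\cC$ with $\underline{U}$-colimits in $\cC$ across the adjunction $f^{*} \dashv f_{*}$, and carefully verify that the composite built from $\eta^{\underline{\rhd}}$, the canonical non-equivalence $(f_{*}f^{*}\cJ)^{\underline{\rhd}} \to f_{*}((f^{*}\cJ)^{\underline{\rhd}})$, and $f_{*}\overline{p}'$ really does recover $\overline{p}$ up to equivalence as a $\underline{V}$-colimit extension of $p$.
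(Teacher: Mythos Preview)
Your proposal is correct and follows essentially the same route as the paper: reduce via base-change stability of distributivity, identify the unit $\eta: \cJ \to f_*f^*\cJ$ with the diagonal and invoke siftedness for cofinality, pass the $\underline{V}$-colimit diagram $\overline{p}$ across the adjunction to a $\underline{U}$-colimit diagram, apply distributivity, and precompose with $\eta^{\underline{\rhd}}$. The paper is terser in two places you could adopt: the initial reduction is just the one-line observation that ``strongly preserves'' follows from ``preserves'' once distributivity is stable under base-change (no need to invoke an analogue of \cref{prop:FiberwiseAndCoproductsGiveAll}), and the final identification you flag as the ``main obstacle'' is dispatched by a direct appeal to the triangle identity for $f^* \dashv f_*$.
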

\begin{proof} Since the property of parametrized distributivity is stable under base-change, it suffices to show that $F$ preserves $\underline{V}$-sifted $\underline{V}$-colimits. Without loss of generality, we may also suppose that $V$ is an orbit. Let $\cK$ be a $\underline{V}$-sifted $\underline{V}$-$\infty$-category and suppose that $\overline{p}: \cK^{\underline{\rhd}} \to f_\ast \cC$ is a $\underline{V}$-colimit diagram. Then because the counit map $f^\ast f_\ast \cC \simeq \underline{\Fun}_{\underline{U}}(\underline{U} \times_{\underline{V}} \underline{U},\cC) \to \cC$ is given by restriction along the diagonal $\underline{U} \to \underline{U} \times_{\underline{V}} \underline{U}$, the adjoint map $(\cK_{\underline{U}})^{\underline{\rhd}} \to \cC$ is a $\underline{U}$-colimit diagram. Since $F$ is $\underline{V}$-distributive, the $\underline{V}$-functor
\[ \overline{\psi}: (f_\ast f^\ast \cK)^{\underline{\rhd}} \to f_\ast (f^\ast \cK^{\underline{\rhd}}) \to f_\ast \cC \xrightarrow{F} \cD  \]
is a $\underline{V}$-colimit diagram. Since $\cK$ is $\underline{V}$-sifted, the unit $\underline{V}$-functor $\delta: \cK \to f_\ast f^\ast \cK \simeq \underline{\Fun}_{\underline{V}}(\underline{U}, \cK)$ is $\underline{V}$-cofinal, so $\overline{\psi \circ \delta}$ is also a $\underline{V}$-colimit diagram. But this composite is homotopic to $F \circ \overline{p}$ via the triangle identity for $f^\ast \dashv f_\ast$, proving the claim.
\end{proof}

We will use \cref{prop:DistributiveFunctorsPreserveSifted} together with \cref{thm:ParametrizedLimitsAndColimitsInSectionCategories} in \cite{paramalg} to show e.g. that given a $\cT$-distributive $\cT$-symmetric monoidal $\cT$-$\infty$-category $\cC$ (such as the $G$-$\infty$-category of $G$-spectra equipped with the Hill--Hopkins--Ravenel norms when $\cT = \OO_G$), the forgetful $\cT$-functor from the $\cT$-$\infty$-category of $\cT$-commutative algebras in $\cC$ to $\cC$ creates all $\cT$-sifted $\cT$-colimits.

\section{Universal constructions}

In this section, we introduce a few more universal constructions in addition to that of $\cT$-presheaves that formally adjoin smaller classes of $\cT$-colimits. We begin with the following lemma.

\begin{lemma} \label{lem:PresheafFunctorialityCofinalConstant} Let $f: \cC \to \cD$ be a functor of small $\infty$-categories and let $F: \PShv(\cC) \to \PShv(\cD)$ be the unique colimit-preserving functor that extends $j \circ f$. Then for every presheaf $\varphi \in \PShv(\cC)$, the induced functor
\[ \cC \times_{\PShv(\cC)} \PShv(\cC)^{/\varphi} \to \cD \times_{\PShv(\cD)} \PShv(\cD)^{/F(\varphi)} \]
is cofinal.
\end{lemma}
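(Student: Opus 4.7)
The plan is to apply Joyal's cofinality theorem: for each object $(d, y)$ of $\cF := \cD \times_{\PShv(\cD)} \PShv(\cD)^{/F(\varphi)}$, I need to show the $\infty$-category
\[ \cX := \cE \times_{\cF} \cF^{(d,y)/}, \qquad \cE := \cC \times_{\PShv(\cC)} \PShv(\cC)^{/\varphi}, \]
is weakly contractible. Since $\cF^{(d,y)/} \to \cF$ is a left fibration classified by $\Map_\cF((d,y),-)$, the pulled-back map $\cX \to \cE$ is a left fibration classified by some $G: \cE \to \Spc$, and $|\cX| \simeq \colim_\cE G$. Writing an object of $\cE$ as $(c, x: j(c) \to \varphi)$ and using that $\cF \to \cD$ is the right fibration classified by $F(\varphi): \cD^\op \to \Spc$, the value $G(c, x) = \Map_\cF((d,y),(f(c),F(x)))$ identifies with the homotopy fiber $\Map_\cD(d, f(c)) \times_{F(\varphi)(d)} \{y\}$, where the map $\Map_\cD(d,f(c)) \to F(\varphi)(d)$ is the $d$-component $F(x)_d$ of $F(x): j(f(c)) \to F(\varphi)$.

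The key input is the colimit formula for the left Kan extension $F$. Writing $\varphi \simeq \colim_\cE (j \circ \pi)$ with $\pi: \cE \to \cC$ the projection (see \cref{rem:YonedaLemmaExtension}) and using that $F$ preserves colimits and satisfies $F \circ j \simeq j \circ f$, we obtain $F(\varphi) \simeq \colim_{(c,x) \in \cE} j(f(c))$ in $\PShv(\cD)$. Evaluating at $d$ and using pointwise computation of colimits gives
\[ F(\varphi)(d) \simeq \colim_{(c,x) \in \cE} \Map_\cD(d, f(c)), \]
with structure maps at $(c, x)$ given precisely by $F(x)_d$. Universality of colimits in the $\infty$-topos $\Spc$ then yields
\[ \colim_\cE G \simeq \{y\} \times_{F(\varphi)(d)} \colim_{(c,x) \in \cE} \Map_\cD(d, f(c)) \simeq \{y\} \times_{F(\varphi)(d)} F(\varphi)(d) \simeq \ast, \]
so $\cX$ is weakly contractible and the induced functor $\cE \to \cF$ is cofinal.

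The main point requiring care is the identification of the classifying functor $G$ together with the compatibility of the structure maps of the colimit presentation $F(\varphi)(d) \simeq \colim_\cE \Map_\cD(d, f(-))$ with the maps $F(x)_d$ appearing in $G$, so that the universality-of-colimits argument can be applied with a single cocone on both sides. This is ultimately a naturality check for the Yoneda lemma relative to the defining universal property of $F$ as a left Kan extension, but some bookkeeping is needed to line everything up correctly.
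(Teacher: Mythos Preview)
Your argument is correct and reaches the same conclusion via a genuinely different route from the paper. Both proofs invoke Joyal's cofinality theorem and both ultimately rest on the identification $F(\varphi)(d) \simeq \colim_{(c,x)\in\cE}\Map_\cD(d,f(c))$ coming from $F(\varphi) \simeq \colim_\cE\,j\circ f\circ\pi$. The difference lies in how each passes from this to weak contractibility of the relevant slice.

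You classify the left fibration $\cX \to \cE$ by $G$, use $|\cX|\simeq\colim_\cE G$, and then invoke universality of colimits in $\Spc$ to compute $\colim_\cE\bigl(\{y\}\times_{F(\varphi)(d)}\Map_\cD(d,f(-))\bigr)\simeq \{y\}\times_{F(\varphi)(d)} F(\varphi)(d)\simeq\ast$. The paper instead argues at the level of simplicial sets: it sets up a strict pullback square
\[
\begin{tikzcd}
\cX \ar{r}{G'} \ar{d} & \cF_{(d,y)/} \ar{d}{\pi} \\
\cD_{d/}\times_\cD \cE \ar{r}{G} & \cD_{d/}\times_\cD \cF,
\end{tikzcd}
\]
proves an auxiliary lemma that $\pi$ is a Kan fibration, observes that $G$ is a weak homotopy equivalence (since both sides have the weak homotopy type of $F(\varphi)(d)$ by the same colimit computation), and concludes that $G'$ is a weak homotopy equivalence by right properness of the Quillen model structure; since $\cF_{(d,y)/}$ has an initial object, $\cX$ is weakly contractible. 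Your approach is more intrinsic and avoids both the Kan fibration lemma and the appeal to model-categorical right properness, at the cost of the naturality bookkeeping you flag; the paper's approach absorbs that bookkeeping into the single assertion that $G$ is a weak homotopy equivalence, but needs the extra point-set input.
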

\begin{proof}
We verify the hypotheses of Joyal's cofinality theorem. Let $\tau = [d,\gamma: j(d) \to F(\varphi)]$ be an object of $\cD \times_{\PShv(\cD)} \PShv(\cD)^{/F(\varphi)}$. We want to show that the $\infty$-category
\[ \cE \coloneqq (\cC \times_{\PShv(\cC)} \PShv(\cC)^{/\varphi}) \times_{(\cD \times_{\PShv(\cD)} \PShv(\cD)^{/F(\varphi)})} (\cD \times_{\PShv(\cD)} \PShv(\cD)^{/F(\varphi)})_{\tau/} \]
is weakly contractible. Consider the commutative diagram
\[ \begin{tikzcd}
\cE \ar{r}{G'} \ar{d}{\pi'} & (\cD \times_{\PShv(\cD)} \PShv(\cD)^{/F(\varphi)})_{\tau/} \ar{d}{\pi} \\
\cD_{d/} \times_\cD (\cC \times_{\PShv(\cC)} \PShv(\cC)^{/\varphi}) \ar{r}{G} \ar{d} & \cD_{d/} \times_\cD (\cD \times_{\PShv(\cD)} \PShv(\cD)^{/F(\varphi)}) \ar{r} \ar{d} & \cD_{d/} \ar{d} \\
\cC \times_{\PShv(\cC)} \PShv(\cC)^{/\varphi} \ar{r} & \cD \times_{\PShv(\cD)} \PShv(\cD)^{/F(\varphi)} \ar{r} & \cD.
\end{tikzcd} \]
Observe that since $F(\varphi) \simeq \colim (\cD \times_{\PShv(\cD)} \PShv(\cD)^{/F(\varphi)} \to \PShv(\cD))$ and $\Map_{\PShv(\cD))}(j(d),-): \PShv(\cD) \to \Spc$ preserves colimits, we have that $\cD_{d/} \times_\cD (\cD \times_{\PShv(\cD)} \PShv(\cD)^{/F(\varphi)})$ is weakly homotopy equivalent to $F(\varphi)(d)$. Likewise, since $F$ preserves colimits, the functor $G$ is a weak homotopy equivalence. By \cref{lem:KanFibration}, the functor $\pi$ is a Kan fibration. By right properness of the Quillen model structure on simplicial sets, we deduce that $G'$ is a weak homotopy equivalence, hence $W$ is weakly contractible.
\end{proof}

\begin{lemma} \label{lem:KanFibration} Let $\psi: X \to Y$ be a right fibration and let $p: K \to X$ be a functor. Then the induced functor
\[ X_{p/} \to Y_{\psi p/} \times_Y X \]
is a Kan fibration.
\end{lemma}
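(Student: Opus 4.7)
The plan is to verify that $X_{p/} \to Y_{\psi p/} \times_Y X$ is simultaneously a left fibration and a right fibration. Since a map of simplicial sets has the right lifting property against every horn inclusion $\Lambda^n_i \hookrightarrow \Delta^n$ (for $0 \le i \le n$) exactly when it is a Kan fibration, and the two fibration conditions handle $0 \le i < n$ and $0 < i \le n$ respectively, this will suffice.

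For the left fibration property, I would cite HTT Proposition 2.1.2.2: for any inner fibration $\pi\colon X \to S$ and map $f\colon A \to X$, the induced map $X_{f/} \to X \times_S S_{\pi f/}$ is a left fibration. Since right fibrations are in particular inner fibrations, applying this to $\psi$ and $p$ delivers the claim directly.

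For the right fibration property, I would verify the right lifting property against $\Lambda^n_i \hookrightarrow \Delta^n$ for $0 < i \le n$. Via the adjunction between the join and the slice, such a lifting problem transposes to the following lifting problem for $\psi$:
\[
\begin{tikzcd}
(K \star \Lambda^n_i) \cup_{\Lambda^n_i} \Delta^n \ar{r} \ar[hookrightarrow]{d}{\iota} & X \ar{d}{\psi} \\
K \star \Delta^n \ar{r} \ar[dotted]{ru} & Y
\end{tikzcd}
\]
By (the right anodyne variant of) HTT Lemma 2.1.2.3, applied to the cofibration $\emptyset \hookrightarrow K$ and the right anodyne map $\Lambda^n_i \hookrightarrow \Delta^n$, the inclusion $\iota$ is right anodyne. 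Hence the lift exists because $\psi$ is a right fibration.

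The argument is essentially a matter of unwinding definitions and invoking these two standard lemmas, so no genuine obstacle is expected. The only point requiring mild care is that HTT Lemma 2.1.2.3 is usually stated in the left anodyne convention; the right anodyne version needed here follows by passing to opposites, using that $\psi^{\op}$ is a left fibration and $(K \star \Delta^n)^{\op} \cong \Delta^n{}^{\op} \star K^{\op}$ interchanges the roles of left and right in the join.
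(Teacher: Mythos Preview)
Your proof is correct and follows essentially the same route as the paper: both reduce to the join--slice adjunction and the anodyne properties of pushout-joins. The only cosmetic differences are that you package the $i<n$ case by citing the standard left-fibration result (whose proof in HTT is precisely the inner-anodyne argument the paper spells out via Lemma~2.1.2.3), and that the correct citation for the right-anodyne step is the opposite of HTT Lemma~2.1.2.4 rather than~2.1.2.3.
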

\begin{proof}
Let $n>0$ and $\iota: A = \Lambda^n_i \to B = \Delta^n$ be a horn inclusion. We need to solve the lifting problem
\[ \begin{tikzcd}
A \ar{r} \ar{d}{\iota} & X_{p/} \ar{d} \\
B \ar{r} \ar[dotted]{ru} & Y_{\psi p/} \times_Y X
\end{tikzcd} \text{ which transposes to }
\begin{tikzcd}
K \star A \bigcup_{A} B \ar{r} \ar{d}{\iota'} & X \ar{d}{\psi} \\
K \star B \ar{r} \ar[dotted]{ru} & Y.
\end{tikzcd} \]
If $i<n$ so that $\iota$ is left anodyne, then by \cite[Lem.~2.1.2.3]{HTT}, $\iota'$ is inner anodyne, and if $i>0$ so that $\iota$ is right anodyne, then by the opposite of \cite[Lem.~2.1.2.4]{HTT}, $\iota'$ is right anodyne. Therefore, the dotted lift exists.
\end{proof}

\begin{definition} \label{dfn:fiberwisePresheaves} Let $\cC$ be a $\cT$-$\infty$-category. We define the \emph{fiberwise $\cT$-$\infty$-category of presheaves of $\cC$} to be the full subcategory
$$\underline{\PShv}_\cT^{\fb}(\cC)  \subset \underline{\PShv}_\cT(\cC)$$
whose fiber over each object $t \in \cT$ is the full subcategory $\PShv(\cC_t)$ of $\underline{\PShv}_\cT(\cC)_t \simeq \PShv(\cC_{\underline{t}}^{\textrm{v}})$, embedded via left Kan extension along the fully faithful inclusion $\cC_t^{\op} \subset \cC_{\underline{t}}^{\vop}$.
\end{definition}

\begin{remark} In \cref{dfn:fiberwisePresheaves}, we note that $\underline{\PShv}_\cT^{\fb}(\cC)$ is a full $\cT$-subcategory of $\underline{\PShv}_\cT(\cC)$, i.e., it is a sub-cocartesian fibration over $\cT^{\op}$. Indeed, the existence of the $\cT$-Yoneda embedding $j_\cT$ as a $\cT$-functor implies that for any morphism $\alpha: s \to t$ in $\cT$, the diagram
\[ \begin{tikzcd}
\cC_t \ar{r} \ar{d}{\alpha^{\ast}} & \cC_{\underline{t}}^{\textrm{v}} \ar{r}{j} & \underline{\PShv}_\cT(\cC)_t \simeq \PShv(\cC_{\underline{t}}^{\textrm{v}}) \ar{d}{\overline{\alpha}^{\ast}} \\
\cC_s \ar{r} & \cC_{\underline{s}}^{\textrm{v}} \ar{r}{j} & \underline{\PShv}_\cT(\cC)_s \simeq \PShv(\cC_{\underline{s}}^{\textrm{v}})
\end{tikzcd} \]
commutes, where $\overline{\alpha}^{\ast}$ is given by restriction along $\cC_{\underline{s}}^{\vop} \to \cC_{\underline{t}}^{\vop}$. Since the inclusions $\PShv(\cC_t) \subset \PShv(\cC_{\underline{t}}^{\textrm{v}})$ and $\PShv(\cC_s) \subset \PShv(\cC_{\underline{s}}^{\textrm{v}})$ along with $\overline{\alpha}^{\ast}$ are colimit-preserving, we have a factorization of the outer rectangle as
\[ \begin{tikzcd}
\cC_t \ar{r}{j} \ar{d}{\alpha^{\ast}} & \PShv(\cC_t) \ar{r} \ar{d}{\overline{\alpha}^{\ast}|_{\PShv(\cC_t)}} & \underline{\PShv}_\cT(\cC)_t \simeq \PShv(\cC_{\underline{t}}^{\textrm{v}}) \ar{d}{\overline{\alpha}^{\ast}} \\
\cC_s \ar{r}{j} & \PShv(\cC_s) \ar{r} & \underline{\PShv}_\cT(\cC)_s \simeq \PShv(\cC_{\underline{s}}^{\textrm{v}}),
\end{tikzcd} \]
which both establishes the claim and also identifies $\overline{\alpha}^{\ast}|_{\PShv(\cC_t)}$ with the prolongation of $j \circ \alpha^{\ast}$ obtained via the universal property of $\PShv(\cC_t)$. The $\cT$-Yoneda embedding then restricts to a $\cT$-functor $j^{\fb}_\cT: \cC \to \underline{\PShv}_\cT^{\fb}(\cC)$.
\end{remark}

\begin{definition} \label{def:cofinalconstant} Let $\cK$ be a $\cT$-$\infty$-category. Then $\cK$ is \emph{cofinal-constant} (\emph{cc}) if for all morphisms $\alpha: s \to t$ in $\cT$, the restriction functor $\alpha^{\ast}: \cT_t \to \cT_s$ is cofinal.

We say that a $\cT$-$\infty$-category $\cC$ is \emph{cc $\cT$-cocomplete} if $\cC$ strongly admits all cc $\cT$-colimits. If $\cC$ and $\cD$ are cc $\cT$-cocomplete, we will let $\underline{\Fun}^{cc}_\cT(\cC,\cD)$ denote the full $\cT$-subcategory of $\underline{\Fun}_\cT(\cC,\cD)$ whose fiber over each $t \in \cT$ is spanned by those $\cT^{/t}$-functors that strongly preserve all cc $\cT^{/t}$-colimits.

More generally, if $\CMcal{K}$ is a collection of small simplicial sets, then we have analogous definitions of $\CMcal{K}$-cc $\cT$-$\infty$-categories, $\CMcal{K}$-cc $\cT$-cocompleteness and $\underline{\Fun}^{\operatorname{\CMcal{K}-cc}}_\cT(\cC,\cD)$, where we suppose that the collection of $\cT$-diagrams in question are cofinal-constant and have fibers in $\CMcal{K}$.
\end{definition}

\begin{proposition} \label{prop:ccEqualsConstant} Let $\cC$ be a $\cT$-$\infty$-category. Then $\cC$ is cc $\cT$-cocomplete if and only if $\cC$ strongly admits all constant $\cT$-colimits. Similarly, if $\cC$ and $\cD$ are cc $\cT$-cocomplete, then a $\cT$-functor $F: \cC \to \cD$ strongly preserves all cc $\cT$-colimits if and only if $F_t$ preserves all colimits for all $t \in \cT$.
\end{proposition}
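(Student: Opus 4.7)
\medskip

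\noindent\textbf{Proof plan.} The statement splits into two equivalences, and the plan for both is to reduce cofinal-constant $\cT$-diagrams to \emph{constant} $\cT$-diagrams using the cofinality remark following \cref{def:cofinalconstant} and the $\cT$-cofinality criterion \cite[Thm.~6.7]{Exp2}.

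For the first claim, the forward direction is immediate since for any $\infty$-category $K$ and any $t \in \cT$, the constant $\cT^{/t}$-$\infty$-category $K \times (\cT^{/t})^{\op}$ is tautologically cofinal-constant (each restriction functor is the identity on $K$). For the reverse direction, fix $t \in \cT$ and let $\cK$ be a cofinal-constant $\cT^{/t}$-$\infty$-category with a $\cT^{/t}$-functor $p \colon \cK \to \cC_{\underline{t}}$. Since $\cT^{/t}$ has the terminal object $\id_t$, the remark immediately following \cref{def:cofinalconstant} (applied with $\cT$ there replaced by $\cT^{/t}$) produces a $\cT^{/t}$-cofinal functor
\[ \chi \colon \cK_{\id_t} \times (\cT^{/t})^{\op} \to \cK \]
and an identification $\colim^{\cT^{/t}}_{\cK} p \simeq \colim^{\cT^{/t}}_{\cK_{\id_t} \times (\cT^{/t})^{\op}}(p \circ \chi)$ provided either side exists. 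The right-hand side exists by the hypothesis that $\cC$ strongly admits constant $\cT$-colimits, which establishes that $\cC$ is cc $\cT$-cocomplete.

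For the second claim, the same $\chi$ argument, combined with the stability of both the class of cofinal-constant diagrams and the class of $\cT$-functors $F$ preserving them under base-change in $\cT$, reduces the assertion to the following: a $\cT$-functor $F \colon \cC \to \cD$ between cc $\cT$-cocomplete $\cT$-$\infty$-categories strongly preserves constant $\cT$-colimits if and only if each $F_t$ preserves all (small) colimits. Here I would use the following unwinding of a constant $\cT^{/t}$-colimit. Given $q \colon K \to \cC_t$, regarded as a $\cT^{/t}$-functor $\widetilde{q} \colon K \times (\cT^{/t})^{\op} \to \cC_{\underline{t}}$ (corresponding under $\Fun_{\cT^{/t}}(K \times (\cT^{/t})^{\op}, \cC_{\underline{t}}) \simeq \Fun(K, \cC_t)$), a $\cT^{/t}$-colimit of $\widetilde{q}$ is a cocartesian section of $\cC_{\underline{t}}$, hence determined by its value at $\id_t$, which is identified with $\colim_K q$ in $\cC_t$ (this is the standard fact that constant $\cT$-colimits compute fiberwise colimits over the terminal object, cf.\ \cite[Prop.~5.5]{Exp2}). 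Since $F$ is a $\cT$-functor and every object of $\cT^{/t}$ admits a (unique) map to $\id_t$, $F$ strongly preserves the $\cT^{/t}$-colimit of $\widetilde{q}$ if and only if $F_t$ preserves $\colim_K q$. Quantifying over $t$ and $q$ yields both directions.

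The main obstacle is the identification of the constant $\cT^{/t}$-colimit with the ordinary colimit in $\cC_t$ (at the terminal fiber), together with the verification that a $\cT$-functor $F$ automatically intertwines the restriction functors, so that no additional Beck--Chevalley condition is needed in the hypothesis on $F$. Everything else is a bookkeeping exercise in transporting along the $\cT^{/t}$-cofinal comparison $\chi$.
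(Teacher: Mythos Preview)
Your proposal is correct and follows essentially the same route as the paper. Both arguments construct the $\cT^{/t}$-cofinal comparison $\chi \colon \cK_{\id_t} \times (\cT^{/t})^{\op} \to \cK$ (the paper calls it $\psi$) as the cocartesian extension of the inclusion of the fiber over $\id_t$, invoke \cite[Thm.~6.7]{Exp2} to transport the colimit problem to the constant diagram, and then treat the functor statement by the same reduction; the paper merely says the second assertion ``will then follow immediately,'' whereas you spell out the identification of constant $\cT^{/t}$-colimits with fiberwise colimits at $\id_t$ (which indeed uses that the restriction functors in $\cD$ preserve colimits, a consequence of $\cD$ being cc $\cT$-cocomplete).
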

\begin{proof} We prove the first assertion about $\cC$; the second assertion about $F$ will then follow immediately. The `only if' implication is obvious. Conversely, suppose $\cC$ strongly admits all constant $\cT$-colimits. Let $t \in \cT$ and let $\cK$ be a cc $\cT^{/t}$-$\infty$-category. We have the $\cT^{/t}$-functor
$$\psi: \cK_t \times (\cT^{/t})^{\op} \to \cK$$
given as the cocartesian extension of the inclusion of the fiber $\cK_t \subset \cK$ over the initial object $\id_t \in (\cT^{/t})^{\op}$. By assumption, for all morphisms $\alpha: s \to t$ in $\cT$, the functor $\psi_{\alpha} \simeq \alpha^{\ast}: \cK_t \to \cK_s$ is cofinal, so by \cite[Thm.~6.7]{Exp2}, for each $\cT^{/t}$-functor $f: \cK \to \cC_{\underline{t}}$, the induced $\cT$-functor $\psi^{\ast}: \cC^{(f,\cT^{/t})/} \to \cC^{(f \psi,\cT^{/t})/}$ is an equivalence. In particular, $\cC^{(f,\cT^{/t})/}$ admits a $\cT^{/t}$-initial object if and only if $\cC^{(f \psi,\cT^{/t})/}$ does. Therefore, $f$ extends to a $\cT^{/t}$-colimit diagram if and only if $f \psi$ does, which completes the proof. 
\end{proof}

Recall that $\cC$ strongly admits all constant $\cT$-colimits if and only if its fibers admit all colimits and its pushforward functors preserve all colimits. For example, $\underline{\PShv}_\cT^{\fb}(\cC)$ strongly admits all constant $\cT$-colimits, so by \cref{prop:ccEqualsConstant}, $\underline{\PShv}_\cT^{\fb}(\cC)$ is cc $\cT$-cocomplete.

\begin{proposition} \label{prop:FiberwisePresheavesUniversalProperty} Let $\cC$ be a small $\cT$-$\infty$-category and let $\cD$ be cc $\cT$-cocomplete. Then for any $\cT$-functor $f: \cC \to \cD$, the $\cT$-left Kan extension $F$ of $f$ along $j_\cT^{\fb}$ exists. Moreover, restriction along $j_\cT^{\fb}$
\[ (j_\cT^{\fb})^\ast: \underline{\Fun}^{cc}_\cT(\underline{\PShv}_\cT^{\fb}(\cC),\cD) \to \underline{\Fun}_\cT(\cC,\cD) \]
implements an equivalence of $\cT$-$\infty$-categories, with inverse given by $\cT$-left Kan extension.
\end{proposition}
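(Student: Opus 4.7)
The plan is to reduce every claim to fiberwise assertions about the ordinary Yoneda embedding, with the nontrivial compatibility between fibers controlled by \cref{lem:PresheafFunctorialityCofinalConstant}.

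First I would prove the key technical point: for every $t \in \cT$ and $x \in \PShv(\cC_t) = \underline{\PShv}^{\fb}_\cT(\cC)_t$, the $\cT^{/t}$-$\infty$-category $\cC^{/\underline{x}} = \cC \times_{\underline{\PShv}^{\fb}_\cT(\cC)} \Ar_\cT(\underline{\PShv}^{\fb}_\cT(\cC)) \times_{\underline{\PShv}^{\fb}_\cT(\cC)} \underline{x}$ is cofinal-constant. Unwinding the definition, the fiber over $\alpha : s \to t$ in $(\cT^{/t})^{\op}$ is $\cC_s \times_{\PShv(\cC_s)} \PShv(\cC_s)^{/\overline{\alpha}^\ast x}$, and the restriction functor from the fiber over $\id_t$ is induced by the pair $(\alpha^\ast, \overline{\alpha}^\ast)$, where $\overline{\alpha}^\ast: \PShv(\cC_t) \to \PShv(\cC_s)$ is the colimit-preserving prolongation of $j \circ \alpha^\ast: \cC_t \to \PShv(\cC_s)$. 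But this is precisely the functor treated in \cref{lem:PresheafFunctorialityCofinalConstant}, which gives that it is cofinal.

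Next, since $\cD$ is cc $\cT$-cocomplete, the base-change $\cD_{\underline{t}}$ strongly admits all $\cT^{/t}$-colimits indexed by cc $\cT^{/t}$-$\infty$-categories, and in particular by the $\cC^{/\underline{x}}$, so by \cref{thm-app:LKEexistence} the $\cT$-left Kan extension $F$ of $f$ along $j^{\fb}_\cT$ exists. To see $F$ lies in $\underline{\Fun}^{cc}_\cT$, I would identify $F_t$ by combining the pointwise formula $F(x) \simeq \underline{\colim}^{\cT^{/t}}_{\cC^{/\underline{x}}} F^x$ with the reduction of cc $\cT^{/t}$-colimits to ordinary colimits in the fiber over the terminal object $\id_t$ (the remark preceding \cref{def:cofinalconstant}). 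This yields
\[ F_t(x) \simeq \colim\left( \cC_t \times_{\PShv(\cC_t)} \PShv(\cC_t)^{/x} \to \cC_t \xto{f_t} \cD_t \right), \]
which is the standard pointwise formula exhibiting $F_t$ as the unique colimit-preserving extension of $f_t$ along $j: \cC_t \to \PShv(\cC_t)$. In particular each $F_t$ preserves all colimits, and by \cref{prop:ccEqualsConstant} we conclude $F$ strongly preserves cc $\cT$-colimits.

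For the equivalence, by \cref{thm-app:LKEexistence}(2) the restriction $(j^{\fb}_\cT)^\ast$ admits a left $\cT$-adjoint $(j^{\fb}_\cT)_!$ defined on all of $\underline{\Fun}_\cT(\cC, \cD)$ and factoring through $\underline{\Fun}^{cc}_\cT(\underline{\PShv}^{\fb}_\cT(\cC), \cD)$ by the previous paragraph. Since $j^{\fb}_\cT$ is fully faithful, the unit $\id \Rightarrow (j^{\fb}_\cT)^\ast (j^{\fb}_\cT)_!$ is an equivalence. For the counit, given any $G \in \underline{\Fun}^{cc}_\cT(\underline{\PShv}^{\fb}_\cT(\cC), \cD)$, \cref{prop:ccEqualsConstant} ensures that each $G_t: \PShv(\cC_t) \to \cD_t$ preserves all colimits, hence agrees with the ordinary left Kan extension of $(G \circ j^{\fb}_\cT)_t$ along the Yoneda embedding; by the identification of the previous paragraph this matches $F_t$ where $F = (j^{\fb}_\cT)_! (j^{\fb}_\cT)^\ast G$, yielding $G \simeq F$ as $\cT$-functors. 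The main obstacle is Paragraph 1: once the cofinal-constancy of $\cC^{/\underline{x}}$ is in hand, everything else follows from systematic reduction to ordinary-categorical statements about $\PShv(\cC_t)$.
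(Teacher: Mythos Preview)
Your proposal is correct and follows essentially the same approach as the paper: use \cref{lem:PresheafFunctorialityCofinalConstant} to show that each $\cC^{/\underline{x}}$ is cofinal-constant, deduce existence of the $\cT$-left Kan extension with $F_t \simeq j_! f_t$ via the pointwise formula, and conclude the adjunction equivalence from the fiberwise identity $j_! j^\ast G_t \simeq G_t$. One small imprecision: cofinal-constancy requires checking \emph{all} restriction maps in $(\cT^{/t})^{\op}$, not just those out of $\id_t$, but the argument you give applies verbatim to an arbitrary $\beta: s' \to s$ since the induced map on fibers is again of the form treated in \cref{lem:PresheafFunctorialityCofinalConstant}.
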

\begin{proof} For any $\varphi \in \underline{\PShv}_\cT^{\fb}(\cC)_t \simeq \PShv(\cC_t)$, note that by \cref{lem:PresheafFunctorialityCofinalConstant}
$$\cC \times_{\underline{\PShv}^{\fb}_\cT(\cC)} \underline{\PShv}^{\fb}_\cT(\cC)^{/\underline{\varphi}} = \cC \times_{\underline{\PShv}^{\fb}_\cT(\cC)} \Ar_\cT(\underline{\PShv}^{\fb}_\cT(\cC)) \times_{\underline{\PShv}^{\fb}_\cT(\cC)} \underline{\varphi} \to \underline{\varphi} \stackrel{\simeq}{\twoheadrightarrow} (\cT^{/t})^\op$$
is a cofinal-constant $\cT^{/t}$-$\infty$-category. Therefore, by the pointwise formula for $\cT$-left Kan extensions \cite[Thm.~10.3]{Exp2}, $F = (j_\cT^{\fb})_! f$ exists and is computed by $F_t \simeq j_! f_t$, so $F_t$ preserves all colimits. Furthermore, given a $\cT$-functor $G: \underline{\PShv}_\cT^{\fb}(\cC) \to \cD$ such that $G_t$ preserves colimits for all $t \in \cT$, since $j_! j^{\ast} G_t \xrightarrow{\simeq} G_t$ it follows that $(j^{\fb}_\cT)_! (j^{\fb}_\cT)^{\ast} G \xrightarrow{\simeq} G$ from the pointwise formula. By the same logic as \cite[Cor.~10.7]{Exp2}, we thus obtain a $\cT$-adjunction
\[ \adjunct{(j_\cT^{\fb})_!}{\underline{\Fun}_\cT(\cC,\cD)}{\underline{\Fun}_\cT(\underline{\PShv}_\cT^{\fb}(\cC),\cD)}{(j_\cT^{\fb})^\ast} \]
in which $(j_\cT^{\fb})_!$ is $\cT$-fully faithful with essential image $\underline{\Fun}^{cc}_\cT(\underline{\PShv}_\cT^{\fb}(\cC),\cD)$.
\end{proof}

\begin{variant} \label{var:FiberwisePresheaves} Let $\cC$ be a small $\cT$-$\infty$-category. For a collection $\CMcal{K}$ of small simplicial sets, let $\underline{\PShv}^{\fbK}_\cT(\cC) \subset \underline{\PShv}^{\fb}_\cT(\cC)$ be the full subcategory whose fiber over each $t \in \cT$ is given by $\PShv^{\cK}(\cC_t) \subset \PShv(\cC_t)$ \cite[Prop.~5.3.6.2]{HTT}, and let $j^{\fbK}_\cT$ denote the factorization of the $\cT$-Yoneda embedding through $\underline{\PShv}^{\fbK}_\cT(\cC)$. Then by the universal property of $\PShv^{\cK}(-)$, $\underline{\PShv}^{\fbK}_\cT(\cC)$ is a sub-cocartesian fibration of $\underline{\PShv}^{\fb}_\cT(\cC)$ and hence a $\cT$-$\infty$-category. Note that the proof of \cite[Prop.~5.3.6.2]{HTT} shows that for a $\cK$-cocomplete $\infty$-category $\cD$, the equivalence $\Fun^{\cK}(\PShv^{\cK}(\cC_t),\cD) \xrightarrow{\simeq} \Fun(\cC_t,\cD)$ implemented by restriction has inverse given by left Kan extension. Thus, by the same proof as in \cref{prop:FiberwisePresheavesUniversalProperty}, we see that if $\cD$ is a $\cT$-$\infty$-category that is $\cK$-cc $\cT$-cocomplete, we have an $\cT$-adjunction
\[ \adjunct{(j^{\fbK}_\cT)_!}{\underline{\Fun}_\cT(\cC,\cD)}{\underline{\Fun}_\cT(\underline{\PShv}^{\fbK}_\cT(\cC),\cD)}{(j^{\fbK}_\cT)^{\ast}}\]
in which $(j^{\fbK}_\cT)_!$ is $\cT$-fully faithful with essential image $\underline{\Fun}^{\operatorname{\CMcal{K}-cc}}_\cT(\underline{\PShv}^{\fbK}_\cT(\cC),\cD)$.
\end{variant}

\begin{dfn} \label{dfn:Ind} If $\CMcal{K}$ is the collection of sifted resp. $\kappa$-filtered simplicial sets, we will write $\underline{\PShv}^{\Sigma}_\cT(\cC)$ and $j^{\Sigma}_\cT$ resp. $\underline{\Ind}^{\kappa}_\cT(\cC)$ and $j^{\kappa}_\cT$ for $\underline{\PShv}^{\fbK}_\cT(\cC)$ and $j^{\fbK}_\cT$. If $\kappa = \omega$, we will also write $\underline{\Ind}_\cT(\cC)$.
\end{dfn}

For the following lemma, note that if $\cT$ is orbital and $\cE$ is an $\infty$-category that admits finite products, then $\underline{\cE}_{T}$ admits finite $\cT$-products in view of \cite[Prop.~5.6]{Exp2} and the pointwise formula for right Kan extension.

\begin{lemma} \label{lem:ParamProductsAndCategoryOfElements} Suppose that $\cT$ is an orbital $\infty$-category. Let $\cC$ be a $\cT$-$\infty$-category that admits finite $\cT$-products and let $\cE$ be an $\infty$-category that admits finite products. Then under the equivalence $$(-)^{\dagger}: \Fun_\cT(\cC, \underline{\cE}_\cT) \xrightarrow{\simeq} \Fun(\cC,\cE)$$ of \cite[Prop.~3.10]{Exp2}, a $\cT$-functor $F: \cC \to \underline{\cE}_\cT$ preserves finite $\cT$-products if and only if $F^{\dagger}: \cC \to \cE$ sends cartesian edges to equivalences and $F^{\dagger}|_{\cC_t}$ preserves finite products for all $t \in \cT$. Moreover, if $\cT$ admits a final object $\ast$, then $(-)^{\dagger}$ restricts to an equivalence $\Fun^{\times}_\cT(\cC, \underline{\cE}_\cT) \xrightarrow{\simeq} \Fun^{\times}(\cC_{\ast},\cE)$.
\end{lemma}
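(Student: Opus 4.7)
The plan is to use the decomposition of finite $\cT$-products given by \cref{prop:FiberwiseAndCoproductsGiveAll}, dualized to limits via \cite[Cor.~5.25]{Exp2} and combined with \cref{rem:OrbitalAdditionToFiberwiseAndCoproductsGiveAll}, to split the hypothesis that $\cC$ admits finite $\cT$-products into (i) fibers admitting finite products with restrictions preserving them, and (ii) for each $\alpha: s \to t$ in $\cT$, the restriction $\alpha^{\ast}$ admits a right adjoint $\alpha_{\ast}$ satisfying Beck--Chevalley. Condition (ii) in particular promotes $\cC \to \cT^{\op}$ to a $\cT$-cartesian fibration in addition to being cocartesian, with cartesian pushforward given by $\alpha_{\ast}$; the same holds for $\underline{\cE}_{\cT} \to \cT^{\op}$ via the pointwise formula for right Kan extension cited just before the lemma (whose fibers are $\Fun((\cT^{/t})^{\op}, \cE)$). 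This gives meaning to the ``cartesian edges'' appearing in the statement. Correspondingly, a $\cT$-functor $F$ preserves finite $\cT$-products iff (i$'$) each $F_t$ preserves fiberwise finite products and (ii$'$) the Beck--Chevalley mate $F_t \alpha_{\ast} \to \alpha_{\ast} F_s$ is an equivalence, i.e., $F$ preserves cartesian edges.

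I would first match (i$'$) with ``$F^{\dagger}|_{\cC_t}$ preserves finite products''. Applying \cite[Prop.~3.10]{Exp2} over the slice $\cT^{/t}$ identifies the base-changed $\cT^{/t}$-functor $F_{\underline{t}}$ with the restriction of $F^{\dagger}$, and unwinding $(-)^{\dagger}$ on fibers gives $F_t(c)(\id_t) \simeq F^{\dagger}(c)$ for $c \in \cC_t$. Since finite products in $(\underline{\cE}_{\cT})_t \simeq \Fun((\cT^{/t})^{\op}, \cE)$ are computed pointwise and $\id_t$ is the terminal object of $(\cT^{/t})^{\op}$, preservation of finite products in the fiber by $F_t$ is detected by the evaluation at $\id_t$, i.e., by $F^{\dagger}|_{\cC_t}$.

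Next I would match (ii$'$) with ``$F^{\dagger}$ sends cartesian edges to equivalences''. For a cartesian edge $e: \alpha_{\ast} c' \to c'$ in $\cC$ over $\alpha: s \to t$ with $c' \in \cC_s$, the pointwise formula for right Kan extension along $\cT^{/s} \to \cT^{/t}$ gives $(\alpha_{\ast} F(c'))(\id_t) \simeq \lim_{\phi \in \cT^{/s}} F(c')(\phi) \simeq F(c')(\id_s) \simeq F^{\dagger}(c')$, using that $\id_s$ is the terminal object of $\cT^{/s}$. Tracing $e$ through $(-)^{\dagger}$, the map $F^{\dagger}(e): F^{\dagger}(\alpha_{\ast} c') \to F^{\dagger}(c')$ is identified with the Beck--Chevalley comparison $F(\alpha_{\ast} c')(\id_t) \to (\alpha_{\ast} F(c'))(\id_t)$. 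Since the Beck--Chevalley condition for $\alpha$ evaluated at a general point $\phi: s' \to t$ of $(\cT^{/t})^{\op}$ reduces, by pullback (using orbitality of $\cT$), to the $\id$-point condition for the base-changed map $\alpha': s \times_t s' \to s'$, inverting all cartesian edges in $\cC$ is equivalent to the full Beck--Chevalley condition.

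Finally, for the ``moreover'' statement, when $\cT$ has a terminal object $\ast$, every $c \in \cC_t$ is the target of a cartesian edge from some $\alpha_{\ast} c \in \cC_{\ast}$ (where $\alpha: t \to \ast$), so $F^{\dagger}$ inverting cartesian edges forces it to be determined up to equivalence by $F^{\dagger}|_{\cC_{\ast}}$. Conversely, any product-preserving $G: \cC_{\ast} \to \cE$ extends essentially uniquely along such cartesian lifts to an $F^{\dagger}: \cC \to \cE$ satisfying both conditions, yielding the equivalence $\Fun^{\times}_{\cT}(\cC, \underline{\cE}_{\cT}) \xrightarrow{\simeq} \Fun^{\times}(\cC_{\ast}, \cE)$. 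The main obstacle I anticipate is the bookkeeping in paragraph 3: rigorously identifying $F^{\dagger}(e)$ for a cartesian edge $e$ with the Beck--Chevalley comparison requires careful unwinding of the equivalence $(-)^{\dagger}$ and of the pointwise Kan extension formula in $\underline{\cE}_{\cT}$ at the level of marked simplicial sets.
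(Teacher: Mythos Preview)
Your overall strategy matches the paper's, but step (i$'$) contains a genuine error. You assert that ``$\id_t$ is the terminal object of $(\cT^{/t})^{\op}$'' and conclude that evaluation at $\id_t$ alone detects whether $F_t$ preserves finite products. This is false: $\id_t$ is terminal in $\cT^{/t}$, hence \emph{initial} in $(\cT^{/t})^{\op}$, and in any case products in $\Fun((\cT^{/t})^{\op},\cE)$ are computed pointwise, so preservation must be checked at \emph{every} $\alpha\in(\cT^{/t})^{\op}$, not just one. The paper's argument instead uses the $\cT$-functoriality of $F$ to write $\ev_{\alpha} F_t \simeq \ev_{\id_s} F_s \circ \alpha^{\ast}$ for each $\alpha: s\to t$, and then observes that since $\alpha^{\ast}:\cC_t\to\cC_s$ preserves finite products (this is part of the hypothesis that $\cC$ admits finite $\cT$-products), the collection of conditions ``$F_t$ preserves finite products for all $t$'' is equivalent to ``$F^{\dagger}|_{\cC_s}$ preserves finite products for all $s$''. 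This is a reduction that works only when all fibers are considered simultaneously; it does not reduce a single $F_t$ to a single $F^{\dagger}|_{\cC_t}$.

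Your argument for (ii$'$) is essentially the paper's and is correct (note you correctly use that $\id_s$ is terminal in $\cT^{/s}$ there, in contrast to the slip above). For the ``moreover'' clause, your sketch is in the right direction but incomplete: you need a precise reason that a functor $\cC\to\cE$ inverting the cartesian edges $W$ is the same datum as a functor $\cC_{\ast}\to\cE$ (the paper invokes \cite[Cor.~3.3.4.3]{HTT} to identify $\cC_{\ast}\simeq\cC[W^{-1}]$), and you must still verify that if $G|_{\cC_{\ast}}$ preserves finite products then $G|_{\cC_t}$ does for every $t$, which the paper does by pushing a fiberwise product in $\cC_t$ along the cartesian edge to $\cC_{\ast}$ via $(\alpha_t)_{\ast}$.
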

\begin{proof} For the first statement, first note that $F: \cC \to \underline{\cE}_\cT$ preserves finite products fiberwise if and only if for all $\alpha: s \to t$ in $\cT$, $\ev_{\alpha} F_t: \cC_t \to \cE_{\cT^{/t}} \to \cE$ preserves finite products. But since $\ev_{\alpha} F_t \simeq \ev_{\id_s} F_s \alpha^{\ast}$, this occurs if and only if $F^{\dagger}|_{\cC_t}$ preserves finite products for all $t \in \cT$. Furthermore, by definition $F^{\dagger}$ inverts cartesian edges if and only if for all $\alpha: s \to t$ in $\cT$ and $x \in \cC_s$, the natural map $(F_t \alpha_{\ast} x)(\id_t) \xrightarrow{\simeq} (\alpha_{\ast} F_s x)(\id_t)$ is an equivalence. This shows the `only if' implication. Now let $\beta: u \to t$ be any morphism and write $s \times_t u \simeq \bigsqcup_{i \in I} o_i$ for $o_i \in \cT$ and a finite set $I$. For each $o_i$, let $\alpha_i: o_i \to u$ and $\beta_i: o_i \to s$ denote the implicit maps, so that $\beta^\ast \alpha_\ast x \simeq \prod_{i \in I} {\alpha_i}_{\ast} {\beta_i}^{\ast} x$ by our assumption that $\cC$ admits finite $\cT$-products. If we suppose that $F$ preserves finite products fiberwise and $F^\dagger$ inverts cartesian edges, we then have
\begin{align*}
(F_t \alpha_\ast x)(\beta) & \simeq (F_u \beta^{\ast} \alpha_\ast x)(\id_u) \simeq \prod_{i \in I} (F_u {\alpha_i}_\ast \beta_i^\ast x)(\id_u) \simeq \prod_{i \in I} ({\alpha_i}_\ast \beta_i^\ast F_s)(\id_u) \\
& \simeq (\beta^{\ast} \alpha_{\ast} F_s x)(\id_u) \simeq (\alpha_\ast F_s x)(\beta),
\end{align*}
which shows the `if' implication.

To prove the second statement, suppose now that $\cT$ has a final object $\ast$. First note that if we let $W$ denote the set of cartesian edges in $\cC$, then the composite $\cC_\ast \to \cC \to \cC[W^{-1}]$ is an equivalence of $\infty$-categories in view of \cite[Cor.~3.3.4.3]{HTT}. Now let $G: \cC \to \cE$ be a functor that inverts $W$ and suppose $G|_{\cC_{\ast}}$ preserves finite products. For any $t \in \cT$, let $\alpha_t: t \to \ast$ denote the unique morphism. If $\prod_{i \in I} x_i$ is a finite product in $\cC_t$, then we have a cartesian edge $\prod_{i \in I} {\alpha_t}_\ast x_i \to \prod_{i \in I} x_i$ in $\cC$ lifting $\alpha_t$ since ${\alpha_t}_\ast$ is a right adjoint, hence $G(\prod_{i \in I} x_i) \simeq \prod_{i \in I} G(x_i)$ and the claim is proven.
\end{proof}

\begin{theorem} \label{thm:SiftedCocompletionAsParamProductPreservingPresheaves} Suppose that $\cT$ is an orbital $\infty$-category and let $\cC$ be a $\cT$-$\infty$-category. Suppose that $\cC$ admits finite $\cT$-coproducts. Then the following statements obtain:
\begin{enumerate}
    \item We have an equality $$\underline{\PShv}^{\Sigma}_\cT(\cC) = \underline{\Fun}^{\times}_\cT(\cC^{\vop}, \underline{\Spc}_\cT)$$ as full $\cT$-subcategories of $\underline{\PShv}_\cT(\cC)$.
    \item The inclusion $\underline{\PShv}^{\Sigma}_\cT(\cC) \subset \underline{\PShv}_\cT(\cC)$ strongly preserves $\cT$-sifted $\cT$-colimits and admits a $\cT$-left adjoint $L$ such that $j^{\Sigma}_\cT \simeq L \circ j_\cT$.
    \item $\underline{\PShv}^{\Sigma}_\cT(\cC)$ is $\cT$-cocomplete, $j_\cT^{\Sigma}$ preserves finite $\cT$-coproducts, and if $\cD$ is any $\cT$-cocomplete $\cT$-$\infty$-category, then restriction along $j_\cT^{\Sigma}$ implements an equivalence
    \[ \underline{\Fun}_\cT^L(\underline{\PShv}^{\Sigma}_\cT(\cC),\cD) \xrightarrow{\simeq} \underline{\Fun}_\cT^{\sqcup}(\cC,\cD) \]
    with inverse given by $\cT$-left Kan extension.
\end{enumerate}

Similarly, if $\cC$ strongly admits $\cT$-$\kappa$-small $\cT$-colimits, then:
\begin{enumerate}
    \item $\underline{\Ind}_\cT^\kappa(\cC)$ equals the full $\cT$-subcategory $\underline{\Fun}^{\operatorname{\kappa-lex}}_\cT(\cC^{\vop}, \underline{\Spc}_\cT)$ of $\underline{\PShv}_\cT(\cC)$ whose fiber over $t \in \cT$ is spanned by those $\cT^{/t}$-presheaves that strongly preserve $\cT^{/t}$-$\kappa$-small $\cT^{/t}$-limits.
    \item The inclusion $\underline{\Ind}_\cT^\kappa(\cC) \subset \underline{\PShv}_\cT(\cC)$ strongly preserves $\cT$-$\kappa$-filtered $\cT$-colimits and admits a $\cT$-left adjoint $L$ such that $j^{\kappa}_\cT \simeq L \circ j_\cT$.
    \item $\underline{\Ind}_\cT^\kappa(\cC)$ is $\cT$-cocomplete, $j_\cT^{\kappa}$ strongly preserves $\cT$-$\kappa$-small $\cT$-colimits, and if $\cD$ is any $\cT$-cocomplete $\cT$-$\infty$-category, then restriction along $j_\cT^{\kappa}$ implements an equivalence
    \[ \underline{\Fun}_\cT^L(\underline{\Ind}_\cT^\kappa(\cC),\cD) \xrightarrow{\simeq} \underline{\Fun}_\cT^{\operatorname{\kappa-rex}}(\cC,\cD) \]
    with inverse given by $\cT$-left Kan extension.
\end{enumerate}
\end{theorem}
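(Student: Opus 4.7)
The plan is to prove each assertion by reducing to the non-parametrized theory of sifted cocompletion (Lurie, HTT~\S 5.5.8) and Ind-completion (HTT~\S 5.3.5--5.3.6), then upgrading to parametrized statements via \cref{thmx:sifted} (resp.\ \cref{thmx:filtered}). I describe the sifted case; the $\Ind^{\kappa}$ case proceeds by identical argument after replacing sifted with $\kappa$-filtered and finite $\cT$-products with $\cT$-$\kappa$-small $\cT$-limits throughout. For part~(1), both $\underline{\PShv}^{\Sigma}_\cT(\cC)$ and $\underline{\Fun}^{\times}_\cT(\cC^{\vop}, \underline{\Spc}_\cT)$ are full $\cT$-subcategories of $\underline{\PShv}_\cT(\cC)$, so I check fiberwise agreement as subcategories of $\PShv(\cC_{\underline{t}}^{\mathrm{v}})$. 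By \cref{var:FiberwisePresheaves}, the fiber of the LHS is $\PShv^{\Sigma}(\cC_t)$, embedded via left Kan extension along $\cC_t^{\op} \hookrightarrow \cC_{\underline{t}}^{\vop}$. Since $\cC$ admits finite $\cT$-coproducts, $\cC_{\underline{t}}^{\vop}$ admits finite $\cT^{/t}$-products, so \cref{lem:ParamProductsAndCategoryOfElements} identifies the fiber of the RHS with $\Fun^{\times}(\cC_t^{\op}, \Spc)$ via restriction, which equals $\PShv^{\Sigma}(\cC_t)$ by the non-parametrized identification. The two embeddings coincide because the pointwise formula gives $(j_! G)(y) = G(\alpha_! y)$ for $y \in \cC_s$ and $\alpha: s \to t$, from which fiberwise product preservation (using $\alpha_!$ preserves coproducts and $G$ preserves products) and inversion of cartesian edges both follow.

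For part~(2), \cref{thmx:sifted}(3) combined with the pointwise computation of $\cT$-colimits in $\underline{\PShv}_\cT(\cC)$ shows that $\cT$-sifted $\cT$-colimits preserve the property of being finite $\cT$-product-preserving; the $\cT$-left adjoint $L$ exists fiberwise as a Bousfield localization, and the Beck-Chevalley compatibility $\alpha^\ast L_t \simeq L_s \alpha^\ast$ holds since restriction between fibers preserves the class of $L_t$-local equivalences. The identification $j^{\Sigma}_\cT \simeq L \circ j_\cT$ is manifest fiberwise.

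For part~(3), $\cT$-cocompleteness of $\underline{\PShv}^{\Sigma}_\cT(\cC)$ follows from \cref{prop:FiberwiseAndCoproductsGiveAll}: fiberwise cocompleteness is classical, finite $\cT$-coproducts are obtained by $L$-localizing those of $\underline{\PShv}_\cT(\cC)$, and Beck-Chevalley transfers through $L$. To obtain the universal property, I apply \cref{thmx:Kan} to the fully faithful $\cT$-functor $j^{\Sigma}_\cT: \cC \hookrightarrow \underline{\PShv}^{\Sigma}_\cT(\cC)$ with target $\cD$ $\cT$-cocomplete, producing a fully faithful $\cT$-left adjoint $(j^{\Sigma}_\cT)_!$ to restriction. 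It remains to verify: (a)~$j^{\Sigma}_\cT$ strongly preserves finite $\cT$-coproducts (fiberwise by non-parametrized theory, with Beck-Chevalley $\alpha_! j^{\Sigma}_s \simeq j^{\Sigma}_t \alpha_!$ holding because $\alpha_!$ on $\underline{\PShv}^{\Sigma}_\cT(\cC)$ is determined on representables by $\alpha_!$ on $\cC$ via the adjunction with $\alpha^\ast$); (b)~for $F \in \underline{\Fun}^{\sqcup}_\cT(\cC, \cD)$, the $\cT$-left Kan extension $G = (j^{\Sigma}_\cT)_! F$ strongly preserves all $\cT$-colimits, which by \cref{prop:FiberwiseAndCoproductsGiveAll} reduces to fiberwise colimit preservation (from the non-parametrized sifted cocompletion universal property) together with a Beck-Chevalley mate for $G$; and (c)~every $\cT$-colimit preserving $G$ is automatically the $\cT$-left Kan extension of its restriction, which follows from the density of representables in $\PShv^{\Sigma}(\cC_t)$ under sifted colimits, upgraded via \cref{thmx:sifted}.

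The main obstacle is the Beck-Chevalley check in~(3)(b): that the mate $\alpha_! G_s \Rightarrow G_t \alpha_!$ is an equivalence for $G = (j^{\Sigma}_\cT)_! F$. Since both sides preserve $\cT$-sifted $\cT$-colimits, the mate reduces to its value on representables, where it identifies with the analogous mate for $F$, an equivalence by hypothesis. The supporting parametrized density statement — that every object of $\underline{\PShv}^{\Sigma}_\cT(\cC)_t$ is canonically a $\cT^{/t}$-sifted $\cT^{/t}$-colimit of representables — requires combining the parametrized Yoneda lemma (\cref{prop:ParamYonedaPreservesSlices}) with the cofinal-constant criterion of \cref{thmx:sifted} applied to an appropriate parametrized category-of-elements construction.
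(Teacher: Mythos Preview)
Your proposal is correct and uses the same essential inputs as the paper: \cref{lem:ParamProductsAndCategoryOfElements} for part~(1), and the fiberwise characterizations \cref{thm:EquivalentConditionsSifted}/\cref{thm:EquivalentConditionsFiltered} together with the non-parametrized theory from HTT for the remainder. Your treatment of~(1) in fact spells out the compatibility of the two embeddings (via $(j_! G)(y)\simeq G(\alpha_! y)$) that the paper leaves implicit in its one-line appeal to the lemma.

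Where you diverge is in the organization of~(2) and~(3). The paper's proof is simply: ``Given \cref{thm:EquivalentConditionsSifted}, \cref{thm:EquivalentConditionsFiltered}, and \cref{var:FiberwisePresheaves}, the rest follows formally as in the proof of \cite[Prop.~5.5.8.10]{HTT} and \cite[Prop.~5.5.8.15]{HTT}.'' The point is that \cref{var:FiberwisePresheaves} has already established the $\cT$-adjunction $(j^{\fbK}_\cT)_! \dashv (j^{\fbK}_\cT)^\ast$ with fully faithful left adjoint and essential image $\underline{\Fun}^{\operatorname{\CMcal{K}\text{-}cc}}_\cT$, so all the $\cT$-left-Kan-extension and cofinal-constant work you redo by hand is already packaged there (via \cref{lem:PresheafFunctorialityCofinalConstant} and \cref{prop:FiberwisePresheavesUniversalProperty}). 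What remains is only the upgrade from ``strongly preserves $\CMcal{K}$-cc $\cT$-colimits'' to ``strongly preserves all $\cT$-colimits'' on the domain side, matched with ``preserves finite $\cT$-coproducts'' on the codomain side, and this is exactly the content of the classical HTT argument run fiberwise. Your route---invoking \cref{thmx:Kan} directly and then verifying Beck--Chevalley for $(j^\Sigma_\cT)_! F$ by reducing to representables via a parametrized density statement---is a legitimate alternative, but it reconstructs machinery the paper has already isolated in \cref{var:FiberwisePresheaves}. The paper's packaging buys concision; yours makes the Beck--Chevalley logic more visible.
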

\begin{proof} In both cases, (1) is an immediate consequence of \cref{lem:ParamProductsAndCategoryOfElements} (together with the dual of \cref{prop:FiberwiseAndCoproductsGiveAll} in the second instance). Given \cref{thm:EquivalentConditionsSifted}, \cref{thm:EquivalentConditionsFiltered}, and \cref{var:FiberwisePresheaves}, the rest of the statements then follow formally as in the proof of \cite[Prop.~5.5.8.10]{HTT} and \cite[Prop.~5.5.8.15]{HTT}.
\end{proof}

\section{Appendix: Promonoidal Day convolution}

We record the following important technical lemma on flat fibrations and apply it to construct \emph{$\cO$-promonoidal Day convolution} with respect to a base $\infty$-operad $\cO$.

\begin{lemma} \label{lem:FlatCategoricalFibrationArrowLemma}
Let $\cB$ be an $\infty$-category with a factorization system $(\sL,\sR)$ and let $p: \cX \to \cB$ be a categorical fibration. Let $\Ar^L(\cB)$ denote the full subcategory of $\Ar(\cB)$ on those arrows in $\sL$ and consider the functor
\[ \pi = \ev_0 \circ \pr_1: \Ar^L(\cB) \times_{\cB} \cX \to \cB. \]
Suppose that
\begin{enumerate}
\item For every edge $e: a \to b$ in $\sL$ and $x \in \cX$ such that $p(x) = a$, there exists a $p$-cocartesian edge $x \to y$ covering $e$.
\item The pullback $\cX_{R} = \cX \times_{\cB} \cB_{R} \to \cB_{R}$ is a flat categorical fibration, where $\cB_{R} \subset \cB$ denotes the wide subcategory on those morphisms in $\sR$.
\end{enumerate}
Then $\pi$ is a flat categorical fibration.
\end{lemma}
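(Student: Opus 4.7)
First, I will check that $\pi$ is a categorical fibration: this is immediate from the factorization $\pi = \ev_0 \circ \pr_1$, where $\ev_0 : \Ar^L(\cB) \to \cB$ is a cartesian fibration (hence a categorical fibration) by \cref{prop:InertCartesianFibration}(1), and $\pr_1 : \Ar^L(\cB) \times_{\ev_1, \cB, p} \cX \to \Ar^L(\cB)$ is a pullback of the categorical fibration $p$.

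For flatness, I will use the standard 2-simplex criterion: it suffices to show that for every $\sigma : \Delta^2 \to \cB$, the inclusion
\[ (\Ar^L(\cB) \times_\cB \cX) \times_\cB \Lambda^2_1 \hookrightarrow (\Ar^L(\cB) \times_\cB \cX) \times_\cB \Delta^2 \]
is a categorical equivalence. Note that trying to reduce this to showing $\pr_1$ itself flat is doomed: every 2-simplex of $\cB$ lifts under $\ev_1$ to $\Ar^L(\cB)$ via the tautological identity section, so flatness of $\pr_1$ would already require flatness of $p$. One must instead use the cartesian structure of $\ev_0$ in an essential way. Setting $\cA_\sigma \coloneq \Ar^L(\cB) \times_{\ev_0, \cB, \sigma} \Delta^2$, a cartesian fibration over $\Delta^2$ by \cref{prop:InertCartesianFibration}(1), I can rewrite the two sides as pullbacks
\[ \cA_\sigma|_{\Lambda^2_1} \times_{\ev_1, \cB, p} \cX \hookrightarrow \cA_\sigma \times_{\ev_1, \cB, p} \cX. \]

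My strategy is to filter this inclusion using the $\sL$--$\sR$ factorization system on $\cB$. Let $\cA_\sigma^{\mathrm{cart}} \subset \cA_\sigma$ denote the wide subcategory on the $\ev_0$-cartesian morphisms; by \cref{prop:InertCartesianFibration}(1), the restriction of $\ev_1$ to $\cA_\sigma^{\mathrm{cart}}$ factors through $\cB_R$, so hypothesis (2) controls the horn inclusion at the level of $\cA_\sigma^{\mathrm{cart}}$ after a fiberwise analysis (the fibers of $\ev_0$ being the slices of $\sL$-arrows out of a fixed point). To absorb the discrepancy between $\cA_\sigma^{\mathrm{cart}}$ and the full $\cA_\sigma$, caused by the ``vertical'' morphisms in $\Ar^L(\cB)$ whose $\ev_1$-images can be arbitrary, I will invoke hypothesis (1): the $p$-cocartesian lifts over $\sL$-arrows push forward $\cX$-data along the $\sL$-arrows, producing a deformation that compares the inclusion $\cA_\sigma^{\mathrm{cart}} \times_\cB \cX \hookrightarrow \cA_\sigma \times_\cB \cX$ as a categorical equivalence relative to $\Delta^2$.

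The main technical obstacle is gluing these two ingredients coherently. Since $p$-cocartesian lifts exist only as a property and do not assemble into a strict functor, I expect to have to work at the level of marked simplicial sets, marking the $p$-cocartesian edges over $\sL$ in $\cX$ and the corresponding $\ev_0$-cartesian edges in $\Ar^L(\cB)$, and then verify that the relevant inclusion is a cocartesian equivalence in $\sSet^+_{/\cB^{\sharp}}$ via a Joyal--Tierney pushout-product argument, with hypothesis (2) handling the resulting base case on 2-simplices all of whose edges lie in $\sR$.
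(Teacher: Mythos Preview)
Your proposal assembles the right conceptual ingredients --- the $\ev_0$-cartesian structure on $\Ar^L(\cB)$, $p$-cocartesian pushforward along $\sL$-edges, and reduction to hypothesis (2) --- but the central step is misconceived. The inclusion $\cA_\sigma^{\mathrm{cart}} \times_\cB \cX \hookrightarrow \cA_\sigma \times_\cB \cX$ that you claim becomes a categorical equivalence cannot be one: it is the inclusion of a \emph{wide} subcategory (same objects, strictly fewer morphisms whenever $\cA_\sigma$ has a non-cartesian arrow), and such an inclusion is a categorical equivalence only when it is the identity. The ``deformation'' you describe via $p$-cocartesian lifts is the identity on objects, so it cannot repair this. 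Consequently your two-step filtration does not compose to the desired horn-inclusion equivalence, and the marked-simplicial-set endgame is left entirely at the level of hope.

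The paper sidesteps this by using the pointwise criterion of \cite[Prop.~B.3.2]{HA}: for each $2$-simplex $\sigma_0 = [a_0 \to b_0 \to c_0]$ in $\cB$ and each edge $(\alpha,x) \to (\gamma,z)$ over $a_0 \to c_0$, one shows the doubly-pointed fiber
\[
(\Ar^L(\cB) \times_\cB \cX)^{(\alpha,x)//(\gamma,z)}_{b_0} \;\simeq\; \Ar^L(\cB)^{\alpha//\gamma}_{b_0} \times_{\cB^{a_1//c_1}} \cX^{x//z}
\]
is weakly contractible. Here the $\ev_0$-cartesian structure produces a \emph{final object} in the first factor (rather than a wide subcategory), and the $p$-cocartesian pushforward along $\sL$ is used to build an explicit \emph{localization} of the whole product onto the single fiber $\cX^{x//z}_{b_1}$. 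A further $(\sL,\sR)$-factorization of $a_1 \to b_1$ and one more cocartesian push identifies this with $(\cX_R)^{x'//z}_{b'_1}$, weakly contractible by hypothesis (2). The key move you are missing is to pass to the over-under category first: there the deformation you want becomes a genuine localization onto a contractible piece, rather than an impossible equivalence of wide subcategories.
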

\begin{proof}
We apply the criterion of \cite[Prop.~B.3.2]{HA} to show flatness. In other words, if we let $\sigma_0 = [a_0 \ra b_0 \ra c_0]$ be a $2$-simplex in $\cB$ and let
\[ \left( \begin{tikzcd}
a_0 \ar{r} \ar{d}{\alpha} & c_0 \ar{d}{\gamma} \\
a_1 \ar{r} & c_1
\end{tikzcd}, \: x \to z \right) \]
be an edge in $\Ar^L(\cB) \times_{\cB} \cX$ covering $a_0 \to c_0$ via $\pi$, then we need to show that
\begin{align*}
(\Ar^L(\cB) \times_{\cB} \cX)^{(\alpha,x)/ /(\gamma,z)}_{b_0} & \coloneq \{\sigma_0\} \times_{\cB^{a_0 / /c_0}} (\Ar^L(\cB) \times_{\cB} \cX)^{(\alpha,x)/ /(\gamma,z)} \\
& \simeq \Ar^L(\cB)^{\alpha/ /\gamma}_{b_0} \times_{\cB^{a_1//c_1}} \cX^{x//z}
\end{align*}
is weakly contractible.

As we noted in \cref{prop:InertCartesianFibration}(1), the functor $\ev_0: \Ar^L(\cB) \to \cB$ is a  cartesian fibration, with $\ev_0$-cartesian edges given by morphisms $f \to g$ such that the edge $f(1) \to g(1)$ is in $\sR$. Therefore, we may identify the full subcategory of $\Ar^L(\cB)^{\alpha/ /\gamma}_{b_0}$ spanned by the final objects with that spanned by objects of the form
\[ \begin{tikzcd}
a_0 \ar{r} \ar{d}{\alpha} & b_0 \ar{d}{\beta} \ar{r} & c_0 \ar{d}{\gamma} \\
a_1 \ar{r} & b_1 \ar{r} & c_1
\end{tikzcd} \]
in which $b_1 \to c_1$ is in $\sR$. Fix such a choice of final object $\sigma_{\bullet}$, and let
$$\theta: \Delta^1 \times \Ar^L(\cB)^{\alpha/ /\gamma}_{b_0} \to \Ar^L(\cB)^{\alpha/ /\gamma}_{b_0}$$
be the natural transformation recording the essentially unique homotopy of the identity functor to the constant functor at $\sigma_{\bullet}$ (i.e., the unit transformation of the associated localization functor). Also let
$$ \theta':  \Ar^L(\cB)^{\alpha/ /\gamma}_{b_0} \to \Fun(\Delta^1, \Ar^L(\cB)^{\alpha/ /\gamma}_{b_0}) \to \Fun'(\Delta^1, \cB^{a_1//c_1})$$
be the composite of the adjoint to $\theta$ and evaluation at the target. Here, $\Fun'$ denotes the full subcategory on objects $\tau = [a_1 \ra b_1' \ra b_1 \ra c_1]$ with $d_1 \tau = \sigma_1$ and such that $b'_1 \to b_1$ is in $\sL$; in other words, $d_0 \tau$ is the essentially unique factorization of $b'_1 \to c_1$ furnished by $(\sL,\sR)$. We then define a natural transformation
$$ \eta: \Delta^1 \times \Ar^L(\cB)^{\alpha/ /\gamma}_{b_0} \times_{\cB^{a_1//c_1}} \cX^{x//z} \to \Ar^L(\cB)^{\alpha/ /\gamma}_{b_0} \times_{\cB^{a_1//c_1}} \cX^{x//z}$$
as $\theta$ on the first factor and the adjoint to
$$\Ar^L(\cB)^{\alpha/ /\gamma}_{b_0} \times_{\cB^{a_1//c_1}} \cX^{x//z} \xto{(\theta',\id)} \Fun'(\Delta^1, \cB^{a_1//c_1}) \times_{\ev_0,\cB^{a_1//c_1}} \cX^{x//z} \xto{P} \Fun(\Delta^1, \cX^{x//z})$$
on the second factor, where $P$ is the cocartesian pushforward functor that on objects is given by
\[ \goesto{([a_1 \ra b'_1 \xrightarrow{e} b_1 \ra c_1], \: x \ra y \ra z)}{[x \ra e_! y \ra z]} \]
and rigorously defined as in \cite[Lem.~2.23]{Exp2}.

Let $L = \eta_1$ and observe that the essential image of $L$ is $\cX^{x// z}_{b_1} \coloneq \{\sigma_1\} \times_{\cB^{a_1//c_1}} \cX^{x//z}$. It is then straightforward to show that $\eta$ satisfies condition (3) of \cite[Prop~5.2.7.4]{HTT} so that $L$ is a localization functor. In particular, it suffices to show that $\cX^{x// z}_{b_1}$ is weakly contractible. Moreover, after choosing a $(\sL,\sR)$-factorization $[a_1 \xrightarrow{e} a_1' \ra b_1]$, we have a $p$-cocartesian lift $\overline{e}: x \to x'$ of $e$ by assumption (1), and by the universal property of $\overline{e}$ we have an equivalence
$$\cX^{x// z}_{b_1} \simeq (\cX_R)^{x'//z}_{a_1'}.$$
But hypothesis (2) then ensures that $(\cX_R)^{x'//z}_{a_1'}$ is weakly contractible, which completes the proof.
\end{proof}

Now let $\cO^{\otimes}$ be an $\infty$-operad and consider the factorization system given by the inert and active edges (cf. \cite[Def.~2.1.2.3]{HA} and \cite[Prop.~2.1.2.4]{HA}). Let $\cO^{\otimes}_{\act} \subset \cO^{\otimes}$ be the wide subcategory on the active edges.

\begin{dfn} \label{dfn:promonoidal}
Let $p: \cC^{\otimes} \to \cO^{\otimes}$ be a fibration of $\infty$-operads. We say that \emph{$p$ exhibits $\cC^{\otimes}$ as a $\cO$-promonoidal $\infty$-category} if the restricted functor $p_{\act} :\cC^{\otimes}_{\act} \to \cO^{\otimes}_{\act}$ is flat.
\end{dfn}

\begin{exm}
Suppose that $\cC^{\otimes}$ is a $\cO$-monoidal $\infty$-category, so that its structure map $p$ is a cocartesian fibration. $\cC^{\otimes}$ is then $\cO$-promonoidal since cocartesian fibrations are flat \cite[Exm.~B.3.4]{HA}.
\end{exm}

The following example was pointed out to us by Harpaz and shows that our earlier definition of symmetric promonoidal given as \cite[Def.~1.4]{BarwickGlasmanShah} was too restrictive.

\begin{exm} \label{exm:promonoidalFlat}
There exists examples of $\cO$-promonoidal $\infty$-categories $(\cC^{\otimes},p)$ such that $p$ itself is not flat. For instance, consider the $\infty$-operad $\MCom^{\otimes}$ that parametrizes modules over commutative algebras (cf. \cite[4.3]{Harpaz2019}). Then $\MCom^{\otimes}$ is symmetric promonoidal, but $p: \MCom^{\otimes} \to \FF_{\ast}$ is not flat. Indeed, let $\angs{n} = \{ 1, ..., n, + \}$ and consider the composition of maps of pointed finite sets
\[ h: \angs{3} \xto{f} \angs{2} \xto{g} \angs{1} \]
where $f(1) = 1, f(2) = 2, f(3) = 2$ and $g(1) = 1, g(2) = +$. Let $m$ be the object of $\MCom$ representing the module factor and consider the inert edge $e: (m,m,m) \to m$ over $h$. Then $e$ doesn't factor over $h = g \circ f$, so $p$ is not flat.
\end{exm}

\begin{rem} Let $(\cC^{\otimes},p)$ be a $\cO$-promonoidal $\infty$-category. Then if $\cC^{\otimes}$ is moreover \emph{corepresentable} in the sense that $p$ is locally cocartesian, we claim that $p$ is cocartesian so that $\cC^{\otimes}$ is $\cO$-monoidal. Indeed, by \cite[Prop.~1.5]{BarwickGlasmanShah} we see that $p_{\act}$ is cocartesian, and using the inert-active factorization system on $\cC^{\otimes}$ together with the decomposition of mapping spaces in $\cC^{\otimes}$ ensured by the definition of an $\infty$-operad \cite[Def.~2.1.1.10(2)]{HA}, it is not difficult to check that $p$ itself is cocartesian.
\end{rem}

We now generalize Lurie's construction of Day convolution \cite[Thm.~2.2.6.2]{HA}, which assumed that $\cC^{\otimes}$ was $\cO$-monoidal. Let $\Ar^{\inert}(\cO^{\otimes})$ denote the full subcategory of $\Ar(\cO^{\otimes})$ on the inert edges.

\begin{conthm} \label{conthm:DayConvolution}
Let $(\cC^{\otimes},p)$ be a $\cO$-promonoidal $\infty$-category. Consider the span of marked simplicial sets
\[ \begin{tikzcd}
(\cO^{\otimes}, \Inert) & (\Ar^{\inert}(\cO^{\otimes}) \times_{\ev_1, \cO^{\otimes}, p} \cC^{\otimes}, \Inert) \ar{r}{\pr_{\cC^{\otimes}}} \ar{l}[swap]{\ev_0} & (\cC^{\otimes}, \Inert) 
\end{tikzcd} \]
where the middle marking consists of those edges in $\Ar^{\inert}(\cO^{\otimes}) \times_{\cO^{\otimes}} \cC^{\otimes}$ whose source in $\cO^{\otimes}$ is inert and whose projection to $\cC^{\otimes}$ is inert. Then the functor
\[ (\ev_0)_* \circ (\pr_{\cC^{\otimes}})^* : \sSet^+_{/(\cC^{\otimes}, \Inert)} \to \sSet^+_{/(\cO^{\otimes}, \Inert)} \]
is right Quillen with respect to the operadic model structures of \cite[Prop.~2.1.4.6]{HA}. For a fibration $\cD^{\otimes} \to \cC^{\otimes}$ of $\infty$-operads, we then define the \emph{$p$-operadic coinduction} of $\cD^{\otimes}$ to be
\[ (\Nm_p \cD)^{\otimes} \coloneq (\ev_0)_* (\pr_{\cC^{\otimes}})^*(\cD^{\otimes},\Inert). \]
For a fibration $\cD^{\otimes} \to \cO^{\otimes}$ of $\infty$-operads, we define the \emph{Day convolution} (of $\cC^{\otimes}$ with $\cD^{\otimes}$ over $\cO^{\otimes}$) to be
\[ \widetilde{\Fun}_{\cO}(\cC, \cD)^{\otimes} \coloneq (\Nm_p p^* \cD)^{\otimes}. \]
\end{conthm}
\begin{proof}
It suffices to verify the hypotheses of \cite[Thm.~B.4.2]{HA}. For (1), $\ev_0$ is flat by \cref{lem:FlatCategoricalFibrationArrowLemma}. The remainder of the proof is now identical to that of \cite[Prop.~2.2.6.20(a)]{HA}; the only additional point to note is that the verification of (5) only uses that $\cC^{\otimes} \to \cO^{\otimes}$ admits $p$-cocartesian lifts over \emph{inert} edges in the base.
\end{proof}

\begin{rem}
It follows readily from the definition that the underlying $\infty$-category of the Day convolution $\widetilde{\Fun}_{\cO}(\cC, \cE)^{\otimes}$ is equivalent to the pairing construction $\widetilde{\Fun}_{\cO}(\cC, \cE)$ (\cref{conthm:PairingConstructionRecalled} with $\cT = \ast$).
\end{rem}

Given \cref{conthm:DayConvolution}, all the usual properties of Day convolution with this extra generality in the source variable then hold; we will give a comprehensive treatment of the parametrized theory in \cite{paramalg}. In particular, we have that for any fibration $\cD^{\otimes} \to \cO^{\otimes}$ of $\infty$-operads, the identity section
\[ \iota: \into{(\cD^{\otimes} \times_{\cO^{\otimes}} \cC^{\otimes}, \Inert)}{(\cD^{\otimes} \times_{\cO^{\otimes}} \Ar^{\inert}(\cO^{\otimes}) \times_{\cO^{\otimes}} \cC^{\otimes}, \Inert)} \]
is a homotopy equivalence in $\sSet^+_{/(\cC^{\otimes}, \Inert)}$, so for all fibrations $\cE^{\otimes} \to \cC^{\otimes}$ of $\infty$-operads, restriction along $\iota$ induces an equivalence of $\infty$-categories
\[ \Alg_{\cD/\cO}(\Nm_p \cE) \xto{\simeq} \Alg_{\cD \times_{\cO} \cC/\cC}(\cE). \]

Thus, we may think of the class of $\cO$-promonoidal $\infty$-categories as singling out the \emph{exponentiable} fibrations of $\infty$-operads over $\cO^{\otimes}$.

\bibliographystyle{amsalpha}
\bibliography{Gcats}

\end{document}